\documentclass[a4paper,12pt,reqno]{amsart}
\usepackage[T1]{fontenc}
\usepackage[utf8]{inputenc}
\usepackage[libertinus,vvarbb]{newtx}
\usepackage[margin=1in]{geometry}
\usepackage{enumitem}
\usepackage{graphicx}
\usepackage[dvipsnames]{xcolor}
\usepackage{tikz}
\usepackage{xparse}
\usepackage{xurl}
\usepackage[bookmarksdepth=2]{hyperref}

\hypersetup{colorlinks=true,urlcolor=MidnightBlue,linkcolor=MidnightBlue,citecolor=MidnightBlue}

\usepackage[normalem]{ulem}

\newcommand{\stkout}[1]{\ifmmode\text{\sout{\ensuremath{#1}}}\else\sout{#1}\fi}
\newcommand{\crsout}[1]{\ifmmode\text{\xout{\ensuremath{#1}}}\else\xout{#1}\fi}

\usetikzlibrary{decorations.markings,decorations.pathmorphing,nfold}
\makeatletter
\tikzset{
 side by side path/.style args={#1:#2}{
  postaction={path only,draw=#1,offset=+.5\pgflinewidth},
  postaction={path only,draw=#2,offset=+-.5\pgflinewidth}
 },
 side by side/.style={path only,side by side path={#1}},
 offset/.code=\tikz@addoption{%
  \pgfgetpath\tikz@temp%
  \pgfsetpath\pgfutil@empty%
  \pgfoffsetpath\tikz@temp{#1}%
 }
}
\makeatother

\numberwithin{equation}{section}

\newtheorem{theorem}{Theorem}[section]
\newtheorem{lemma}[theorem]{Lemma}

\newtheorem{proposition}[theorem]{Proposition}

\theoremstyle{definition}
\newtheorem{definition}[theorem]{Definition}

\newtheorem{remark}[theorem]{Remark}

\newtheorem*{assumption}{Assumption}

\DeclareMathOperator{\re}{Re}
\DeclareMathOperator{\im}{Im}
\let\arg\relax
\DeclareMathOperator{\arg}{Arg}

\DeclareMathOperator{\arccot}{arccot}
\DeclareMathOperator{\sign}{sign}
\DeclareMathOperator{\supp}{supp}
\DeclareMathOperator{\esssupp}{ess\,supp}

\DeclareMathOperator{\Cl}{Cl}
\DeclareMathOperator{\Int}{Int}

\newcommand{\ind}{\mathbb{1}}

\newcommand{\pr}{\mathbb{P}}
\newcommand{\ex}{\mathbb{E}}
\newcommand{\C}{\mathbb{C}}
\newcommand{\R}{\mathbb{R}}
\newcommand{\Z}{\mathbb{Z}}

\newcommand{\disk}{\mathbb{D}}
\newcommand{\torus}{\mathbb{T}}
\newcommand{\hp}{\mathbb{H}}

\newcommand{\ph}{\varphi}
\newcommand{\eps}{\varepsilon}
\newcommand{\thet}{\vartheta}
\newcommand{\cinfty}{\bar \infty}
\newcommand{\pvint}{\operatorname{pv}\int}

\newcommand{\mass}{R}
\newcommand{\limit}{L}
\newcommand{\ev}{v}
\newcommand{\evr}{w}
\newcommand{\levy}{J}

\mathchardef\mathhyphen="2D

\newcommand{\cm}{\mathscr{C}\mspace{-4mu}\mathscr{M}}
\newcommand{\am}{\mathscr{A}\mspace{-4mu}\mathscr{M}}
\newcommand{\amcm}{\am\mspace{-2mu}/\mspace{-1mu}\cm}
\newcommand{\amdcm}{\am\mspace{-2mu}\mathhyphen\mspace{-1mu}\cm}

\renewcommand{\le}{\leqslant}
\renewcommand{\ge}{\geqslant}

\newcommand{\lv}{\lvert}
\newcommand{\rv}{\rvert}

\newcommand{\ul}{\underline}
\newcommand{\ol}{\overline}

\NewDocumentCommand{\formula}{ssom}{%
 \IfBooleanTF{#1}{%
  \IfBooleanTF{#2}{%
   \IfValueTF{#3}%
    {\begin{align}\label{#3}\begin{gathered}#4\end{gathered}\end{align}}%
    {\begin{gather}#4\end{gather}}%
  }{%
   \IfValueTF{#3}%
    {\begin{align}\label{#3}\begin{aligned}#4\end{aligned}\end{align}}%
    {\begin{gather*}#4\end{gather*}}%
  }%
 }{%
  \IfValueTF{#3}%
   {\begin{align}\label{#3}#4\end{align}}%
   {\begin{align*}#4\end{align*}}%
 }%
}

\begin{document}

\title[Generalised eigenvector expansion of infinite Toeplitz matrices]{Generalised eigenvector expansion of infinite Toeplitz matrices with absolutely/completely monotone entries}
\author{Mateusz Kwaśnicki, Jacek Wszoła}
\thanks{Work supported by the National Science Centre, Poland, grant no.\@ 2023/49/B/ST1/04303}
\address{Mateusz Kwaśnicki, Jacek Wszoła \\ Department of Analysis and Stochastic Processes \\ Wrocław University of Science and Technology \\ Wybrzeże Wyspiańskiego 27 \\ 50-370 Wrocław, Poland}
\email{\textsf{\href{mailto:mateusz.kwasnicki@pwr.edu.pl}{mateusz.kwasnicki@pwr.edu.pl}, \href{mailto:jacek.wszola@pwr.edu.pl}{jacek.wszola@pwr.edu.pl}}}
\date{\today}
\keywords{Toeplitz matrix, spectral analysis, eigenvalue, eigenvector, generalised eigenvector expansion}
\subjclass[2020]{%
 15A18, 
 15B05, 
 47A68, 
 47B35, 
 60G50
}

\begin{abstract}
We study the spectral theory of infinite Toeplitz matrices $T = (a_{k - l})$ under the assumption that $(a_k)$ and $(a_{-k})$ are completely monotone sequences. We derive expressions for generalised eigenvectors and prove a generalised eigenvector expansion of $T$. Even if the matrix $T$ is not normal, our expressions involve only eigenvalues and eigenvectors with real entries.
\end{abstract}

\maketitle

%
%

\section{Introduction}
\label{sec:intro}


\subsection{Main results}

This paper provides a detailed analysis of the Wiener--Hopf factorisation for infinite Toeplitz matrices
\formula{
 T & = (a_{k - l} : k, l \ge 0) = \begin{pmatrix}
  a_0 & a_{-1} & a_{-2} & a_{-3} & \cdots \\
  a_1 & a_0 & a_{-1} & a_{-2} & \cdots \\
  a_2 & a_1 & a_0 & a_{-1} & \cdots \\
  a_3 & a_2 & a_1 & a_0 & \cdots \\
  \vdots & \vdots & \vdots & \vdots & \ddots
 \end{pmatrix} ,
}
where the sequences $(a_k : k \ge 1)$ and $(a_{-k} : k \ge 1)$ are completely monotone. Under relatively minor further assumptions on the sequence $(a_k : k \in \Z)$, our main result, Theorem~\ref{thm:main}, provides a generalised eigenvector expansion for $T$. Surprisingly, although $T$ is not assumed to be self-adjoint or normal, our expression only involves \emph{real} generalised eigenvalues, eigenvectors and co-eigenvectors. 

Our motivation is three-fold:
\begin{itemize}
\item On the probability side, our interest in the spectral theory of $T$ stems from the fluctuation theory of random walks. We aim to develop integral formulae analogous to those obtained recently for the continuous counterparts of Toeplitz matrices and random walks, the Wiener--Hopf operators and Lévy processes; see~\cite{k11,kmr13} for self-adjoint operators and symmetric processes, and~\cite{k19,k25} for the general case. We plan to apply the generalised eigenvector expansion to provide a semi-explicit expression and asymptotic estimates for the distribution of maxima and minima of the random walk $X_n$ with increments distributed according to $(a_k : k \in \Z)$. Formula~\eqref{eq:infimum} is a preliminary result in this direction.
\item The Wiener--Hopf factorisation of infinite Toeplitz matrices $T$ is a step towards understanding spectral properties of finite Toeplitz matrices; see, for example, \cite{bbg23,bg23,bbgm17,dgk09}. Our goal is to study the asymptotic behaviour of large eigenvalues of finite matrices $(a_{k - l} : 0 \le k, l < N)$ as $N \to \infty$. We plan to approximate the corresponding eigenvectors and co-eigenvectors using the generalised eigenvectors and co-eigenvectors of $T$, following the approach successfully applied for Lévy operators in~\cite{kkm13,kkm16,k12} and a class of Toeplitz matrices in~\cite{bg23}.
\item Finally, from the point of view of functional analysis and spectral theory, infinite Toeplitz matrices are interesting and important examples of non-normal operators, and exploring their spectral properties is therefore an interesting problem on its own.
\end{itemize}
A rigorous statement of our main results requires auxiliary definitions that we now discuss.

Throughout this work, $(a_k : k \in \Z)$ denotes a two-sided summable \emph{`$\amcm$ sequence'.} Specifically, we assume that $(a_k : k \ge 1)$ and $(a_{-k} : k \ge 1)$ are completely monotone, the central term $a_0 \in \R$ is arbitrary, and the entire sequence satisfies $\sum_{k = -\infty}^\infty \lv a_k \rv < \infty$. Recall that if $\Delta^n a_k = \sum_{j = 0}^n \tbinom{n}{j} (-1)^{n - j} a_{k + j}$ is the $n$th iterated forward difference operator, then a sequence $(a_k : k \ge 1)$ is completely monotone if $(-1)^n \Delta^n a_k \ge 0$ for every $n \ge 0$ and $k \ge 1$. Furthermore, if $(a_{-k})$ is completely monotone, then $(a_k)$ is said to be absolutely monotone.

The value of $a_0$ is irrelevant to the spectral properties of $T$. In order to simplify the statements of our main results, in the introduction we often assume that
\formula[eq:a0:zero]{
 a_0 & = -\sum_{k \in \Z \setminus \{0\}} a_k .
}
The general case can be easily reduced to this setting by considering the operator $T - \alpha$, where $\alpha = \sum_{k = -\infty}^\infty a_k$.

Theorem~\ref{thm:main} requires a certain balance between the elements of $(a_k : k \in \Z)$ with positive and negative indices. This condition is expressed in terms of the generating function
\formula{
 \hat a(z) & = \sum_{k = -\infty}^\infty a_k z^k ,
}
defined initially when $\lv z \rv = 1$. Let us denote
\formula[eq:i:symbol]{
 F(z) & = \hat a(1) - \hat a(z) = \sum_{k = -\infty}^\infty a_k (1 - z^k) .
}
This definition is independent of the value of $a_0$. With the choice given in~\eqref{eq:a0:zero}, we have $\hat a(1) = \sum_{k = -\infty}^\infty a_k = 0$, and so $F(z) = -\hat a(z)$.

In Section~\ref{sec:symbol}, we observe that $F$ extends to a holomorphic function on $\C \setminus [0, \infty)$, which we denote by the same symbol $F$. Then, in Section~\ref{sec:spine}, we prove that the curve
\formula[eq:i:spine]{
 \Gamma & = \{z \in \C : \im z > 0, \, F(z) \in \R\}
}
can be parameterised as $\Gamma = \{\gamma(\lambda) : \lambda \in \Lambda\}$, where $\Lambda$ is a bounded open subset of $(0, \infty)$, and $F(\gamma(\lambda)) = \lambda$. We call $\Gamma$ the \emph{spine} of $(a_k : k \in \Z)$. While in many important cases $\Lambda$ is an interval and $\Gamma$ is a single arc, in general $\Gamma$ consists of a finite or countably infinite number of arcs that begin and end on $(0, \infty)$, and at most one arc that begins on $(0, \infty)$ and ends on $(-\infty, 0] \cup \{\infty\}$. In Theorem~\ref{thm:main}, we assume that this last arc in fact ends on $(-\infty, 0)$, so in this case we say that $\Gamma$ \emph{winds around $0$}.

In our generalised eigenfunction expansion of the infinite Toeplitz matrix $T$, the set $\Lambda$ plays the role of the generalised spectrum of $-T$. For a generalised eigenvalue $\lambda \in \Lambda$, we denote the corresponding generalised eigenvector of $-T$ by
\formula{
 \ev^+_\lambda & = (\ev^+_{\lambda, k} : k \ge 0) ,
}
and the generalised co-eigenvector of $-T$ by
\formula{
 \ev^-_\lambda & = (\ev^-_{\lambda, l} : l \ge 0) .
}
An explicit formula for $\ev^+_{\lambda, k}$ and $\ev^-_{\lambda, k}$ requires a number of further auxiliary definitions, and we postpone it to Section~\ref{sec:intro:eig}. Here we mention that
\formula[]{
\label{eq:ev:p}
 \ev^+_{\lambda, k} & = \lv \gamma(\lambda) \rv^{-k - 1} \sin\bigl((k + 1) \thet_\lambda + \thet^+_\lambda\bigr) - \evr^+_{\lambda, k} , \\
\label{eq:ev:m}
 \ev^-_{\lambda, l} & = \lv \gamma(\lambda) \rv^{l + 1} \sin\bigl((l + 1) \thet_\lambda + \thet^-_\lambda\bigr) - \evr^-_{\lambda, l} ,
}
where $(\evr^+_{\lambda, k} : k \ge 0)$, $(\evr^-_{\lambda, l} : l \ge 0)$ are completely monotone sequences and $\thet_\lambda, \thet^+_\lambda, \thet^-_\lambda \in [0, \pi)$.

The following is our main result. The proof is given in Section~\ref{sec:main}.

\begin{theorem}
\label{thm:main}
Suppose that $(a_k : k \in \Z)$ is a summable $\amcm$ sequence, $\sum_{k = -\infty}^\infty a_k = 0$, and that the spine $\Gamma$ of $(a_k : k \in \Z)$ winds around~$0$: one of the arcs of $\Gamma$ begins on $(0, \infty)$ and ends on $(-\infty, 0)$. Denote by $T = (a_{k - l} : k, l \ge 0)$ the corresponding infinite Toeplitz matrix. Then for every polynomial $P$, the entries of the matrix $P(T)$ are given by
\formula{
 (P(T))_{k, l} & = \frac{2}{\pi} \int_{\Lambda} P(-\lambda) \ev^+_{\lambda, k} \ev^-_{\lambda, l} \, \frac{\lv \gamma'(\lambda) \rv}{\lv \gamma(\lambda) \rv} \, d\lambda ,
}
where $k, l \ge 0$.
\end{theorem}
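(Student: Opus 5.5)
The plan is to reduce the statement to an identity for the resolvent (or, equivalently, the generating function) of $T$, and then to deform a contour onto the spine $\Gamma$. By linearity it suffices to treat $P(x) = x^n$. Rather than handle powers directly, I would consider the generating function $\sum_{k,l \ge 0} (T^n)_{k,l} \xi^k \eta^l$, or more conveniently the resolvent $(T + \lambda)^{-1}$ for $\lambda$ off $[0,\infty)$. Since $\sum_k a_k = 0$ and the off-diagonal entries are nonnegative, $T$ is the generator of a random walk on $\Z$ killed on leaving $[0,\infty)$. Hence $-T$ is accretive on $\ell^1$, and $(\lambda - T)^{-1}$ makes sense for $\lambda > 0$. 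The Wiener--Hopf factorisation
\formula{
 \lambda + F(z) & = \kappa^+_\lambda(z)\,\kappa^-_\lambda(1/z)
}
then gives the classical formula for the double generating function of $(\lambda - T)^{-1}$:
\formula{
 \sum_{k, l \ge 0} \bigl((\lambda - T)^{-1}\bigr)_{k, l} \xi^k \eta^l & = \frac{1}{(1 - \xi\eta)\,\kappa^+_\lambda(\xi)\,\kappa^-_\lambda(\eta)} .
}

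The second step is to make the Wiener--Hopf factors explicit. Because $F$ extends holomorphically to $\C \setminus [0,\infty)$ (Section~\ref{sec:symbol}), one can write $\log \kappa^\pm_\lambda$ as Cauchy-type integrals of $\log(\lambda + F)$ along the negative and positive half-lines. On the positive half-line the jump of $\arg(\lambda + F)$ is governed by whether $\lambda + F$ crosses the real axis. This is where the spine $\Gamma$ and the angles $\thet_\lambda, \thet^\pm_\lambda$ enter: the zero $z = \gamma(\mu)$ of $F(z) - \mu$ encodes $\kappa^\pm_{-\mu}$ through a pole factor $(1 - z/\gamma(\mu))(1 - z/\ol{\gamma(\mu)})$ times a "nice" remainder. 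I would use this to identify $\ev^\pm_\mu$ with the inverse generating-function coefficients arising when $\lambda \to -\mu \pm i0$, $\mu \in \Lambda$. Concretely, the plan is to show that
\formula{
 \frac{1}{\pi}\, \im \bigl((-\mu - i0 - T)^{-1}\bigr)_{k, l} & = \frac{2}{\pi}\, \ev^+_{\mu, k}\, \ev^-_{\mu, l}\, \frac{\lv \gamma'(\mu) \rv}{\lv \gamma(\mu) \rv} .
}
This is a Plemelj/Stieltjes-inversion statement. Its two ingredients are that the oscillatory part $\lv\gamma\rv^{\mp k \mp 1}\sin(\cdot)$ comes from the residue at $\gamma(\mu)$ and its conjugate, and that the completely monotone part $\evr^\pm$ comes from the branch cut.

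The third step is to express $(T^n)_{k,l}$ as a contour integral. By Cauchy's formula,
\formula{
 (T^n)_{k, l} & = \frac{1}{2\pi i} \oint \lambda^n \bigl((\lambda - T)^{-1}\bigr)_{k, l}\, d\lambda
}
over a large circle. Entrywise this is legitimate because $T$ is bounded on $\ell^1$ with norm at most $2\sum_{k \ne 0}\lv a_k\rv$. I then collapse the contour onto the set where $(\lambda - T)^{-1}_{k,l}$ is non-analytic. The winding assumption is used here: it guarantees that the analytic continuation of the factors $\kappa^\pm$ has no extra singularities. In particular, the arc of $\Gamma$ ending on $(-\infty,0)$ rather than at $0$ or $\infty$ rules out an atom at $\lambda = 0$ and any contribution from outside $-\Lambda$. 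The net result is that the resolvent entries are analytic in $\C \setminus (-\ol\Lambda)$ with integrable boundary jumps. Taking the jump across $-\Lambda$ and substituting the formula above then gives the theorem with $P(-\lambda)$.

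I expect the main obstacle to be the second step, especially the justification at the endpoints of the arcs of $\Lambda$ and the exclusion of any point spectrum or singular continuous part. There are two things to check. First, the boundary values of the resolvent entries must be locally integrable near the endpoints where $\gamma$ hits $(0,\infty)$; there $\thet_\lambda \to 0$ and $\lv\gamma'\rv$ may blow up. Second, no delta mass may arise, for instance at $\lambda = 0$, corresponding to a bounded harmonic function. For the second point I would first try to show that $\kappa^\pm_\lambda$ stay bounded away from zero uniformly as $\lambda \to 0$ under the winding hypothesis. For the first, I would use the explicit representation of $\evr^\pm$ as Laplace transforms of positive measures, giving a uniform bound $\lv \ev^\pm_{\lambda,k}\rv \le C\lv\gamma(\lambda)\rv^{\mp k}$. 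This bound permits dominated convergence when passing $\lambda \to -\mu \pm i0$.
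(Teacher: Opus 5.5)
Your first step is Proposition~\ref{prop:wh}. Your picture of $\ev^\pm_\lambda$ (poles at $\gamma(\lambda)$, $\overline{\gamma}(\lambda)$ giving the sines, a branch cut giving the completely monotone part) is what Proposition~\ref{prop:cbf:s} makes rigorous in the proof of Proposition~\ref{prop:main}.

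\textbf{The gap.} Everything in between is asserted rather than derived: both the analytic continuation of the resolvent entries to $\C \setminus (-\overline{\Lambda})$ and the jump formula. The framework you propose for them also does not work as stated. For a non-symmetric sequence the spectrum of $T$ has nonempty interior: it consists of the points around which $-F(\torus)$ has nonzero winding number. In the tridiagonal example with $p \ne q$ it is a filled ellipse containing $-\Lambda$. So $(\lambda - T)^{-1}$ does not exist near most of $-\Lambda$. Your expression $\im((-\mu - i0 - T)^{-1})_{k,l}$ only makes sense for a continuation of the entries across $-F(\torus)$, and there is no resolvent set onto which to collapse the contour. Constructing that continuation, by following $\kappa^\pm_\lambda$ as zeros of $\lambda + F$ cross $\torus$, is the whole difficulty; ``the winding assumption guarantees no extra singularities'' does not address it.

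\textbf{The missing idea} is Proposition~\ref{prop:wh:int}. For fixed real $x, y$,
$F^+_\sigma(x) F^-_\sigma(y) = \sigma \exp\bigl(\frac{1}{\pi} \int_0^\limit \psi(\lambda) \frac{d\lambda}{\sigma + \lambda}\bigr)$ with $\psi(\lambda) = \arg(\gamma(\lambda) - y^{-1}) - \arg(\gamma(\lambda) - x)$, and this is a complete Bernstein function of $\sigma$. Hence the bivariate generating function of the resolvent is a Stieltjes function of $\sigma$, holomorphic off $[-\limit, 0]$, whose measure is read off from $\psi$ (Proposition~\ref{prop:bi}). The paper uses this only for $\sigma > 0$, expanding in powers of $(\mass - \lambda)/(\sigma + \mass)$, so it never enters the spectrum. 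The resulting density is $\frac{\sin \psi(\lambda)}{\lambda}$ times the exponential of a principal-value integral. Identifying it with $H_+(\lambda; x) H_-(\lambda; y) \im\gamma(\lambda) / \lv\gamma(\lambda)\rv$, i.e.\ with the Wiener--Hopf factors of the difference quotient $F(\lambda; \cdot)$, requires the spine-integral identity of Proposition~\ref{prop:integral}. That is not a residue computation, and without it your key identity is unproved.

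\textbf{Where winding is actually needed.} The following all hold for every two-sided sequence, and Proposition~\ref{prop:bi} has no winding hypothesis:
\begin{itemize}
\item no atom at $0$, because $\psi(0^+) = \pi$;
\item no atom at $\limit$, because $\psi(\limit^-) = 0$;
\item absolute continuity, with support in $\Lambda$ because $\sin\psi = 0$ off $\Lambda$.
\end{itemize}
Winding is used only in the spatial inversion. It gives $\delta > 0$ with $\delta \le \lv\gamma(\lambda)\rv \le \delta^{-1}$ on $\Lambda$, and finite length of $\Gamma$ (Proposition~\ref{prop:spine}\ref{it:spine:d}). Consequently all generating functions of $\ev^\pm_\lambda$ converge on the common interval $(0,\delta)$. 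The bounds $\lv\ev^+_{\lambda,k}\rv \le 1 + \lv\gamma(\lambda)\rv^{-k-1}$ and $\lv\ev^-_{\lambda,l}\rv \le 1 + \lv\gamma(\lambda)\rv^{l+1}$ are integrable against $\lv\gamma'(\lambda)\rv/\lv\gamma(\lambda)\rv\,d\lambda$, which justifies Fubini and the passage from generating functions to entries. In your formulation the genuine endpoint issue is integrability near $-\limit$, where without winding $\lv\gamma\rv \to 0$ or $\infty$. The endpoints on $(0,\infty)$ that you single out are harmless because $\Gamma$ has finite length.

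\textbf{Two of your estimates are wrong.}
\begin{itemize}
\item $\lv\ev^+_{\lambda,k}\rv \le C\lv\gamma(\lambda)\rv^{-k}$ fails when $\lv\gamma(\lambda)\rv > 1$, since $\evr^+_{\lambda,k}$ need not decay geometrically.
\item $\kappa^\pm_\lambda$ cannot stay bounded away from zero uniformly as $\lambda \to 0$, since $\kappa^+_\lambda(1)\kappa^-_\lambda(1) = \lambda$. What suffices, at fixed $x, y \in (0,1)$ and with no winding, is $\sigma/(\kappa^+_\sigma(x)\kappa^-_\sigma(y)) \to 0$ as $\sigma \to 0^+$, which is immediate from the formula above.
\end{itemize}
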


The case $P(\lambda) = 1$ is of particular interest in Theorem~\ref{thm:main}: for $k, l \ge 0$, the functions $\lambda \mapsto \ev^+_{\lambda, k}$ and $\lambda \mapsto \ev^-_{\lambda, l}$ satisfy the orthogonality relation
\formula{
 \frac{2}{\pi} \int_\Lambda \ev^+_{\lambda, k} \ev^-_{\lambda, l} \, \frac{\lv \gamma'(\lambda) \rv}{\lv \gamma(\lambda) \rv} \, d\lambda & = \delta_{k - l} ,
}
where $\delta_k = 0$ if $k \ne 0$ and $\delta_0 = 1$.

Observe that the sequences $\ev^+_\lambda$ and $\ev^-_\lambda$ may grow exponentially fast, and so the series defining $T \ev^+_\lambda$ and $T^* \ev^-_\lambda$ may fail to converge. In this sense $\ev^+_\lambda$ and $\ev^-_\lambda$ need not be true eigenvectors and co-eigenvectors of $T$. However, our next result shows that this lack of summability is essentially the only obstacle. The proof is given in Section~\ref{sec:eig}.

\begin{theorem}
\label{thm:eig}
Suppose that the assumptions of Theorem~\ref{thm:main} are satisfied, and let $\lambda \in \Lambda$. If
\formula[eq:eig:sum:p]{
 \sum_{l = 0}^\infty \frac{a_{-l}}{\lv \gamma(\lambda) \rv^l} & < \infty ,
}
then the sequence $\ev^+_\lambda$ is an eigenvector of\, $T$: for every $k \ge 0$,
\formula{
 \sum_{l = 0}^\infty a_{k - l} \ev^+_{\lambda, l} & = -\lambda \ev^+_{\lambda, k} .
}
Similarly, if
\formula[eq:eig:sum:m]{
 \sum_{k = 0}^\infty a_k \lv \gamma(\lambda) \rv^k & < \infty ,
}
then the sequence $\ev^-_\lambda$ is a co-eigenvector of\, $T$: for every $l \ge 0$,
\formula{
 \sum_{k = 0}^\infty a_{k - l} \ev^-_{\lambda, k} & = -\lambda \ev^-_{\lambda, l} .
}
\end{theorem}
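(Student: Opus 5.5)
We will verify the eigenvector equation directly from an integral representation of $\ev^+_{\lambda,k}$, of the type Section~\ref{sec:intro:eig} provides. Put $z = \gamma(\lambda)$. The first term of $\ev^+_{\lambda,k}$ in~\eqref{eq:ev:p} is $\im(\bar z^{-k-1} e^{i\thet^+_\lambda})$, a combination of the geometric sequences $z^{-k-1}$ and $\bar z^{-k-1}$. The correction $\evr^+_{\lambda,k}$ is completely monotone, so by Hausdorff's theorem we expect
\[
 \evr^+_{\lambda,k} = \int_{(0,1]} s^{k} \, \mu_\lambda(ds)
\]
for a positive measure $\mu_\lambda$. We will take the explicit form of $\mu_\lambda$ from its construction via the Wiener--Hopf factors of $\lambda + F$.

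Next we compute $\sum_l a_{k-l} w^{-l}$ for a geometric sequence $w^{-l}$ with $w = \bar z$ or $w = z$. We split the sum into $l \le k$ and $l > k$. The tail is $\sum_{j\ge1} a_{-j} w^{-k-j}$, which converges absolutely by~\eqref{eq:eig:sum:p}. The finite part is $w^{-k}\bigl(\hat a(w^{-1}) - \sum_{m>k} a_m w^{-m}\bigr)$. Here the analytic continuation of $F$ from Section~\ref{sec:symbol} enters, and $F(z)=\lambda$ on $\Gamma$ gives $\hat a = -F = -\lambda$ at the appropriate point. We must check the orientation of the convention: either $F(z)$ or $F(1/z)$ corresponds to the column sums against $z^{-l}$. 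Thus
\[
 T(w^{-\cdot})_k = -\lambda w^{-k} + \text{(boundary term)}, \qquad \text{boundary term} = -\sum_{m>k} a_m w^{k-m}.
\]
This is a tail of the completely monotone part $(a_m)_{m\ge1}$, which is the Laplace transform of a measure on $(0,1)$. The boundary term therefore has the form $\int s^k \,\nu(ds)$ with an explicit kernel in $s$ and $w$.

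Similarly, for $s^l$ with $s\in(0,1]$, $\sum_l a_{k-l}s^l = -F(s)$-type value times $s^k$, plus a boundary term coming from the negative-index entries. The $l>k$ terms converge trivially because $s\le1$. Integrating against $\mu_\lambda$ is justified by Fubini, using positivity of the completely monotone representations together with~\eqref{eq:eig:sum:p}.

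The claim then reduces to an identity of measures/functions in $s$: the boundary terms from the sine part must cancel exactly the defect $(\lambda+F)\cdot\mu_\lambda$ together with the boundary terms of $\evr^+_\lambda$. This is the main obstacle. We will try to prove it through the defining property of $\thet^+_\lambda$ and $\mu_\lambda$, namely that the generating function $\sum_k \ev^+_{\lambda,k}\zeta^k$ equals a ratio involving the Wiener--Hopf factor $\Phi^+_\lambda(\zeta)$, meromorphic with poles only at $1/z,1/\bar z$. Passing to generating functions converts the eigen-equation into $(\lambda + F)\cdot V^+ = $ an analytic function with only nonpositive powers, which is the usual Wiener--Hopf characterisation of eigenvectors. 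Condition~\eqref{eq:eig:sum:p} ensures all series converge on a circle of radius slightly less than $|z|^{-1}$, so the formal manipulation is legitimate. If the generating-function route is not directly available, we will instead use the Stieltjes-type representation of $\evr^+$ and verify the cancellation by a contour-deformation argument around $(0,\infty)$.

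The co-eigenvector statement follows by the same argument applied to the transposed sequence $(a_{-k})$. This swaps the roles of $z$ and $1/z$ and of~\eqref{eq:eig:sum:p} and~\eqref{eq:eig:sum:m}.
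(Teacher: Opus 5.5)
Your fallback generating-function route is the paper's route. As written, though, the key identity is stated with the wrong objects, and there is a genuine gap in the analytic justification.

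\textbf{The identity.} What does the work is
\[
 (F(z)-\lambda)\,\hat\ev^+_\lambda(z) = \frac{\lv F^+(\lambda;\gamma)\rv \im\gamma}{\lv\gamma\rv\lv 1-\gamma\rv}\,\frac{F^-(\lambda;z^{-1})}{1-z} ,
\]
which follows in one line from Definition~\ref{def:fr} and the factorisation \eqref{eq:wh:fr} of the difference quotient $F(\lambda;z)$. The right-hand side is holomorphic in $\lv z\rv>1$ and is $O(z^{-1})$ at infinity. By \eqref{eq:fr:one} it is continuous up to $\torus$, including $z=1$; you need this point too and do not mention it. Hence its coefficients of $z^k$, $k\ge0$, vanish. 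The identity involves $F-\lambda$ and the factors $F^\pm(\lambda;\cdot)$ of the difference quotient. It does not involve $\lambda+F$ or its Wiener--Hopf factors, which describe the resolvent; with $\lambda+F$ your claimed identity is false. Also, $\hat\ev^+_\lambda$ has its poles at $\gamma,\overline\gamma$, not at their reciprocals, and the critical radius is $\lv\gamma\rv$, not $\lv\gamma\rv^{-1}$.

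\textbf{The gap.} You claim that \eqref{eq:eig:sum:p} makes all series converge on a circle slightly inside the critical radius. This fails whenever $\lv\gamma\rv\le1$:
\begin{itemize}
\item $\sum_k\ev^+_{\lambda,k}\zeta^k$ then has radius of convergence exactly $\lv\gamma\rv$.
\item \eqref{eq:eig:sum:p} controls $\sum_l a_{-l}\zeta^{-l}$ only for $\lv\zeta\rv\ge\lv\gamma\rv$, and says nothing inside that circle.
\item In the symmetric Fisher--Hartwig example, $\lv\gamma\rv=1$ and $a_{\pm l}$ decay polynomially. So the Laurent series of $\hat a$ converges only on $\torus$, which is exactly the circle through the poles of $\hat\ev^+_\lambda$, and there $\ev^+_{\lambda,k}\zeta^k\not\to0$.
\end{itemize}
Consequently there is no annulus on which the generating function of $T\ev^+_\lambda$ equals the product of the two series.

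\textbf{How the paper closes it.} It uses Abel regularisation:
\begin{enumerate}
\item $(T\ev^+_\lambda)_k=\lim_{x\to1^-}\sum_l a_{k-l}\ev^+_{\lambda,l}x^l$ by dominated convergence; this is exactly where \eqref{eq:eig:sum:p} enters.
\item For fixed $x<1$, the coefficient of the regularised convolution is a Cauchy integral of $(F(z)-\lambda)\hat\ev^+_\lambda(xz)$ over $\torus$. If $\lv\gamma\rv<1$, one first rescales to the circle of radius $\lv\gamma\rv$.
\item The limit $x\to1^-$ is taken under the integral. The uniform bound comes from $\lv\gamma-xz\rv\ge\frac12\lv\gamma-z\rv$ (see \eqref{eq:disk}), together with the zeros of $F-\lambda$ at $\gamma,\overline\gamma$, which cancel the poles.
\end{enumerate}
Only for $\lv\gamma\rv>1$, where $\ev^+_\lambda$ is summable and one can work directly on $\torus$, does your argument go through as stated.

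\textbf{Your first route does not bypass this.} With the correct orientation, the tail is $\sum_{m>k}a_m w^{m}$ (not $w^{-m}$), and for $\lv w\rv=\lv\gamma\rv>1$ it in general diverges, so it can only be read as an analytic continuation. The finite sums against $s^l$ produce $\hat a(s^{-1})$ on the cut $[1,\infty)$, where only boundary values $F(s^{-1}+0i)$ exist. The resulting cancellation identity is left unproved.
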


If $\lv \gamma(\lambda) \rv \ge 1$, then~\eqref{eq:eig:sum:p} holds. Similarly, if $\lv \gamma(\lambda) \rv \le 1$, then~\eqref{eq:eig:sum:m} is satisfied. Hence, for each $\lambda \in \Lambda$, exactly one of the following holds:
\begin{itemize}
\item $\lv \gamma(\lambda) \rv = 1$, both $\ev^+_\lambda$ and $\ev^-_\lambda$ are bounded sequences, and both are (co)-eigenvectors;
\item $\lv \gamma(\lambda) \rv > 1$, $\ev^+_\lambda$ is summable and is an eigenvector, while $\ev^-_\lambda$ grows exponentially fast;
\item $\lv \gamma(\lambda) \rv < 1$, $\ev^-_\lambda$ is summable and is a co-eigenvector, while $\ev^+_\lambda$ grows exponentially fast.
\end{itemize}
If $a_k$ converges to zero as $k \to \infty$ or $k \to -\infty$ faster than an appropriate geometric sequence, then it is possible that both $\ev^+_\lambda$ and $\ev^-_\lambda$ are (co)-eigenvectors even if $\lv \gamma(\lambda) \rv \ne 1$. However, in many interesting cases the decay of $a_k$ is slower than geometric.


\subsection{Generalised eigenvectors and co-eigenvectors}
\label{sec:intro:eig}

Below we describe $\ev^+_\lambda$ and $\ev^-_\lambda$ in more detail. We continue to assume that $(a_k : k \in \Z)$ is a summable $\amcm$ sequence. All notions introduced below are defined in terms of $F$, and so they do not depend on the value of $a_0$.

Recall that the spine $\Gamma$ of $(a_k : k \in \Z)$ is parameterised by $\gamma(\lambda)$, where $\lambda \in \Lambda$. Below we let $\lambda \in \Lambda$ be fixed, and, for brevity, we write $\gamma$ instead of $\gamma(\lambda)$. Let
\formula[eq:i:fr]{
 F(\lambda; z) & = \frac{\lv \gamma \rv (z - 1)^2}{(\gamma - z) (\overline{\gamma} - z)} \, (F(z) - \lambda) ,
}
and define the corresponding Wiener--Hopf factors
\formula[]{
\label{eq:i:fr:wh:p}
 F^+(\lambda; z) & = c^+_\lambda \exp\biggl(\frac{1}{\pi} \int_0^1 \frac{1}{s - z^{-1}} \, \lv \arg(F(s^{-1} + 0 i) - \lambda) \rv ds \biggr) , \\
\label{eq:i:fr:wh:m}
 F^-(\lambda; z) & = c^-_\lambda \exp\biggl(\frac{1}{\pi} \int_0^1 \frac{1}{s - z^{-1}} \, \lv \arg(F(s + 0 i) - \lambda) \rv ds \biggr) .
}
Here $F(s + 0 i)$ denotes the boundary limit of $F$ approached from above: for almost every $s \in (0, \infty)$,
\formula{
 F(s + 0 i) & = \lim_{t \to 0^+} F(s + i t) ,
}
and $c^+_\lambda, c^-_\lambda > 0$ are constants such that
\formula{
 F^+(\lambda; z) F^-(\lambda; z^{-1}) & = F(\lambda; z) .
}
If $0 < x < \min\{\lv \gamma \rv, 1\}$ and $0 < y < \min\{\lv \gamma \rv^{-1}, 1\}$, then the generating functions of $\ev^+_\lambda$ and $\ev^-_\lambda$ are given by 
\formula{
 \sum_{k = 0}^\infty \ev^+_{\lambda, k} x^k & = \frac{\lv F^+(\lambda; \gamma) \rv \im \gamma}{\lv 1 - \gamma \rv} \, \frac{1 - x}{\lv \gamma - x \rv^2} \, \frac{1}{F^+(\lambda; x)} \, , \\
 \sum_{l = 0}^\infty \ev^-_{\lambda, l} y^l & = \frac{\lv F^-(\lambda; \gamma^{-1}) \rv \im \gamma}{\lv \gamma \rv \lv 1 - \gamma \rv} \, \frac{1 - y}{\lv \gamma^{-1} - y \rv^2} \, \frac{1}{F^-(\lambda; y)} \, .
}

For a more explicit description of $\ev^+_\lambda$ and $\ev^-_\lambda$, we denote
\formula[eq:i:theta]{
 \thet_\lambda & = \arg \gamma , &
 \thet^+_\lambda & = -\arg \frac{1 - \gamma}{F^+(\lambda; \gamma)} , &
 \thet^-_\lambda & = \arg \frac{1 - \gamma^{-1}}{F^-(\lambda; \gamma^{-1})} .
}
From the definition of $F^+(\lambda; z)$ and $F^-(\lambda; z)$, by a direct calculation, we find that
\formula[]{
\label{eq:i:theta:p}
 \thet^+_\lambda & = \frac{1}{\pi} \int_0^1 \im \frac{-1}{s - \gamma^{-1}} \, \bigl(\pi - \lv \arg(F(s^{-1} + 0 i) - \lambda) \rv \bigr) ds , \\
\label{eq:i:theta:m}
 \thet^-_\lambda & = \frac{1}{\pi} \int_0^1 \im \frac{1}{s - \gamma} \, \bigl(\pi - \lv \arg(F(s + 0 i) - \lambda) \rv\bigr) ds .
}
In Section~\ref{sec:main}, we prove that $\thet_\lambda \in (0, \pi)$, $\thet^+_\lambda, \thet^-_\lambda \in [0, \pi)$, and
\formula{
 \thet_\lambda + \thet^+_\lambda + \thet^-_\lambda & \le \pi .
}
More precisely,
\formula{
 \thet^+_\lambda & \in [0, \pi - \arg(\gamma - 1)] , &
 \thet^-_\lambda & \in [0, \arg(\gamma - 1) - \thet_\lambda] .
}
We also introduce completely monotone sequences $(\evr^+_{\lambda, k} : k \ge 0)$ and $(\evr^-_{\lambda, l} : l \ge 0)$, determined by their generating functions: for $x, y \in \C$ such that $\lv x \rv \le 1$, $\lv y \rv \le 1$, we have
\formula{
 \sum_{k = 0}^\infty \evr^+_{\lambda, k} x^k & = \frac{1}{2 i} \biggl(\frac{e^{i \thet^+_\lambda}}{\overline{\gamma} - x} - \frac{e^{-i \thet^+_\lambda}}{\gamma - x}\biggr) - \frac{\lv F^+(\lambda; \gamma) \rv \im \gamma}{\lv 1 - \gamma \rv} \, \frac{(1 - x) F^-(\lambda; x^{-1})}{(\gamma - x) (\overline{\gamma} - x) F(\lambda; x)} \, , \\
 \sum_{l = 0}^\infty \evr^-_{\lambda, l} y^l & = \frac{1}{2 i} \biggl(\frac{e^{i \thet^-_\lambda}}{\gamma^{-1} - y} - \frac{e^{-i \thet^-_\lambda}}{\overline{\gamma}{}^{-1} - y}\biggr) - \frac{\lv F^-(\lambda; \gamma^{-1}) \rv \im \gamma}{\lv \gamma \rv \lv 1 - \gamma \rv} \, \frac{(1 - y) F^+(\lambda; y^{-1})}{(\overline{\gamma}{}^{-1} - y) (\gamma^{-1} - y) F(\lambda; y)} \, .
}
Thus, $\evr^+_{\lambda, k}$ and $\evr^-_{\lambda, l}$ can be written in an integral form using Cauchy's integral formula. We prove in Section~\ref{sec:main} that
\formula{
 0 & \le \evr^+_{\lambda, k} < \sin(\thet_\lambda + \thet^+_\lambda), & 0 & \le \evr^-_{\lambda, l} < \sin(\thet_\lambda + \thet^-_\lambda) ,
}
and
\formula{
 \sum_{k = 0}^\infty \evr^+_{\lambda, k} & \le \im \frac{e^{i \thet^+_\lambda}}{\overline{\gamma} - 1} \, , &
 \sum_{l = 0}^\infty \evr^-_{\lambda, l} & \le \im \frac{e^{i \thet^-_\lambda}}{\gamma^{-1} - 1} \, .
}
An integral expression that is more convenient than the one involving Cauchy's integral formula is available if $F$ has a continuous boundary limit $F(s + 0 i) \ne \lambda$ for $s \in (0, 1) \cup (1, \infty)$. Then a similar boundary limit $F(\lambda; s + 0 i) \ne 0$ exists, and we prove in Section~\ref{sec:main} that
\formula[eq:i:psi:p]{
 \evr^+_{\lambda, k} & = \frac{1}{\pi} \, \frac{\lv F^+(\lambda; \gamma) \rv \im \gamma}{\lv 1 - \gamma \rv} \int_0^1 s^k \, \frac{(1 - s) F^-(\lambda; s)}{\lv \gamma s - 1 \rv^2} \, \biggl|\im \frac{1}{F(\lambda; s^{-1} + 0 i)}\biggr| \, ds ,
}
and
\formula[eq:i:psi:m]{
 \evr^-_{\lambda, l} & = \frac{1}{\pi} \, \frac{\lv \gamma \rv \lv F^-(\lambda; \gamma^{-1}) \rv \im \gamma}{\lv 1 - \gamma \rv} \int_0^1 s^l \, \frac{(1 - s) F^+(\lambda; s)}{\lv s - \gamma \rv^2} \, \biggl|\im \frac{1}{F(\lambda; s + 0 i)}\biggr| \, ds .
}
In the general case, the above expressions are still valid, as long as we understand the boundary limit in the sense of vague convergence of measures on $(0, 1)$:
\formula[]{
\label{eq:i:boundary:p}
 \biggl|\im \frac{1}{F(\lambda; s^{-1} + 0 i)}\biggr| ds & = \lim_{t \to 0^+} \biggl|\im \frac{1}{F(\lambda; s^{-1} + i t)}\biggr| ds , \\
\label{eq:i:boundary:m}
 \biggl|\im \frac{1}{F(\lambda; s + 0 i)}\biggr| ds & = \lim_{t \to 0^+} \biggl|\im \frac{1}{F(\lambda; s + i t)}\biggr| ds .
}

With the above definitions, $\ev^+_\lambda$ and $\ev^-_\lambda$ are given by~\eqref{eq:ev:p} and~\eqref{eq:ev:m}.


\subsection{Discussion}

The spectrum of the infinite Toeplitz matrix $T$ is well understood when $(a_k)$ is merely a summable sequence. By Gohberg's theorem (see Theorem~1.17 in~\cite{bs99}), $z \in \C$ is in the essential spectrum of $T$ if and only if $z = \hat a(e^{i \thet})$, where $\thet \in [0, 2 \pi)$, and the spectrum of $T$ additionally contains those $z \in \C$ for which the winding number of $\hat a(e^{i \thet})$ ($\thet \in [0, 2 \pi)$) around $z$ is nonzero. Furthermore, this winding number coincides with the Fredholm index of $T - z$.

\begin{figure}
\centering
\footnotesize
\begin{tabular}{cc}
\raisebox{-0.5\height}{\includegraphics[scale=0.7,]{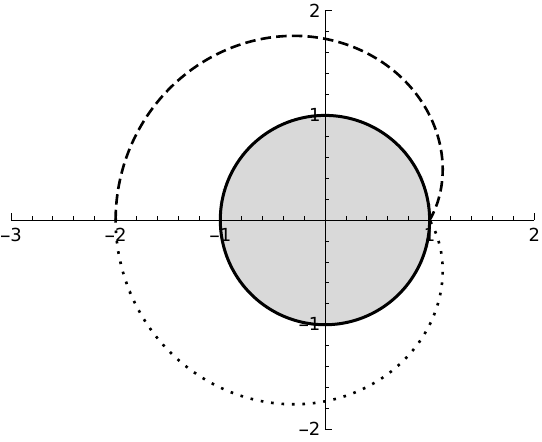}} & \raisebox{-0.5\height}{\includegraphics[scale=0.7]{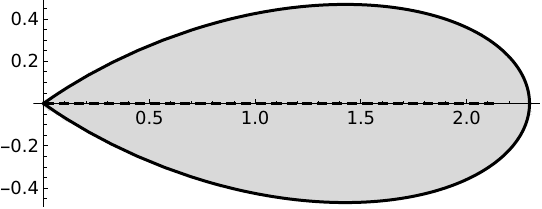}} \\
(a) & (b)
\end{tabular}
\caption{The spine $\Gamma$ and the generalised spectrum $\Lambda$ for the $\amcm$ sequence with generating function $\hat a(z) = -(1 - z)^\alpha (1 - z^{-1})^\beta$, where $\alpha = 0.4$ and $\beta = 0.8$: (a)~the spine $\Gamma$ (dashed line), its mirror image $\overline{\Gamma}$ (dotted line), the unit circle (solid line), and the unit disk (gray region);  (b)~their image under $-\hat a$: the generalised spectrum $\Lambda$ of $-T$ (dashed line on the real axis), the essential spectrum of $-T$ (solid line), and the spectrum of $-T$ (gray region).}
\label{fig:spectrum}
\end{figure}

Our Theorem~\ref{thm:eig} explicitly describes the eigenvectors and co-eigenvectors corresponding to \emph{real} elements of the spectrum of $T$, corresponding to the values of the symbol $F$ along the spine $\Gamma$. Namely, if $\lambda \in \Lambda$, then $\lambda = f(\gamma(\lambda))$ belongs to the spectrum of $-T$, and we have the following possibilities (see Figure~\ref{fig:spectrum}):
\begin{itemize}
\item if $\lv \gamma(\lambda) \rv = 1$, then $\lambda$ is in the essential spectrum of $-T$, and $\ev^+_\lambda$ and $\ev^-_\lambda$ are the corresponding `resonances' of $-T$ and the adjoint $-T^*$;
\item if $\lv \gamma(\lambda) \rv > 1$, then $\lambda$ is in the point spectrum of $-T$, with eigenvector $\ev^+_\lambda$;
\item if $\lv \gamma(\lambda) \rv < 1$, then $-\lambda$ is in the residual spectrum of $-T$, with co-eigenvector $\ev^-_\lambda$.
\end{itemize}
The Wiener--Hopf techniques used in the proof of Theorem~\ref{thm:eig} likely lead to a similar explicit description of eigenvectors and co-eigenvectors for non-real elements of the spectrum of $T$. However, we did not pursue this direction, as the present work is focused on the generalised eigenfunction expansion.

Variants of Theorem~\ref{thm:main} are well-known for self-adjoint infinite Toeplitz matrices, that is, when $a_{-k} = \overline{a}_k$. In this case the explicit generalised eigenvector expansion goes back to Rosenblum's work~\cite{r65}; see also~\cite{sy20,sy22} for a modern exposition and extensions. When $(a_k)$ is an $\amcm$ sequence with $a_0 = -\sum_{k \in \Z \setminus \{0\}} a_k$ and $a_{-k} = a_k$, then our Theorem~\ref{thm:main} recovers this general result in the following form. We have $\lv \gamma(\lambda) \rv = 1$ and $\ev^+_\lambda = \ev^-_\lambda$ for every $\lambda \in \Lambda = (0, -\hat a(-1))$. Since $\Gamma$ is a semi-circle, it winds around $0$, and Theorem~\ref{thm:main} simplifies to
\formula{
 (P(T))_{k, l} & = \frac{2}{\pi} \int_{\Lambda} P(-\lambda) \ev_{\lambda, k} \ev_{\lambda, l} d\lambda ,
}
where
\formula{
 \ev_{\lambda, k} & = \sin\bigl((k + 1) \thet_\lambda + \thet_\lambda^\pm\bigr) - \evr_{\lambda, k} ,
}
$(\evr_{\lambda, k} : k \ge 0)$ is a completely monotone sequence, $\thet_\lambda \in (0, \pi)$, and $\thet_\lambda^\pm \in [0, \tfrac{1}{2} (\pi - \thet_\lambda)]$.

On the other hand, very little seems to be known about the explicit spectral representations of non-normal infinite Toeplitz matrices $T$, and in particular no results similar to the generalised eigenvector expansion given in Theorem~\ref{thm:main} seem to be available in the literature. We refer to~\cite{bs99,bs06} for a general account on the spectral theory of finite and infinite Toeplitz matrices. Spectral properties of non-normal operators, including finite and infinite Toeplitz matrices, are discussed in~\cite{te05}. The classical text~\cite{g66} provides a detailed exposition of the theory of the Riemann--Hilbert problem, intimately related to the Wiener--Hopf factorisation; see also Chapter~IX in~\cite{mw73}.


\subsection{Applications in probability}

If $(a_k : k \in \Z)$ is a probability sequence (that is, $a_k \ge 0$ for $k \in \Z$ and $\sum_{k = -\infty}^\infty a_k = 1$), then there is a random walk $(X_n : n \ge 0)$ with values in $\Z$ such that $a_k = \pr[X_{n + 1} - X_n = k]$ for every $k \in \Z$ and $n \ge 0$. We assume that the increments $X_{n + 1} - X_n$ are independent random variables, and $X_0 = 0$. The running maxima and minima
\formula{
 \ol{X}_n & = \max\{X_0, X_1, \ldots, X_n\} , \\
 \ul{X}_n & = \min\{X_0, X_1, \ldots, X_n\}
}
are central objects in the fluctuation theory of random walks. The entries of the $n$th power of the Toeplitz matrix $T = (a_{k - l} : k, l \ge 0)$ are easily seen to coincide with the transition probabilities of $(X_n : n \ge 0)$ with a barrier:
\formula{
 (T^n)_{k, l} & = \pr \bigl[ \ul{X}_n \ge -l , \, X_n = -l + k \bigr] ;
}
see Sections~\ref{sec:rw} and~\ref{sec:rw:wh} for a more detailed discussion.

Suppose that $(a_k : k \in \Z)$ is a probability $\amcm$ sequence such that the spine $\Gamma$ winds around $0$. Then $\hat a(1) = 1$, and Theorem~\ref{thm:main} with $P(\lambda) = (1 - \lambda)^n$ gives a semi-explicit description of the joint distribution of $(\ul{X}_n, X_n)$:
\formula[eq:infimum]{
 \pr \bigl[ \ul{X}_n \ge -l, \, X_n = k \bigr] & = \frac{2}{\pi} \int_{\Lambda} (1 - \lambda)^n \ev^+_{\lambda, k + l} \ev^-_{\lambda, l} \, \frac{\lv \gamma'(\lambda) \rv}{\lv \gamma(\lambda) \rv} \, d\lambda .
}
In some special cases, the above expression simplifies to an explicit integral formula, suitable for numerical evaluation; some examples of this kind are discussed in Section~\ref{sec:ex}.

While in general the formula presented above may not be tractable numerically, we plan to use it in the theoretical study of asymptotic properties of the distribution of $\ul{X}_n$ and $\ol{X}_n$. This is motivated by the analogous results that have been obtained in the context of, mostly symmetric, Lévy processes (and the corresponding Wiener--Hopf operators), see~\cite{k25,kmr13}.


\subsection{Examples}

Our expressions~\eqref{eq:i:theta}, \eqref{eq:i:theta:p} and~\eqref{eq:i:theta:m} for $\thet_\lambda$, $\thet_\lambda^+$, $\thet_\lambda^-$, and formulae~\eqref{eq:i:psi:p} and~\eqref{eq:i:psi:m} for $\evr_{\lambda, k}^+$ and $\evr_{\lambda, l}^-$ are complicated, but explicit. They involve the Wiener--Hopf factors $F^+(\lambda; z)$ and $F^-(\lambda; z)$, which are given explicitly in~\eqref{eq:i:fr:wh:p} and~\eqref{eq:i:fr:wh:m}, and the only implicitly defined notion: the parameterisation $\gamma(\lambda)$ of the spine $\Gamma$.

For a given summable $\amcm$ sequence $(a_k)$ with explicitly known $\gamma(\lambda)$, we obtain explicit, numerically tractable integral expressions for the generalised eigenvalues, eigenvectors and co-eigenvectors. In Section~\ref{sec:ex}, we discuss the following classes of examples:
\begin{enumerate}[label=(\alph*)]
\item\label{it:ex:1} tridiagonal Toeplitz matrices;
\item\label{it:ex:2} Toeplitz matrices corresponding to sequences $(a_k)$ such that $(a_k : k \ge 1)$ and $(a_{-k} : k \ge 1)$ are geometric sequences;
\item\label{it:ex:3} Toeplitz matrices with symbol $\hat a(z) = -(1 - z)^\alpha (1 - z^{-1})^\beta$ of Fisher--Hartwig type.
\end{enumerate}
An explicit description of the parameterisation of the spine is available in cases~\ref{it:ex:1} and~\ref{it:ex:2}, and also in case~\ref{it:ex:3} with $\alpha = \beta$. In case~\ref{it:ex:3} with $\alpha \ne \beta$, $\gamma(\lambda)$ can still be evaluated numerically using, for example, Newton's method. A more detailed discussion is given in Section~\ref{sec:ex}.


\subsection{Structure of the paper}

In Section~\ref{sec:pre}, we recall standard definitions and set up the notation. Then, in Section~\ref{sec:amcm}, we study the integral representation of symbols of $\amcm$ sequences. It reveals a link between them and the class of Rogers functions, introduced and thoroughly studied in~\cite{k19,k25}. This connection is established and exploited in Section~\ref{sec:rog}, where we prove the key technical results on the Wiener--Hopf factorisation of symbols of $\amcm$ sequences.

We return to the study of Toeplitz matrices in Section~\ref{sec:wh}, where we express the generating function of the sequence of powers of a Toeplitz matrix in terms of the Wiener--Hopf factors of the corresponding symbol. We give two independent arguments. The shorter one is purely analytical and involves Krein's factorisation theorem. The other proof is of probabilistic nature, and while it is more laborious, we find it more intuitive and insightful. It also gives us an opportunity to further discuss our probabilistic motivations.

After all this preparatory work, we prove our main results, Theorems~\ref{thm:main} and~\ref{thm:eig}, in Section~\ref{sec:inv}. A number of examples are then discussed in Section~\ref{sec:ex}.

%
%

\section{Preliminaries}
\label{sec:pre}

In this section we fix the notation, recall standard definitions, and prove some auxiliary results.


\subsection{Notation}

By a sequence we mean either a one-sided sequence $(a_k : k \ge 0)$ or $(a_k : k \ge 1)$, or a two-sided sequence $(a_k : k \in \Z)$. When the meaning is clear from the context, we write $(a_k)$ for a one-sided or a two-sided sequence.

For a real function $f$, we denote by $f(x^+) = \lim_{y \to x^+} f(y)$ and $f(x^-) = \lim_{y \to x^-} f(y)$ the one-sided limits. In a similar way, by $f(\infty^-)$ and $f(-\infty^+)$ we denote the limits at $\infty$ and $-\infty$.

We use $\Z$, $\R$, and $\C$ for the sets of integers, real numbers, and complex numbers, respectively. We denote the open and closed unit disks in the complex plane by $\disk = \{z \in \C : \lv z \rv < 1\}$ and $\overline \disk = \{z \in \C : \lv z \rv \le 1\}$, and we use $\torus = \{z \in \C : \lv z \rv = 1\}$ for the unit circle. The open and closed right complex half-planes are denoted by $\hp = \{z \in \C : \re z > 0\}$, $\overline \hp = \{z \in \C : \re z \ge 0\}$.

When $a, b \in \C$, we denote by $[a, b]$ the line segment in the complex plane, and we identify $[a, b]$ with $[b, a]$. By $\cinfty$ we denote the complex infinity, so that $\C \cup \{\cinfty\}$ becomes the Riemann sphere. In this context we agree that $0^{-1} = \cinfty$ and $\cinfty^{-1} = 0$.

We use $\re z$, $\im z$, $\lv z \rv$ and $\arg z$ to denote the real part, the imaginary part, the modulus, and the argument of a complex number $z$. We always assume that $\arg z$ takes values in $(-\pi, \pi]$, and we agree that $\arg 0 = 0$.

We record two elementary inequalities. If $a, b \in \C \setminus \{0\}$ and $\thet = \lv \arg (a b^{-1}) \rv$, then
\formula[eq:triangle]{
 \lv a - b \rv & \ge (\lv a \rv + \lv b \rv) \sin \tfrac{\thet}{2} ;
}
see equation~(2.8) in~\cite{k25}. If $r \in [0, 1]$ and $z \in \torus$, then
\formula{
 4 \lv r - z \rv^2 - \lv 1 - z \rv^2 & = (4 + 4 r^2 - 8 r \re z) - (2 - 2 \re z) \\
 & = 2 (1 + \re z) (1 - r)^2 + 2 (1 - \re z) (r^2 + 2 r) \ge 0 ,
}
and hence
\formula[eq:disk]{
 \lv r - z \rv \ge \tfrac{1}{2} \lv 1 - z \rv .
}


\subsection{Stieltjes transform}

Integral representations of holomorphic functions of certain classes play an essential role in our development. They are closely related to the Stieltjes transform of a measure $\mu$ on $\R$, defined by
\formula{
 f(z) & = \frac{1}{\pi} \int_{\R} \frac{1}{z - s} \, \mu(ds)
}
whenever $\min\{1, \lv s \rv^{-1}\}$ is integrable with respect to $\mu$. In this case $\mu$ can be recovered as the vague boundary limit of $\im f$:
\formula{
 \mu(ds) & = \lim_{t \to 0^+} \im f(s + i t) ds .
}
Additionally, if $\mu$ is absolutely continuous, then the density function of $\mu$, denoted again by $\mu(s)$, is equal to 
\formula{
 \mu(s) & = \lim_{t \to 0^+} \im f(s + i t)
}
for almost every $s \in \R$, and the above limit can be taken nontangentially, in the sense that for every $\eps > 0$, we have
\formula{
 \mu(s) & = \lim_{\substack{z \to s \\ \arg(z - s) \in (\eps, \pi - \eps)}} \im f(z)
}
for almost every $s \in \R$. Furthermore,
\formula{
 \mu(\{s\}) & = \lim_{t \to 0^+} \pi t f(s + i t)
}
for every $s \in \R$, and again a nontangential version of this identity holds true. These and similar properties are frequently used below without further comment.

Stieltjes transforms of nonnegative measures map the upper complex half-plane into the (closed) lower complex half-plane, and so they are closely related to Nevanlinna--Pick functions, which preserve a given complex half-plane. A general Nevanlinna--Pick function in the right complex half-plane $\hp$ has the form
\formula{
 c_1 z + i c_0 - \frac{c_{-1}}{z} + \int_{\R} \biggl( \frac{1}{z + i s} + \frac{i \sign s}{1 + \lv s \rv} \biggr) \mu(ds) ,
}
where $c_1 \ge 0$, $c_0 \in \R$, $c_{-1} \ge 0$, and $\mu$ is a nonnegative measure on $\R$ such that $\min\{1, s^{-2}\}$ is integrable with respect to $\mu$. We will see an equivalent representation of Rogers functions in~\eqref{eq:rogers:int}.


\subsection{Stieltjes and complete Bernstein functions}

The following two classes of holomorphic functions $f$ on $\C \setminus (-\infty, 0]$ and nonnegative on $(0, \infty)$ play an important role in our development. We say that $f$ is a \emph{Stieltjes function} if additionally $f$ maps the upper complex half-plane into the (closed) lower complex half-plane. In this case
\formula[eq:stieltjes:int]{
 f(z) & = c_0 + \frac{c_{-1}}{z} + \frac{1}{\pi} \int_{(0, \infty)} \frac{1}{z + s} \, \nu(ds) , 
}
where $c_0, c_{-1} \ge 0$ and $\nu$ is a nonnegative measure on $(0, \infty)$ such that $\min\{1, s^{-1}\}$ is integrable with respect to $\nu$. Furthermore,
\formula*[eq:stieltjes:constants]{
 c_0 & = \lim_{z \to \infty} f(z) , \\
 c_{-1} & = \lim_{z \to 0^+} z f(z) ,
}
and, in the sense of vague convergence of measures on $(0, \infty)$,
\formula[eq:stieltjes:measure]{
 \nu(ds) & = \lim_{t \to 0^+} (-\im f(-s + i t)) ds .
}
A holomorphic function $f$ in $\C \setminus (-\infty, 0]$ is a Stieltjes function if and only if
\formula{
 0 & \ge \arg f(z) \ge -\arg z && \text{if $\im z > 0$,} \\
 0 & \le \arg f(z) \le -\arg z && \text{if $\im z < 0$.}
}
For these and further properties of Stieltjes functions, we refer to Chapter~2 in~\cite{ssv26}.

If $f$ is holomorphic on $\C \setminus (-\infty, 0]$, nonnegative on $(0, \infty)$, and maps the upper complex half-plane into its closure, then $f$ is said to be a \emph{complete Bernstein function}. Such functions have the form
\formula{
 f(z) & = c_1 z + c_0 + \frac{1}{\pi} \int_{(0, \infty)} \frac{z}{z + s} \, \nu(ds) , 
}
where $c_1, c_0 \ge 0$ and $\nu$ is a nonnegative measure on $(0, \infty)$ such that $\min\{1, s^{-1}\}$ is integrable with respect to $\nu$, and these parameters are given by
\formula{
 c_1 & = \lim_{z \to \infty} \frac{f(z)}{z} , \\
 c_0 & = \lim_{z \to 0^+} f(z) ,
}
and
\formula{
 \nu(ds) & = \lim_{t \to 0^+} \im f(-s + i t) ds .
}
A holomorphic function $f$ in $\C \setminus (-\infty, 0]$ is a Stieltjes function if and only if
\formula{
 0 & \ge \arg f(z) \ge -\arg z && \text{if $\im z > 0$,} \\
 0 & \le \arg f(z) \le -\arg z && \text{if $\im z < 0$.}
}
A holomorphic function $f$ in $\C \setminus (-\infty, 0]$ is a complete Bernstein function if and only if
\formula*[eq:cbf:arg]{
 0 & \le \arg f(z) \le \arg z && \text{if $\im z > 0$,} \\
 0 & \ge \arg f(z) \ge \arg z && \text{if $\im z < 0$.}
}
A detailed study of complete Bernstein functions can be found in Chapters~7--8 in~\cite{ssv26}.

Clearly, $f(z)$ is a Stieltjes function if and only if $z f(z)$ is a complete Bernstein function. Similarly, unless $f$ is identically $0$, $f(z)$ is a Stieltjes function if and only if $1 / f(z)$ is a complete Bernstein function.

We sometimes say that the restriction of $f$ to $(0, \infty)$ is a Stieltjes of complete Bernstein function. By the uniqueness of the holomorphic extension, this causes no confusion.

We will need the following auxiliary result.

\begin{proposition}[Proposition~3.18 in~\cite{k25}]
\label{prop:cbf:s}
If $h$ is a complete Bernstein function and $\im \zeta > 0$, then there is a Stieltjes function $g$ such that
\formula{
 \frac{h(\xi)}{(\xi - \zeta) (\xi - \overline \zeta)} = \frac{1}{2 i \im \zeta} \biggl( \frac{h(\zeta)}{\xi - \zeta} - \frac{h(\overline \zeta)}{\xi - \overline \zeta} \biggr) - g(\xi)
}
for $\xi \in \C \setminus (-\infty, 0] \setminus \{\zeta, \overline \zeta\}$. Furthermore, the constants $c_0$ and $c_{-1}$ in the Stieltjes representation~\eqref{eq:stieltjes:int} of $g$ are equal to zero.
\end{proposition}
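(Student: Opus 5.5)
The plan is to define $g$ directly as the difference
\formula{
 g(\xi) & = \frac{1}{2 i \im \zeta} \biggl( \frac{h(\zeta)}{\xi - \zeta} - \frac{h(\overline \zeta)}{\xi - \overline \zeta} \biggr) - \frac{h(\xi)}{(\xi - \zeta) (\xi - \overline \zeta)}
}
and to verify that it is a Stieltjes function with vanishing constants. First I would check that $g$ is holomorphic on $\C \setminus (-\infty, 0]$. The points $\zeta, \overline\zeta$ are removable singularities, because the residue of the last term at $\zeta$ equals $h(\zeta)/(2 i \im \zeta)$, which is cancelled by the first term, and similarly at $\overline\zeta$. By the symmetry $h(\overline\xi) = \overline{h(\xi)}$, the function $g$ is real on $(0,\infty)$.

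Next I would insert the representation $h(\xi) = c_1 \xi + c_0 + \frac{1}{\pi}\int \frac{\xi}{\xi + s}\,\nu(ds)$ and treat each part separately, using linearity in $h$; the map $h \mapsto g$ is linear.

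\emph{Affine part.} For $h(\xi) = c_1 \xi + c_0$, the expression $h(\xi)/((\xi-\zeta)(\xi-\overline\zeta))$ is a rational function of degree $-1$. Its partial fraction decomposition is exactly $\frac{1}{2i\im\zeta}\bigl(\frac{h(\zeta)}{\xi-\zeta} - \frac{h(\overline\zeta)}{\xi-\overline\zeta}\bigr)$, so $g = 0$ for this part.

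\emph{Atom part.} For $h(\xi) = \xi/(\xi+s)$, partial fractions in $\xi$ with the three poles $\zeta, \overline\zeta, -s$ give
\formula{
 \frac{\xi}{(\xi + s)(\xi-\zeta)(\xi-\overline\zeta)} & = \frac{1}{2 i \im \zeta}\biggl(\frac{h(\zeta)}{\xi-\zeta} - \frac{h(\overline\zeta)}{\xi-\overline\zeta}\biggr) + \frac{-s}{(-s-\zeta)(-s-\overline\zeta)}\,\frac{1}{\xi+s} .
}
Hence
\formula{
 g(\xi) & = \frac{s}{\lv s+\zeta\rv^2}\,\frac{1}{\xi+s} ,
}
which is a Stieltjes function with nonnegative weight and zero constants.

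Integrating against $\nu/\pi$, I then expect
\formula{
 g(\xi) & = \frac{1}{\pi}\int_{(0,\infty)} \frac{1}{\xi+s}\,\frac{s}{\lv s+\zeta\rv^2}\,\nu(ds) .
}
This is a Stieltjes representation~\eqref{eq:stieltjes:int} with $c_0 = c_{-1} = 0$ and measure $s\lv s+\zeta\rv^{-2}\nu(ds)$.

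The only real obstacle is justifying the exchange of the partial-fraction identity with the integral, together with the integrability condition $\min\{1, s^{-1}\}$ for the new measure. Since $\lv s+\zeta\rv \ge c\max\{1,s\}$ for some constant $c>0$ depending on $\zeta$, the density $s\lv s+\zeta\rv^{-2}$ is bounded by $C\min\{s, s^{-1}\}$. Integrability then follows from the integrability of $\min\{1,s^{-1}\}$ against $\nu$. The terms $h(\zeta)$ and $h(\overline\zeta)$ are given by absolutely convergent integrals of the atom values, so Fubini applies pointwise in $\xi$, which completes the argument.
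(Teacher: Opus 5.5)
Your proof is correct and complete.

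The paper does not prove this statement; it quotes it from \cite{k25}, so there is no in-paper argument to compare against. Your route is a natural self-contained proof. You use the integral representation of $h$ and exact partial fractions to show two things: the affine part $c_1 \xi + c_0$ contributes nothing, and each kernel $\xi/(\xi+s)$ contributes $s\lv s+\zeta\rv^{-2}(\xi+s)^{-1}$. You then integrate against $\nu/\pi$. A side benefit is that this identifies the representing measure of $g$ explicitly as $s\lv s+\zeta\rv^{-2}\nu(ds)$, which your bound shows is a finite measure.

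Two small points of precision:
\begin{itemize}
\item The ``exchange'' step needs only linearity of the integral, not Fubini's theorem. For fixed $\xi$, each of the three terms in your pointwise identity is $O(\min\{1,s^{-1}\})$ in $s$, so each is integrable against $\nu$.
\item The vanishing of $c_0$ and $c_{-1}$ follows either from uniqueness of the Stieltjes representation, or directly from the limit formulas~\eqref{eq:stieltjes:constants} by dominated convergence.
\end{itemize}
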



\subsection{Completely monotone sequences}

Recall that a one-sided sequence $(a_k : k \ge 0)$ is completely monotone if and only if for every $n \ge 0$, the $n$th iterated differences $(\Delta^n a_k)$ are of alternating signs: $(-1)^n \Delta^n a_k \ge 0$ for every $n, k \ge 0$. Equivalently, $(a_k)$ is the mixture of geometric sequences:
\formula{
 a_k & = \int_{[0, 1]} s^k \mu(ds)
}
for some nonnegative finite measure $\mu$ on $[0, 1]$. The generating function of such a sequence is given by
\formula{
 \hat a(z) & = \sum_{k = 0}^\infty a_k z^k
}
whenever the series converges, so at least when $z \in \disk$. By Fubini's theorem,
\formula[eq:cm:int]{
 \hat a(z) & = \int_{[0, 1]} \frac{1}{1 - s z} \, \mu(ds) .
}
It is easy to see that if $c_0 = 0$, then the Stieltjes function~\eqref{eq:stieltjes:int} is the Laplace transform of a completely monotone function. It turns out that Stieltjes functions also arise as generating functions of completely monotone sequences. The following simple result is well-known, but we could not find this exact statement in the literature.

\begin{proposition}
\label{prop:cm:gf}
A function $f$ is a Stieltjes function if and only if there is a completely monotone sequence $(a_k)$ with generating function $\hat a(z) = f(1 - z)$.
\end{proposition}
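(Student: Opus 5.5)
The plan is to use the two integral representations already recorded: \eqref{eq:cm:int} for completely monotone sequences and \eqref{eq:stieltjes:int} for Stieltjes functions. The substitution $w = 1 - z$ links them. For the forward direction, let $a_k = \int_{[0,1]} s^k \mu(ds)$. For $z \in \disk$ we have
\formula{
 \hat a(z) = \int_{[0,1]} \frac{\mu(ds)}{1 - s z} ,
}
and we rewrite $1 - sz = (1 - s) + s(1 - z)$. The atom of $\mu$ at $s = 0$ contributes a constant, which plays the role of $c_0$. For $s \in (0,1]$ we write
\formula{
 \frac{1}{1 - s z} = \frac{1}{s} \cdot \frac{1}{w + t} , \qquad t = \frac{1 - s}{s} \in [0, \infty) , \quad w = 1 - z .
}
The atom at $s = 1$, that is $t = 0$, gives the term $c_{-1}/w$. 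The remaining part of $\mu$ on $(0,1)$, pushed forward by $s \mapsto t$ and weighted by $1/s = 1 + t$, gives a measure $\pi^{-1}\nu(dt) = (1+t)\,\tilde\mu(dt)$ on $(0,\infty)$. The integrability condition on $\nu$ follows from $\mu$ being finite: $\min\{1, t^{-1}\}(1 + t) \le 2$. Hence $f(w) = c_0 + c_{-1}/w + \pi^{-1}\int \nu(dt)/(w + t)$ is a Stieltjes function, and $f(1 - z) = \hat a(z)$ on $\disk$.

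For the converse, start from a Stieltjes function in the form \eqref{eq:stieltjes:int} and reverse the change of variables. Set $s = 1/(1 + t)$ for $t \in (0,\infty)$, and define $\mu$ on $[0,1]$ by three pieces: an atom $c_0$ at $0$, an atom $c_{-1}$ at $1$, and the push-forward of $\pi^{-1}(1 + t)^{-1}\nu(dt)$ on $(0,1)$. This $\mu$ is finite exactly because $\int \min\{1, t^{-1}\}\,\nu(dt) < \infty$, since $(1+t)^{-1} \le \min\{1, t^{-1}\}$. Then $a_k = \int s^k \mu(ds)$ is completely monotone. Running the computation above backwards gives $\hat a(z) = f(1 - z)$ for $z \in \disk$, and hence wherever both sides are defined by analytic continuation.

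Most of this is a routine Fubini and change-of-variables argument. The one point needing care is the meaning of ``generating function'' at the boundary. The identity $\hat a(z) = f(1 - z)$ should be read on $\disk$, where the series converges, with $f(1-z)$ supplying the holomorphic extension to $\C \setminus [1,\infty)$. Near $z = 1$, that is $w = 0$, the term $c_{-1}/w$ corresponds to a non-summable constant part of $(a_k)$, so I would keep the statement on $\disk$. The main thing to check is that the bookkeeping of the two atoms (at $s = 0 \leftrightarrow t = \infty$ and $s = 1 \leftrightarrow t = 0$) matches $c_0$ and $c_{-1}$ exactly.
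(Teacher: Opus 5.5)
Your proof is correct and follows essentially the paper's argument: the substitution $t = s^{-1} - 1$, the atoms of $\mu$ at $0$ and $1$ producing $c_0$ and $c_{-1}$, the weight $1/s = 1 + t$ producing $\nu$, and the converse obtained by reversing the substitution. Your normalisation $\pi^{-1}\nu(dt) = (1+t)\tilde\mu(dt)$ also accounts for the factor $\tfrac{1}{\pi}$ in \eqref{eq:stieltjes:int}, which the paper's proof omits when it writes $\nu(dr) = (1+r)\,\mu\circ\ph^{-1}(dr)$.
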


\begin{proof}
The proof boils down to a simple comparison of~\eqref{eq:stieltjes:int} and~\eqref{eq:cm:int}. If $(a_k)$ has the generating function given by~\eqref{eq:cm:int}, then
\formula{
 \hat a(1 - z) & = \int_{[0, 1]} \frac{1}{1 - s + s z} \, \mu(ds) = \mu(\{0\}) + \frac{\mu(\{1\})}{z} + \int_{(0, 1)} \frac{s^{-1}}{s^{-1} - 1 + z} \, \mu(ds) .
}
If we set $\ph(s) = s^{-1} - 1$ for $s \in (0, 1)$, then, by the substitution $r = \ph(s)$, or $s = (1 + r)^{-1}$, we obtain
\formula{
 \hat a(1 - z) & = \mu(\{0\}) + \frac{\mu(\{1\})}{z} + \int_{(0, \infty)} \frac{1 + r}{r + z} \, \mu \circ \ph^{-1}(dr) .
}
This is the same as~\eqref{eq:stieltjes:int}, with $c_0 = \mu(\{0\})$, $c_{-1} = \mu(\{1\})$, and $\nu(dr) = (1 + r) \mu \circ \ph^{-1}(dr)$.

The converse is proved exactly in the same way.
\end{proof}

%
%

\section{\texorpdfstring{$\amcm$}{AM/CM} sequences}
\label{sec:amcm}

In this relatively short section, we recall the definition of $\amcm$ sequences $(a_k)$ and their generating functions $\hat a$. We also introduce the notion of the symbol $F(z) = \hat a(1) - \hat a(z)$ and discuss its integral representation and holomorphic extension. Finally, we evaluate certain nontangential boundary limits of this extension.


\subsection{\texorpdfstring{$\amcm$}{AM/CM} sequences}

The following is our standing assumption.

\begin{assumption}
The two-sided real sequence $(a_k : k \in \Z)$ is summable and it is an $\amcm$ sequence in the following sense:
\formula{
 & \text{$(a_k : k \ge 1)$ and $(a_{-k} : k \ge 1)$ are completely monotone.}
}
\end{assumption}

Note that the meaning of an `$\amcm$ sequence' here is slightly different from the definition of an `$\amdcm$ sequence' in the context of bell-shaped sequences~\cite{kw23,kw}, where $(a_k : k \ge 0)$ and $(a_{-k} : k \ge 0)$ were assumed to be completely monotone. Clearly, every $\amdcm$ sequence is an $\amcm$ sequence. The converse is not true. However, if $(a_k)$ is an $\amcm$ sequence and we replace $a_0$ by $\infty$, then $(a_k)$ is formally an $\amdcm$ sequence.

In the introduction, we denoted by $\hat a$ the generating function of $(a_k)$:
\formula{
 \hat a(z) & = \sum_{k = -\infty}^\infty a_k z^k
}
for $z \in \torus$. However, it will be more convenient for us to use the function
\formula{
 F(z) & = \hat a(1) - \hat a(z) = \sum_{k = -\infty}^\infty a_k (1 - z^k) ,
}
defined initially for $z \in \torus$. We say that $F$ is the \emph{symbol} of $(a_k)$, and in the remaining part of the article we only use $F$ in definitions and statements of our results. Clearly, $F$ is the generating function of the sequence $(\hat a(1) \delta_k - a_k)$, where $\delta_0 = 1$ and $\delta_k = 0$ for $k \ne 0$.

Note that the definition of $F$ does not depend on the value of $a_0$. In the introduction, we often assume that $a_0$ is chosen in such a way that $\hat a(1) = 0$. In this case, $F(z) = -\hat a(z)$. Throughout the rest of the paper, we generally consider an arbitrary value of $a_0$, although sometimes it will be convenient to assume that $a_0 = 0$.

If $(a_k)$ tends to zero as $k \to \pm \infty$, then $(a_k)$ is an $\amcm$ sequence in the above sense if and only if there are finite nonnegative measures $\mu_+$ and $\mu_-$ on $[0, 1)$ such that
\formula[]{
\label{eq:amcm:p}
 a_k & = \int_{[0, 1)} s^{k - 1} \mu_+(ds) && \text{for $k \ge 1$,} \\
\label{eq:amcm:m}
 a_k & = \int_{[0, 1)} s^{1 - k} \mu_-(ds) && \text{for $k \le -1$.}
}
Furthermore, by Fubini's theorem, $(a_k)$ defined by the above formulae is summable if and only if
\formula{
 \int_{[0, 1)} \frac{1}{1 - s} \, \mu_+(ds) & = \sum_{k = 1}^\infty a_k < \infty , \\
 \int_{[0, 1)} \frac{1}{1 - s} \, \mu_-(ds) & = \sum_{k = -\infty}^{-1} a_k < \infty .
}
Note that $a_k = 0$ for all $k < 0$ if and only if $\mu_- = 0$. Similarly, $a_k = 0$ for all $k > 0$ if and only if $\mu_+ = 0$.

We conclude with an auxiliary definition.

\begin{definition}
An $\amcm$ sequence $(a_k)$ is said to be:
\begin{itemize}
\item \emph{nontrivial} if $a_k \ne 0$ for some $k \ne 0$;
\item \emph{one-sided} if $a_k = 0$ for all $k < 0$ or $a_k = 0$ for all $k > 0$;
\item \emph{two-sided} if it is not one-sided.
\end{itemize}
\end{definition}


\subsection{Symbols of \texorpdfstring{$\amcm$}{AM/CM} sequences}
\label{sec:symbol}

For a summable $\amcm$ sequence $(a_k)$, we define
\formula{
 \mass & = \sum_{k \in \Z \setminus \{0\}} a_k = \hat a(1) - a_0 .
}
Clearly, $\lv \hat a(z) - a_0 \rv \le \mass$, and so $\lv F(z) \rv \le \lv \hat a(1) - a_0 \rv + \lv \hat a(z) - a_0 \rv \le 2 \mass$ for $z \in \torus$. We have
\formula{
 F(z) & = \hat a(1) - \hat a(z) = \sum_{k = 1}^\infty a_{-k} (1 - z^{-k}) + \sum_{k = 1}^\infty a_k (1 - z^k) .
}
Using~\eqref{eq:amcm:p} and~\eqref{eq:amcm:m}, we obtain
\formula{
 F(z) & = \sum_{k = 1}^\infty \int_{[0, 1)} s^{k - 1} (1 - z^{-k}) \mu_-(ds) + \sum_{k = 1}^\infty \int_{[0, 1)} s^{k - 1} (1 - z^k) \mu_+(ds) .
}
By Fubini's theorem,
\formula{
 F(z) & = \int_{[0, 1)} \sum_{k = 1}^\infty s^{k - 1} (1 - z^{-k}) \mu_-(ds) + \int_{[0, 1)} \sum_{k = 1}^\infty s^{k - 1} (1 - z^k) \mu_+(ds) .
}
Adding up the geometric series, we arrive at
\formula[eq:symbol]{
 F(z) & = \int_{[0, 1)} \biggl(\frac{1}{1 - s} - \frac{z^{-1}}{1 - s z^{-1}}\biggr) \mu_-(ds) + \int_{[0, 1)} \biggl(\frac{1}{1 - s} - \frac{z}{1 - s z}\biggr) \mu_+(ds) .
}
Note that the right-hand side of~\eqref{eq:symbol} defines a holomorphic function of $z \in \C$ such that $z \notin \supp \mu_-$ and $z^{-1} \notin \supp \mu_+$. From now on, $F$ denotes this extension. Clearly, $F(\overline z) = \overline{F(z)}$ for every admissible $z$.

By formula~\eqref{eq:symbol}, $F$ is closely related to the Stieltjes transforms of $\mu_+$ and $\mu_-$. More precisely, when $\im z \ne 0$, we have
\formula[eq:poisson]{
 \im F(z) & = \int_{[0, 1)} \frac{\im z}{\lv z - s \rv^2} \, \mu_-(ds) - \int_{[0, 1)} \frac{\im z}{\lv s z - 1 \rv^2} \, \mu_+(ds) .
}
It follows that, in the sense of vague convergence of measures on $(0, 1)$,
\formula[]{
\label{eq:measure:p}
 \mu_+(ds) & = \lim_{t \to 0^+} \frac{\im F((s + i t)^{-1})}{\pi} \, \ind_{(0, 1)}(s) ds , \\
\label{eq:measure:m}
 \mu_-(ds) & = \lim_{t \to 0^+} \frac{\im F(s + i t)}{\pi} \, \ind_{(0, 1)}(s) ds ,
}
and additionally
\formula[]{
\label{eq:measure:atom:p}
 \mu_+(\{0\}) & = \lim_{t \to 0^+} \frac{t \im F(-i t^{-1})}{\pi} \, , \\
\label{eq:measure:atom:m}
 \mu_-(\{0\}) & = \lim_{t \to 0^+} \frac{t \im F(i t)}{\pi} \, .
}

Observe that all expressions given above are \emph{symmetric}, in the following sense. If we consider the reversed sequence $(\check a_k) = (a_{-k})$, then $(\check a_k)$ is $\amcm$, the corresponding function $F$ becomes $\check F(x) = F(x^{-1})$, and the roles of the measures $\mu_+$ and $\mu_-$ are exchanged: $\check \mu_+ = \mu_-$ and $\check \mu_- = \mu_+$.


\subsection{Boundary limits}

We collect two simple properties of symbols of summable $\amcm$ sequences.

\begin{lemma}
\label{lem:one}
If $F$ is the symbol of a summable $\amcm$ sequence, then
\formula{
 \lim_{\xi \to 0^\pm} F(1 + i \xi) & = 0 ,
}
and the limit can be taken nontangentially: $\lv \xi \rv \to 0$ with $\lv\arg \xi \rv \le \tfrac{\pi}{2} - \eps$ or $\lv \arg(-\xi) \rv \le \tfrac{\pi}{2} - \eps$, where $\eps > 0$ is arbitrary.
\end{lemma}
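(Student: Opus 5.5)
The plan is to work directly with the integral representation~\eqref{eq:symbol} and pass to the limit under the integral sign using dominated convergence. First I rewrite the two integrands in a form that makes the factor $z - 1$ visible. Since $z^{-1}/(1 - s z^{-1}) = 1/(z - s)$, we get
\formula{
 \frac{1}{1 - s} - \frac{z^{-1}}{1 - s z^{-1}} & = \frac{z - 1}{(1 - s)(z - s)} , &
 \frac{1}{1 - s} - \frac{z}{1 - s z} & = \frac{1 - z}{(1 - s)(1 - s z)} .
}
Hence
\formula{
 F(z) = \int_{[0, 1)} \frac{z - 1}{z - s} \, \frac{\mu_-(ds)}{1 - s} + \int_{[0, 1)} \frac{1 - z}{1 - s z} \, \frac{\mu_+(ds)}{1 - s} .
}
By summability, both $\mu_\pm(ds)/(1 - s)$ are finite measures on $[0, 1)$. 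For each fixed $s \in [0, 1)$, both integrands tend to $0$ as $z = 1 + i\xi \to 1$. So it suffices to bound the two ratios uniformly in $s$ for $z$ in the nontangential region.

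For the $\mu_-$ term, write $z - s = (z - 1) - (-(1 - s))$ and apply~\eqref{eq:triangle} with $a = z - 1 = i\xi$ and $b = -(1 - s)$. The nontangential condition says that $\arg(i\xi)$ stays in $[\eps, \pi - \eps]$ or in $[-\pi + \eps, -\eps]$. Then the angle $\lv \arg(a b^{-1}) \rv$ between $a$ and the negative real direction is at least $\eps$. Thus
\formula{
 \lv z - s \rv \ge \bigl(\lv z - 1 \rv + (1 - s)\bigr) \sin \tfrac{\eps}{2} ,
}
so $\lv z - 1 \rv / \lv z - s \rv \le 1 / \sin \frac{\eps}{2}$ uniformly in $s \in [0, 1)$.

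For the $\mu_+$ term, I split the range of $s$ into two parts.
\begin{itemize}
\item For $s \le \tfrac12$ and $\lv \xi \rv \le \tfrac12$, we have $\lv 1 - s z \rv \ge 1 - s \lv z \rv \ge \tfrac14$, so the ratio is at most $4 \lv z - 1 \rv \le 2$.
\item For $s \in (\tfrac12, 1)$, write $1 - s z = (1 - s) - s(z - 1)$ and apply~\eqref{eq:triangle} to $a = 1 - s > 0$ and $b = s(z - 1)$. Their angle is again at least $\eps$, which gives $\lv 1 - s z \rv \ge s \lv z - 1 \rv \sin\frac{\eps}{2}$. Hence the ratio is at most $2 / \sin \frac{\eps}{2}$.
\end{itemize}

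Dominated convergence with these constant bounds against the finite measures $\mu_\pm(ds)/(1 - s)$ gives $F(1 + i\xi) \to 0$ in the nontangential sense, and in particular along $\xi \to 0^\pm$. The only step needing care is the uniform bound near $s \to 1$, where $1/(1 - s)$ blows up. This is exactly where nontangentiality is used: it keeps $z$ away from the real segment $[0, 1]$ in angle. Without it, the bound would fail when $z$ approaches $1$ tangentially along the real axis.
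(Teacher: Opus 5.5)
Your argument is correct and is essentially the paper's proof. Both apply dominated convergence in \eqref{eq:symbol}, with a dominating function of the form $C/(1-s)$ (integrable against $\mu_\pm$ by summability) obtained from \eqref{eq:triangle} in the nontangential region. The only difference is cosmetic: you first combine the integrands into $\frac{z-1}{(1-s)(z-s)}$ and $\frac{1-z}{(1-s)(1-sz)}$ (the form the paper uses later in Lemma~\ref{lem:symbol:rogers}) and bound the combined ratios, whereas the paper bounds the subtracted terms $\frac{x^{-1}}{1-sx^{-1}}$ and $\frac{x}{1-sx}$ separately by $\frac{C}{1-s}$.
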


\begin{proof}
The result follows from~\eqref{eq:symbol} and the dominated convergence theorem; we only need to verify that the application of the latter is legitimate. Recall that $(1 - s)^{-1}$ is integrable over $(0, 1)$ with respect to $\mu_+(ds)$ and $\mu_-(ds)$. Furthermore, if $s \in (0, 1)$, $\lv x^{-1} \rv \le 2$ and $\lv \arg(1 - x^{-1}) \rv \le \pi - \eps$, by~\eqref{eq:triangle} we have
\formula{
 \biggl|\frac{x^{-1}}{1 - s x^{-1}}\biggr| & = \frac{s^{-1} \lv x \rv^{-1}}{\lv s^{-1} - x^{-1} \rv} \le \frac{s^{-1} \lv x \rv^{-1}}{(\lv s^{-1} - 1 \rv + \lv x^{-1} - 1 \rv) \sin \tfrac{\eps}{2}} \le \frac{\lv x\rv^{-1}}{(1 - s) \sin \tfrac{\eps}{2}} \le \frac{2}{(1 - s) \sin \tfrac{\eps}{2}} \, .
}
Similarly, if $s \in (0, 1)$, $\lv x \rv \le 2$ and $\lv \arg(1 - x) \rv \le \pi - \eps$, we have
\formula{
 \biggl|\frac{x}{1 - s x}\biggr| & = \frac{s^{-1} \lv x \rv}{\lv x - s^{-1} \rv} \le \frac{s^{-1} \lv x \rv}{(\lv x - 1 \rv + \lv s^{-1} - 1 \rv) \sin \tfrac{\eps}{2}} \le \frac{\lv x \rv}{(1 - s) \sin \tfrac{\eps}{2}} \le \frac{2}{(1 - s) \sin \tfrac{\eps}{2}} \, .
}
By the dominated convergence theorem, $F(x)$ tends to $0$ as $\lv x - 1 \rv \to 0$ with $\lv \arg(1 - x) \rv \le \pi - \eps$ and $\lv \arg(1 - x^{-1}) \rv \le \pi - \eps$. To complete the proof, it remains to substitute $x = 1 + i \xi$.
\end{proof}

\begin{lemma}
\label{lem:range}
If $F$ is the symbol of a summable $\amcm$ sequence, then
\formula{
 \lim_{x \to 0^-} F(x) & = \int_{[0, 1)} \frac{1}{s (1 - s)} \, \mu_-(ds) + \int_{[0, 1)} \frac{1}{1 - s} \, \mu_+(ds) , \\
 \lim_{x \to -\infty} F(x) & = \int_{[0, 1)} \frac{1}{1 - s} \, \mu_-(ds) + \int_{[0, 1)} \frac{1}{s (1 - s)} \, \mu_+(ds) ,
}
where we agree that $1 / 0 = \infty$. Furthermore, the limits can be taken nontangentially: $\lv x \rv \to 0$ or $\lv x \rv \to \infty$, with $\lv \arg(-x) \rv \le \pi - \eps$, where $\eps > 0$ is arbitrary.
\end{lemma}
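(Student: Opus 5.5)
We work directly from the integral representation~\eqref{eq:symbol} and pass to the limit under the integral sign, as in the proof of Lemma~\ref{lem:one}. The integrand has two parts. For $x$ with $\lv \arg(-x) \rv \le \pi - \eps$, we note that $-x$ lies in a sector around the positive half-line. Hence both $x$ and $x^{-1}$ stay at angular distance at least $\eps$ from $(0, \infty)$, and in particular from $[0, 1)$ and from $(1, \infty)$.

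\textbf{The limit $x \to 0$.} In the $\mu_+$ term, the integrand $\frac{1}{1 - s} - \frac{x}{1 - s x}$ tends to $\frac{1}{1 - s}$ for every $s \in [0, 1)$. In the $\mu_-$ term we write
\formula{
 \frac{1}{1 - s} - \frac{x^{-1}}{1 - s x^{-1}} & = \frac{1}{1 - s} - \frac{1}{x - s} ,
}
and this tends to $\frac{1}{1 - s} + \frac{1}{s}$, which equals $\frac{1}{s (1 - s)}$ for $s \in (0, 1)$. At $s = 0$ it tends to $+\infty$, consistent with the convention $1/0 = \infty$ and the term $\mu_-(\{0\}) / 0$.

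\textbf{Domination and monotonicity.} For the $\mu_+$ term and for $\lv x \rv \le 1/2$, we have $\lv 1 - s x \rv \ge 1/2$, so dominated convergence with the bound $\frac{1}{1 - s} + 2$ applies; this bound is integrable because $\mu_+$ is finite and $\frac{1}{1 - s}$ is $\mu_+$-integrable. For the $\mu_-$ term we use the real part; the imaginary part is treated separately below. Write $x = -r e^{i \phi}$ with $\lv \phi \rv \le \pi - \eps$. By~\eqref{eq:triangle}, $\lv x - s \rv \ge (r + s) \sin \tfrac{\eps}{2}$, so
\formula{
 \biggl|\frac{1}{x - s}\biggr| \le \frac{1}{(r + s) \sin \frac{\eps}{2}} .
}
This bound increases to $\frac{1}{s \sin(\eps/2)}$ as $r \to 0$.

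\textbf{The main obstacle.} If $\int \frac{1}{s} \, \mu_-(ds) < \infty$, dominated convergence finishes the argument. Otherwise we must show divergence to $\infty$, which in the complex setting means $\re F(x) \to \infty$ or at least $\lv F(x) \rv \to \infty$. I would first try showing
\formula{
 \re \frac{-1}{x - s} & = \frac{r \cos \phi + s}{\lv x - s \rv^2} .
}
This expression can be negative when $\lv \phi \rv > \pi/2$, so instead I would rotate: multiply by a suitable unimodular constant $e^{-i \phi/2}$-type factor so that $\re\bigl(e^{i \beta}(-1/(x - s))\bigr) \ge c_\eps / (r + s)$ for all $s$. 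Such a factor exists since $\arg(s - x)$ lies between $0$ and $\arg(-x)$, an interval of length at most $\pi - \eps$. Applying Fatou's lemma then gives $\lv F(x) \rv \ge c_\eps \int \frac{1}{r + s} \, \mu_-(ds) - C \to \infty$. Here $C$ absorbs the bounded contributions of the $\mu_+$ term and of the $\frac{1}{1 - s}$ term. This is exactly what $\lim F = \infty$ means on the Riemann sphere.

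\textbf{The limit $x \to \infty$.} This follows from the previous case by the symmetry noted at the end of Section~\ref{sec:symbol}. For the reversed sequence we have $\check F(x) = F(x^{-1})$, with $\mu_+$ and $\mu_-$ exchanged. Moreover, $x \mapsto x^{-1}$ maps the sector $\lv \arg(-x) \rv \le \pi - \eps$ onto itself and sends neighbourhoods of $0$ to neighbourhoods of $\infty$.
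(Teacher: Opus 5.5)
Your proof is correct and follows essentially the paper's route: pass to the limit under the integral in \eqref{eq:symbol}, using dominated convergence when $\int s^{-1}\,\mu_-(ds) < \infty$ and a lower bound otherwise. The only structural difference is that you obtain the case $x \to \infty$ from the reversal symmetry rather than by repeating the argument.

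Your rotation by $e^{i\phi/2}$, which gives $\re\bigl(e^{i\phi/2}(s - x)^{-1}\bigr) \ge \sin(\eps/2)/(r + s)$ uniformly in $s$, is a useful addition. It makes rigorous the paper's brief appeal to Fatou's lemma in the divergent case. Fatou's lemma does not apply directly to the complex integrand there, since $\re (s - x)^{-1}$ becomes negative for small $s$ when $\lv \arg(-x) \rv > \pi/2$.
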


\begin{proof}
As $x \to 0^-$, the function $-x^{-1} / (1 - s x^{-1})$ increases to $s^{-1}$, while the function $-x / (1 - s x)$ decreases to $0$. Hence, by~\eqref{eq:symbol}, the monotone convergence theorem, and the dominated convergence theorem, we have
\formula{
 \lim_{x \to 0^-} F(x) & = \int_{[0, 1)} \biggl(\frac{1}{1 - s} + \frac{1}{s}\biggr) \mu_-(ds) + \int_{[0, 1)} \frac{1}{1 - s} \, \mu_+(ds) \\
 & = \int_{[0, 1)} \frac{1}{s (1 - s)} \, \mu_-(ds) + \int_{[0, 1)} \frac{1}{1 - s} \, \mu_+(ds) .
}
Similarly,
\formula{
 \lim_{x \to -\infty} F(x) & = \int_{[0, 1)} \frac{1}{1 - s} \, \mu_-(ds) + \int_{[0, 1)} \biggl(\frac{1}{1 - s} + \frac{1}{s}\biggr) \mu_+(ds) \\
 & = \int_{[0, 1)} \frac{1}{1 - s} \, \mu_-(ds) + \int_{[0, 1)} \frac{1}{s (1 - s)} \, \mu_+(ds) .
}
The argument is slightly more involved for the nontangential convergence: if $\lv \arg(-x) \rv \le \pi - \eps$, $\lv x \rv \le 1$ and $s \in (0, 1)$, then
\formula{
 \lv 1 - s x^{-1} \rv & \ge s \lv \im x^{-1} \rv \ge s \lv x^{-1} \rv \sin \eps , & \lv 1 - s x \rv & \ge \sin \eps ,
}
and hence
\formula*{
 \biggl|\frac{1}{1 - s} - \frac{x^{-1}}{1 - s x^{-1}}\biggr| \le \frac{1}{1 - s} + \frac{1}{s \sin \eps} \le \frac{(\sin \eps)^{-1}}{s (1 - s)} \, , \\
 \biggl|\frac{1}{1 - s} - \frac{x}{1 - s x}\biggr| \le \frac{1}{1 - s} + \frac{\lv x \rv}{\sin \eps} \le \frac{1 + (\sin \eps)^{-1}}{1 - s} \, .
}
Using the dominated convergence theorem for the latter integral in~\eqref{eq:symbol}, and either the dominated convergence theorem or Fatou's lemma for the former one (depending on whether the limiting integral is finite or not), we find that the limit as $x \to 0^-$ can be taken nontangentially. A very similar argument applies to the limit as $x \to -\infty$.
\end{proof}

%
%

\section{Symbols as Rogers functions}
\label{sec:rog}

In this lengthy and technical section, we study more refined properties of symbols of $\amcm$ sequences. Our main tool is the theory of Rogers functions, developed in~\cite{k19,k25} in the context of Wiener--Hopf operators related to Lévy processes with completely monotone jumps. We prove that the symbol of an $\amcm$ sequence is, up to a simple change of variables, a Rogers function, and this allows us to reuse various results from~\cite{k19,k25}. In fact, we identify the symbol with two Rogers functions, which leads to even stronger results.

While the difficult part of the proofs was mostly carried out in~\cite{k19,k25}, some technical difficulties remain.


\subsection{Rogers functions}

A holomorphic function $f$ on the right complex half-plane $\hp = \{\xi \in \C : \re \xi > 0\}$ is said to be a \emph{Rogers function} if $\re(\xi^{-1} f(\xi)) \ge 0$ for every $\xi \in \hp$ (Definition~3.2 in~\cite{k19}).

Rogers functions are closely related to Nevanlinna--Pick functions, and they have the following Stieltjes representation:
\formula[eq:rogers:int]{
 f(\xi) & = c_2 \xi^2 - i c_1 \xi + c_0 + \frac{1}{\pi} \int_{\R \setminus \{0\}} \biggl(\frac{\xi}{\xi + i r} + \frac{i \xi \sign r}{1 + \lv r \rv}\biggr) \frac{\mu(dr)}{\lv r \rv}
}
where $c_2 \ge 0$, $c_1 \in \R$, $c_0 \ge 0$, and $\mu$ is a nonnegative measure on $\R \setminus \{0\}$ such that $\lv r \rv^{-3} \min\{1, r^2\}$ is integrable with respect to $\mu$; see Theorem~3.3 in~\cite{k19}. Furthermore, we have
\formula[eq:rogers:constants]{
 c_2 & = \lim_{\xi \to \infty} \frac{f(\xi)}{\xi^2} \, , &
 c_1 & = \lim_{\xi \to \infty} \frac{-\im f(\xi)}{\xi} \, , &
 c_0 & = \lim_{\xi \to 0^+} f(\xi) , 
}
and in the sense of vague convergence of measures on $\R \setminus \{0\}$,
\formula[eq:rogers:mu]{
 \lv r \rv^{-1} \mu(dr) = \lim_{t \to 0^+} \re \, \frac{f(t - i r)}{t - i r} \, dr ;
}
see Remark~3.4(f) in~\cite{k19}.

A real function $\levy$ is said to be $\amdcm$ if $\levy(r)$ and $\levy(-r)$ are completely monotone functions of $r > 0$. By Theorem~3.3 in~\cite{k19}, Rogers functions admit the following integral representation:
\formula{
 f(\xi) & = c_2 \xi^2 - i c_1 \xi + c_0 + \frac{1}{\pi} \int_{-\infty}^\infty \bigl(1 - e^{i \xi r} + i \xi (1 - e^{-\lv r \rv}) \sign r\bigr) \levy(r) dr ,
}
where $c_0, c_1, c_2$ are the same constants as in~\eqref{eq:rogers:int}, and $\levy$ is an $\amdcm$ function such that $\levy(r)$ tends to zero as $r \to \pm \infty$ and the integral $\int_{-\infty}^\infty \min\{1, r^2\} \levy(r) dr$ is finite.

Recall that the symbol $F$ of an $\amcm$ sequence $(a_k)$ is equal to the generating function of $(-a_k)$ up to addition by a constant. Similarly, at least for integrable $\amdcm$ functions $\levy$, the corresponding Rogers function $f$ is the Fourier transform of $-\levy$, up to addition by a polynomial term. This suggests that there is some analogy between $(a_k)$ and $F$ on the one hand, and $\levy$ and $f$ on the other hand.

It turns out, however, that the link between symbols of $\amcm$ sequences and Rogers functions is much deeper than the above superficial remark suggests. We begin this section by proving that we can associate two Rogers functions $f$ and $g$ to a symbol $F$ of an $\amcm$ sequence. Next, we recall various results about Rogers functions from~\cite{k19,k25}, and we study their analogues for symbols of $\amcm$ sequences.


\subsection{The symbol as a Rogers function}

We have the following fundamental observation.

\begin{lemma}
\label{lem:symbol:rogers}
If $F$ is the symbol of a summable $\amcm$ sequence, then
\formula{
 f(\xi) & = F(1 + i \xi) \qquad \text{and} \qquad g(\xi) = F((1 + i \xi)^{-1})
}
are Rogers functions. Furthermore, $f(0^+) = g(0^+) = 0$, while $f(\infty^-) = F(-\infty^+)$ and $g(\infty^-) = F(0^-)$ are given by Lemma~\ref{lem:range}.
\end{lemma}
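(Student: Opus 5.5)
The plan is a direct computation from the integral representation \eqref{eq:symbol}: divide each integrand by $\xi$ and check that the real part is nonnegative. The endpoint values then come from Lemmas~\ref{lem:one} and~\ref{lem:range}.

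\emph{Holomorphy.} Write $\xi = a + i b$ with $a > 0$. Then $z = 1 + i\xi = (1 - b) + i a$ lies in the upper half-plane. Since $\supp \mu_\pm \subseteq [0,1)$, the conditions $z \notin \supp \mu_-$ and $z^{-1} \notin \supp\mu_+$ hold, so $f$ is holomorphic on $\hp$. Similarly, $(1+i\xi)^{-1}$ lies in the lower half-plane, so $g$ is holomorphic on $\hp$.

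\emph{The Rogers condition for $f$.} Substitute $z = 1 + i\xi$ into the two integrands of \eqref{eq:symbol}. For the $\mu_-$ integrand we have $z - s = (1-s) + i\xi$, which gives
\formula{
 \frac{1}{1-s} - \frac{1}{z-s} = \frac{i\xi}{(1-s)(1-s+i\xi)} , \qquad \text{so} \qquad \xi^{-1}(\cdots) = \frac{1}{(1-s)(\xi - i(1-s))} .
}
For the $\mu_+$ integrand we have
\formula{
 \frac{1}{1-s} - \frac{z}{1-sz} = \frac{1-z}{(1-s)(1-sz)} , \qquad \text{so} \qquad \xi^{-1}(\cdots) = \frac{1}{(1-s)(s\xi + i(1-s))} .
}
When $\re\xi > 0$, the quantities $\xi - i(1-s)$ and $s\xi + i(1-s)$ both have nonnegative real part. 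Hence both reciprocals have nonnegative real part; the case $s = 0$ in the second expression gives a purely imaginary value. The moduli are bounded by $1/((1-s)\re\xi)$ and $1/(1-s)^2$ respectively, hence by $C/(1-s)$ for fixed $\xi$ (using $s\re\xi + (1-s) \ge \min\{1, \re\xi\}$). The function $(1-s)^{-1}$ is integrable with respect to $\mu_\pm$ by summability. Therefore we may divide \eqref{eq:symbol} by $\xi$ inside the integrals and take real parts, which gives $\re(\xi^{-1} f(\xi)) \ge 0$.

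\emph{The Rogers condition for $g$.} Note that $g(\xi) = \check F(1 + i\xi)$, where $\check F(x) = F(x^{-1})$ is the symbol of the reversed $\amcm$ sequence $(a_{-k})$, which is again summable. By the symmetry remark at the end of Section~\ref{sec:symbol}, the previous step applies verbatim to $\check F$.

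\emph{Limits.} As $\xi \to 0^+$ along the positive reals, $1 + i\xi \to 1$ vertically, which is nontangential. Lemma~\ref{lem:one} then gives $f(0^+) = 0$, and the same argument applied to $\check F$ gives $g(0^+) = 0$. As $\xi \to \infty$ along the positive reals, $x = 1 + i\xi \to \infty$ with $\arg(-x) \to -\tfrac{\pi}{2}$. So $x$ stays in a sector $\lv\arg(-x)\rv \le \pi - \eps$, and the nontangential part of Lemma~\ref{lem:range} yields $f(\infty^-) = F(-\infty^+)$. Likewise $x = (1+i\xi)^{-1} \to 0$ with $\arg(-x) \to \tfrac{\pi}{2}$, so Lemma~\ref{lem:range} yields $g(\infty^-) = F(0^-)$.

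The only delicate point is justifying the interchange of division by $\xi$, and taking real parts, with the integral. This is handled by the domination bound $C/(1-s)$ above, which makes the argument routine.
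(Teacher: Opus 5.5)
Your proof is correct and essentially the paper's own argument: you rewrite the two integrands of \eqref{eq:symbol} divided by $\xi$, check the signs of their real parts, pass to the dual symbol $\check F$ for $g$, and use Lemmas~\ref{lem:one} and~\ref{lem:range} for the boundary values. One slip is that the bound $\lv s\xi + i(1-s)\rv \ge 1-s$ behind your $1/(1-s)^2$ fails when $\im\xi<0$, and $1/(1-s)^2$ would not give $C/(1-s)$ anyway; the step is unnecessary, though, since dividing by the constant $\xi$ and taking real parts commute with the already absolutely convergent integrals, and if you want explicit domination, $s \mapsto \lv s\xi + i(1-s)\rv$ is continuous and nonvanishing on $[0,1]$, hence bounded below.
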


\begin{proof}
We recall formula~\eqref{eq:symbol}: for $z \in \C \setminus [0, \infty)$ we have
\formula{
 F(z) & = \int_{[0, 1)} \biggl(\frac{1}{1 - s} - \frac{z^{-1}}{1 - s z^{-1}}\biggr) \mu_-(ds) + \int_{[0, 1)} \biggl(\frac{1}{1 - s} - \frac{z}{1 - s z}\biggr) \mu_+(ds) .
}
This can be rewritten as
\formula{
 F(z) & = \int_{[0, 1)} \frac{z - 1}{(1 - s)(z - s)} \, \mu_-(ds) + \int_{[0, 1)} \frac{1 - z}{(1 - s)(1 - s z)} \, \mu_+(ds) ,
}
and therefore
\formula{
 \frac{F(1 + i \xi)}{\xi} & = \int_{[0, 1)} \frac{i}{(1 - s)(1 + i \xi - s)} \, \mu_-(ds) + \int_{[0, 1)} \frac{-i}{(1 - s)(1 - s - i s \xi)} \, \mu_+(ds) .
}
When $\re \xi > 0$ and $s \in [0, 1)$, we have
\formula{
 \re \frac{i}{(1 - s)(1 + i \xi - s)} & = \re \frac{i (1 - i \overline{\xi} - s)}{(1 - s) \lv 1 + i \xi - s \rv^2} \\
 & = \frac{\re \xi}{(1 - s) \lv 1 + i \xi - s \rv^2} > 0
}
and
\formula{
 \re \frac{-i}{(1 - s)(1 - s - i s \xi)} & = \re \frac{-i (1 - s + i s \overline{\xi})}{(1 - s) \lv 1 - s - i s \xi \rv^2} \\
 & = \frac{s \re \xi}{(1 - s) \lv 1 - s - i s \xi \rv^2} > 0 .
}
It follows that $\re (\xi^{-1} F(1 + i \xi)) \ge 0$, which proves that $f$ is a Rogers function. By Lemma~\ref{lem:one},
\formula{
 f(0^+) & = \lim_{\xi \to 0^+} F(1 + i \xi) = 0 ,
}
while by Lemma~\ref{lem:range},
\formula{
 f(\infty^-) & = \lim_{\xi \to \infty} f(\xi) = \lim_{\xi \to \infty} F(1 + i \xi) = \lim_{x \to -\infty} F(x) = F(-\infty^+) .
}
The same argument applied to the dual symbol $\check F(z) = F(z^{-1})$ implies that also $\check f(\xi) = \check F(1 + i \xi) = F((1 + i \xi)^{-1}) = g(\xi)$ has the desired properties.
\end{proof}


\subsection{Exponential representation of Rogers functions}

The following exponential representation formula for Rogers functions played a key role in~\cite{k19,k25}.

\begin{proposition}[Theorem~3.3 and Remark 3.4(g) in~\cite{k19}]
\label{prop:rogers:exp}
Let $f$ be a nonzero Rogers function. Then there is a constant $b > 0$ and a Borel function $\ph : \R \to [0, \pi]$ such that
\formula[eq:rogers:exp]{
 f(\xi) & = b \exp \biggl(\frac{1}{\pi} \int_{-\infty}^\infty \biggl(\frac{\xi}{\xi + i r} - \frac{1}{1 + \lv r \rv}\biggr) \frac{\ph(r)}{\lv r \rv} \, dr\biggr)
}
for $\xi \in \hp$. Conversely, the above expression defines a Rogers function for every $c$ and $\ph$ satisfying the conditions listed above. Finally, for almost every $r \in \R$ we have
\formula{
 -\ph(r) \sign r & = \lim_{t \to 0^+} \arg f(-i r + t) ,
}
and the limit can be taken nontangentially: $\lv t \rv \to 0$ with $\lv \arg t \rv \le \tfrac{\pi}{2} - \eps$, where $\eps > 0$ is arbitrary.
\end{proposition}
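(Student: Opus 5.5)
Throughout, set $\ph(r) = -\sign r \, \lim_{t \to 0^+} \arg f(-i r + t)$; this is the function in~\eqref{eq:rogers:exp}. The plan is to show that $\log f$ is a well-defined holomorphic function on $\hp$ whose imaginary part is bounded. Its imaginary part is then a bounded harmonic function, hence the Poisson integral of its boundary values. The real part is recovered, up to an additive constant, by the conjugate Poisson kernel, and this gives exactly~\eqref{eq:rogers:exp}.

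\emph{Step~1: $f$ has no zeros in $\hp$.} The function $u(\xi) = \re(\xi^{-1} f(\xi))$ is harmonic and nonnegative on $\hp$. If $f(\xi_0) = 0$ for some $\xi_0 \in \hp$, then $u(\xi_0) = 0$. By the minimum principle $u \equiv 0$, so $\xi^{-1} f(\xi) = i c$ for a real constant $c$. Since $f$ is nonzero, $c \ne 0$, and then $f(\xi) = i c \xi$ does not vanish on $\hp$. This contradicts $f(\xi_0) = 0$, so $f$ has no zeros in $\hp$.

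\emph{Step~2: $\arg f$ is bounded.} Hence we may define a holomorphic branch of $\log f$, normalised by $\arg f(\xi) = \arg \xi + \arg(\xi^{-1} f(\xi))$. The condition $\re(\xi^{-1} f(\xi)) \ge 0$ gives $\lv \arg(\xi^{-1} f(\xi)) \rv \le \tfrac{\pi}{2}$. Together with $\lv \arg \xi \rv < \tfrac{\pi}{2}$, this shows that $\im \log f = \arg f$ is bounded by $\pi$, and it is a continuous branch.

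\emph{Step~3: Poisson representation and boundary values.} By Fatou's theorem, the bounded harmonic function $\arg f$ has nontangential boundary limits almost everywhere on $i \R$ and is the Poisson integral of them. This already yields the nontangential-limit statement in the proposition.

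To get the range $[0, \pi]$ for $\ph$, take $r > 0$ and let $\xi = -i r + t \to -i r$. Then $\arg \xi \to -\tfrac{\pi}{2}$, so the limit of $\arg f(\xi)$ lies in $[-\pi, 0]$, i.e.\ $\ph(r) \in [0, \pi]$. The case $r < 0$ is symmetric and gives limits in $[0, \pi]$, again $\ph(r) \in [0,\pi]$.

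Writing the Poisson kernel of $\hp$ in the variable $-i r$, a holomorphic function whose imaginary part is the Poisson integral of $-\ph(r) \sign r$ is
\[ \frac{1}{\pi} \int_{-\infty}^\infty \biggl(\frac{\xi}{\xi + i r} - \frac{1}{1 + \lv r \rv}\biggr) \frac{\ph(r)}{\lv r \rv} \, dr . \]
This is a direct computation of $\im$ of the kernel, up to a real constant. The subtracted term $1/(1 + \lv r \rv)$ secures convergence near $r = 0$ and at infinity, since $\ph$ is bounded. The kernel decays like $\lv r \rv^{-2}$ at infinity, and near $0$ the bracket is $O(\lv r \rv)$.

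Two holomorphic functions on $\hp$ with the same imaginary part differ by a real constant. So $\log f$ equals this integral plus $\log b$, which is~\eqref{eq:rogers:exp}.

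\emph{Converse.} Given $b > 0$ and a Borel $\ph : \R \to [0, \pi]$, the exponent is holomorphic by dominated convergence. Its imaginary part is the Poisson integral of $-\ph(r) \sign r$. Similarly, $\arg \xi$ is the Poisson integral of $-\tfrac{\pi}{2} \sign r$. Hence $\arg(\xi^{-1} f(\xi))$ is the Poisson integral of $(\tfrac{\pi}{2} - \ph(r)) \sign r$, which takes values in $[-\tfrac{\pi}{2}, \tfrac{\pi}{2}]$. Therefore $\re(\xi^{-1} f(\xi)) \ge 0$, and $f$ is a Rogers function.

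The main obstacle is the bookkeeping in Step~3. One must verify that the specific kernel in~\eqref{eq:rogers:exp} has imaginary part equal to the Poisson kernel of $\hp$ at the point $-i r$, with the correct sign and the factor $\lv r \rv^{-1}$. One must also check that the compensating term produces only a real constant, so that it is absorbed into $b$.
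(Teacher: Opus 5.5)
Your proof is correct. The paper gives no proof and only quotes~\cite{k19}; your argument (zero-freeness via the minimum principle, $\lv \arg f \rv < \pi$ so that your continuous branch is the principal argument, the Fatou/Poisson representation of the bounded harmonic function $\arg f$, recovery of $\log f$ up to a real constant, plus the converse) is the standard Herglotz/exponential Nevanlinna--Pick route behind that cited result. The bookkeeping you flag also checks out: $\im \frac{\xi}{\xi + i r} = -\frac{r \re \xi}{\lv \xi + i r \rv^2}$, so the imaginary part of the exponent is exactly the Poisson integral of $-\ph(r) \sign r$ at the boundary point $-i r$, while the subtracted term $\frac{1}{1 + \lv r \rv}$ is real and only shifts the real part.
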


Formula~\eqref{eq:rogers:exp} allows one to extend a Rogers function $f$ to a holomorphic function on $D_f = \C \setminus (-i \esssupp \ph)$ (Remark~3.4(a) in~\cite{k19}).

By Proposition~3.14 in~\cite{k19}, if $f$ is as in~\eqref{eq:rogers:exp}, then
\formula[eq:rogers:zero]{
 f(0^+) & = b \exp \biggl(-\frac{1}{\pi} \int_{-1}^\infty \frac{1}{1 + \lv r \rv} \, \frac{\ph(r)}{\lv r \rv} \, dr\biggr) .
}


\subsection{Exponential representation of symbols of \texorpdfstring{$\amcm$}{AM/CM} sequences}

The following is an analogue of Proposition~\ref{prop:rogers:exp} for the symbol $F$.

\begin{proposition}
\label{prop:exp}
If $F$ is the symbol of a nontrivial summable $\amcm$ sequence and $\sigma \ge 0$, then there is a constant $c > 0$ and Borel functions $\ph_+, \ph_- : (0, 1) \to [0, \pi]$ such that
\formula[eq:exp]{
 \sigma + F(z) & = c \exp \biggl(\frac{1}{\pi} \int_0^1 \frac{1}{s - z^{-1}} \, \ph_+(s) ds + \frac{1}{\pi} \int_0^1 \frac{1}{s - z} \, \ph_-(s) ds\biggr)
}
for every $z \in \C \setminus [0, \infty)$. Furthermore, for almost every $s \in (0, 1)$ we have
\formula[]{
\label{eq:phi:p}
 \ph_+(s) & = -\lim_{t \to 0^+} \arg(\sigma + F(s^{-1} + i t)) , \\
\label{eq:phi:m}
 \ph_-(s) & = \lim_{t \to 0^+} \arg(\sigma + F(s + i t)) ,
}
and the limit can be taken nontangentially: $\lv t \rv \to 0$ with $\lv \arg t \rv \le \tfrac{\pi}{2} - \eps$, where $\eps > 0$ is arbitrary. Additionally,
\formula[eq:exp:sigma]{
 \sigma & = c \exp\biggl(-\frac{1}{\pi} \int_0^1 \frac{1}{1 - s} \, (\ph_-(s) + \ph_+(s)) ds\biggr) ,
}
with $e^{-\infty}$ understood to be $0$.
\end{proposition}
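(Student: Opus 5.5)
The plan is to transfer Proposition~\ref{prop:rogers:exp} to $F$ through the change of variables of Lemma~\ref{lem:symbol:rogers}. First observe that $\sigma + f(\xi) = \sigma + F(1 + i \xi)$ is again a Rogers function, because $\re(\xi^{-1} \sigma) = \sigma \re \xi / \lv \xi \rv^2 \ge 0$. It is nonzero, since the sequence is nontrivial. Proposition~\ref{prop:rogers:exp} then gives $b > 0$ and $\ph : \R \to [0, \pi]$ with
\[
 \sigma + F(1 + i \xi) = b \exp\biggl(\frac{1}{\pi} \int_{-\infty}^\infty \biggl(\frac{\xi}{\xi + i r} - \frac{1}{1 + \lv r \rv}\biggr) \frac{\ph(r)}{\lv r \rv} \, dr\biggr) .
\]
Under $z = 1 + i \xi$ the singular set $-i \esssupp \ph$ corresponds to $1 - r$, i.e.\ to points of the real line in the $z$-plane. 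Since $F$ is holomorphic off $[0, \infty)$, I expect $\ph$ to vanish a.e.\ for $r > 1$ (the image $(-\infty, 0)$ carries no singularity). To justify this, I would use the boundary formula $-\ph(r) \sign r = \lim \arg(\sigma + F(1 - r + 0 i))$. For $r > 1$ the point $1 - r$ lies in $(-\infty, 0)$, where $F$ is real and positive by Lemma~\ref{lem:range} and~\eqref{eq:symbol}, so this limit is $0$.

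The remaining $r \in (-\infty, 1)$ split into $r \in (0, 1)$, corresponding to $z = s \in (0, 1)$, and $r < 0$, corresponding to $z = s^{-1}$ with $s \in (0, 1)$. I would substitute $r = 1 - s$ on the first part and $r = 1 - s^{-1}$ on the second. Then I would simplify the kernels. Since $\xi = -i(z - 1)$, we get $\xi/(\xi + i r) = (z - 1)/(z - 1 + r)$, which equals $(z - 1)/(z - s)$ for $r = 1 - s$. A computation should show that
\[
 \frac{\xi}{\xi + i r} \, \frac{dr}{\lv r \rv}
\]
becomes $\frac{ds}{s - z}$ plus a term independent of $z$. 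Similarly, on $r < 0$ it becomes $\frac{ds}{s - z^{-1}}$ up to constants, using $(z - 1)/(z - s^{-1})$ and $dr = s^{-2} ds$. All $z$-independent terms, together with $1/(1 + \lv r \rv)$, are absorbed into the constant $c$. With $\ph_-(s) = \ph(1 - s)$ and $\ph_+(s) = \ph(1 - s^{-1})$, this yields~\eqref{eq:exp} for $z$ in the image of $\hp$, i.e.\ $\re z$ below $1$ (or the corresponding region). The identity then extends to all of $\C \setminus [0, \infty)$ by analytic continuation, since both sides are holomorphic there. The boundary formulas~\eqref{eq:phi:p} and~\eqref{eq:phi:m}, including nontangentiality, follow from the corresponding statement in Proposition~\ref{prop:rogers:exp}, since a Möbius map preserves nontangential approach. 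The signs come out as follows. For $r \in (0, 1)$ we have $-\ph(r) = \lim \arg(\sigma + F(s - 0i))$, and conjugating gives $\ph_-(s) = \lim \arg(\sigma + F(s + 0i))$. For $r < 0$ we get $\ph_+$ with a minus sign.

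The main obstacle is the bookkeeping of integrability in this splitting of the integral. The separated constant pieces (for instance $\int \ph_\pm(s)/(1 - s) \, ds$) may diverge individually near $s = 1$, even though the combined Rogers integrand is fine there. I would handle this by grouping terms before separating, and by noting that near $s = 1$ the total contribution matches that in~\eqref{eq:exp:sigma}. Finally, to get~\eqref{eq:exp:sigma}, let $z \to 1$ nontangentially, equivalently $\xi \to 0^+$. By Lemma~\ref{lem:one} the left side tends to $\sigma$. On the right, $1/(s - z) \to 1/(s - 1)$ and $1/(s - z^{-1}) \to 1/(s - 1)$, and monotone convergence along $z \to 1^-$ gives $c \exp(-\frac{1}{\pi} \int_0^1 (\ph_- + \ph_+)/(1 - s) \, ds)$. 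This is consistent with~\eqref{eq:rogers:zero}, and when $\sigma = 0$ it yields divergence of the integral, interpreted as $e^{-\infty} = 0$.
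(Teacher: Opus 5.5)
Your plan is the paper's plan: apply Proposition~\ref{prop:rogers:exp} to $\sigma + f$, show that $\ph$ vanishes on the part of the line corresponding to $(-\infty, 0)$, substitute on the two remaining pieces, and absorb the $z$-independent terms into $c$. However, you carry out the change of variables with the wrong orientation, and as written the key vanishing step is false. Under $z = 1 + i \xi$ the singular point $\xi = -i r$ is sent to $z = 1 + r$, not to $1 - r$. Accordingly $\xi + i r = -i(z - 1 - r)$, so $\xi / (\xi + i r) = (z - 1)/(z - 1 - r)$. The boundary formula of Proposition~\ref{prop:rogers:exp} therefore reads $-\ph(r) \sign r = \lim_{t \to 0^+} \arg(\sigma + F(1 + r + i t))$. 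Hence $\ph = 0$ a.e.\ on $(-\infty, -1)$, whereas on $(1, \infty)$ it need not vanish. For example, take the tridiagonal symbol $F(z) = p (1 - z) + q (1 - z^{-1})$. For large $x$ and small $t > 0$ one has $\re(\sigma + F(x + i t)) < 0$ and $\im F(x + i t) < 0$, so $\ph(r) = \pi$ for all large $r$. With the correct orientation, the interval $(0, 1)$ in the $z$-plane corresponds to $r \in (-1, 0)$ (substitute $r = s - 1$), and $(1, \infty)$ corresponds to $r > 0$ (substitute $r = s^{-1} - 1$). Setting $\ph_-(s) = \ph(s - 1)$ and $\ph_+(s) = \ph(s^{-1} - 1)$, the signs in \eqref{eq:phi:p} and \eqref{eq:phi:m} follow directly from $\sign r$. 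The conjugation you insert for $r \in (0, 1)$ is compensating for the sign slip; it does not follow from the formula you quote.

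With the orientation fixed, the integrability problem you anticipate does not arise in \eqref{eq:exp}. On the $\ph_-$ piece the $z$-independent remainder is $\frac{1}{1 - s} - \frac{1}{(2 - s)(1 - s)} = \frac{1}{2 - s}$. On the $\ph_+$ piece it is exactly zero, since $s^{-2} \bigl(s - (s^{-1} - z)^{-1}\bigr) = (s - z^{-1})^{-1}$. So $c = b \exp\bigl(\frac{1}{\pi} \int_0^1 \frac{\ph_-(s)}{2 - s} \, ds\bigr)$ is finite, and divergence of $\int_0^1 (\ph_+ + \ph_-)/(1 - s) \, ds$ only matters in \eqref{eq:exp:sigma}. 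Note also that $\xi \in \hp$ corresponds to $\im z > 0$, not to $\re z < 1$.

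For \eqref{eq:exp:sigma} you cannot let $z \to 1^-$ along the real axis, because $(0, 1)$ lies on the cut where the $\ph_-$ integral is singular. Take instead $z = 1 + i \xi$ with $\xi \to 0^+$, as your ``$\xi \to 0^+$'' suggests. Then $\re \frac{1}{s - z} = -\frac{1 - s}{(1 - s)^2 + \xi^2}$ and $\re \frac{1}{s - z^{-1}} = -\frac{1 - s}{(1 - s)^2 + s^2 \xi^2} + \frac{s \xi^2}{(1 - s)^2 + s^2 \xi^2}$. The first two terms increase in modulus to $\frac{1}{1 - s}$, so monotone convergence applies, and the last term integrates to $O(\xi)$. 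Together with Lemma~\ref{lem:one} this gives \eqref{eq:exp:sigma}. The paper instead applies \eqref{eq:rogers:zero} to $\sigma + f$ and reuses the same substitutions.
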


\begin{proof}
By Lemma~\ref{lem:symbol:rogers}, $f(\xi) = F(1 + i \xi)$ is a (nonzero) Rogers function of $\xi$. Hence, by Proposition~\ref{prop:rogers:exp} applied to the Rogers function $\sigma + f$,
\formula{
 \sigma + F(1 + i \xi) & = b \exp \biggl(\frac{1}{\pi} \int_{-\infty}^\infty \biggl(\frac{\xi}{\xi + i r} - \frac{1}{1 + \lv r \rv}\biggr) \frac{\ph(r)}{\lv r \rv} \, dr\biggr) \\
 & = b \exp \biggl(\frac{1}{\pi} \int_{-\infty}^\infty \biggl(\frac{1}{1 + \lv r \rv} - \frac{i \sign r}{\xi + i r}\biggr) \ph(r) dr\biggr)
}
for $\xi \in \hp$, where $b > 0$ and $\ph : \R \to [0, \pi]$. Furthermore,
\formula{
 -\ph(r) \sign r & = \lim_{t \to 0^+} \arg(\sigma + F(1 + r + i t))
}
for almost every $r \in \R$. We substitute $1 + i \xi = z$, or $\xi = i - i z$, in the above expression for $\sigma + F(1 + i \xi)$:
\formula{
 \sigma + F(z) & = b \exp \biggl(\frac{1}{\pi} \int_{-\infty}^\infty \biggl(\frac{1}{1 + \lv r \rv} - \frac{\sign r}{1 - z + r}\biggr) \ph(r) dr\biggr) .
}
Here $\im z > 0$, but since $F(\overline z) = \overline{F(z)}$, the above equality in fact holds for $z \in \C \setminus \R$.

Observe that $F$ is holomorphic in a neighbourhood of $(-\infty, 0)$ and $F(z) > 0$ for $z < 0$ by~\eqref{eq:symbol}. Hence, if $r < -1$, then $F(1 + r) > 0$ and $\ph(r) = 0$. For $s \in (0, 1)$ let us denote
\formula[]{
\label{eq:ph:m:1}
 \ph_-(s) & = \ph(s - 1) = \lim_{t \to 0^+} \arg(\sigma + F(s + i t)) , \\
\label{eq:ph:p:1}
 \ph_+(s) & = \ph(s^{-1} - 1) = -\lim_{t \to 0^+} \arg(\sigma + F(s^{-1} + i t)) .
}
Then,
\formula{
 \sigma + F(z) = b \exp \biggl( & \frac{1}{\pi} \int_{-1}^0 \biggl(\frac{1}{1 - r} + \frac{1}{1 - z + r}\biggr) \ph_-(1 + r) dr \\
 & \qquad + \frac{1}{\pi} \int_0^\infty \biggl(\frac{1}{1 + r} - \frac{1}{1 - z + r}\biggr) \ph_+((1 + r)^{-1}) dr\biggr) .
}
We substitute
\formula[eq:sub:1]{
 1 + r & = s, & r & = s - 1
}
in the former integral, and
\formula[eq:sub:2]{
 \frac{1}{1 + r} & = s , & r & = \frac{1}{s} - 1
}
in the latter one (these substitutions are frequently used later):
\formula{
 \sigma + F(z) & = b \exp \biggl(\frac{1}{\pi} \int_0^1 \biggl(\frac{1}{2 - s} + \frac{1}{s - z}\biggr) \ph_-(s) ds + \frac{1}{\pi} \int_0^1 \frac{1}{s^2} \biggl(s - \frac{1}{s^{-1} - z}\biggr) \ph_+(s) ds\biggr) .
}
But
\formula{
 \frac{1}{s^2} \biggl(s - \frac{1}{s^{-1} - z}\biggr) & = \frac{1}{s^2} \, \frac{-s z}{s^{-1} - z} = \frac{-z}{1 - s z} = \frac{1}{s - z^{-1}} \, ,
}
and therefore
\formula{
 \sigma + F(z) & = b \exp \biggl(\frac{1}{\pi} \int_0^1 \frac{1}{2 - s} \, \ph_-(s) ds + \frac{1}{\pi} \int_0^1 \frac{1}{s - z} \, \ph_-(s) ds + \frac{1}{\pi} \int_0^1 \frac{1}{s - z^{-1}} \, \ph_+(s) ds\biggr) .
}
This is formula~\eqref{eq:exp}, with
\formula[eq:exp:c]{
 c & = b \exp \biggl(\frac{1}{\pi} \int_0^1 \frac{1}{2 - s} \, \ph_-(s) ds\biggr) .
}

In order to prove~\eqref{eq:exp:sigma}, we apply Lemma~\ref{lem:one}, the definition of $f$, and equation~\eqref{eq:rogers:zero} (applied to the Rogers function $\sigma + f$): we have
\formula{
 \sigma & = \lim_{\xi \to 0^+} (\sigma + F(1 + i \xi)) = \lim_{\xi \to 0^+} (\sigma + f(\xi)) = b \exp\biggl(-\frac{1}{\pi} \int_{-1}^\infty \frac{1}{1 + \lv r \rv} \, \frac{\ph(r)}{\lv r \rv} \, dr\biggr).
}
Using the same substitutions~\eqref{eq:sub:1} and~\eqref{eq:sub:2}, we get
\formula{
 \sigma & = b \exp\biggl(\frac{1}{\pi} \int_0^1 \frac{1}{2 - s} \, \ph_-(s) ds - \frac{1}{\pi} \int_0^1 \frac{\ph_-(s)}{1 - s} \, ds - \frac{1}{\pi} \int_0^1 \frac{\ph_+(s)}{1 - s} \, ds\biggl) \\
 & = c \exp\biggl(-\frac{1}{\pi} \int_0^1 \frac{\ph_-(s)}{1 - s} \, ds - \frac{1}{\pi} \int_0^1 \frac{\ph_+(s)}{1 - s} \, ds\biggl) ,
}
as desired.
\end{proof}

Note that the statement of Proposition~\ref{prop:exp} is symmetric: when $(a_k)$ is changed to the reversed sequence $(\check a_k) = (a_{-k})$, then the corresponding dual symbol $\check F(z) = F(z^{-1})$ is given by~\eqref{eq:exp}, with the roles of $\ph_{\pm}$ exchanged: $\check \ph_+(s) = \ph_-(s)$ and $\check \ph_-(s) = \ph_+(s)$.

The right-hand side of formula~\eqref{eq:exp} defines a holomorphic function of $z$ on the set
\formula[eq:dom]{
 D_F & = \{z \in \C : z^{-1} \notin \esssupp \ph_+ , \, z \notin \esssupp \ph_-\} .
}
Observe that if $f$ is the Rogers function defined in Lemma~\ref{lem:symbol:rogers}, then the proof of Proposition~\ref{prop:exp} shows that $\xi \in D_f$ if and only if $1 + i \xi \in D_F$.

It is easy to see that the right-hand side of~\eqref{eq:symbol} is well-defined on $D_F$, and so $F$ is given on $D_F$ by both~\eqref{eq:symbol} and~\eqref{eq:exp} (with $\sigma = 0$). We say that $D_F$ is the \emph{domain} of the symbol $F$.

\begin{remark}
\label{rem:onesided}
If the sequence $(a_k)$ is one-sided, one of the measures $\mu_+$ and $\mu_-$ in~\eqref{eq:symbol} is equal to zero, and one of the functions $\ph_+$ and $\ph_-$ is equal to zero almost everywhere. Indeed: if $a_k = 0$ for $k \le -1$, then $\mu_-([0, 1)) = 0$, and, by~\eqref{eq:symbol}, $F$ extends to a holomorphic function on $\C \setminus [1, \infty]$ satisfying $F(z) \ge 0$ for $z \in (0, 1)$. If $a_k > 0$ for some $k \ge 1$, then $\mu_+([0, 1)) > 0$, and we have a strict inequality $F(z) > 0$ for $z \in (0, 1)$. By~\eqref{eq:phi:m}, $\ph_-(s) = 0$ for almost every $s \in (0, 1)$. Similarly, if $a_k = 0$ for $k \ge 1$, but $a_k > 0$ for some $k \le -1$, then, by~\eqref{eq:phi:p}, $\ph_+(s) = 0$ for almost every $s \in (0, 1)$.

Conversely, if $(a_k)$ is two-sided, then neither $\mu_+$ nor $\mu_-$ is zero, and neither of the functions $\ph_+$ nor $\ph_-$ is equal to zero almost everywhere. Indeed: if $\ph_-(s) = 0$ for almost all $s \in (0, 1)$, then $\C \setminus [1, \infty) \subseteq D_F$ and by~\eqref{eq:exp}, we have $\im F(z) = 0$ for $z \in [0, 1)$. From~\eqref{eq:measure:m} and~\eqref{eq:measure:atom:m} it follows that $\mu_-([0, 1)) = 0$, that is, $a_k = 0$ for $k \le -1$. In a similar way, by~\eqref{eq:measure:p} and~\eqref{eq:measure:atom:p}, if $\ph_+(s) = 0$ for almost all $s \in (0, 1)$, then $a_k = 0$ for $k \ge 1$.
\end{remark}

We have the following converse of Proposition~\ref{prop:exp}.

\begin{proposition}
\label{prop:exp:inv}
If $c > 0$, $\ph_+$ and $\ph_-$ are Borel functions with values in $[0, \pi]$, at least one of $\ph_+$, $\ph_-$ is not equal to zero almost everywhere, and $\sigma$ is given by formula~\eqref{eq:exp:sigma}, then equation~\eqref{eq:exp} defines the symbol $F$ of a summable $\amcm$ sequence, unique up to the choice of $a_0$.
\end{proposition}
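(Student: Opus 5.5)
The plan is to run the proof of Proposition~\ref{prop:exp} backwards, using the converse part of Proposition~\ref{prop:rogers:exp}. Given $c$, $\ph_\pm$, we define $\ph : \R \to [0, \pi]$ by $\ph(r) = 0$ for $r < -1$, $\ph(r) = \ph_-(1 + r)$ for $r \in (-1, 0)$, and $\ph(r) = \ph_+((1 + r)^{-1})$ for $r > 0$. We also set $b = c \exp(-\frac{1}{\pi} \int_0^1 (2 - s)^{-1} \ph_-(s) \, ds)$, as in~\eqref{eq:exp:c}. By Proposition~\ref{prop:rogers:exp}, formula~\eqref{eq:rogers:exp} with these $b$ and $\ph$ defines a nonzero Rogers function $h$. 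The substitutions~\eqref{eq:sub:1} and~\eqref{eq:sub:2}, together with the identity $\frac{1}{s^2}(s - \frac{1}{s^{-1} - z}) = \frac{1}{s - z^{-1}}$, show that $h(i - i z)$ equals the right-hand side of~\eqref{eq:exp}; call it $G(z)$. The computation leading to~\eqref{eq:exp:sigma} via~\eqref{eq:rogers:zero} gives $h(0^+) = \sigma$. We then set $F = G - \sigma$, so that $f(\xi) = F(1 + i \xi) = h(\xi) - \sigma$ is a Rogers function vanishing at $0^+$, since $\sigma = h(0^+) \ge 0$ and subtracting a nonnegative constant preserves $\re(\xi^{-1} f) \ge 0$ only after a check. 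To make this check, write $h$ in the Stieltjes form~\eqref{eq:rogers:int} with $c_0 = h(0^+) = \sigma$, and note that removing $c_0$ leaves a Rogers function.

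The main step is to identify $F$ with a symbol~\eqref{eq:symbol}. By construction $\ph = 0$ on $(-\infty, -1)$, so $F$ is holomorphic across $(-\infty, 0)$, real there, and satisfies $F(\overline z) = \overline{F(z)}$. The only possible singularities are on $[0, \infty)$: $\esssupp \ph_-$ lies in $[0, 1]$, and $z^{-1} \in \esssupp \ph_+$ forces $z \in [1, \infty]$. I plan to use the representation~\eqref{eq:rogers:int} of $f$. Its measure $\mu$ is supported where $\re(f(t - i r)/(t - i r))$ has nonzero boundary values, which happens only for $r \in (-1, \infty) \setminus \{0\}$, since on $r < -1$ we have $f$ real and $t - i r \to -ir$ purely imaginary. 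Translating back with $\xi = i - i z$, the term $\frac{\xi}{\xi + i r}$ becomes $\frac{1 - z}{1 + r - z}$. For $r \in (-1, 0)$ set $s = 1 + r \in (0, 1)$; this gives the $\mu_-$ part, $\frac{z - 1}{(1 - s)(z - s)}$ up to positive weights. For $r > 0$ set $s = (1 + r)^{-1}$, which gives $\frac{1 - z}{(1 - s)(1 - s z)}$, the $\mu_+$ part. The linear term $-i c_1 \xi$ and the compensators $\frac{i \xi \sign r}{1 + \lv r \rv}$ combine into a multiple of $(1 - z)$. The quadratic term $c_2 \xi^2$ must vanish, and the linear multiple of $1 - z$ must be absorbed by atoms of $\mu_\pm$ at $s = 0$: by~\eqref{eq:symbol}, an atom at $0$ of $\mu_+$ contributes $1 - z$, and an atom of $\mu_-$ contributes $1 - z^{-1}$.

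To pin down $c_2$, $c_1$ and the atoms, I will use growth. Since $\ph \le \pi$ and $\ph$ vanishes near $-\infty$, the exponential representation gives $\lv G(z) \rv = O(\lv z \rv)$ as $z \to \infty$ and $O(\lv z \rv^{-1})$ as $z \to 0$. Hence $c_2 = 0$, and the remaining linear behaviour must be of the form $\alpha (1 - z) + \beta (1 - z^{-1})$ with $\alpha, \beta \ge 0$, which we take as the atoms $\mu_+(\{0\}) = \alpha$ and $\mu_-(\{0\}) = \beta$. Matching $z \to \infty$ and $z \to 0$ asymptotics gives the signs, since $F(x) > 0$ for $x < 0$. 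Summability, $\int (1 - s)^{-1} \mu_\pm(ds) < \infty$, follows from the integrability condition $\lv r \rv^{-3} \min\{1, r^2\}$ on $\mu$ near $r = 0$, once the Jacobian of the substitution is taken into account; near $s \to 1$ it is exactly this condition. With $\mu_\pm$ in hand, $a_k$ is defined by~\eqref{eq:amcm:p} and~\eqref{eq:amcm:m}, $a_0$ is arbitrary, and the nontriviality assumption on $\ph_\pm$ ensures the sequence is not zero.

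Uniqueness up to $a_0$ is then immediate: by~\eqref{eq:measure:p}--\eqref{eq:measure:atom:m}, $F$ determines $\mu_\pm$ and hence all $a_k$ with $k \ne 0$. I expect the main obstacle to be the bookkeeping in the passage from~\eqref{eq:rogers:int} to~\eqref{eq:symbol}, namely showing that the drift terms exactly equal atoms at $0$ with nonnegative weights. If that direct matching gets messy, a fallback is to avoid it and instead compute $\im F$ on approach to $(0, 1)$ and $(1, \infty)$, recovering $\mu_\pm$ as nonnegative measures by~\eqref{eq:poisson}. One would then show that $F$ minus the resulting symbol is a function that is entire off $\{0, \infty\}$, has the above growth, and is real on $\R$, hence of the form $\alpha(1 - z) + \beta(1 - z^{-1})$.
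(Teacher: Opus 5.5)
Your route is the paper's: rebuild $\sigma + f$ from the compatible $\ph$, take its representation~\eqref{eq:rogers:int}, get $c_0 = \sigma$ and $c_2 = 0$, then push $\mu$ forward by $s = 1 + r$ and $s = (1 + r)^{-1}$. There is a gap, however. You claim that summability of $\mu_\pm$ follows from the Rogers integrability condition, and also that the compensators merge into a multiple of $1 - z$. Neither holds at $r \to \infty$.

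Matching $\frac{1 - z}{1 - z + r} \, \frac{\mu(dr)}{r}$ with the kernel $\frac{1 - z}{(1 - s)(1 - s z)}$ of~\eqref{eq:symbol} forces $\mu_+$ to be the image of $(1 + r)^{-2} \mu(dr)$, $r > 0$. Two things then require $\int_{(0, \infty)} \frac{\mu(dr)}{r (1 + r)} < \infty$: the finiteness of $\int_{[0, 1)} (1 - s)^{-1} \mu_+(ds)$, and splitting $\frac{1}{\pi} \int_{(0, \infty)} \bigl(\frac{1 - z}{1 - z + r} - \frac{1 - z}{1 + r}\bigr) \frac{\mu(dr)}{r}$ into two separately convergent pieces.

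Near $r = 0$ (that is, $s \to 1$) this is indeed the Rogers condition. Near $r = \infty$ (that is, $s \to 0$, where the issue is finiteness of $\mu_+$ itself) it needs $\int^\infty r^{-2} \mu(dr) < \infty$. A general Rogers function only gives $\int^\infty r^{-3} \mu(dr) < \infty$; for example, $\xi^{3/2}$ is a Rogers function.

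This integrability must be extracted from the specific form of $F$, and that is Step~5 of the paper's proof. With $c_2 = 0$, divide by $1 - z$ and let $z \to -\infty$. Dominated convergence applies on $[-1, 0)$, and monotone convergence on $(0, \infty)$, where the integrand is negative and decreasing. So the limit exists in $[-\infty, \infty)$. Positivity of $\sigma + F$ on $(-\infty, 0)$ makes it nonnegative. Use $\sigma + F$ here, not $F$: the sign of $F$ on $(-\infty, 0)$ is not yet known. This yields at once $\int_{(0, \infty)} \frac{\mu(dr)}{r (1 + r)} < \infty$ and
$\tilde c_1 = c_1 + \frac{1}{\pi} \int_{[-1, 0)} \frac{\mu(dr)}{-r (1 - r)} - \frac{1}{\pi} \int_{(0, \infty)} \frac{\mu(dr)}{r (1 + r)} \ge 0$.
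The term $\tilde c_1 (1 - z)$ then becomes an atom of $\mu_+$ at $0$. Your growth bound holds the needed information, but you only use it for $c_2$ and the signs. The ``remaining linear behaviour'' cannot be isolated before this integrability is established.

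A smaller inaccuracy concerns the support of $\mu$. The boundary behaviour of $\re \frac{f(t - i r)}{t - i r}$ only gives $\mu((-\infty, -1)) = 0$; it does not show that $\mu$ lives on $(-1, \infty) \setminus \{0\}$. The atom $\mu(\{-1\})$ matters. Since $\xi = i(1 - z)$, the kernel at $r = -1$ is $\frac{\xi}{\xi - i} = 1 - z^{-1}$, so $\mu(\{-1\})$ is exactly $\mu_-(\{0\})$, and it is automatically nonnegative. The drift $-i c_1 \xi$ and the compensators produce only multiples of $1 - z$, so they can only feed $\mu_+(\{0\})$. Your $\beta (1 - z^{-1})$ term therefore does not come from ``linear behaviour''. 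Once $r = -1$ is kept in the support, no separate $z \to 0$ matching is needed; excluding it, as you did, loses that term.
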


\begin{proof}
Suppose that $F$ is given by~\eqref{eq:exp}. We divide the argument into six steps.

\emph{Step 1.}
We can easily reverse the proof of Proposition~\ref{prop:exp} to find that if we define $f(\xi) = F(1 + i \xi)$ and
\formula[eq:ph:pm:2]{
 \ph(r) & = \begin{cases}
   \ph_-(1 + r) & \text{if $r \in (-1, 0)$,} \\
   \ph_+((1 + r)^{-1}) & \text{if $r \in (0, \infty)$,} \\
   0 & \text{otherwise}
 \end{cases}
}
(as in~\eqref{eq:ph:m:1} and~\eqref{eq:ph:p:1}), then
\formula[eq:exp:inf:rogers:0]{
 \sigma + f(\xi) & = b \exp \biggl(\frac{1}{\pi} \int_{-1}^\infty \biggl(\frac{\xi}{\xi + i r} - \frac{1}{1 + \lv r \rv}\biggr) \frac{\ph(r)}{\lv r \rv} \, dr\biggr) ,
}
with constant
\formula{
 b & = c \exp \biggl(-\frac{1}{\pi} \int_0^1 \frac{1}{2 - s} \, \ph_-(s) ds\biggr)
}
(as in~\eqref{eq:exp:c}). It follows that $\sigma + f(\xi)$ is a Rogers function, holomorphic in $\C \setminus (-i \infty, i]$. We consider its Stieltjes representation~\eqref{eq:rogers:int}:
\formula[eq:exp:inv:rogers:1]{
 \sigma + f(\xi) & = c_2 \xi^2 - i c_1 \xi + c_0 + \frac{1}{\pi} \int_{\R \setminus \{0\}} \biggl(\frac{\xi}{\xi + i r} + \frac{i \xi \sign r}{1 + \lv r \rv}\biggr) \frac{\mu(dr)}{\lv r \rv} \, .
}

\emph{Step 2.}
By~\eqref{eq:rogers:mu}, on $\R \setminus \{0\}$ we have, in the sense of vague convergence of measures,
\formula{
 \lv r \rv^{-1} \mu(dr) = \lim_{t \to 0^+} \re \, \frac{f(t - i r)}{t - i r} \, dr .
}
Since $\sigma + f$ is holomorphic in $\C \setminus (-i \infty, i]$ and real-valued on $(i, i \infty)$, we have $\mu((-\infty, -1)) = 0$. In other words, the integral in~\eqref{eq:exp:inv:rogers:1} is effectively taken over $[-1, 0) \cup (0, \infty)$.

\emph{Step 3.}
We now evaluate the constant $c_0$. On the one hand, by~\eqref{eq:rogers:constants}, $c_0 = \sigma + f(0^+)$. On the other hand, by~\eqref{eq:exp:inf:rogers:0} and~\eqref{eq:rogers:zero}, we have
\formula{
 \sigma + f(0^+) & = b \exp \biggl(-\frac{1}{\pi} \int_{-1}^\infty \frac{1}{1 + \lv r \rv} \, \frac{\ph(r)}{\lv r \rv} \, dr\biggr) .
}
As in~\eqref{eq:sub:1} and~\eqref{eq:sub:2} in the proof of Proposition~\ref{prop:exp}, we split the integral into two integrals, over $(-1, 0)$ and $(0, \infty)$, and we substitute $s = 1 + r$ in the former one, and $s = (1 + r)^{-1}$ in the latter one. This leads to
\formula{
 \sigma + f(0^+) & = b \exp \biggl(-\frac{1}{\pi} \int_{-1}^0 \frac{1}{1 - r} \, \frac{\ph_-(1 + r)}{-r} \, dr - \frac{1}{\pi} \int_0^\infty \frac{1}{1 + r} \, \frac{\ph_+((1 + r)^{-1})}{r} \, dr\biggr) \\
 & = c \exp \biggl(-\frac{1}{\pi} \int_0^1 \frac{1}{2 - s} \, \ph_-(s) ds - \frac{1}{\pi} \int_0^1 \frac{1}{2 - s} \, \frac{\ph_-(s)}{1 - s} \, ds - \frac{1}{\pi} \int_0^1 \frac{\ph_+(s)}{1 - s} \, ds\biggr) \\
 & = c \exp \biggl(-\frac{1}{\pi} \int_0^1 \frac{\ph_-(s)}{1 - s} \, ds - \frac{1}{\pi} \int_0^1 \frac{\ph_+(s)}{1 - s} \, ds\biggr) .
}
Comparing this with the definition~\eqref{eq:exp:sigma} of $\sigma$, we find that $f(0^+) = 0$ and $c_0 = \sigma$. Therefore, equation~\eqref{eq:exp:inv:rogers:1} takes the form
\formula[eq:exp:inv:rogers:2]{
 f(\xi) & = c_2 \xi^2 - i c_1 \xi + \frac{1}{\pi} \int_{[-1, 0) \cup (0, \infty)} \biggl(\frac{\xi}{\xi + i r} + \frac{i \xi \sign r}{1 + \lv r \rv}\biggr) \frac{\mu(dr)}{\lv r \rv} \, .
}

\emph{Step 4.}
We turn to the evaluation of $c_2$. Observe that for $\xi > 0$,
\formula{
 \lv \sigma + f(\xi) \rv & = b \exp \biggl(\frac{1}{\pi} \int_{-1}^\infty \biggl(\frac{\xi^2}{\xi^2 + r^2} - \frac{1}{1 + \lv r \rv}\biggr) \frac{\ph(r)}{\lv r \rv} \, dr\biggr) \\
 & \le b \exp \biggl(\int_{-1}^\infty \biggl(\frac{\xi^2}{\xi^2 + r^2} - \frac{1}{1 + \lv r \rv}\biggr) \frac{1}{\lv r \rv} \, dr\biggr)
}
By a direct calculation, the exponent on the right-hand side is equal to $2 \log \xi - \tfrac{1}{2} \log(1 + \xi^2) + \log 2$, and hence
\formula{
 \lv \sigma + f(\xi) \rv & \le 2 b \, \frac{\xi^2}{\sqrt{1 + \xi^2}} \, .
}
Combining this with~\eqref{eq:rogers:constants}, we find that
\formula{
 c_2 & = \lim_{\xi \to \infty} \frac{\sigma + f(\xi)}{\xi^2} = 0 .
}
Taking this into account, and substituting $z = 1 + i \xi$, we rewrite~\eqref{eq:exp:inv:rogers:2} as
\formula[]{
\notag
 F(z) & = c_1 (1 - z) + \frac{1}{\pi} \int_{[-1, 0) \cup (0, \infty)} \biggl(\frac{1 - z}{1 - z + r} - \frac{(1 - z) \sign r}{1 + \lv r \rv}\biggr) \frac{\mu(dr)}{\lv r \rv} \\
\label{eq:exp:inv:rogers:3}
 & = c_1 (1 - z) + \frac{1}{\pi} \int_{[-1, 0)} \biggl(\frac{1 - z}{1 - z + r} + \frac{1 - z}{1 - r}\biggr) \frac{\mu(dr)}{-r} \\
\notag
 & \hspace*{12em} + \frac{1}{\pi} \int_{(0, \infty)} \biggl(\frac{1 - z}{1 - z + r} - \frac{1 - z}{1 + r}\biggr) \frac{\mu(dr)}{r} \, .
}

\emph{Step 5.}
We now study the value of $c_1$. By the dominated convergence theorem for the former integral in~\eqref{eq:exp:inv:rogers:3}, and the monotone convergence theorem for the latter one, we have
\formula{
 \lim_{z \to -\infty} \frac{F(z)}{1 - z} & = c_1 + \frac{1}{\pi} \int_{[-1, 0)} \frac{1}{-r (1 - r)} \, \mu(dr) - \frac{1}{\pi} \int_{(0, \infty)} \frac{1}{r (1 + r)} \, \mu(dr) .
}
On the other hand, by definition, $\sigma + F(z) > 0$ for $z < 0$, and hence the above limit is nonnegative. It follows that $\min\{\lv r \rv^{-1}, r^{-2}\}$ (rather than $\min\{\lv r \rv^{-1}, \lv r \rv^{-3}\}$) is integrable with respect to $\mu$, and
\formula[eq:exp:inv:rogers:c1]{
 c_1 + \frac{1}{\pi} \int_{[-1, 0)} \frac{1}{-r (1 - r)} \, \mu(dr) - \frac{1}{\pi} \int_{(0, \infty)} \frac{1}{r (1 + r)} \, \mu(dr) & \ge 0.
}

\emph{Step 6.}
Once again, we use the substitutions~\eqref{eq:sub:1} and~\eqref{eq:sub:2} of Proposition~\ref{prop:exp}: we substitute $s = 1 + r$ in the former integral in~\eqref{eq:exp:inv:rogers:3}, and $s = (1 + r)^{-1}$ in the latter one. In order to do so, we define
\formula{
 \mu_+(A) & = \int_{(0, \infty)} \ind_A((1 + r)^{-1}) \, \frac{\mu(dr)}{(1 + r)^2} \, , \\
 \mu_-(A) & = \int_{[-1, 0)} \ind_A(1 + r) \, \mu(dr) .
}
Since $(-r)^{-1}$ is integrable with respect to $\mu$ over $[-1, 0)$, we find that $(1 - s)^{-1}$ is integrable with respect to $\mu_-$ over $[0, 1)$. Similarly, $r^{-1} (1 + r)^{-1}$ is integrable with respect to $\mu$ over $(0, \infty)$, and so $(1 - s)^{-1}$ is integrable with respect to $\mu_+$ over $(0, 1)$. Formula~\eqref{eq:exp:inv:rogers:3} now takes the following form:
\formula{
 F(z) & = c_1 (1 - z) + \frac{1}{\pi} \int_{[0, 1)} \biggl(\frac{1 - z}{s - z} + \frac{1 - z}{2 - s}\biggr) \frac{\mu_-(ds)}{1 - s} + \frac{1}{\pi} \int_{(0, 1)} \biggl(\frac{1 - z}{s^{-1} - z} - s (1 - z)\biggr) \frac{\mu_+(ds)}{s (1 - s)} \\
 & = \tilde c_1 (1 - z) + \frac{1}{\pi} \int_{[0, 1)} \biggl(\frac{1}{1 - s} - \frac{z^{-1}}{1 - s z^{-1}}\biggr) \mu_-(ds) + \frac{1}{\pi} \int_{(0, 1)} \biggl(\frac{1}{1 - s} - \frac{z}{1 - s z}\biggr) \mu_+(ds)
}
when $z \in \C \setminus [0, \infty)$, with
\formula{
 \tilde c_1 & = c_1 + \frac{1}{\pi} \int_{[0, 1)} \frac{1}{(2 - s) (1 - s)} \, \mu_-(ds) - \frac{1}{\pi} \int_{(0, 1)} \frac{1}{1 - s} \, \mu_+(ds) \\
 & = c_1 + \frac{1}{\pi} \int_{[-1, 0)} \frac{1}{-r (1 - r)} \, \mu(dr) - \frac{1}{\pi} \int_{(0, \infty)} \frac{1}{r (1 + r)} \, \mu(dr) .
}
By~\eqref{eq:exp:inv:rogers:c1}, we have $\tilde c_1 \ge 0$. Thus, if we extend $\mu_+$ to a measure on $[0, 1)$ so that $\mu_+(\{0\}) = \pi \tilde c_1$, then we arrive at
\formula{
 F(z) & = \frac{1}{\pi} \int_{[0, 1)} \biggl(\frac{1}{1 - s} - \frac{z^{-1}}{1 - s z^{-1}}\biggr) \mu_-(ds) + \frac{1}{\pi} \int_{[0, 1)} \biggl(\frac{1}{1 - s} - \frac{z}{1 - s z}\biggr) \mu_+(ds) ,
}
which is precisely the Stieltjes representation~\eqref{eq:symbol} of the symbol of an $\amcm$ sequence.
\end{proof}

Inspecting the above proof, we find that it also yields the following result.

\begin{proposition}
\label{prop:characterisation}
Suppose that $F$ is a holomorphic function in $\C \setminus \R$, and $f(\xi) = F(1 + i \xi)$. Then $F$ is the symbol of a summable $\amcm$ sequence if and only if:
\begin{itemize}
\item $f$ is a Rogers function;
\item $f(0^+) = 0$;
\item $f$ extends to a holomorphic function on $\C \setminus (-i \infty, i]$;
\item this extension satisfies $f(i r) > 0$ for $r > 1$.
\end{itemize}
\end{proposition}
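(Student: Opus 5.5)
The plan is to prove the two implications separately, in each case reusing the computations from the proofs of Lemma~\ref{lem:symbol:rogers}, Proposition~\ref{prop:exp} and Proposition~\ref{prop:exp:inv}.

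\emph{Necessity.} Suppose $F$ is the symbol of a summable $\amcm$ sequence.
\begin{itemize}
\item Lemma~\ref{lem:symbol:rogers} gives directly that $f$ is a Rogers function and that $f(0^+) = 0$.
\item Formula~\eqref{eq:symbol} extends $F$ holomorphically to $\C \setminus [0, \infty)$, indeed to every $z$ with $z \notin \supp \mu_-$ and $z^{-1} \notin \supp \mu_+$. Under $z = 1 + i \xi$, the set $\C \setminus [0, \infty)$ corresponds to $\C \setminus (-i \infty, i]$, so $f$ extends there.
\item The points $\xi = i r$ with $r > 1$ correspond to $z = 1 - r < 0$. By~\eqref{eq:symbol} each integrand is positive there (this was observed in the proof of Proposition~\ref{prop:exp}), so $F(1 - r) > 0$.
\item If the sequence is trivial, then $F \equiv 0$ and this last condition fails. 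I will therefore read the statement as tacitly concerning nontrivial sequences, or equivalently interpret $f(ir) > 0$ as $f(ir) \ge 0$ in the degenerate case. I will mention this briefly rather than dwell on it.
\end{itemize}

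\emph{Sufficiency.} Here I rerun Steps~2--6 of the proof of Proposition~\ref{prop:exp:inv} with $\sigma = 0$, starting from the given Rogers function $f$ rather than from an exponential representation.
\begin{itemize}
\item Take the Stieltjes representation~\eqref{eq:rogers:int} of $f$.
\item \emph{Support of $\mu$.} Since $f$ is holomorphic off $(-i\infty, i]$ and real on $(i, i\infty)$, formula~\eqref{eq:rogers:mu} gives $\mu((-\infty, -1)) = 0$, as in Step~2.
\item \emph{Constant $c_0$.} We have $c_0 = f(0^+) = 0$.
\item \emph{Constant $c_2$.} Step~4 used the exponential representation with $\ph$ supported in $(-1, \infty)$. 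Proposition~\ref{prop:rogers:exp} applies to $f$, and the boundary-argument formula together with holomorphy and positivity on $(i, i\infty)$ forces $\ph = 0$ a.e.\ on $(-\infty, -1)$. The same estimate $\lv f(\xi) \rv \le 2 b \xi^2 (1 + \xi^2)^{-1/2}$ then gives $c_2 = 0$.
\item \emph{Constant $c_1$.} Step~5 only needs $F(z) \ge 0$ for $z < 0$, which is exactly the hypothesis $f(ir) > 0$ for $r > 1$. This yields $\tilde c_1 \ge 0$ and the integrability of $\min\{\lv r \rv^{-1}, r^{-2}\}$ with respect to $\mu$.
\item \emph{Conclusion.} Step~6 converts the representation into~\eqref{eq:symbol} with finite measures $\mu_\pm$ on $[0, 1)$ integrating $(1 - s)^{-1}$. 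This exhibits $F$ as the symbol of a summable $\amcm$ sequence via~\eqref{eq:amcm:p} and~\eqref{eq:amcm:m}.
\end{itemize}

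Most of this is bookkeeping. The main point needing care is the step showing $\ph = 0$ on $(-\infty, -1)$, equivalently $\mu((-\infty, -1)) = 0$. This requires that the boundary values of $\arg f$ on $-i(1, \infty)$ vanish. I would justify it from continuity of the holomorphic extension across that ray together with $f(ir) > 0$ there, using the nontangential boundary-limit statement of Proposition~\ref{prop:rogers:exp}.
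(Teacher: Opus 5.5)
Your proposal is correct and follows the paper's own route: the paper proves this proposition only by remarking that rerunning Lemma~\ref{lem:symbol:rogers} and the proof of Proposition~\ref{prop:exp:inv} with $\sigma = 0$ gives both directions, with the hypothesis $f(ir) > 0$ for $r > 1$ taking the place of $\sigma + F > 0$ on $(-\infty, 0)$, which is what you do. Your caveat that the trivial sequence ($F \equiv 0$) violates the last condition, so nontriviality must be tacitly assumed, is a fair correction. In your final paragraph the relevant ray is $i(1, \infty)$ (the points $-ir$ with $r < -1$), not $-i(1, \infty)$.
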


We conclude this part with the following observation.

\begin{remark}
\label{rem:compatible}
Let $F$ be the symbol of a nonzero summable $\amcm$ sequence $(a_k)$, let $\sigma \ge 0$, and let $f$ be a Rogers function. Consider the functions $\ph_+$ and $\ph_-$ in the exponential representation~\eqref{eq:exp} of $\sigma + F$, and the function $\ph$ in the exponential representation~\eqref{eq:rogers:exp} of the Rogers function $\sigma + f$. Let us say that the triple $\ph, \ph_+, \ph_-$ is compatible if $\ph(r) = \ph_-(1 + r)$ for $r \in (-1, 0)$, $\ph(r) = \ph_+((1 + r)^{-1})$ for $r \in (0, \infty)$, and $\ph(r) = 0$ for $r \in (-\infty, -1)$ (see~\eqref{eq:ph:m:1}, \eqref{eq:ph:p:1} and~\eqref{eq:ph:pm:2}). In the proofs of Propositions~\ref{prop:exp} and~\ref{prop:exp:inv}, we have observed that the triple $\ph, \ph_+, \ph_-$ is compatible if and only if $\sigma + f(\xi)$ is equal to $\sigma + F(1 + i \xi)$, up to multiplication by a constant. This constant disappears if additionally the constants $b$ and $c$ satisfy~\eqref{eq:exp:c}.
\end{remark}


\subsection{Spine of a Rogers function}

\begin{figure}
\centering
\setlength{\tabcolsep}{0pt}
\begin{tabular}{cc}
\begin{tikzpicture}
\footnotesize
\coordinate (X) at (3,0);
\coordinate (Y) at (0,3.5);
\coordinate (Xn) at (-2,0);
\coordinate (Yn) at (0,-2);
\coordinate (zeta) at (1,1);
\coordinate (modzetax) at (1.4142,0);
\coordinate (modzetay) at (0,1.4142);
\draw[white, very thick, fill=magenta!20!white] (0,0) .. controls (0.4,-0.4) and (1.15,0) .. (1,1) .. controls (0.85,2) and (0.6,2.5) .. (0,2.5);
\draw[white, very thick, fill=cyan!20!white] (0,0) .. controls (0.4,-0.4) and (1.15,0) .. (1,1) .. controls (0.85,2) and (0.6,2.5) .. (0,2.5) -- (0,3.5) -- (3,3.5) -- (3,-2) -- (0,-2);
\draw[white, very thick, fill=magenta!20!white] (0,0) .. controls (-0.4,-0.4) and (-1.15,0) .. (-1,1) .. controls (-0.85,2) and (-0.6,2.5) .. (0,2.5);
\draw[white, very thick, fill=cyan!20!white] (0,0) .. controls (-0.4,-0.4) and (-1.15,0) .. (-1,1) .. controls (-0.85,2) and (-0.6,2.5) .. (0,2.5) -- (0,3.5) -- (-2,3.5) -- (-2,-2) -- (0,-2);
\draw (modzetay) arc (90:-90:1.4142);
\node[above right] at (modzetax) {$r$};
\node[violet, below right] at (0.7,2.5) {$\Gamma_f$};
\draw[->] (Xn) -- (X) node[above] {$\re \xi$};
\draw[->] (Yn) -- (Y) node[left] {$\im \xi$};
\draw[very thick, violet, decoration={markings,
  mark=at position 0.15 with {\arrow{>}},
  mark=at position 0.4 with {\arrow{>}},
  mark=at position 0.8 with {\arrow{>}}
 }, postaction={decorate}] (0,0) .. controls (0.4,-0.4) and (1.15,0) .. (1,1) .. controls (0.85,2) and (0.6,2.5) .. (0,2.5);
\filldraw (zeta) circle[radius=1pt] node[above right] {$\zeta_f(r)$};
\filldraw (0,0) circle[radius=1pt];
\filldraw (0,2.5) circle[radius=1pt];
\end{tikzpicture}
&
\begin{tikzpicture}
\footnotesize
\coordinate (X) at (2.7,0);
\coordinate (Y) at (0,3);
\coordinate (Xn) at (-2.8,0);
\coordinate (Yn) at (0,-2.5);
\coordinate (zeta) at (-0.3,1);
\draw[white, very thick, fill=magenta!20!white] (0.7,0) .. controls (1.1,0.4) and (0.7,1.15) .. (-0.3,1) .. controls (-1.3,0.85) and (-1.8,0.6) .. (-1.8,0);
\draw[white, very thick, fill=cyan!20!white] (0.7,0) .. controls (1.1,0.4) and (0.7,1.15) .. (-0.3,1) .. controls (-1.3,0.85) and (-1.8,0.6) .. (-1.8,0) -- (-2.8,0) -- (-2.8,3) -- (2.7,3) -- (2.7,0);
\draw[white, very thick, fill=magenta!20!white] (0.7,0) .. controls (1.1,-0.4) and (0.7,-1.15) .. (-0.3,-1) .. controls (-1.3,-0.85) and (-1.8,-0.6) .. (-1.8,0);
\draw[white, very thick, fill=cyan!20!white] (0.7,0) .. controls (1.1,-0.4) and (0.7,-1.15) .. (-0.3,-1) .. controls (-1.3,-0.85) and (-1.8,-0.6) .. (-1.8,0) -- (-2.8,0) -- (-2.8,-2.5) -- (2.7,-2.5) -- (2.7,0);
\node[violet, above left] at (-1.3,0.7) {$\Gamma_F$};
\node at (-0.4,-0.4) {$D_F^+$};
\node at (-0.8,-1.8) {$D_F^-$};
\filldraw (0.7,0) circle[radius=1pt] node[below] {$1$};
\draw[->] (Xn) -- (X) node[above] {$\re x$};
\draw[->] (Yn) -- (Y) node[left] {$\im x$};
\draw[very thick, violet, decoration={markings,
  mark=at position 0.15 with {\arrow{>}},
  mark=at position 0.4 with {\arrow{>}},
  mark=at position 0.8 with {\arrow{>}}
 }, postaction={decorate}] (0.7,0) .. controls (1.1,0.4) and (0.7,1.15) .. (-0.3,1) .. controls (-1.3,0.85) and (-1.8,0.6) .. (-1.8,0);
\draw[very thick, violet, densely dashed, decoration={markings,
  mark=at position 0.15 with {\arrow{<}},
  mark=at position 0.4 with {\arrow{<}},
  mark=at position 0.8 with {\arrow{<}}
 }, postaction={decorate}] (0.7,0) .. controls (1.1,-0.4) and (0.7,-1.15) .. (-0.3,-1) .. controls (-1.3,-0.85) and (-1.8,-0.6) .. (-1.8,0);
\filldraw (zeta) circle[radius=1pt] node[above] {$\gamma(\lambda)\,$};
\filldraw (-1.8,0) circle[radius=1pt];
\end{tikzpicture}
\\
(a)&(b)
\end{tabular}
\caption{(a)~The spine $\Gamma_f$ (purple line) of a Rogers function $f$. (b)~The spine $\Gamma$ (solid purple line), the symmetrised spine $\Gamma^\star$ (solid and dashed purple line), and the regions $D_F^+$ (red) and $D_F^-$ (blue), for the symbol $F$ of an $\amcm$ sequence.}
\label{fig:spine}
\end{figure}

The \emph{spine} of a nonconstant Rogers function $f$ is the system of curves
\formula{
 \Gamma_f & = \{\xi \in \hp : f(\xi) \in (0, \infty)\}
}
(Definition~4.1 in~\cite{k19}); see Figure~\ref{fig:spine}(a). Basic properties of the spine are given in the following result, taken from~\cite{k19}. Note, however, that item~\ref{it:rogers:spine:d}, with a better constant, was proved earlier in~\cite{dh10}, and an optimal constant was found in~\cite{s13}.

\begin{proposition}[Theorem~4.2 in~\cite{k19}]
\label{prop:rogers:spine}
Let $f$ be a nonconstant Rogers function. There is a unique continuous complex-valued function $\zeta_f$ on $(0, \infty)$ with the following properties.
\begin{enumerate}[label=\rm (\alph*)]
\item\label{it:rogers:spine:a} We have $\lv \zeta_f(r) \rv = r$ and $\arg \zeta_f(r) \in [-\tfrac{\pi}{2}, \tfrac{\pi}{2}]$ for all $r > 0$.
\item\label{it:rogers:spine:b} If $\re \xi > 0$ and $r = \lv \xi \rv$, then
\formula{
 \sign \im f(\xi) & = \sign (\arg \xi - \arg \zeta_f(r)) .
}
\item\label{it:rogers:spine:c}
The spine $\Gamma_f$ is the union of pairwise disjoint simple real-analytic curves, which begin and end at the imaginary axis or at infinity. Furthermore, $\Gamma_f$ has parameterisation
\formula{
 \Gamma_f & = \{\zeta_f(r) : r \in Z_f\} ,
}
where
\formula{
 Z_f & = \{r \in (0, \infty) : \arg \zeta_f(r) \in (-\tfrac{\pi}{2}, \tfrac{\pi}{2})\} .
}
\item\label{it:rogers:spine:d}
For every $r > 0$, the spine $\Gamma_f$ restricted to the annular region $r \le \lv \xi \rv \le 2 r$ is a system of rectifiable curves of total length at most $C r$, where one can take $C = 300$.
\end{enumerate}
\end{proposition}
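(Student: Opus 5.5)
The plan is to work directly with the Stieltjes representation~\eqref{eq:rogers:int} of $f$, restricted to half-circles $\xi = r e^{i\theta}$, $\theta \in (-\tfrac{\pi}{2}, \tfrac{\pi}{2})$, with $r > 0$ fixed. The key claim is a monotonicity property:
\formula{
 h_r(\theta) & = \frac{\im f(r e^{i \theta})}{\cos \theta}
}
is a nondecreasing function of $\theta$, and it is strictly increasing when $f$ is nonconstant. I would check this term by term.
\begin{itemize}
\item For $c_2 \xi^2$ we get $2 c_2 r^2 \sin \theta$, which is nondecreasing.
\item The terms $-i c_1 \xi$ and $i \xi \sign s / (1 + \lv s \rv)$ have imaginary parts proportional to $r \cos \theta$, so after division by $\cos \theta$ they contribute constants. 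The constant $c_0$ contributes nothing.
\item For the kernel $\xi / (\xi + i s)$ we have $\lv \xi + i s \rv^2 = r^2 + s^2 + 2 r s \sin \theta$. Hence its contribution to $h_r$ is $-s r / (r^2 + s^2 + 2 r s \sin \theta)$, whose $\theta$-derivative is $2 r^2 s^2 \cos \theta / (r^2 + s^2 + 2 r s \sin \theta)^2 \ge 0$.
\end{itemize}
Integrating against $\mu(ds) / \lv s \rv$ gives the claim. Strictness holds unless $\mu = 0$ and $c_2 = 0$, that is, unless $f$ is affine in $\xi$ with real part constant. That case, $f = c_0 - i c_1 \xi$, is handled separately: it is nonconstant only if $c_1 \ne 0$, and then the spine is empty.

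With monotonicity in hand, I would define $\arg \zeta_f(r)$ as the unique $\theta \in [-\tfrac{\pi}{2}, \tfrac{\pi}{2}]$ where $h_r$ changes sign, and set it equal to $\pm \tfrac{\pi}{2}$ if $h_r$ keeps a constant sign. Let $\zeta_f(r) = r e^{i \arg \zeta_f(r)}$. Then item~(a) holds by construction. Item~(b) follows immediately, since $\sign \im f(\xi) = \sign h_r(\arg \xi)$ and $\cos \theta > 0$. Continuity of $\zeta_f$ in $r$ follows from the joint continuity of $h_r(\theta)$ together with strict monotonicity: a strictly monotone family depending continuously on a parameter has a continuously moving unique zero. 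Uniqueness of $\zeta_f$ is forced by~(b).

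For item~(c), observe that on $\hp$ we have $f(\xi) \in \R$ if and only if $h_{\lv \xi \rv}(\arg \xi) = 0$. At such points $\partial_\theta \im f \ne 0$ by strict monotonicity, so the implicit function theorem applied to the real-analytic function $(r, \theta) \mapsto \im f(r e^{i\theta})$ shows that $\{\im f = 0\} \cap \hp$ is a real-analytic graph $\theta = \arg \zeta_f(r)$ over the open set where $\arg \zeta_f(r) \in (-\tfrac{\pi}{2}, \tfrac{\pi}{2})$. The components of this open set are intervals, and at their finite endpoints $\zeta_f$ reaches the imaginary axis. It remains to check that on these curves $f$ is positive rather than negative. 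This follows from the fact that $\re(f(\xi)/\xi) \ge 0$ and $\re \xi > 0$, so a real value $f(\xi)$ must satisfy $f(\xi) \ge 0$. Moreover $f(\xi) = 0$ is impossible in $\hp$ for a nonzero Rogers function, by the exponential representation in Proposition~\ref{prop:rogers:exp}. Hence $\Gamma_f = \{\zeta_f(r) : r \in Z_f\}$.

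The main obstacle is item~(d), the uniform length bound. The radial part of the length over $[r, 2r]$ is at most $r$, so what must be controlled is the total variation of $\theta(\rho) = \arg \zeta_f(\rho)$ for $\rho \in [r, 2r]$. My first attempt would be to use scaling invariance: $f(r \xi)$ is again a Rogers function, so it suffices to take $r = 1$ and bound the number of monotonicity changes of $\theta$, or directly $\int \lv \theta'(\rho) \rv d\rho$, on $[1, 2]$. Writing $\theta' = -\partial_\rho H / \partial_\theta H$ with $H(\rho, \theta) = h_\rho(\theta)$, the derivative $\partial_\theta H$ has an explicit positive integral form by the computation above. I would try to dominate $\lv \partial_\rho H \rv$ by a constant multiple of $\partial_\theta H$ plus a term whose integral over $\rho \in [1,2]$ is controlled. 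If this pointwise comparison fails near the imaginary axis, I would fall back on the argument of~\cite{dh10}: bound the length via the harmonic-function structure of $\im f$ together with a crossing-count (Crofton-type) estimate on the number of intersections of $\Gamma_f$ with rays and circles. That approach is what produces an explicit universal constant such as $C = 300$.
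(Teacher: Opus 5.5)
Your argument for (a)--(c) is correct. Since the paper gives no proof here and simply imports the statement from~\cite{k19}, it is a reasonable self-contained substitute. The formula $\partial_\theta h_r(\theta) = 2c_2r^2\cos\theta + \frac{1}{\pi}\int \frac{2r^2s^2\cos\theta}{(r^2+s^2+2rs\sin\theta)^2}\,\frac{\mu(ds)}{\lvert s\rvert}$ is right. Existence, uniqueness and continuity of $\zeta_f$, the sign rule~(b), and $\Gamma_f = \{\xi\in\hp : \im f(\xi)=0\}$ then follow as you describe. One step needs a different justification: strict monotonicity of $h_r$ does not by itself give $\partial_\theta \im f \neq 0$ at a zero (compare $t \mapsto t^3$). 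Use the displayed formula instead, which is strictly positive when $\mu \neq 0$ or $c_2 > 0$. At a zero of $h_r$ it gives $\partial_\theta \im f(re^{i\theta}) = \cos\theta\,\partial_\theta h_r(\theta) > 0$, and then the real-analytic implicit function theorem applies.

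The genuine gap is~(d): you describe intentions rather than a proof, and the first one cannot succeed. The radial part of the length over $[r,2r]$ is at most $r$, so~(d) amounts to a universal bound on the total variation of $\theta(\rho) = \arg\zeta_f(\rho)$ on $[r,2r]$, and this bound cannot come from a pointwise estimate.

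Take $f(\xi) = \frac{\xi}{\xi+is_0} + \frac{i s_0\xi}{K}$ with $0 < s_0 \le \sqrt{K}/3$; this is a Rogers function with $\im f(\xi) = s_0\re\xi\,\bigl(K^{-1} - \lvert\xi+is_0\rvert^{-2}\bigr)$. So $\Gamma_f$ is the right half of the circle $\lvert\xi+is_0\rvert = \sqrt{K}$. Along it $\lvert\theta'(\rho)\rvert = \frac{\rho+s_0\sin\theta}{\rho s_0\cos\theta} \ge \frac{1}{2s_0}$, and $\theta$ falls from $\frac{\pi}{2}$ to $-\frac{\pi}{2}$ while $\rho$ runs over an interval of length $2s_0$. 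The length $\pi\sqrt{K}$ is harmless only because this steep sweep is monotone. Hence any ``controlled remainder'' in a comparison of $\partial_\rho H$ with $\partial_\theta H$ would have to encode exactly the global fact to be proved: $\arg\zeta_f$ cannot make many large swings inside one dyadic annulus.

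Your fallback to~\cite{dh10} is a reference, not an argument, and a crossing count cannot be uniform either. Let $f(\xi)/\xi = ib + \sum_{j=1}^N \frac{w_j}{\xi - iy_j}$ with $b<0$, $y_j\in[1,2]$ well separated, and $w_j>0$ tiny. Then $\Gamma_f$ contains $N$ small, nearly semicircular arcs around the points $iy_j$, and a single ray close to the imaginary axis crosses all of them. So only an averaged crossing bound is available (total variation $=\int n(\theta_0)\,d\theta_0$), and establishing it is the whole content of~(d).

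As written, (d) is unproved. Either supply such a global argument, or state explicitly that (d) is taken from~\cite{k19}. That is where $C = 300$ comes from; \cite{dh10} has a better constant.
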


The next result describes the values of $f$ along the spine.

\begin{proposition}[Theorem~4.3 in~\cite{k19}]
\label{prop:rogers:lambda}
Suppose that $f$ is a nonconstant Rogers function.
\begin{enumerate}[label=\rm (\alph*)]
\item\label{it:rogers:lambda:a} For every $r \in (0, \infty) \setminus \partial Z_f$ we have $\zeta_f(r) \in D_f$.
\item\label{it:rogers:lambda:b} The function $\lambda_f(r)$, defined for $r \in (0, \infty) \setminus \partial Z_f$ by
\formula{
 \lambda_f(r) & = f(\zeta_f(r)) ,
}
extends in a unique way to a continuous, strictly increasing function of $r \in (0, \infty)$, and $\lambda_f'(r) > 0$ for every $r \in (0, \infty) \setminus \partial Z_f$.
\item\label{it:rogers:lambda:c} We have $\lambda_f(0^+) = f(0^+)$, and $\lambda_f(\infty^-) = f(\infty^-)$.
\end{enumerate}
\end{proposition}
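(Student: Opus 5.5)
Since this is Theorem~4.3 of~\cite{k19}, my plan is to reprove it from the exponential representation of Proposition~\ref{prop:rogers:exp} together with Proposition~\ref{prop:rogers:spine}. Throughout, $f$ is a nonconstant Rogers function with the representation~\eqref{eq:rogers:exp}, with constant $b$ and function $\ph$.

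For~\ref{it:rogers:lambda:a}, let $r \in (0, \infty) \setminus \partial Z_f$.
\begin{itemize}
\item If $r \in Z_f$, then $\zeta_f(r) \in \hp \subseteq D_f$.
\item Otherwise $r$ is an interior point of $(0, \infty) \setminus Z_f$. By continuity, $\arg \zeta_f$ is constant near $r$, say equal to $\tfrac{\pi}{2}$. (The other case, $-\tfrac{\pi}{2}$, follows by the symmetry $\xi \mapsto \overline{\xi}$ applied to $\overline{f(\overline{\xi})}$.) Then, by Proposition~\ref{prop:rogers:spine}\ref{it:rogers:spine:b}, $\im f(\xi) < 0$ for all $\xi \in \hp$ in a neighbourhood of the segment $i (r - \delta, r + \delta)$. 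The Rogers condition $\re(\xi^{-1} f(\xi)) \ge 0$ gives $\lv \arg f(\xi) - \arg \xi \rv \le \tfrac{\pi}{2}$. As $\arg \xi \to \tfrac{\pi}{2}$, this forces $\arg f(\xi) \in [-\eps, \pi]$. Combined with $\im f < 0$, the boundary limit of $\arg f$ at $i s$ is $0$. By the boundary formula of Proposition~\ref{prop:rogers:exp}, this is $\ph(-s) = 0$ for almost every $s$ near $r$. So $-i$ times these points are outside $\esssupp \ph$, and $\zeta_f(r) = i r \in D_f$.
\end{itemize}

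For~\ref{it:rogers:lambda:b}, on $Z_f$ I will use that each arc of $\Gamma_f$ is a real-analytic curve on which $\im f = 0$ and $f > 0$. Parametrise it by $r = \lv \xi \rv$ with unit tangent $\tau$, oriented outward. By Proposition~\ref{prop:rogers:spine}\ref{it:rogers:spine:b}, $\im f > 0$ on the side where $\arg \xi > \arg \zeta_f(r)$, which is the left side $i \tau$. The Cauchy--Riemann equations give
\[
 \partial_\tau f = f'(\zeta) \tau \in \R
 \qquad \text{and} \qquad
 \partial_\tau f = \im\bigl(f'(\zeta)\, i\tau\bigr) = \partial_{i\tau} \im f \ge 0 .
\]
Hence $\lambda_f$ is nondecreasing along each arc, with $\lambda_f'(r) = f'(\zeta_f(r))\, \zeta_f'(r)$. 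On the open set $\operatorname{Int}((0, \infty) \setminus Z_f)$, I would set $\lambda_f(r) = f(\pm i r)$, using the holomorphic extension from part~\ref{it:rogers:lambda:a}. There the same Cauchy--Riemann argument applies, because $f$ is real on these segments and $\im f$ has a fixed sign on the adjacent side. Continuity of $\lambda_f$ across $\partial Z_f$ will come from nontangential boundary limits: both $\lv f \rv$ and $\arg f$ vary continuously along $\zeta_f$. The modulus is controlled by the exponential representation, in which $\log \lv f(r e^{i\alpha}) \rv$ is continuous in $(r, \alpha)$ up to the boundary, away from the support of $\ph$, in a sense to be made precise. Once monotonicity and continuity are in place, uniqueness of the extension is immediate.

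The main obstacle is strictness, $\lambda_f'(r) > 0$. It requires $f'(\zeta_f(r)) \ne 0$ on the spine and transversality of the spine to circles. My first attempt is a direct computation from~\eqref{eq:rogers:exp}:
\[
 \xi \, \frac{f'(\xi)}{f(\xi)} = \frac{1}{\pi} \int_{-\infty}^\infty \frac{i s \xi}{(\xi + i s)^2} \, \frac{\ph(s)}{\lv s \rv} \, ds .
\]
At a spine point $\xi = r e^{i\alpha}$, I would show that $\re$ of this expression is strictly positive, i.e.\ that $\partial_r \log \lv f(r e^{i\alpha}) \rv > 0$. For this I would use the vanishing of $\arg f$ at $\xi$, namely $\int \im\bigl(\xi/(\xi + i s)\bigr) \ph(s) / \lv s \rv \, ds = 0$, to cancel the negative part of the kernel. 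The inequality would then follow from $0 \le \ph \le \pi$ and $\ph \not\equiv 0$.

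For~\ref{it:rogers:lambda:c}, monotonicity gives existence of both limits. To identify them, I would compare $\lambda_f(r) = \lv f(\zeta_f(r)) \rv$ with $f(r)$ using the bound $\lv f(r e^{i\alpha}) \rv / f(r) \in [C^{-1}, C]$. I expect this bound to follow from~\eqref{eq:rogers:exp}, because $\bigl| \log \lv \xi/(\xi + i s) \rv - \log\bigl(r/\sqrt{r^2 + s^2}\bigr) \bigr|$ is integrable against $\lv s \rv^{-1} ds$, uniformly in the scale $r$. In the limits $r \to 0^+$ and $r \to \infty$, this comparison combined with monotonicity yields $\lambda_f(0^+) = f(0^+)$ and $\lambda_f(\infty^-) = f(\infty^-)$.
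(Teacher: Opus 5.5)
The paper does not prove this statement; it quotes it as Theorem~4.3 of [k19]. So I assess your argument on its own terms.

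Part~(a) is correct, and so is your local analysis on $Z_f$. The cancellation you anticipate works exactly. For $\xi = re^{i\alpha}$, the points $u = \xi/(\xi+is)$, $s \in \R$, all lie on the circle $\lv u \rv^2 = \re u + \tan\alpha \, \im u$ through $0$ and $1$. Since $is\xi/(\xi+is)^2 = u - u^2$, this gives $\re(u - u^2) = 2(\im u)^2 - \tan\alpha \, \im u$. At a spine point, $\int \im u \, \ph(s) \lv s \rv^{-1} ds = \pi \arg f(\xi) = 0$, hence $\re(\xi f'(\xi)/f(\xi)) = \frac{2}{\pi} \int (\im u)^2 \, \ph(s) \lv s \rv^{-1} ds > 0$. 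The polar Cauchy--Riemann relation $\partial_\theta \arg f = r \, \partial_r \log \lv f \rv$ then gives the transversality you need.

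The genuine gap is the global part of~(b). All your arguments are local to one component of $(0,\infty) \setminus \partial Z_f$. After that you say uniqueness is immediate ``once monotonicity and continuity are in place''. But $\partial Z_f$, although closed with empty interior, need not be countable, and one can choose $\ph$ so that it has positive measure. For instance, take $\ph = \pi$ on $(0,\infty)$ and $\ph = \pi \ind_{-K}$ on $(-\infty,0)$, where $K = (1,2) \setminus C$ for a fat Cantor set $C$ whose gaps $I_n$ satisfy $\sum_n \lv I_n \rv^{1/3} \ll 1$.

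A continuous function that is strictly increasing on every complementary interval of such a set need not be monotone: take $r \mapsto \eps r - c(r)$ with $c$ the Cantor function. If $\partial Z_f$ were countable, continuity plus your local monotonicity would suffice, by choosing a level outside the countable set $\lambda_f(\partial Z_f)$. That is not the general situation.

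So you need two things you have not supplied:
\begin{itemize}
\item a genuinely global comparison of $\lambda_f(r_1)$ and $\lambda_f(r_2)$ for $r_1 < r_2$ in different components;
\item an actual proof that $\lambda_f$ has no jumps at points $r_0 \in \partial Z_f$.
\end{itemize}
Your proposed mechanism, continuity of $\log \lv f \rv$ up to the boundary away from the support of $\ph$, fails exactly at such points, since $\pm i r_0$ may lie in $-i \esssupp \ph$.

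A smaller gap concerns the components of $\Int((0,\infty) \setminus Z_f)$, where $\zeta_f(r) = \pm ir$. There your Cauchy--Riemann argument gives only $\lambda_f' \ge 0$. The cancellation above is also unavailable, because $u$ is real there. Strict positivity needs an extra argument, for example the Hopf boundary lemma for $\im f$: it vanishes on the segment $\pm i(r-\delta, r+\delta)$ and has a strict sign in the adjacent half-annulus.

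Part~(c) does not work as proposed. The bound $\lv f(re^{i\alpha}) \rv \asymp \lv f(r) \rv$, uniformly in $\alpha$, is false. Take $f(\xi) = \xi/(\xi - i)$, which is a Rogers function since $\re(\xi^{-1} f(\xi)) = \re \xi / \lv \xi - i \rv^2$. Then $\lv f(\eps + i) \rv \to \infty$ as $\eps \to 0^+$, while $\lv f(1) \rv = 2^{-1/2}$. Your integrability claim breaks down as $\alpha \to \pm\tfrac{\pi}{2}$. Also, the kernel in~\eqref{eq:rogers:exp} is $\re(\xi/(\xi+is))$, not $\log \lv \xi/(\xi+is) \rv$.

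More fundamentally, a two-sided comparison with a fixed constant $C > 1$ can only transfer the limit values $0$ and $\infty$. It cannot give $\lambda_f(\infty^-) = f(\infty^-)$ when $f(\infty^-)$ is finite and positive. The paper relies on exactly that case, for example $\limit_+ = f(\infty^-)$ in the proof of Proposition~\ref{prop:spine}. You need an argument that identifies these limits exactly, not up to constants.
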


We also need the following regularity property of the spine.

\begin{proposition}[Propositions~3.14 and~3.15 in~\cite{k25}]
\label{prop:rogers:holder}
If $f$ is a nonconstant Rogers function, then $\zeta_f$ and $\lambda_f^{-1}$ are locally Hölder continuous functions on $(0, \infty)$ and $(\lambda_f(0^+), \lambda_f(\infty^-))$, with exponents $\tfrac{1}{30}$ and $\tfrac{1}{3}$, respectively.
\end{proposition}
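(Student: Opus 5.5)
The plan is to treat the two claims separately, in both cases working with the harmonic function $u(\xi) = \arg f(\xi)$ on $\hp$. By Proposition~\ref{prop:rogers:exp}, $u$ is the Poisson-type integral
\formula{
 u(\xi) & = \frac{1}{\pi} \int_{-\infty}^\infty \im \frac{\xi}{\xi + i r} \, \frac{\ph(r)}{\lv r \rv} \, dr ,
}
with $0 \le \ph \le \pi$. By Proposition~\ref{prop:rogers:spine}\ref{it:rogers:spine:b}, for fixed $r$ the function $\theta \mapsto u(r e^{i \theta})$ on $(-\tfrac{\pi}{2}, \tfrac{\pi}{2})$ is negative below $\arg \zeta_f(r)$ and positive above it. After rescaling $\xi \mapsto \xi / r_0$, which preserves the class of Rogers functions, it suffices to prove uniform estimates on a fixed annulus $\tfrac12 \le \lv \xi \rv \le 2$. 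The constants then depend only on the compact range of $r$.

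\emph{Hölder continuity of $\lambda_f^{-1}$.} I would first derive a quantitative lower bound for the growth of $\lambda_f$ from the exponential representation. Differentiating $\log f$ along the spine, and using that $f(\zeta_f(r)) = \lambda_f(r)$ is real, gives
\formula{
 \frac{\lambda_f'(r)}{\lambda_f(r)} & = \re\Bigl(\frac{f'(\zeta_f(r))}{f(\zeta_f(r))} \, \zeta_f'(r)\Bigr) ,
}
where the integrand $\partial_\xi \log f$ is an explicit positive-kernel integral against $\ph(s) / \lv s \rv$. Integrating in $r$ avoids pointwise differentiability at $\partial Z_f$. I would compare $\log \lambda_f(r_2) - \log \lambda_f(r_1)$ with $\log \lv f(r_2) \rv - \log \lv f(r_1) \rv$, where the latter equals $\frac{1}{\pi} \int (\frac{r_2^2}{r_2^2 + s^2} - \frac{r_1^2}{r_1^2 + s^2}) \frac{\ph(s)}{\lv s \rv} \, ds \ge 0$. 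The aim is a bound of the form $\lambda_f(r_2) - \lambda_f(r_1) \ge c (r_2 - r_1)^3 \lambda_f(r_1)$ for nearby $r_1 < r_2$, which is exactly Hölder continuity of $\lambda_f^{-1}$ with exponent $\tfrac13$.

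The cube arises from a worst case in which $\ph$ concentrates near a point. There one only gains from the quadratic degeneracy of the kernel, and a further factor is lost in controlling how far $\zeta_f$ sits from the real axis. To handle this I would split $\ph$ into the part near $\pm i r_1$ and the remainder.

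\emph{Hölder continuity of $\zeta_f$.} For fixed $r$, I would bound from below the oscillation of $\theta \mapsto u(r e^{i\theta})$ near its zero $\thet_r = \arg \zeta_f(r)$. Concretely, I would show
\formula{
 \lv u(r e^{i(\thet_r \pm \delta)}) \rv & \ge c \, \delta^{a}
}
for some fixed power $a$. The idea is that $u$ is a nonnegative combination of harmonic measures of boundary intervals. Its angular derivative $\partial_\theta \im \frac{\xi}{\xi + is}$ has a sign structure that forces a definite increase across the zero, quantified via Harnack's inequality in a Whitney disk around $\zeta_f(r)$ of radius comparable to $\cos \thet_r$. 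Separately, $u$ is Lipschitz in $r$ at scale $\cos \thet$, by standard gradient bounds $\lv \nabla u \rv \le C / \re \xi$ for bounded harmonic functions. Combining the two estimates gives $\lv \thet_{r'} - \thet_r \rv \le C \lv r' - r \rv^{1/(a+1)}$ away from the imaginary axis. Near the axis, i.e.\ when $\thet_r$ is close to $\pm\tfrac{\pi}{2}$, I would use Proposition~\ref{prop:rogers:spine}\ref{it:rogers:spine:d} together with a boundary Harnack argument to propagate the estimate.

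I expect this last step to be the main obstacle. Degeneration of the Whitney scale near the imaginary axis is what produces the poor exponent $\tfrac{1}{30}$. I would try first a dyadic decomposition in $\cos\thet$, balancing the Harnack constant loss against the Lipschitz-in-$r$ gain at each scale.
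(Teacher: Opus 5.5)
The paper gives no proof of this statement; it is imported verbatim from \cite{k25}. As a standalone argument, your proposal has a genuine gap: both quantitative estimates that carry the result are stated as aims, and the mechanism you give for the central one does not work.

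\textbf{The transversality estimate.} You want $\lvert u(\rho e^{i(\thet_\rho\pm\delta)})\rvert\ge c\delta^a$ from a ``sign structure'' of the angular derivative of the kernel, combined with Harnack in a Whitney disc. Neither ingredient is available.
\begin{itemize}
\item $u=\arg f$ is not a nonnegative combination of harmonic measures. It is the Poisson integral over $i\R$ of $is\mapsto\sign(s)\,\ph(-s)$, that is, a difference of two nonnegative harmonic functions. That cancellation is exactly why $u$ vanishes on the spine, and it is why Harnack says nothing about $u$ near its zero set.
\item There is no sign structure. For $\xi=\rho e^{i\thet}$,
\[
 \partial_\thet\bigl(\re\xi/\lvert\xi-it\rvert^2\bigr)=\rho\bigl(2\rho t-(\rho^2+t^2)\sin\thet\bigr)/\lvert\xi-it\rvert^4 ,
\]
which changes sign. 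Correspondingly, $\thet\mapsto\arg f(\rho e^{i\thet})$ need not be monotone: for $f(\xi)=\xi/(\xi+is)$ with $\rho>s$ it goes from $0$ to negative values and back to $0$.
\end{itemize}

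\textbf{The $\lambda_f^{-1}$ half.} This part has no mechanism. The inequality $\log\lvert f(r_2)\rvert-\log\lvert f(r_1)\rvert\ge0$ concerns $f$ on $(0,\infty)$, whereas $\lambda_f(r)=\lvert f(\zeta_f(r))\rvert$. The $r$-dependent factor relating the two is not controlled by anything you propose, so monotonicity of $\lvert f(r)\rvert$ yields no lower bound on the increments of $\lambda_f$. The cube is asserted, not derived.

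\textbf{The region near the imaginary axis.} The vicinity of $\partial Z_f$, where the spine meets $i\R$, is exactly where the difficulty and the exponents $\frac{1}{30}$, $\frac13$ come from. There $\lambda_f'$ really degenerates: if an arc of $\Gamma_f$ ends at $ir_0$ and $f$ is analytic near $ir_0$, then $f'(ir_0)=0$. You leave this case as ``I would try'', and the tools you name do not cover it. Boundary Harnack does not apply, since $u$ changes sign and does not vanish on $i\R$. The length bound in item (d) of Proposition~\ref{prop:rogers:spine} works at the scale of dyadic annuli, so it gives no small-scale modulus of continuity.

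\textbf{What is missing.} The quantity with the monotonicity you want is $v(\xi)=\im f(\xi)/\re\xi$, not $\arg f$. With $c_2,\mu$ as in \eqref{eq:rogers:int}, one computes for $\xi=\rho e^{i\thet}$
\[
 \partial_\thet v(\xi)=2\rho\cos\thet\,\biggl(c_2+\frac{1}{\pi}\int_{\R\setminus\{0\}}\frac{r^2}{\lvert\xi+ir\rvert^4}\,\frac{\mu(dr)}{\lvert r\rvert}\biggr) .
\]
The bracket is bounded below by $c_2+\frac1\pi\int\lvert r\rvert(\rho+\lvert r\rvert)^{-4}\mu(dr)>0$, locally uniformly in $\rho$, unless $f$ is affine, which is a trivial case. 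So $v$ increases strictly along each circle at rate at least $c\cos\thet$. This is the quantitative transversality you were after.

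Feeding it into the polar Cauchy--Riemann equations along the level curve $\{\im f=0\}$ gives, on $Z_f$,
\[
 \lambda_f'(\rho)\ge\rho^{-1}\partial_\thet\im f(\zeta_f(\rho))=\cos\thet\,\partial_\thet v\ge c\cos^2\arg\zeta_f(\rho) .
\]
This at least indicates how a cubic modulus for $\lambda_f$ can arise. Even so, you still need a separate argument controlling how the spine and $\lambda_f$ behave near $i\R$, including on $(0,\infty)\setminus Z_f$. Without it the proposal does not establish the stated exponents. In this paper the proposition rests solely on the citation.
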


Using $r = \lv \zeta_f(r) \rv$ as a parameter in the description of $\Gamma_f$ was very natural in~\cite{k19,k25}. In our case, however, it will be more convenient to use $\lambda_f(r) = f(\zeta_f(r))$ instead. Since $\lambda_f$ is a continuous increasing function on $(0, \infty)$, $\zeta_f(\lambda_f^{-1}(\lambda))$ is well-defined for $\lambda \in (\lambda_f(0^+), \lambda_f(\infty^-))$, and it is a parameterisation of $\Gamma_f$ when $\lambda$ is restricted to the set $\lambda_f(Z_f)$.


\subsection{Spine of the symbol of an \texorpdfstring{$\amcm$}{AM/CM} sequence}
\label{sec:spine}

We turn to the analogues of the above results for the symbol $F$ of an $\amcm$ sequence.

\begin{definition}
\label{def:spine}
Let $F$ be the symbol of a nontrivial summable $\amcm$ sequence. The \emph{spine} of $F$ is the system of curves
\formula{
 \Gamma & = \{z \in \C : \im z > 0 , F(z) \in (0, \infty)\}
}
(see Figure~\ref{fig:spine}(b)). We also define
\formula{
 \Lambda & = F(\Gamma) = \{F(z) : z \in \Gamma\} .
}
\end{definition}

The above definition has the following symmetry property: when $(a_k)$ is changed to the reversed sequence $(\check a_k) = (a_{-k})$, then $\check\Gamma = \overline{\Gamma^{-1}} = \{z \in \C \setminus \{0\} : \overline{z^{-1}} \in \Gamma\}$ is the spine of the corresponding dual symbol $\check F(z) = F(z^{-1})$, and the set $\Lambda$ remains unchanged.

Propositions~\ref{prop:rogers:spine} and~\ref{prop:rogers:lambda} apply to both Rogers functions $f(\xi) = F(1 + i \xi)$ and $g(\xi) = F((1 + i \xi)^{-1})$, and this leads to further regularity of $\Gamma$.

\begin{proposition}
\label{prop:spine}
Let $F$ be the symbol of a two-sided summable $\amcm$ sequence. Then there is a number $\limit \in (0, \infty)$ such that $\limit \ge \sup \Lambda$, and a unique continuous complex-valued function $\gamma$ on $(0, \limit)$, with the following properties.
\begin{enumerate}[label=\rm (\alph*)]
\item\label{it:spine:a} We have $\im \gamma(\lambda) \ge 0$ for all $\lambda \in (0, \limit)$. We also have $\gamma(\lambda) \in D_F$ and $F(\gamma(\lambda)) = \lambda$ when $\lambda \in (0, \limit) \setminus \partial \Lambda$. Furthermore, $\gamma(0^+) = 1$, and $\gamma(\limit^-)$ is either a nonpositive real number or complex infinity. If $\gamma(\limit^-) \in (-\infty, 0)$, then $\limit = \sup \Lambda$ and $F(\gamma(\limit^-)) = \limit$.
\item\label{it:spine:b}
Both $\lv \gamma(\lambda) - 1 \rv$ and $\lv (\gamma(\lambda))^{-1} - 1 \rv$ are strictly increasing functions of $\lambda \in (0, \limit)$.
\item\label{it:spine:c}
The spine $\Gamma$ is the union of a countable family of pairwise disjoint simple real-analytic curves. All of them begin and end on $(0, \infty)$, except at most one, which may end at $\gamma(\limit^-)$.
\item\label{it:spine:d}
For every $r > 0$, the spine $\Gamma$ restricted to the disk $\{z \in \C : \lv z \rv < r\}$ is a system of rectifiable curves of total length at most $C (1 + r)$, where one can take $C = 300$. The same applies to the image of the spine under the inversion $z \mapsto z^{-1}$. In particular, if $\gamma(\limit^-) \in (-\infty, 0)$, then both $\Gamma$ and its image under the inversion have finite length.
\item\label{it:spine:e}
For every $z \in (-\infty, 0)$ we have $F(z) \ge \limit$, and equality holds only when $z = \gamma(\limit^-)$. Furthermore, $\limit \le 2 \mass$.
\item\label{it:spine:f}
The function $\gamma$ is locally Hölder continuous on $(0, \limit)$, with exponent $\tfrac{1}{90}$.
\end{enumerate}
For a one-sided $\amcm$ sequence, we have $\Gamma = \varnothing$ and $\Lambda = \varnothing$.
\end{proposition}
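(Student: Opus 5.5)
The plan is to transfer Propositions~\ref{prop:rogers:spine}, \ref{prop:rogers:lambda} and~\ref{prop:rogers:holder} to $F$ through the Rogers function $f(\xi) = F(1 + i \xi)$ of Lemma~\ref{lem:symbol:rogers}. The map $\xi \mapsto z = 1 + i \xi$ sends $\hp$ onto the half-plane $\{\im z > 0\}$, so
\[
 \Gamma = 1 + i \Gamma_f .
\]
Accordingly, I will define $\gamma(\lambda) = 1 + i \zeta_f(\lambda_f^{-1}(\lambda))$ for $\lambda \in (0, \limit)$, where $\limit = \lambda_f(\infty^-) = f(\infty^-) = F(-\infty^+)$; this is finite by Lemma~\ref{lem:range}. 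Since $\lambda_f(0^+) = f(0^+) = 0$, we get $\gamma(0^+) = 1$, because $\lv \zeta_f(r) \rv = r \to 0$.

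\textbf{Part~\ref{it:spine:a}.} The inequality $\im \gamma \ge 0$ is equivalent to $\re \zeta_f \ge 0$. The statements $\gamma(\lambda) \in D_F$ and $F(\gamma(\lambda)) = \lambda$ off $\partial \Lambda$ follow from Proposition~\ref{prop:rogers:lambda}\ref{it:rogers:lambda:a},\ref{it:rogers:lambda:b}, since $D_F$ corresponds to $D_f$. The limit $\gamma(\limit^-)$ comes from $\zeta_f(r)$ as $r \to \infty$. I will show that $\arg \zeta_f(r)$ eventually equals $\tfrac{\pi}{2}$, or that $\zeta_f(r)$ tends to a point $i(1 - x)$ with $x \le 0$. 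Here I use that $f$ is real and positive on $(i, i\infty)$ (Proposition~\ref{prop:characterisation}), and Proposition~\ref{prop:rogers:spine}\ref{it:rogers:spine:b} then forces $\arg \zeta_f = \pm\tfrac{\pi}{2}$ there. The sign analysis of $\im f$ near $-i$ uses the segment $(-i \infty, i]$ together with the arc $\Gamma_f$.

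\textbf{Part~\ref{it:spine:b}.} The identity $\lv \gamma - 1 \rv = \lv \zeta_f(\lambda_f^{-1}(\lambda)) \rv = \lambda_f^{-1}(\lambda)$ is strictly increasing. For the second monotonicity I repeat the construction with $g(\xi) = F((1 + i \xi)^{-1})$, whose spine parameterises $\{z^{-1} : z \in \overline{\Gamma}\}$. The key step is to check that both constructions give the same curve and the same parameter $\lambda$. This holds because both spines are characterised as the set where $F$ is real and positive in the corresponding half-planes, and $F(\bar z) = \overline{F(z)}$. Uniqueness of $\zeta_f, \zeta_g$ and strict monotonicity of $\lambda_f, \lambda_g$ then match the parameterisations. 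This reconciliation, particularly at points of $\partial \Lambda$ where $\gamma$ touches $(0, \infty)$, is the main obstacle. I would handle it by continuity: both curves agree on the open set of $\lambda$ where they lie in $\{\im z > 0\}$ and satisfy $F = \lambda$. On each complementary interval, both curves lie on the real line with $\lv z - 1 \rv$ and $\lv z^{-1} - 1 \rv$ pinned down, so continuity of $\gamma$ extends the agreement.

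\textbf{Parts~\ref{it:spine:c}--\ref{it:spine:f}.} Part~\ref{it:spine:c} follows from Proposition~\ref{prop:rogers:spine}\ref{it:rogers:spine:c}. Curves of $\Gamma_f$ ending on the imaginary axis correspond to curves of $\Gamma$ ending on $\R$. Endpoints in $(1,\infty)$ or $(0,1)$ are allowed. An endpoint on $(-\infty,0]$ can only be $\gamma(\limit^-)$, since $F > 0$ there, and the monotonicity in~\ref{it:spine:b} prevents returning.

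For part~\ref{it:spine:d}, I apply the annulus bound of Proposition~\ref{prop:rogers:spine}\ref{it:rogers:spine:d} to $\Gamma_f$ dyadically. Near $1$ the total length is summable because of the factor $r$. The length of $\Gamma \cap \{\lv z \rv < r\}$ is controlled by annuli $\lv \xi \rv \le 1 + r$ and is $O(1 + r)$ after summing; the constant needs some care. The same argument with $g$ handles the inversion.

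For part~\ref{it:spine:e}, $F$ is real on $(-\infty,0)$ with $F > 0$. If $F(x) < \limit$ for some $x<0$, then $F(x)=\lambda_f(r)$ for some $r$. Running the intermediate value theorem on the circle $\lv \xi \rv = \lv 1 - x \rv$ via~\ref{it:rogers:spine:b} contradicts monotonicity. Equality is handled similarly. The bound $\limit \le 2\mass$ uses $\lv F \rv \le 2 \mass$ on $\torus$, together with the fact that $\Gamma$ crosses $\torus$.

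For part~\ref{it:spine:f}, I compose the $\tfrac{1}{30}$-Hölder map $\zeta_f$ with the $\tfrac{1}{3}$-Hölder map $\lambda_f^{-1}$ from Proposition~\ref{prop:rogers:holder}, which gives exponent $\tfrac{1}{90}$.

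\textbf{One-sided case.} By Remark~\ref{rem:onesided}, if the sequence is one-sided then $F$ is holomorphic and real off one ray, and $\im F$ has a constant sign in the upper half-plane by~\eqref{eq:poisson}, so $\Gamma = \varnothing$.
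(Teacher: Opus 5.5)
Your proposal has a genuine gap at its very first step: the choice $\limit = \lambda_f(\infty^-) = f(\infty^-) = F(-\infty^+)$ is the wrong endpoint.

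First, Lemma~\ref{lem:range} does not make this value finite; it allows $\infty$, for instance whenever $\mu_+(\{0\}) > 0$. For the tridiagonal symbol $F(z) = p(1 - z) + q(1 - z^{-1})$ one has $F(-\infty^+) = \infty$, whereas the correct value is $\limit = (\sqrt{p} + \sqrt{q})^2 = F(-\sqrt{q/p})$.

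More fundamentally, $\lv \zeta_f(r) \rv = r$, so your $\gamma(\lambda) = 1 + i \zeta_f(\lambda_f^{-1}(\lambda))$ \emph{always} tends to $\cinfty$ as $\lambda \to f(\infty^-)$. Your plan to show that $\zeta_f(r)$ tends to a finite point $i(1 - x)$ therefore cannot work. With your definition, the case $\gamma(\limit^-) \in (-\infty, 0)$ never occurs, and that is exactly the case Theorem~\ref{thm:main} needs.

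What actually happens when the last arc of $\Gamma$ ends at $x_1 \in (-\infty, 0)$ is that $\zeta_f(r) = i r$ for all $r \ge 1 - x_1$. So your $\gamma$ keeps running along $(-\infty, x_1]$ towards $\cinfty$ for $\lambda \in (\sup \Lambda, f(\infty^-))$. On that segment $\lv \gamma^{-1} - 1 \rv = 1 + \lv \gamma \rv^{-1}$ decreases, so part~(b) fails. Also $F(x_1) = \sup \Lambda < f(\infty^-)$, so part~(e) fails. Similar failures occur if the last arc ends in $(0, 1)$ or at $0$: the $f$-parameterisation then runs through $0$ and out along the negative axis.

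The missing idea is that $\limit$ must be the right end of the maximal interval on which two parameterisations coincide. These are $\gamma_+(\lambda) = 1 + i \zeta_f(\lambda_f^{-1}(\lambda))$ for $\lambda < f(\infty^-)$, and $\gamma_-(\lambda) = (1 - i \overline{\zeta_g(\lambda_g^{-1}(\lambda))})^{-1}$ for $\lambda < g(\infty^-)$. You do bring in $g$, but you treat agreement of the two curves as something continuity reconciles everywhere. It cannot hold globally, because $\gamma_+$ always ends at $\cinfty$ and $\gamma_-$ always ends at $0$.

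The paper establishes agreement in three stages:
\begin{enumerate}
\item It proves $\gamma_+ = \gamma_-$ on $\Cl \Lambda$, using injectivity of $F$ on $\Gamma$.
\item It extends this to each gap of $\Lambda$ below $\sup \Lambda$. Here \emph{both} monotonicities are used to show that the gap endpoints lie in $(0, \infty)$, so both curves traverse the same real interval, on which $F$ is strictly monotone.
\item It runs a case analysis on $x_1 = \gamma_\pm(\sup \Lambda)$: $\limit = \sup \Lambda$ if $x_1 \in (-\infty, 0] \cup \{\cinfty\}$; $\limit = g(\infty^-)$ if $x_1 \in (0, 1)$; $\limit = f(\infty^-)$ if $x_1 \in (1, \infty)$. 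The case $x_1 = 1$ reduces to one of the last two by two-sidedness.
\end{enumerate}

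Only with this $\limit$ do the remaining claims follow. Part~(b) comes directly from the definitions. $\limit$ is finite, because $\gamma(\limit^-)$ cannot be both $\cinfty$ and $0$, so $\limit$ is strictly below $f(\infty^-)$ or $g(\infty^-)$. Part~(e) holds because $\gamma_+([\limit, f(\infty^-)]) \cup \gamma_-([\limit, g(\infty^-)])$ covers $(-\infty, 0) \setminus \{x_1\}$ with $F > \limit$ there.

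Finally, $\limit \le 2 \mass$ should come from $\limit \le F(-1)$. Knowing that $\Gamma$ meets $\torus$ would only bound some element of $\Lambda$, not $\limit$.
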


\begin{proof}
Suppose that $F$ is the symbol of a nontrivial $\amcm$ sequence. We divide the argument into seven steps.

\begin{figure}
\centering
\setlength{\tabcolsep}{0pt}
\begin{tabular}{cc}
\begin{tikzpicture}
\footnotesize
\coordinate (X) at (2.4,0);
\coordinate (Y) at (0,3);
\coordinate (Xn) at (-4.8,0);
\coordinate (Yn) at (0,-1.6);
\node[cyan, above] at (-4.2,0.1) {$\gamma_+(\lambda)$};
\node[magenta, above] at (-1.5,0.1) {$\gamma_-(\lambda)$};
\draw[->] (Xn) -- (X) node[above] {$\re z$};
\draw[->] (Yn) -- (Y) node[left] {$\im z$};
\draw[very thick, side by side=magenta:cyan, violet, decoration={markings,
  mark=at position 0.15 with {\arrow{>}},
  mark=at position 0.4 with {\arrow{>}},
  mark=at position 0.8 with {\arrow{>}}
 }, postaction={decorate}] (1.4,0) .. controls (2.2,0.8) and (1.4,2.3) .. (-0.6,2) .. controls (-2.6,1.7) and (-3.6,1.2) .. (-3.6,0);
\draw[very thick, cyan, decoration={markings,
  mark=at position 0.6 with {\arrow{>}}
 }, postaction={decorate}] (-3.6,0) -- (Xn);
\draw[very thick, magenta, decoration={markings,
  mark=at position 0.6 with {\arrow{>}}
 }, postaction={decorate}] (-3.6,0) -- (0,0);
\filldraw (1.4,0) circle[radius=1pt] node[below] {$1$};
\filldraw (-3.6,0) circle[radius=1pt] node[violet, below] {$\gamma(\limit^-)$};
\end{tikzpicture}
&
\begin{tikzpicture}
\footnotesize
\coordinate (X) at (2.4,0);
\coordinate (Y) at (0,3);
\coordinate (Xn) at (-4.8,0);
\coordinate (Yn) at (0,-1.6);
\node[cyan, below left] at (1,0.6) {$\check\gamma_+(\lambda)$};
\node[magenta, above] at (-0.6,1) {$\gamma_-(\lambda)$};
\draw (0,0) circle[radius=1.4];
\draw[->] (Xn) -- (X) node[above] {$\re z$};
\draw[->] (Yn) -- (Y) node[left] {$\im z$};
\draw[very thick, magenta, decoration={markings,
  mark=at position 0.15 with {\arrow{>}},
  mark=at position 0.4 with {\arrow{>}},
  mark=at position 0.8 with {\arrow{>}}
 }, postaction={decorate}] (1.4,0) .. controls (2.2,0.8) and (1.4,2.3) .. (-0.6,2) .. controls (-2.6,1.7) and (-3.6,1.2) .. (-3.6,0);
\draw[very thick, magenta, decoration={markings,
  mark=at position 0.6 with {\arrow{>}}
 }, postaction={decorate}] (-0.5444,0) -- (0,0);
\draw[very thick, cyan, decoration={markings,
  mark=at position 0.15 with {\arrow{>}},
  mark=at position 0.4 with {\arrow{>}},
  mark=at position 0.8 with {\arrow{>}}
 }, postaction={decorate}] (1.4,0) .. controls (0.6,0.8) and (0.2802,1.154) .. (-0.2698,0.899) .. controls (-0.8196,0.6442) and (-0.5444,0.1814) .. (-0.5444,0);
\draw[very thick, cyan, decoration={markings,
  mark=at position 0.6 with {\arrow{>}}
 }, postaction={decorate}] (-3.6,0) -- (Xn);
\draw[very thick, side by side=cyan:magenta, decoration={markings,
  mark=at position 0.4 with {\arrow[cyan]{<}},
  mark=at position 0.6 with {\arrow[magenta]{>}}
 }, postaction={decorate}] (-3.6,0) -- (-0.5444,0);
\filldraw (1.4,0) circle[radius=1pt] node[below right] {$1$};
\filldraw (-3.6,0) circle[radius=1pt] node[violet, below] {$\gamma(\limit^-)$};
\end{tikzpicture}
\\
(a)&(b)
\end{tabular}
\caption{(a)~The curves parameterised by $\gamma_+$ (cyan) and $\gamma_-$ (magenta). (b)~The curve parameterised by $\gamma_-$ (magenta) and its image under inversion with respect to the unit circle, parameterised by $\check\gamma_+$ (cyan).}
\label{fig:spine:pm}
\end{figure}

\emph{Step 1.}
By Lemma~\ref{lem:symbol:rogers}, $f(\xi) = F(1 + i \xi)$ defines a Rogers function of $\xi$, with $f(0^+) = 0$. We denote $\limit_+ = f(\infty^-) \in (0, \infty]$. If we set $z = 1 + i \xi$, then $\im z > 0$ if and only if $\re \xi > 0$, and $F(z) \in (0, \infty)$ if and only if $f(\xi) \in (0, \infty)$. In other words, $z \in \Gamma$ if and only if $\xi \in \Gamma_f$. By Proposition~\ref{prop:rogers:spine},
\formula{
 \Gamma & = \{ 1 + i \zeta_f(r) : r \in Z_f \}
}
and $\Lambda = F(\Gamma) = \lambda_f(Z_f) \subseteq (0, \limit_+)$.

It will be more convenient for us to use $\lambda = F(1 + i \zeta_f(r)) = f(\zeta_f(r)) = \lambda_f(r)$ instead of $r$ as a parameter. Recall that by Proposition~\ref{prop:rogers:lambda}, $\lambda_f$ is a continuous increasing function which maps $(0, \infty)$ onto $(f(0^+), f(\infty^-)) = (0, \limit_+)$. The inverse function $\lambda_f^{-1}$ maps $(0, \limit_+)$ onto $(0, \infty)$, and for $\lambda \in (0, \limit_+)$ we define
\formula[eq:gamma:p]{
 \gamma_+(\lambda) & = 1 + i \zeta_f(\lambda_f^{-1}(\lambda))
}
(see Figure~\ref{fig:spine:pm}(a)). We set $\gamma_+(0) = 1$ and $\gamma_+(\limit_+)$ to be the complex infinity $\cinfty$, so that $\gamma_+$ is continuous on $[0, \limit_+]$ with values on the Riemann sphere $\C \cup \{\cinfty\}$. Since $Z_f = \lambda_f^{-1}(\Lambda)$, we have
\formula[]{
\label{eq:lambda:p:g}
 \Gamma & = \{ \gamma_+(\lambda) : \lambda \in \Lambda \} , \\
\label{eq:lambda:p:l}
 \Lambda & = \{ \lambda \in (0, \limit_+) : \im \gamma_+(\lambda) > 0 \} .
}
Finally, consider $\lambda \in (0, \limit_+) \setminus \partial \Lambda$. Denote $z = \gamma_+(\lambda)$ and $r = \lambda_f^{-1}(\lambda)$. Then $r \in (0, \infty) \setminus Z_f$, and hence, by Proposition~\ref{prop:rogers:spine}, we have $\zeta_f(r) \in D_f$. It follows that $\gamma_+(\lambda) = 1 + i \zeta_f(r) \in D_F$. Furthermore, $\lambda = \lambda_f(r) = f(\zeta_f(r)) = F(\gamma_+(\lambda)) = F(z)$. In other words,
\formula[eq:gamma:p:f1]{
 F(\gamma_+(\lambda)) & = \lambda
}
for $\lambda \in (0, \limit_+) \setminus \partial \Lambda$.

\emph{Step 2.}
We repeat the first step for the dual symbol $\check F(z) = F(z^{-1})$ and the corresponding Rogers function $g$ from Lemma~\ref{lem:symbol:rogers}. This way we obtain a function
\formula{
 \check \gamma_+(\lambda) & = 1 + i \zeta_g(\lambda_g^{-1}(\lambda)) ,
}
defined for $\lambda \in [0, \limit_-]$, where $\limit_- = g(\infty^-) \in (0, \infty]$. The function $\check \gamma_+$ restricted to $\Lambda$ parameterises the spine $\check \Gamma = \{z \in \C : \im z > 0, F(z^{-1}) \in (0, \infty)\}$ of $\check F$.

Recall that $F(\overline z) = \overline{F(z)}$. Thus, the following conditions are equivalent when $\im z > 0$: $z \in \check \Gamma$; $F(z^{-1}) \in (0, \infty)$, $F(\overline z^{-1}) \in (0, \infty)$; $\overline z^{-1} \in \Gamma$. It follows that $\Gamma$ is parameterised by $(\overline{\check \gamma_+(\lambda)})^{-1}$, which we denote by $\gamma_-(\lambda)$. To be specific,
\formula[eq:gamma:m]{
 \gamma_-(\lambda) & = (\overline{\check \gamma_+(\lambda)})^{-1} = (1 - i \overline{\zeta_g(\lambda_g^{-1}(\lambda))})^{-1}
}
for $\lambda \in (0, \limit_-)$, with $0^{-1} = \cinfty$, and $\gamma_-(0) = 1$, $\gamma_-(\limit_-) = 0$; see Figure~\ref{fig:spine:pm}(b). We have already proved that
\formula{
 \Gamma & = \{ \gamma_-(\lambda) : \lambda \in \Lambda \} , \\
 \Lambda & = \{ \lambda \in (0, \limit_-) : \im \gamma_-(\lambda) > 0 \} .
}
Finally, for $\lambda \in (0, \limit_-) \setminus \partial \Lambda$, except possibly one value for which $\gamma_+(\lambda) = \cinfty$, we have
\formula{
 F(\gamma_-(\lambda)) & = F(\overline{\gamma_-(\lambda)}) = \check F(\check \gamma_+(\lambda)) = \lambda .
}

\emph{Step 3.}
Observe that if $\lambda \in \Lambda$, then $\gamma_+(\lambda), \gamma_-(\lambda) \in \Gamma$ and $F(\gamma_+(\lambda)) = \lambda = F(\gamma_-(\lambda))$. Since $F$ is one-to-one on $\Gamma$, it follows that
\formula[eq:gamma:pm]{
 \gamma_+(\lambda) & = \gamma_-(\lambda)
}
for $\lambda \in \Lambda$ and hence, by continuity, for $\lambda \in \Cl \Lambda$. By definition, the above equality also holds for $\lambda = 0$: $\gamma_+(0) = \gamma_-(0) = 1$.

In Steps~5 and~6, we extend~\eqref{eq:gamma:pm} to $\lambda \in [0, \limit]$ for an appropriate $\limit$.

\emph{Step 4.}
Consider $\lambda_1, \lambda_2 \in \{0\} \cup \partial \Lambda$ such that $\lambda_1 < \lambda_2$. Since~\eqref{eq:gamma:pm} holds for $\lambda = \lambda_1$ and $\lambda = \lambda_2$, we may define $x_1 = \gamma_+(\lambda_1) = \gamma_-(\lambda_1)$ and $x_2 = \gamma_+(\lambda_2) = \gamma_-(\lambda_2)$. By~\eqref{eq:lambda:p:g} and~\eqref{eq:lambda:p:l}, we have $x_1, x_2 \in \R \cup \{\cinfty\}$, and by~\eqref{eq:gamma:p}, $\lv x_1 - 1 \rv < \lv x_2 - 1 \rv$. Similarly, by~\eqref{eq:gamma:m}, $\lv x_1^{-1} - 1 \rv < \lv x_2^{-1} - 1 \rv$. We consider four possibilities.
\begin{itemize}
\item If $x_1 \in (-\infty, 0)$, then $\lv x_2 - 1 \rv > 1 - x_1 > 1$ and $\lv x_2^{-1} - 1 \rv > 1 - x_1^{-1} > 1$. The former condition fails for $x_2 \in [0, 1]$, while the latter one for $x_2 \in [1, \infty) \cup \{\cinfty\}$. Hence, $x_2 < 0$, and we have $1 - x_2 > 1 - x_1$ and $1 - x_2^{-1} > 1 - x_1^{-1} > 1$, a contradiction. Thus, $x_1 \in (-\infty, 0)$ is impossible.
\item Since $\lv x_1 - 1 \rv < \lv x_2 - 1 \rv$, we cannot have $x_1 = \cinfty$. Similarly, $\lv x_1^{-1} - 1 \rv < \lv x_2^{-1} - 1 \rv$ makes $x_1 = 0$ impossible.
\end{itemize}
It follows that necessarily $x_1 \in (0, \infty)$. Furthermore, if $\lambda_2 < \sup \Lambda$, then the above arguments applied to the pair $(\lambda_2, \sup \Lambda)$ instead of $(\lambda_1, \lambda_2)$ imply that $x_2 \in (0, \infty)$. Thus, we are left with two possibilities:
\begin{itemize}
\item $x_1, x_2 \in (0, \infty)$;
\item $x_1 \in (0, \infty)$, $x_2 \in \{\cinfty\} \cup (-\infty, 0]$, and $\lambda_2 = \sup \Lambda$.
\end{itemize}

\begin{figure}
\centering
\begin{tikzpicture}
\footnotesize
\coordinate (X) at (4.8,0);
\coordinate (Y) at (0,4);
\coordinate (Xn) at (-4.8,0);
\coordinate (Yn) at (0,-0.6);
\node[cyan, above] at (-4,0.1) {$\gamma_+(\lambda)$};
\node[magenta, above] at (-1.3,0.1) {$\gamma_-(\lambda)$};
\draw[->] (Xn) -- (X) node[above] {$\re z$};
\draw[->] (Yn) -- (Y) node[left] {$\im z$};
\draw[very thick, side by side=magenta:cyan, violet, decoration={markings,
  mark=at position 0.06 with {\arrow{>}},
  mark=at position 0.18 with {\arrow{>}},
  mark=at position 0.55 with {\arrow{>}},
  mark=at position 0.85 with {\arrow{>}}
 }, postaction={decorate}] (1.4,0) .. controls (1.4,0.4) and (1,0.2) .. (1,0) -- (0.6,0) .. controls (0.6,2) and (2.2,1.4) .. (2.6,0) -- (3.4,0) .. controls (4,4) and (-3,4) .. (-3,0);
\draw[very thick, cyan, decoration={markings,
  mark=at position 0.6 with {\arrow{>}}
 }, postaction={decorate}] (-3,0) -- (Xn);
\draw[very thick, magenta, decoration={markings,
  mark=at position 0.6 with {\arrow{>}}
 }, postaction={decorate}] (-3,0) -- (0,0);
\filldraw (1.4,0) circle[radius=1pt] node[below] {$1$};
\filldraw (-3,0) circle[radius=1pt] node[violet, below] {$\gamma(\limit^-)$};
\filldraw (2.6,0) circle[radius=1pt] node[below] {$x_1$\vphantom{0}};
\filldraw (3.4,0) circle[radius=1pt] node[below] {$x_2$\vphantom{0}};
\draw[very thick, decorate, decoration={snake, segment length=5pt, amplitude=2pt}] (2.6,0) -- (3.4,0);
\end{tikzpicture}
\caption{An interval $[x_1, x_2]$ in Step~5 of the proof of Proposition~\ref{prop:spine}.}
\label{fig:spine:interval}
\end{figure}

\emph{Step 5.}
Consider a maximal interval $(\lambda_1, \lambda_2)$ in $(0, \sup \Lambda) \setminus \Cl \Lambda$. Then $0 \le \lambda_1 < \lambda_2 < \sup \Lambda$ and $\lambda_1, \lambda_2 \in \partial \Lambda \cup \{0\}$. Hence, \eqref{eq:gamma:pm} holds for $\lambda = \lambda_1$ and $\lambda = \lambda_2$: we have $\gamma_+(\lambda_1) = \gamma_-(\lambda_1)$ and $\gamma_+(\lambda_2) = \gamma_-(\lambda_2)$. We denote these two points by $x_1$ and $x_2$, respectively. By Step~4, we have $x_1, x_2 \in (0, \infty)$.

Let $r_1 = \lambda_f^{-1}(\lambda_1) \in [0, \infty)$ and $r_2 = \lambda_f^{-1}(\lambda_2) \in (0, \infty)$. Then $\zeta_f$ is a linear function on $[r_1, r_2]$: either $\zeta_f(r) = i r$ for $r \in [r_1, r_2]$, or $\zeta_f(r) = -i r$ for $r \in [r_1, r_2]$. Furthermore, $\zeta_f(r) \in D_f$ for $r \in (r_1, r_2)$, and $\lambda_f(r) = f(\zeta_f(r))$ for $r \in (r_1, r_2)$.

It follows that $\gamma_+$ maps $[\lambda_1, \lambda_2]$ onto the interval $[\gamma_+(\lambda_1), \gamma_+(\lambda_2)] = [x_1, x_2]$, and either $x_1 = 1 - r_1$, $x_2 = 1 - r_2$, or $x_1 = 1 + r_1$, $x_2 = 1 + r_2$. Therefore, $0 < x_2 < x_1 \le 1$ or $1 \le x_1 < x_2$. Furthermore, $\gamma_+(\lambda) \in D_F$ for $\lambda \in (\lambda_1, \lambda_2)$, and
\formula[eq:gamma:p:f2]{
 F(\gamma_+(\lambda)) & = \lambda
}
for $\lambda \in (\lambda_1, \lambda_2)$. In particular, $F$ is strictly monotone on $(x_1, x_2)$.

By the same argument, $\check \gamma_+$ maps $[\lambda_1, \lambda_2]$ onto the interval $[\check \gamma_+(\lambda_1), \check \gamma_+(\lambda_2)] = [\check x_1, \check x_2]$. Furthermore, $\check \gamma_+(\lambda) \in D_{\check F}$ and $\check F(\check \gamma_+(\lambda)) = \lambda$ for $\lambda \in (\lambda_1, \lambda_2)$.

Observe that $\gamma_-(\lambda) = (\overline{\check \gamma_+(\lambda)})^{-1} = (\check \gamma_+(\lambda))^{-1}$ for $\lambda \in [\lambda_1, \lambda_2]$. Also, $x_1 = \gamma_-(\lambda_1) = (\check \gamma_+(\lambda_1))^{-1} = (\check x_1)^{-1}$, and similarly $x_2 = (\check x_2)^{-1}$. It follows that $\gamma_-$ maps $[\lambda_1, \lambda_2]$ onto the interval $[x_1, x_2]$. Furthermore, $\gamma_-(\lambda) \in D_F$ and $F(\gamma_-(\lambda)) = \lambda$ for $\lambda \in (\lambda_1, \lambda_2)$.

It follows that for every $\lambda \in (\lambda_1, \lambda_2)$, both $x = \gamma_+(\lambda)$ and $x = \gamma_-(\lambda)$ satisfy $x \in (x_1, x_2)$ and $F(x) = \lambda$. Since $F$ is strictly monotone on $(x_1, x_2)$, we have $\gamma_+(\lambda) = \gamma_-(\lambda)$, and so formula~\eqref{eq:gamma:pm} holds for $\lambda \in (\lambda_1, \lambda_2)$.

Since $(\lambda_1, \lambda_2)$ is an arbitrary maximal interval in $(0, \sup \Lambda) \setminus \Cl \Lambda$, and~\eqref{eq:gamma:pm} holds for $\lambda \in \{0\} \cup \Cl \Lambda$, we conclude that in fact~\eqref{eq:gamma:pm} holds for every $\lambda \in [0, \sup \Lambda]$.

\emph{Step 6.}
Let $\limit_0 = \sup \Lambda$. We now perform a similar analysis of $\gamma_+$ and $\gamma_-$ on intervals $[\limit_0, \limit_+]$ or $[\limit_0, \limit_-]$, respectively. Note that these intervals may be degenerate if $\limit_0 = \limit_+$ or $\limit_0 = \limit_-$.

We have $\gamma_+(\limit_+) = \cinfty$. Thus, if $x_1 = \gamma_+(\limit_0) = \gamma_-(\limit_0) \in \R \cup \{\cinfty\}$, then $\gamma_+$ maps $[\limit_0, \limit_+]$ onto an interval with endpoints $x_1$ and $\cinfty$: $[x_1, \infty) \cup \{\cinfty\}$ if $x_1 \in (1, \infty)$, $\{\cinfty\} \cup (-\infty, x_1]$ if $x_1 \in (-\infty, 1)$, either of them if $x_1 = 1$, and a degenerate interval $\{\cinfty\}$ if $x_1 = \cinfty$ (and necessarily $\limit_0 = \limit_+$). Furthermore, $\gamma_+(\lambda) \in D_F$ and $F(\gamma_+(\lambda)) = \lambda > \limit_0$ for $\lambda \in (\limit_0, \limit_+)$.

Similarly, $\check \gamma_+$ maps $[\limit_0, \limit_-]$ onto an interval with endpoints $\check x_1 = x_1^{-1}$ and $\cinfty$. It follows that $\gamma_-$ maps $[\limit_0, \limit_-]$ onto an arc of the circle $\R \cup \{\cinfty\}$ on the Riemann sphere with endpoints $x_1$ and $0$: $x_1$ is the left endpoint if $x_1^{-1} < 1$, the right endpoint if $x_1^{-1} > 1$, either of them if $x_1 = 1$, and the interval is degenerate if $x_1 = 0$ (and necessarily $\limit_0 = \limit_-$). Furthermore, $\gamma_-(\lambda) \in D_F$ and $F(\gamma_-(\lambda)) = \lambda > \limit_0$ for $\lambda \in (\limit_0, \limit_-)$, except perhaps a single value for which $\gamma_-(\lambda) = \cinfty$.

\begin{figure}
\centering
\setlength{\tabcolsep}{0pt}
\begin{tabular}{cc}
\begin{tikzpicture}
\footnotesize
\coordinate (X) at (2.2,0);
\coordinate (Y) at (0,3);
\coordinate (Xn) at (-3.4,0);
\coordinate (Yn) at (0,-0.4);
\draw[->] (Xn) -- (X) node[above] {$\re z$};
\draw[->] (Yn) -- (Y) node[left] {$\im z$};
\draw[very thick, side by side=magenta:cyan, violet, decoration={markings,
  mark=at position 0.15 with {\arrow{>}},
  mark=at position 0.4 with {\arrow{>}},
  mark=at position 0.8 with {\arrow{>}}
 }, postaction={decorate}] (1.4,0) .. controls (2.2,2.8) and (-2.4,3) .. (-2.4,0);
\draw[very thick, cyan, decoration={markings,
  mark=at position 0.6 with {\arrow{>}}
 }, postaction={decorate}] (-2.4,0) -- (Xn);
\draw[very thick, cyan, densely dotted, decoration={markings,
  mark=at position 0.6 with {\arrow{>>}}
 }, postaction={decorate}] (Xn) -- (-4.4,0);
\draw[very thick, magenta, decoration={markings,
  mark=at position 0.6 with {\arrow{>}}
 }, postaction={decorate}] (-2.4,0) -- (0,0);
\filldraw (1.4,0) circle[radius=1pt] node[below] {$1$};
\filldraw (-2.4,0) circle[radius=1pt] node[violet, below] {$x_1$\vphantom{0}};
\filldraw (0,0) circle[radius=1pt] node[magenta, above right] {$\gamma_-(\limit_-)$} node[below right] {$0$};
\filldraw (-4.4,0) circle[radius=1pt] node[cyan, above] {$\gamma_+(\limit_+)$} node[below] {$\cinfty$\vphantom{0}};
\end{tikzpicture}
&
\begin{tikzpicture}
\footnotesize
\coordinate (X) at (3.4,0);
\coordinate (Y) at (0,3);
\coordinate (Xn) at (-2.4,0);
\coordinate (Yn) at (0,-0.4);
\draw[->] (Xn) -- (X) node[above] {$\re z$};
\draw[->] (Yn) -- (Y) node[left] {$\im z$};
\draw[very thick, side by side=magenta:cyan, violet, decoration={markings,
  mark=at position 0.3 with {\arrow{>}},
  mark=at position 0.6 with {\arrow{>}}
 }, postaction={decorate}] (1.4,0) .. controls (2,1.2) and (0.2,2) .. (0,0);
\draw[very thick, cyan, decoration={markings,
  mark=at position 0.6 with {\arrow{>}}
 }, postaction={decorate}] (0,0) -- (Xn);
\draw[very thick, cyan, densely dotted, decoration={markings,
  mark=at position 0.6 with {\arrow{>>}}
 }, postaction={decorate}] (Xn) -- (-3.4,0);
\filldraw (1.4,0) circle[radius=1pt] node[below] {$1$};
\filldraw (0,0) circle[radius=1pt] node[violet, above left] {$x_1$} node[below left] {$0$};
\filldraw (-3.4,0) circle[radius=1pt] node[cyan, above] {$\gamma_+(\limit_+)$} node[below] {$\cinfty$\vphantom{0}};
\end{tikzpicture}
\\
(i) $x_1 = \gamma_\pm(\limit_0) = \gamma_\pm(\limit)$ &
(ii) $x_1 = 0 = \gamma_\pm(\limit_0) = \gamma_\pm(\limit) = \gamma_\pm(\limit_-)$ \\[1em]
\begin{tikzpicture}
\footnotesize
\coordinate (X) at (3.4,0);
\coordinate (Y) at (0,3);
\coordinate (Xn) at (-2.4,0);
\coordinate (Yn) at (0,-0.4);
\draw[->] (Xn) -- (X) node[above] {$\re z$};
\draw[->] (Yn) -- (Y) node[left] {$\im z$};
\draw[very thick, side by side=magenta:cyan, violet, decoration={markings,
  mark=at position 0.25 with {\arrow{>}},
  mark=at position 0.7 with {\arrow{>}},
  mark=at position 0.999 with {\arrow{>>}}
 }, postaction={decorate}] (1.4,0) .. controls (2.4,1.4) and (-0.4,1.2) .. (1.6,3);
\draw[very thick, magenta, decoration={markings,
  mark=at position 0.6 with {\arrow{>}}
 }, postaction={decorate}] (Xn) -- (0,0);
\draw[very thick, magenta, densely dotted, decoration={markings,
  mark=at position 0.6 with {\arrow{>>}}
 }, postaction={decorate}] (-3.4,0) -- (Xn);
\filldraw (1.4,0) circle[radius=1pt] node[below] {$1$};
\filldraw (0,0) circle[radius=1pt] node[magenta, above right] {$\gamma_-(\limit_-)$} node[below right] {$0$};
\filldraw (-3.4,0) circle[radius=1pt] node[cyan, above] {\phantom{$\gamma_+(\limit_+)$}} node[violet, above] {$x_1$} node[below] {$\cinfty$\vphantom{0}};
\end{tikzpicture}
&
\begin{tikzpicture}
\footnotesize
\coordinate (X) at (3.4,0);
\coordinate (Y) at (0,3);
\coordinate (Xn) at (-2.4,0);
\coordinate (Yn) at (0,-0.4);
\draw[->] (Xn) -- (X) node[above] {$\re z$};
\draw[->] (Yn) -- (Y) node[left] {$\im z$};
\draw[very thick, side by side=magenta:cyan, violet, decoration={markings,
  mark=at position 0.25 with {\arrow{>}},
  mark=at position 0.7 with {\arrow{>}}
 }, postaction={decorate}] (1.4,0) .. controls (2,1) and (0.6,1) .. (0.6,0);
\draw[very thick, side by side=magenta:cyan, violet, decoration={markings,
  mark=at position 0.6 with {\arrow{>}}
 }, postaction={decorate}] (0.6,0) -- (0,0);
\draw[very thick, cyan, decoration={markings,
  mark=at position 0.6 with {\arrow{>}}
 }, postaction={decorate}] (0,0) -- (Xn);
\draw[very thick, cyan, densely dotted, decoration={markings,
  mark=at position 0.6 with {\arrow{>>}}
 }, postaction={decorate}] (Xn) -- (-3.4,0);
\filldraw (1.4,0) circle[radius=1pt] node[below] {$1$};
\filldraw (0.6,0) circle[radius=1pt] node[violet, below] {$x_1$\vphantom{0}};
\filldraw (0,0) circle[radius=1pt] node[magenta, above left] {$\gamma_-(\limit_-)$} node[below left] {$0$};
\filldraw (-3.4,0) circle[radius=1pt] node[cyan, above] {$\gamma_+(\limit_+)$} node[below] {$\cinfty$\vphantom{0}};
\end{tikzpicture}
\\
(iii) $x_1 = \cinfty = \gamma_\pm(\limit_0) = \gamma_\pm(\limit) = \gamma_\pm(\limit_+)$ &
(iv) $x_1 = \gamma_\pm(\limit_0)$, $0 = \gamma_\pm(\limit) = \gamma_\pm(\limit_-)$ \\[1em]
\begin{tikzpicture}
\footnotesize
\coordinate (X) at (4.6,0);
\coordinate (Y) at (0,3);
\coordinate (Xn) at (-1.2,0);
\coordinate (Yn) at (0,-0.4);
\draw[->] (Xn) -- (X) node[above] {$\re z$};
\draw[->] (Yn) -- (Y) node[left] {$\im z$};
\draw[very thick, side by side=magenta:cyan, violet, decoration={markings,
  mark=at position 0.3 with {\arrow{>}},
  mark=at position 0.8 with {\arrow{>}}
 }, postaction={decorate}] (1.4,0) .. controls (2,1.2) and (3.2,2) .. (3.2,0);
\draw[very thick, side by side=magenta:cyan, violet, decoration={markings,
  mark=at position 0.6 with {\arrow{>}},
  mark=at position 0.999 with {\arrow{>>}}
 }, postaction={decorate}] (3.2,0) -- (X);
\draw[very thick, magenta, decoration={markings,
  mark=at position 0.6 with {\arrow{>}}
 }, postaction={decorate}] (Xn) -- (0,0);
\draw[very thick, magenta, densely dotted, decoration={markings,
  mark=at position 0.6 with {\arrow{>>}}
 }, postaction={decorate}] (-2.2,0) -- (Xn);
\filldraw (1.4,0) circle[radius=1pt] node[below] {$1$};
\filldraw (0,0) circle[radius=1pt] node[magenta, above left] {$\gamma_-(\limit_-)$} node[below right] {$0$};
\filldraw (3.2,0) circle[radius=1pt] node[violet, below] {$x_1$\vphantom{0}};
\filldraw (-2.2,0) circle[radius=1pt] node[cyan, above] {$\gamma_+(\limit_+)$} node[below] {$\cinfty$\vphantom{0}};
\end{tikzpicture}
&
\begin{tikzpicture}
\footnotesize
\draw[->] (-2.2,0) -- (3.4,0) node[above] {$\re z$};
\draw[->] (-2.2,2.6) -- (3.4,2.6) node[above] {$\re z$};
\draw[->] (0,-0.4) -- (0,1) node[right] {$\im z$};
\draw[->] (0,2.2) -- (0,3.6) node[right] {$\im z$};
\draw[very thick, side by side=magenta:cyan, violet, decoration={markings,
  mark=at position 0.6 with {\arrow{>}},
  mark=at position 0.999 with {\arrow{>>}}
 }, postaction={decorate}] (1.4,0) -- (3.4,0);
\draw[very thick, side by side=magenta:cyan, violet, decoration={markings,
  mark=at position 0.6 with {\arrow{>}}
 }, postaction={decorate}] (1.4,2.6) -- (0,2.6);
\draw[very thick, cyan, decoration={markings,
  mark=at position 0.6 with {\arrow{>}}
 }, postaction={decorate}] (0,2.6) -- (-2.2,2.6);
\draw[very thick, magenta, decoration={markings,
  mark=at position 0.6 with {\arrow{>}}
 }, postaction={decorate}] (-2.2,0) -- (0,0);
\draw[very thick, cyan, densely dotted, decoration={markings,
  mark=at position 0.6 with {\arrow{>>}}
 }, postaction={decorate}] (-2.2,2.6) -- (-3.4,2.6);
\draw[very thick, magenta, densely dotted, decoration={markings,
  mark=at position 0.6 with {\arrow{>>}}
 }, postaction={decorate}] (-3.4,0) -- (-2.2,0);
\filldraw (1.4,0) circle[radius=1pt] node[below] {$1$} node[violet, above] {$x_1$\vphantom{$\gamma_-$}};
\filldraw (1.4,2.6) circle[radius=1pt] node[below] {$1$} node[violet, above] {$x_1$\vphantom{$\gamma_-$}};
\filldraw (0,0) circle[radius=1pt] node[magenta, above right] {$\gamma_-(\limit_-)$} node[below right] {$0$};
\filldraw (0,2.6) circle[radius=1pt] node[magenta, above left] {$\gamma_-(\limit_-)$} node[below left] {$0$};
\filldraw (-3.4,0) circle[radius=1pt] node[cyan, above] {$\gamma_+(\limit_+)$} node[below] {$\cinfty$\vphantom{0}};
\filldraw (-3.4,2.6) circle[radius=1pt] node[cyan, above] {$\gamma_+(\limit_+)$} node[below] {$\cinfty$\vphantom{0}};
\end{tikzpicture}
\\
(v) $x_1 = \gamma_\pm(\limit_0)$, $\cinfty = \gamma_\pm(\limit) = \gamma_\pm(\limit_+)$ &
(vi) $x_1 = 1 = \gamma_\pm(\limit_0)$
\end{tabular}
\caption{The six scenarios in Step~6 in the proof of Proposition~\ref{prop:spine}.}
\label{fig:spine:cases}
\end{figure}

We now consider six cases; see Figure~\ref{fig:spine:cases}. All maps listed below are one-to-one and onto.
\begin{enumerate}[label=(\roman*)]
\item\label{it:spine:last:1} If $x_1 \in (-\infty, 0)$, then $\gamma_+ : [\limit_0, \limit_+] \to \{\cinfty\} \cup (-\infty, x_1]$, $\gamma_- : [\limit_0, \limit_-] \to [x_1, 0]$. Furthermore, $\gamma_+$ is decreasing on $(\limit_0, \limit_+)$, $\gamma_-$ is increasing on $(\limit_0, \limit_-)$.
\item\label{it:spine:last:2} If $x_1 = 0$, then $\limit_0 = \limit_-$, while $\gamma_+ : [\limit_0, \limit_+] \to \{\cinfty\} \cup (-\infty, 0]$ is decreasing on $(\limit_0, \limit_+)$.
\item\label{it:spine:last:3} If $x_1 = \cinfty$, then $\limit_0 = \limit_+$, while $\gamma_- : [\limit_0, \limit_-] \to \{\cinfty\} \cup (-\infty, 0]$ is increasing on $(\limit_0, \limit_-)$.
\item\label{it:spine:last:4} If $x_1 \in (0, 1)$, then $\gamma_+ : [\limit_0, \limit_+] \to \{\cinfty\} \cup (-\infty, x_1]$, $\gamma_- : [\limit_0, \limit_-] \to [0, x_1]$. Furthermore, $\gamma_+$ and $\gamma_-$ are decreasing on $(\limit_0, \limit_+)$ and $(\limit_0, \limit_-)$, respectively. In this case $(0, x_1) \subseteq D_F$, and~\eqref{eq:gamma:pm} extends to $\lambda \in [0, \limit_-]$.
\item\label{it:spine:last:5} If $x_1 \in (1, \infty)$, then $\gamma_+ : [\limit_0, \limit_+] \to [x_1, \infty) \cup \{\cinfty\}$, $\gamma_- : [\limit_0, \limit_-] \to [x_1, \infty) \cup \{\cinfty\} \cup (-\infty, 0]$. Furthermore, $\gamma_+$ is increasing on $(\limit_0, \limit_+)$, and $\gamma_-$ is piecewise increasing on $(\limit_0, \limit_-)$. In this case $(x_1, \infty) \subseteq D_F$, and~\eqref{eq:gamma:pm} extends to $\lambda \in [0, \limit_+]$.
\item\label{it:spine:last:6} Finally, suppose that $x_1 = 1$. If $\gamma_+$ decreases on $[\limit_0, \limit_+)$ and maps it onto $(-\infty, 1]$, then $(0, 1) \subseteq D_F$, and so $\ph_-$ is equal to zero almost everywhere. If $\gamma_+$ increases on $[\limit_0, \limit_+)$ and maps it onto $[1, \infty)$, then $(1, \infty) \subseteq D_F$, and so $\ph_+$ is equal to zero almost everywhere. Therefore, the corresponding $\amcm$ sequence is one-sided. A similar argument applies to $\gamma_-$. Since we assumed that the corresponding $\amcm$ sequence is nontrivial, $\ph_+$ and $\ph_-$ cannot be simultaneously equal to zero almost everywhere, and hence $x_1$ is either the left or the right endpoint of both intervals $\gamma_+([\limit_0, \limit_+])$ and $\gamma_-([\limit_0, \limit_-])$. This leads us to the conclusion that one of the scenarios~\ref{it:spine:last:4}, \ref{it:spine:last:5} holds, but with $x_1 = 1$.
\end{enumerate}
Recall that $\limit_0 = \sup \Lambda$. We set $\limit = \limit_0$ in case~\ref{it:spine:last:1}, $\limit = \limit_0 = \limit_-$ in case~\ref{it:spine:last:2}, $\limit = \limit_0 = \limit_+$ in case~\ref{it:spine:last:3}, $\limit = \limit_-$ in case~\ref{it:spine:last:4} (possibly with $x_1 = 1$ as in case~\ref{it:spine:last:6}), and $\limit = \limit_+$ in case~\ref{it:spine:last:5} (possibly with $x_1 = 1$ as in case~\ref{it:spine:last:6}).

Observe that in each case identity~\eqref{eq:gamma:pm} holds for $\lambda \in [0, \limit]$, and we have $\gamma_+(\limit) = \gamma_-(\limit) \in \{\cinfty\} \cup (-\infty, 0]$. Furthermore, $\gamma_+(\limit) = \gamma_-(\limit) \in (0, \infty)$ only in scenario~\ref{it:spine:last:1}.

\emph{Step 7.}
The conclusion of the previous step allows us to define $\gamma(\lambda) = \gamma_+(\lambda) = \gamma_-(\lambda)$ for $\lambda \in (0, \limit)$. We now verify all assertions of the proposition.

First of all, if $F$ is the symbol of a one-sided $\amcm$ sequence, then, by~\eqref{eq:poisson}, $\im F$ has a constant sign in the region $\{z \in \C : \im z > 0\}$, and therefore $\Gamma = \varnothing$ and $\Lambda = \varnothing$. Conversely, if $\Lambda = \varnothing$, then we have $x_1 = \gamma_+(\limit) = 1$ in Step~6, scenario~\ref{it:spine:last:6} applies, and therefore $F$ is the symbol of a one-sided $\amcm$ sequence.

Suppose that $F$ is the symbol of a two-sided $\amcm$ sequence, that is, $\Lambda \ne \varnothing$.
\begin{itemize}
\item By the definition~\eqref{eq:gamma:p} of $\gamma_+$, we have $\gamma(0^+) = 1$, and $\im \gamma(\lambda) \ge 0$ for $\lambda \in (0, \limit)$.
\item If $\lambda \in \Lambda$, then $\gamma(\lambda) \in \Gamma$ and $F(\gamma(\lambda)) = \lambda$ by~\eqref{eq:gamma:p:f1}. If $\lambda \in (0, \sup \Lambda) \setminus \Cl \Lambda$, then again $\gamma(\lambda) \in D_F$ and $F(\gamma(\lambda)) = \lambda$ by~\eqref{eq:gamma:p:f2}. Finally, if $\lambda \in (\sup \Lambda, \limit)$, then we are in scenario~\ref{it:spine:last:4}, \ref{it:spine:last:5} or~\ref{it:spine:last:6}, and once again $\gamma(\lambda) \in D_F$ and $F(\gamma(\lambda)) = \lambda$.
\item We have $\gamma(\limit^-) = \gamma_+(\limit) \in \{\cinfty\} \cup (-\infty, 0]$. Furthermore, if $\gamma_+(\limit) \in (-\infty, 0)$, then we are in scenario~\ref{it:spine:last:1}, and so $F(\gamma_+(\limit)) = \limit \in (0, \infty)$.
\item If $\gamma(\limit^-) \ne \cinfty$, then $\gamma_+(\limit) = \gamma(\limit^-) \ne \cinfty = \gamma_+(\limit_+)$, so that $\limit < \limit_+$, and hence $\limit \in (0, \infty)$. Similarly, if $\gamma(\limit^-) \ne 0$, then $\gamma_-(\limit) = \gamma(\limit^-) \ne 0 = \gamma_-(\limit_-)$, and so $\limit < \limit_-$ and $\limit \in (0, \infty)$.
\end{itemize}
This completes the proof of item~\ref{it:spine:a} and shows that $\limit \in (0, \infty)$.

Item~\ref{it:spine:b} is a direct consequence of the definitions~\eqref{eq:gamma:p} and~\eqref{eq:gamma:m} of $\gamma_+$ and $\gamma_-$.

Since $\zeta_f$ and $\lambda_f$ are real-analytic on $Z_f$ and $\lambda_f' \ne 0$ on $Z_f$, the function $\gamma_+ = 1 + i \zeta_f \circ \lambda_f^{-1}$ (see~\eqref{eq:gamma:p}) is real-analytic on $\Lambda = \lambda_f(Z_f)$. Furthermore, every connected component of $\Gamma$ is parameterised by $\gamma$ restricted to a maximal interval $(\lambda_1, \lambda_2)$ contained in $\Lambda$. Since $\lambda_1, \lambda_2 \in \partial \Lambda$, by Step~5 we have $\gamma(\lambda_1^+) = \gamma_+(\lambda_1) \in (0, \infty)$, and $\gamma(\lambda_2^-) = \gamma_+(\lambda_2)$ is in $(0, \infty)$ if $\lambda_2 < \limit$. Thus, all connected components of $\Gamma$ begin and end at $(0, \infty)$, except possibly one with $\lambda_2 = \limit$, which ends at $\gamma(\limit^-) = \gamma_+(\limit) \in \{\cinfty\} \cup (-\infty, 0]$. This completes the proof of item~\ref{it:spine:c}.

Item~\ref{it:spine:d} follows immediately from Proposition~\ref{prop:rogers:spine} and the definitions~\eqref{eq:gamma:p} and~\eqref{eq:gamma:m} of $\gamma_+$ and $\gamma_-$.

Item~\ref{it:spine:e} is a consequence of the analysis conducted in Step~6: we have $F(\gamma_+(\lambda)) > \limit$ for $\lambda \in (\limit, \limit_+)$ and $F(\gamma_-(\lambda)) > \limit$ for $\lambda \in (\limit, \limit_-)$, and in each of the six scenarios the intervals $\gamma_+([\limit, \limit_+])$ and $\gamma_-([\limit, \limit_-])$ cover $(-\infty, 0)$, except perhaps a single point $x_1 = \gamma_+(\limit) = \gamma_-(\limit) = \gamma(\limit^-)$. By continuity, $F(x) \ge \limit$ for every $x \in (-\infty, 0)$. Since $\lv F(z) \rv \le 2 \mass$ for $z \in \torus$, in particular we have $\limit \le F(-1) \le 2 \mass$.

Finally, by Proposition~\ref{prop:rogers:holder}, $\zeta_f$ and $\lambda_f^{-1}$ are locally Hölder continuous on their domains with exponents $\tfrac{1}{30}$ and $\tfrac{1}{3}$, respectively. Since the composition of Hölder continuous functions is Hölder continuous, it follows that $\gamma(\lambda) = \gamma_+(\lambda) = 1 + i \zeta_f(\lambda_f^{-1}(\lambda))$ is Hölder continuous with exponent $\tfrac{1}{30} \cdot \tfrac{1}{3}$.
\end{proof}

It is clear that the parameterisation $\check\gamma$ of the spine $\check\Gamma$ of the symbol $\check F(z) = F(z^{-1})$ of the reversed sequence $(\check a_k) = (a_{-k})$ is given by $\check\gamma(\lambda) = (\overline{\gamma}(\lambda))^{-1}$.

Following the analogous definitions for Rogers functions in~\cite{k19,k25}, if $\Gamma$ is the spine of the symbol $F$ of an $\amcm$ sequence, then we define the \emph{symmetrised spine} $\Gamma^\star$ to be the union of $\Gamma$, all endpoints of $\Gamma$, and the image of $\Gamma$ under complex conjugation.

More precisely, we extend the definition of $\gamma$ to $(-\limit, \limit)$ so that $\gamma(0) = \gamma(0^+)$ and $\gamma(-\lambda) = \overline{\gamma}(\lambda)$ for $\lambda \in (0, \limit)$. Then
\formula[eq:spine:star]{
 \Gamma^\star & = \bigcup_{(\lambda_1, \lambda_2)} \{\gamma(\lambda) : \lambda \in (-\lambda_2, -\lambda_1) \cup [\lambda_1, \lambda_2]\} ,
}
where the union is taken over all maximal intervals $(\lambda_1, \lambda_2) \subseteq \Lambda$.

The symmetrised spine $\Gamma^\star$ (or, more precisely, its closure) divides the complex plane $\C$ into two parts: $D_F^+$ lying to the left of $\Gamma^\star$, and $D_F^-$ lying to the right of $\Gamma^\star$; see Figure~\ref{fig:spine}(b). We have $\sign \im F(x) = \sign \im x$ when $x \in D_F^+ \cap D_F$ and $\sign \im F(x) = -\im x$ when $x \in D_F^-$ (see Section~4.2 in~\cite{k19} for a similar discussion in the context of Rogers functions). More formally, $D_F^+$ and $D_F^-$ are defined using the function $\gamma_+$ introduced in the proof of Proposition~\ref{prop:spine}: we have
\formula[]{
\label{eq:dom:m}
 D_F^- & = \Int \{z \in \C : \lv \arg(z - 1) \rv \le \arg(\gamma_+(\lambda) - 1)\} , \\
\label{eq:dom:p}
 D_F^+ & = \Int \{z \in \C : \lv \arg(1 - z) \rv \le \lv \arg(1 - \gamma_+(\lambda)) \rv\} ;
}
here $\Int$ denotes the interior of a set, the complex argument function $\arg$ takes values in $(-\pi, \pi]$, and $\arg 0 = 0$.


\subsection{Wiener--Hopf factorisation of Rogers functions}

The Wiener--Hopf factorisation of a function $F$ on $\torus$ is the representation of $F$ as the product of two factors, one holomorphic in the unit disk $\disk$, and another one holomorphic in its complement $\C \setminus \overline \disk$. This can be written in the following form:
\formula{
 F(z) & = F^+(z) F^-(z^{-1}) ,
}
where $F^+$ and $F^-$ are holomorphic in $\disk$ and continuous up to the boundary.

If $f$ is a nonzero Rogers function with exponential representation~\eqref{eq:rogers:exp}, then the Wiener--Hopf factors of $f$ are given by
\formula[]{
\label{eq:rogers:wh:exp:p}
 f^+(\xi) & = b^+ \, \exp\biggl(\frac{1}{\pi} \int_0^\infty \biggl(\frac{\xi}{\xi + r} - \frac{1}{1 + r}\biggr) \, \frac{\ph(r)}{r} \, dr\biggr) , \\
\label{eq:rogers:wh:exp:m}
 f^-(\xi) & = b^- \, \exp\biggl(\frac{1}{\pi} \int_0^\infty \biggl(\frac{\xi}{\xi + r} - \frac{1}{1 + r}\biggr) \, \frac{\ph(-r)}{r} \, dr\biggr) ,
}
where $b^+, b^- > 0$ satisfy $b^+ b^- = b$; see Equation~(5.2) in~\cite{k19}. By Theorem~5.1 in~\cite{k19}, $f^+$ and $f^-$ are complete Bernstein functions, and
\formula[eq:rogers:wh]{
 f(\xi) & = f^+(-i \xi) f^-(i \xi)
}
for all $\xi \in D_f$. Furthermore, the above equation determines the complete Bernstein functions $f^+$ and $f^-$ uniquely up to multiplication by a constant.

By Proposition~3.12(b) in~\cite{k19}, if $f$ is a nonzero Rogers function, then $\xi^2 / f(\xi)$ is also a Rogers function. Suppose that $f$ has the Wiener--Hopf factorisation~\eqref{eq:rogers:wh}. Then the Wiener--Hopf factors of $\xi^2 / f(\xi)$ are given by $\xi / f^+(\xi)$ and $\xi / f^-(\xi)$, that is, the corresponding Wiener--Hopf factorisation is
\formula{
 \frac{\xi^2}{f(\xi)} & = \frac{-i \xi}{f^+(-i \xi)} \, \frac{i \xi}{f^-(i \xi)} \, .
}
This follows directly from the fact that $\xi / f^+(\xi)$ and $\xi / f^-(\xi)$ are complete Bernstein functions, and the uniqueness of the Wiener--Hopf factorisation.

We have the following fundamental result.

\begin{proposition}[Proposition~3.11 in~\cite{k25}]
\label{prop:rogers:wh:int}
If $f$ is a nonconstant Rogers function such that $f(0^+) = 0$, $\sigma > 0$ and $f_\sigma(\xi) = \sigma + f(\xi)$, then for $\xi, \eta > 0$ we have
\formula{
 f_\sigma^+(\xi) f_\sigma^-(\eta) & = \sigma \exp\biggl(\frac{1}{\pi} \int_0^\infty \bigl(\arg(\zeta_f(r) + i \eta) - \arg(\zeta_f(r) - i \xi)\bigr) \, \frac{d\lambda_f(r)}{\sigma + \lambda_f(r)}\biggr) ,
}
and the above expression defines a complete Bernstein function of $\sigma$. (The integral on the right-hand side is a Lebesgue--Stieltjes one.)
\end{proposition}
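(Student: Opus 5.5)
The plan is to obtain the formula by differentiating the Wiener--Hopf factorisation in the parameter $\sigma$, evaluating the derivative by a contour deformation onto the spine, and then integrating back in $\sigma$. Throughout, $f_\sigma^\pm$ denote the Wiener--Hopf factors of the Rogers function $f_\sigma = \sigma + f$, normalised as in~\eqref{eq:rogers:wh:exp:p} and~\eqref{eq:rogers:wh:exp:m}.

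\emph{Step 1: reduce to a logarithmic derivative.} Since $\re(\xi^{-1} f(\xi)) \ge 0$ and $\sigma > 0$, the function $\sigma + f$ has no zeros in $\hp$, so $\log f_\sigma$ is holomorphic there. The factorisation $f_\sigma(\xi) = f_\sigma^+(-i\xi) f_\sigma^-(i\xi)$ is the multiplicative version of a splitting of $\log f_\sigma$ into parts holomorphic in the two half-planes, realised by a Cauchy-type integral along the imaginary axis. Differentiating in $\sigma$ therefore expresses
\[
 \frac{\partial}{\partial\sigma} \log\bigl(f_\sigma^+(\xi) f_\sigma^-(\eta)\bigr)
\]
as a Cauchy-type integral of $1/(\sigma + f)$ against the kernels $1/(\zeta - i\xi)$ and $1/(\zeta + i\eta)$, up to an additive constant. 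I would fix the normalisation of the constants $b^\pm$ (and hence this additive constant) by the requirement $f_\sigma^+(\xi) f_\sigma^-(\eta) \sim \sigma$ as $\sigma \to \infty$. This normalisation is consistent with the factor $\sigma$ in the claimed formula, because the integral term vanishes as $\sigma \to \infty$.

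\emph{Step 2: deform the contour onto the spine.} I would move the integration contour from the imaginary axis onto the symmetrised spine of $f$. The regions to the left and to the right of the symmetrised spine are exactly those where $\im f$ has a fixed sign, by Proposition~\ref{prop:rogers:spine}\ref{it:rogers:spine:b}. On the spine itself $f(\zeta_f(r)) = \lambda_f(r)$ is real. Hence the only contribution that survives after the deformation comes from the jump of $\log(\zeta \pm i\cdot)$ across the spine. Taking imaginary parts, this produces the weight
\[
 \arg(\zeta_f(r) + i\eta) - \arg(\zeta_f(r) - i\xi) \in [0,\pi],
\]
integrated against $d\lambda_f(r)/(\sigma + \lambda_f(r))^2$. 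Integrating in $\sigma$ from $\sigma$ to $\infty$ and using the normalisation of Step~1 then yields the claimed exponent with the kernel $d\lambda_f(r)/(\sigma + \lambda_f(r))$. Parametrising by $\lambda = \lambda_f(r)$ is legitimate because $\lambda_f$ is continuous and increasing with $\lambda_f(0^+) = f(0^+) = 0$ (Proposition~\ref{prop:rogers:lambda}).

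\emph{Step 3: complete Bernstein property in $\sigma$.} Write $\psi(\lambda)$ for the weight $\arg(\zeta_f(r) + i\eta) - \arg(\zeta_f(r) - i\xi)$ with $r = \lambda_f^{-1}(\lambda)$. Formally, $\sigma = \exp\bigl(\int_0^\infty (\text{const} - (\sigma+\lambda)^{-1})\, d\lambda\bigr)$. Combining this with the formula, the right-hand side takes the standard exponential form of a complete Bernstein function of $\sigma$, with density $1 - \psi/\pi \in [0,1]$. That it is complete Bernstein then follows from the exponential characterisation of such functions, as in Chapter~6 of~\cite{ssv26}.

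\emph{Main obstacle.} I expect the hard part to be justifying the deformation in Step~2. The spine may consist of countably many arcs, and it is only Hölder regular near the boundary points $\partial Z_f$, so one must control the behaviour near those points and at $0$ and $\infty$. For this I would use the length bound of Proposition~\ref{prop:rogers:spine}\ref{it:rogers:spine:d} together with the Hölder continuity from Proposition~\ref{prop:rogers:holder}. I would first prove the formula for Rogers functions whose spine is a finite union of smooth arcs, and then pass to general $f$ by approximation, using continuity of the Wiener--Hopf factors in $\ph$ and dominated convergence in the Lebesgue--Stieltjes integral.
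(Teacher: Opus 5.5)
The paper gives no proof of this statement; it is imported from~\cite{k25}, so your plan has to stand on its own. Its overall shape is sensible, and Step~3 is fine. The weight $\psi(r)=\arg(\zeta_f(r)+i\eta)-\arg(\zeta_f(r)-i\xi)$ lies in $[0,\pi]$, because both points have the same nonnegative real part and the first has the larger imaginary part. Use the compensator $\lambda/(1+\lambda^2)$ in place of your constant, and put $\psi:=0$ for $\lambda\ge f(\infty^-)$. You then get the exponential representation of a complete Bernstein function of $\sigma$ with density $1-\psi/\pi$. However, there is a genuine gap in how you produce the prefactor $\sigma$.

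You treat the prefactor as a normalisation: the derivative identity holds ``up to an additive constant'', which you fix by demanding $f_\sigma^+(\xi)f_\sigma^-(\eta)\sim\sigma$ as $\sigma\to\infty$. This cannot work, for two reasons.

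\emph{The constants are not at your disposal.} Only the split of $b_\sigma=b^+b^-$ is arbitrary, and $b_\sigma$ itself is determined by $f_\sigma$.

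\emph{The additive term depends on $\sigma$.} It comes from $\partial_\sigma\log b_\sigma$ and the compensators $1/(1+r)$ in~\eqref{eq:rogers:wh:exp:p}--\eqref{eq:rogers:wh:exp:m}. A single asymptotic condition at $\sigma=\infty$ fixes only one constant of integration, not a function of $\sigma$.

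Your argument uses the hypothesis $f(0^+)=0$ only to start the parametrisation at $\lambda=0$. Without that hypothesis it would ``prove'' a false statement. Take the Rogers function $f(\xi)=c_0+\xi^2$ with $c_0>0$: its spine is $(0,\infty)$ and $\lambda_f(r)=c_0+r^2$. Then $f_\sigma^+(\xi)f_\sigma^-(\eta)=(\sqrt{\sigma+c_0}+\xi)(\sqrt{\sigma+c_0}+\eta)$, which is $\sigma+c_0$ times the exponential in the statement, not $\sigma$ times it. Yet it is still $\sim\sigma$ as $\sigma\to\infty$.

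The prefactor has to come from $\xi,\eta\to0^+$. Letting $\xi\to0^+$ in $f_\sigma(\xi)=f_\sigma^+(-i\xi)f_\sigma^-(i\xi)$ gives $f_\sigma^+(0^+)f_\sigma^-(0^+)=\sigma+f(0^+)=\sigma$ for every $\sigma$. Meanwhile the exponent in the formula tends to $0$ by monotone convergence, since $\psi$ decreases pointwise to $0$ as $\xi,\eta\downarrow0$. Equivalently, in your integration by parts along the spine, the boundary term at $r\to0^+$ is $\psi(0^+)/(\pi(\sigma+\lambda_f(0^+)))=1/\sigma$, because $\psi(0^+)=\pi$ and $\lambda_f(0^+)=f(0^+)=0$. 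This is exactly the term your Step~2 drops.

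Separately, the contour deformation you defer is the real content of the result, and your sketch skips a point that any version of it must handle. On $(0,\infty)\setminus Z_f$ the curve $\zeta_f$ runs along the imaginary axis. If $\zeta_f(\xi)=i\xi$ or $\zeta_f(\eta)=-i\eta$, the deformed contour passes through a pole of your kernels. This is precisely where $\psi$ jumps by $\pi$, and the corresponding residue must come out of the deformation. Already in Step~1, the $\sigma$-derivative of the boundary argument need not be a function: for $f(\xi)=\xi^2$, $\sigma+f$ vanishes at $\pm i\sqrt\sigma$, where the boundary argument jumps. Finally, your approximation scheme requires $\zeta_{f_n}\to\zeta_f$ and $\lambda_{f_n}\to\lambda_f$, which you do not address.
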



\subsection{Wiener--Hopf factorisation of symbols of \texorpdfstring{$\amcm$}{AM/CM} sequences}

A formal definition of the Wiener--Hopf factors of symbols of $\amcm$ sequences involves the exponential representation from Proposition~\ref{prop:exp}.

\begin{definition}
\label{def:wh}
Suppose that $F$ is the symbol of a nontrivial summable $\amcm$ sequence, $\sigma \ge 0$, and that $F_\sigma = \sigma + F$ has the exponential representation~\eqref{eq:exp}:
\formula{
 F_\sigma(z) & = c \exp \biggl(\frac{1}{\pi} \int_0^1 \frac{1}{s - z^{-1}} \, \ph_+(s) ds + \frac{1}{\pi} \int_0^1 \frac{1}{s - z} \, \ph_-(s) ds\biggr) .
}
For $x, y \in \C \setminus [1, \infty)$ we define
\formula[]{
\label{eq:wh:exp:p}
 F_\sigma^+(x) & = c^+ \exp \biggl(\frac{1}{\pi} \int_0^1 \frac{1}{s - x^{-1}} \, \ph_+(s) ds\biggr) , \\
\label{eq:wh:exp:m}
 F_\sigma^-(y) & = c^- \exp \biggl(\frac{1}{\pi} \int_0^1 \frac{1}{s - y^{-1}} \, \ph_-(s) ds\biggr) ,
}
where $c^+, c^- > 0$ satisfy $c^+ c^- = c$.
\end{definition}

There is no canonical way to choose constants $c^+$ and $c^-$ in the above definition, so we leave them arbitrary (as long as $c^+ c^- = c$). This causes no ambiguity, because in the remaining part of the article all expressions involving $F_\sigma^+$ and $F_\sigma^-$ do not depend on the choice of $c^+$ and $c^-$.

The definition of $F_\sigma^+(x)$ extends to $\{x \in \C : x^{-1} \notin \esssupp \ph_+\}$, while $F_\sigma^-(y)$ extends to $\{y \in \C : y^{-1} \notin \esssupp \ph_-\}$. Clearly, for $\sigma \ge 0$ and $z \in D_F \setminus \{0\}$,
\formula[eq:wh]{
 F_\sigma(z) & = F_\sigma^+(z) F_\sigma^-(z^{-1}) .
}
In other words, $F_\sigma^+$ and $F_\sigma^-$ are the Wiener--Hopf factors of $F_\sigma$. Their role in the proof of Theorem~\ref{thm:main} will be explained in Section~\ref{sec:wh}.

\begin{remark}
\label{rem:wh:symbol}
By Proposition~\ref{prop:exp:inv}, if we choose $\sigma^+, \sigma^- \ge 0$ appropriately, then $F_\sigma^+ - \sigma^+$ and $F_\sigma^- - \sigma^-$ are symbols of one-sided summable $\amcm$ sequences. More precisely, by~\eqref{eq:exp:sigma}, we have
\formula{
 \sigma^+ & = c^+ \exp \biggl(-\frac{1}{\pi} \int_0^1 \frac{1}{1 - s} \, \ph_+(s) ds\biggr) , \\
 \sigma^- & = c^- \exp \biggl(-\frac{1}{\pi} \int_0^1 \frac{1}{1 - s} \, \ph_-(s) ds\biggr) .
}
Additionally, by Lemma~\ref{lem:one}, $\sigma^+$ and $\sigma^-$ are limits of $F_\sigma^+(z)$ and $F_\sigma^-(z)$ as $z \to 1$ nontangentially in $z \in \C \setminus [1, \infty)$. In particular, if $\sigma > 0$, then also $\sigma^+, \sigma^- > 0$.
\end{remark}

It is clear that the role of the Wiener--Hopf factors $F_\sigma^+$ and $F_\sigma^-$ is symmetric: the reversed sequence $(\check a_k) = (a_{-k})$ and the corresponding dual symbol $\check F(z) = F(z^{-1})$ have $\check F_\sigma^+ = F_\sigma^-$ and $\check F_\sigma^- = F_\sigma^+$ as Wiener--Hopf factors.

Below we study the relation between the functions $F_\sigma^+$ and $F_\sigma^-$ and the Wiener--Hopf factors of the Rogers function $f(\xi) = F(1 + i \xi)$. We assume that $F_\sigma = \sigma + F$ has exponential representation~\eqref{eq:exp}, while $f_\sigma = \sigma + f$ has exponential representation~\eqref{eq:rogers:exp}.

Using the terminology of Remark~\ref{rem:compatible}, the triple $\ph, \ph_+, \ph_-$ is compatible. It follows that also the triples $\ph \ind_{(0, \infty)}, \ph_+, 0$ and $\ph \ind_{(-1, 0)}, 0, \ph_-$ are compatible. Observe that, by~\eqref{eq:rogers:wh:exp:p} and~\eqref{eq:rogers:wh:exp:m}, $\ph \ind_{(0, \infty)}$ and $\ph \ind_{(-1, 0)}$ appear in the exponential representation~\eqref{eq:rogers:exp} of the Rogers functions $f^+(-i \xi)$ and $f^-(i \xi)$, respectively. Similarly, by~\eqref{eq:wh:exp:p} and~\eqref{eq:wh:exp:m}, the pairs $\ph_+, 0$ and $0, \ph_-$ appear in the exponential representation~\eqref{eq:exp} of symbols $F_\sigma^+(z)$ and $F_\sigma^-(z^{-1})$ of one-sided $\amcm$ sequences. By Remark~\ref{rem:compatible}, there is a constant $c_\sigma > 0$ such that if $x = 1 + i \xi$, then
\formula{
 F_\sigma^+(x) & = c_\sigma f_\sigma^+(-i \xi) = c_\sigma f_\sigma^+(1 - x) ,
}
and if $x^{-1} = 1 + i \xi$, then
\formula{
 F_\sigma^-(x^{-1}) & = c_\sigma^{-1} f_\sigma^-(i \xi) = c_\sigma^{-1} f_\sigma^-(x - 1) .
}
In particular, if $x, y \in (0, 1)$, then
\formula[eq:wh:rogers]{
 F_\sigma^+(x) F_\sigma^-(y) & = f_\sigma^+(1 - x) f_\sigma^-(y^{-1} - 1) .
}
The above identity enables us to rephrase Proposition~\ref{prop:rogers:wh:int} in terms of symbols of $\amcm$ sequences.

\begin{proposition}
\label{prop:wh:int}
If $F$ is a symbol of a nontrivial summable $\amcm$ sequence, $\sigma > 0$ and $F_\sigma = \sigma + F$, then for $x, y \in (0, 1)$ we have
\formula[eq:wh:int]{
 F_\sigma^+(x) F_\sigma^-(y) & = \sigma \exp\biggl(\frac{1}{\pi} \int_0^{\limit} \bigl(\arg(\gamma(\lambda) - y^{-1}) - \arg(\gamma(\lambda) - x)\bigr) \, \frac{d\lambda}{\sigma + \lambda}\biggr) .
}
and the above expression defines a complete Bernstein function of $\sigma$.
\end{proposition}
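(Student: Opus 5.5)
The plan is to transfer Proposition~\ref{prop:rogers:wh:int} through identity~\eqref{eq:wh:rogers}, and then rewrite the integral over $r$ as an integral over $\lambda$ using the substitution $\lambda = \lambda_f(r)$ and the parameterisation $\gamma$ from Proposition~\ref{prop:spine}.

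\emph{Applying the Rogers result.} Let $f(\xi) = F(1 + i \xi)$. By Lemma~\ref{lem:symbol:rogers}, $f$ is a Rogers function with $f(0^+) = 0$, and $f$ is nonconstant because the sequence is nontrivial. Fix $x, y \in (0, 1)$ and set $\xi = 1 - x > 0$ and $\eta = y^{-1} - 1 > 0$. By~\eqref{eq:wh:rogers},
\[
 F_\sigma^+(x) F_\sigma^-(y) = f_\sigma^+(\xi) f_\sigma^-(\eta) .
\]
Proposition~\ref{prop:rogers:wh:int} expresses the right-hand side as $\sigma$ times the exponential of
\[
 \frac{1}{\pi} \int_0^\infty \bigl(\arg(\zeta_f(r) + i \eta) - \arg(\zeta_f(r) - i \xi)\bigr) \frac{d\lambda_f(r)}{\sigma + \lambda_f(r)} .
\]
It also shows that the whole expression is a complete Bernstein function of $\sigma$. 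That settles the final claim once the formula is identified.

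\emph{Change of variables.} Since $\lambda_f$ is continuous and strictly increasing from $(0, \infty)$ onto $(0, \limit_+)$, the Lebesgue--Stieltjes integral becomes $\int_0^{\limit_+} (\dots) \, d\lambda / (\sigma + \lambda)$, with $r = \lambda_f^{-1}(\lambda)$. By~\eqref{eq:gamma:p}, $\zeta_f(\lambda_f^{-1}(\lambda)) = -i(\gamma_+(\lambda) - 1)$. Multiplying by $i$ rotates every argument by the same amount $\pi/2$, so modulo $2\pi$,
\[
 \arg(\zeta_f + i \eta) - \arg(\zeta_f - i \xi) \equiv \arg(\gamma_+(\lambda) - y^{-1}) - \arg(\gamma_+(\lambda) - x) ,
\]
because $\gamma_+ - 1 - \eta = \gamma_+ - y^{-1}$ and $\gamma_+ - 1 + \xi = \gamma_+ - x$.

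\emph{Branches.} I must check that no multiple of $2\pi$ appears. The left-hand difference is the angle at $\zeta_f$ subtended by the segment $[i\xi, -i\eta]$, which lies in $[0, \pi]$ since $\re \zeta_f \ge 0$. On the right, for $\im \gamma_+ \ge 0$, the difference is the angle at $\gamma_+$ subtended by $[x, y^{-1}]$. This also lies in $[0, \pi]$, with the conventions $\arg$ of a negative number equal to $\pi$ and $\arg 0 = 0$ holding up on the real axis.

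\emph{Truncating at $\limit$.} Next I replace $\gamma_+$ by $\gamma$ on $(0, \limit)$ using Step~6 of the proof of Proposition~\ref{prop:spine}. I then show the integrand vanishes for $\lambda \in (\limit, \limit_+)$, so the upper limit can be cut to $\limit$. There are two cases:
\begin{itemize}
\item If $\limit < \limit_+$, then in each scenario of Step~6 the point $\gamma_+(\lambda)$ for $\lambda > \limit$ lies on the real axis. It lies either in $(-\infty, x_1] \subseteq (-\infty, 0]$ or in $[x_1, \infty)$ with $x_1 \ge 1$. When $\gamma_+ \le 0 < x < y^{-1}$, both arguments equal $\pi$. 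When $\gamma_+ \ge y^{-1}$, both equal $0$.
\item The one delicate point is scenario~\ref{it:spine:last:5}, where $\gamma_+ \in [x_1, \infty)$ with $x_1 > 1$. Here $\gamma_+$ may lie between $x$ and $y^{-1}$. However, in that scenario $\limit = \limit_+$, so nothing is truncated.
\end{itemize}
Similarly, scenario~\ref{it:spine:last:4} has $\limit = \limit_-$, with $\gamma_+$ in $(-\infty, x_1]$ beyond $\limit$, and $x_1 < 1$. Here $\gamma_+$ might fall between $x$ and $1$. I expect this bookkeeping of real-axis pieces, and matching which of $\limit_\pm$ equals $\limit$, to be the main obstacle. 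I would resolve it by redoing the argument symmetrically with $g$ and $\gamma_-$ where needed, as the six scenarios dictate.
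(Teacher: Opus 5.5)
Your route is the paper's: transfer Proposition~\ref{prop:rogers:wh:int} through~\eqref{eq:wh:rogers}, substitute $\lambda = \lambda_f(r)$, rotate by $\tfrac{\pi}{2}$, and cut the integral at $\limit$. The first three parts are fine. Your check that both angle differences lie in $[0, \pi]$ is a more careful form of the paper's remark that $\arg(-i w) = \arg w - \tfrac{\pi}{2}$ when $\im w \ge 0$.

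The gap is the last step. You do not show that the integrand vanishes on $(\limit, \limit_+)$. The obstacle you describe in scenario~\ref{it:spine:last:4} is not real; it comes from confusing $\limit_0 = \sup \Lambda$ with $\limit$. In that scenario, Step~6 of the proof of Proposition~\ref{prop:spine} gives $\limit = \limit_-$ and says that~\eqref{eq:gamma:pm} extends to $[0, \limit_-]$. Since $\gamma_-$ maps $[\limit_0, \limit_-]$ decreasingly onto $[0, x_1]$ with $\gamma_-(\limit_-) = 0$, the function $\gamma_+ = \gamma_-$ sweeps $[0, x_1]$ while $\lambda$ runs over $[\limit_0, \limit]$. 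This includes any points of $(x, y^{-1})$ in that interval. That $\lambda$-range is kept, $\gamma_+ = \gamma$ there, and an integrand equal to $\pi$ is legitimate. For $\lambda > \limit$, the decreasing function $\gamma_+$ is already below $\gamma_+(\limit) = 0$.

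The paper's one-line claim that $\gamma_+(\lambda) \in (-\infty, 0)$ for $\lambda \in (\limit, \limit_+)$ can be justified uniformly. It rests on the observation at the end of Step~6 that $\gamma_+(\limit) = \gamma(\limit^-) \in (-\infty, 0] \cup \{\cinfty\}$ in every scenario. If $\limit < \limit_+$, then $\gamma_+(\limit)$ is finite.

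For $\lambda \in (\limit, \limit_+)$ we have $\lambda > \sup \Lambda$, so $\gamma_+(\lambda)$ is real by~\eqref{eq:lambda:p:l}. By~\eqref{eq:gamma:p} and $\lv \zeta_f(r) \rv = r$,
$\lv \gamma_+(\lambda) - 1 \rv = \lambda_f^{-1}(\lambda) > \lambda_f^{-1}(\limit) = 1 - \gamma_+(\limit)$.
Hence $\gamma_+(\lambda)$ avoids $[\gamma_+(\limit), 2 - \gamma_+(\limit)]$. Continuity at $\lambda = \limit$ and connectedness of $(\limit, \limit_+)$ then force $\gamma_+(\lambda) < \gamma_+(\limit) \le 0$.

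Therefore $\arg(\gamma_+(\lambda) - y^{-1}) = \arg(\gamma_+(\lambda) - x) = \pi$, and the integrand vanishes on $(\limit, \limit_+)$. This completes the proof with no case analysis, so your proposed detour through $g$ and $\gamma_-$ is not needed.
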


\begin{proof}
With the notation introduced above, formula~\eqref{eq:wh:rogers} holds. By Proposition~\ref{prop:rogers:wh:int} (with $\xi = 1 - x$ and $\eta = y^{-1} - 1$),
\formula{
 F_\sigma^+(x) F_\sigma^-(y) & = f_\sigma^+(1 - x) f_\sigma^-(y^{-1} - 1) \\
 & = \sigma \exp\biggl(\frac{1}{\pi} \int_0^\infty \bigl(\arg(\zeta_f(r) + i y^{-1} - i) - \arg(\zeta_f(r) + i x - i)\bigr) \, \frac{d\lambda_f(r)}{\sigma + \lambda_f(r)}\biggr) ,
}
and the right-hand side is a complete Bernstein function of $\sigma$. It remains to prove that the right-hand side is equal to the right-hand side of~\eqref{eq:wh:int}.

We use the notation of the proof of Proposition~\ref{prop:spine}: $\limit_+ = f(\infty^-)$ and $\gamma_+(\lambda) = 1 + i \zeta_f(\lambda_f^{-1}(\lambda))$ for $\lambda \in (0, \limit_+)$ (see~\eqref{eq:gamma:p}). Substituting $\lambda = \lambda_f(r)$, $r = \lambda_f^{-1}(\lambda)$, we obtain
\formula{
 F_\sigma^+(x) F_\sigma^-(y) & = \sigma \exp\biggl(\frac{1}{\pi} \int_0^{\limit_+} \bigl(\arg(-i \gamma_+(\lambda) + i y^{-1}) - \arg(-i \gamma_+(\lambda) + i x)\bigr) \, \frac{d\lambda}{\sigma + \lambda}\biggr) .
}
Since $\arg$ takes values in $(-\pi, \pi]$ and $\im \gamma_+(\lambda) \ge 0$, we have
\formula{
 \arg(-i \gamma_+(\lambda) + i y^{-1}) & = \arg(\gamma_+(\lambda) - y^{-1}) - \tfrac{\pi}{2} , \\
 \arg(-i \gamma_+(\lambda) + i x) & = \arg(\gamma_+(\lambda) - x) - \tfrac{\pi}{2} .
}
Therefore,
\formula[eq:wh:int:aux]{
 F_\sigma^+(x) F_\sigma^-(y) & = \sigma \exp\biggl(\frac{1}{\pi} \int_0^{\limit_+} \bigl(\arg(\gamma_+(\lambda) - y^{-1}) - \arg(\gamma_+(\lambda) - x)\bigr) \, \frac{d\lambda}{\sigma + \lambda}\biggr) .
}
Recall that $\gamma(\lambda) = \gamma_+(\lambda)$ for $\lambda \in (0, \limit)$, and $\gamma_+(\lambda) \in (-\infty, 0)$ for $\lambda \in (\limit, \limit_+)$. It follows that if $\lambda \in (\limit, \limit_+)$, then
\formula{
 \arg(\gamma_+(\lambda) - y^{-1}) - \arg(\gamma_+(\lambda) - x) & = \pi - \pi = 0 .
}
Thus, the integral in~\eqref{eq:wh:int:aux} is effectively over $[0, \limit]$, and the proof is complete.
\end{proof}


\subsection{Difference quotients of Rogers functions}

Difference quotients of Rogers functions played a key role in the description of generalised eigenfunctions of Lévy processes with completely monotone jumps. We recall two key results from~\cite{k25}.

\begin{proposition}[Proposition 3.16 in~\cite{k25}]
\label{prop:rogers:h}
Let $f$ be a nonconstant Rogers function and $\zeta \in \Gamma_f$. Then
\formula[eq:rogers:h]{
 h(\xi) & = \frac{(\xi - \zeta) (\xi + \overline{\zeta})}{f(\xi) - f(\zeta)} \, ,
}
defined for $\xi \in \hp \setminus \{\zeta\}$ and extended continuously at $\xi = \zeta$, is a Rogers function.
\end{proposition}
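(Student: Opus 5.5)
To prove Proposition~\ref{prop:rogers:h}, I would verify the defining condition $\re(\xi^{-1} h(\xi)) \ge 0$ on $\hp$. Equivalently, I would show that $k(\xi) = \xi / h(\xi) = \xi (f(\xi) - f(\zeta)) / ((\xi - \zeta)(\xi + \overline{\zeta}))$ has nonnegative real part, because $\re(1/w) \ge 0$ if and only if $\re w \ge 0$. Write $\lambda = f(\zeta) > 0$. The starting point is the Stieltjes representation~\eqref{eq:rogers:int}:
\formula{
 \frac{f(\xi)}{\xi} & = c_2 \xi - i c_1 + \frac{c_0}{\xi} + \frac{1}{\pi} \int_{\R \setminus \{0\}} \biggl(\frac{1}{\xi + i r} + \frac{i \sign r}{1 + \lv r \rv}\biggr) \frac{\mu(dr)}{\lv r \rv} \, .
}
This exhibits $f(\xi)/\xi$ as a Nevanlinna--Pick function in $\hp$, built from the elementary kernels $\xi$, $1/\xi$ and $1/(\xi + i r)$.

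The key step is to subtract $f(\zeta)$ and divide by $(\xi - \zeta)(\xi + \overline\zeta)$ term by term. I would apply the resolvent-type identity to each elementary kernel $\kappa(\xi)$:
\formula{
 \frac{\xi \kappa(\xi) - \zeta \kappa(\zeta)}{\xi - \zeta} .
}
For example, with $\kappa(\xi) = 1/(\xi + i r)$ one gets $\xi/(\xi + i r) - \zeta/(\zeta + i r) = i r (\xi - \zeta)/((\xi + i r)(\zeta + i r))$. After division by $\xi - \zeta$, each term becomes a product of a known kernel with the constant $1/(\zeta + i r)$. The obstacle is that the complex constants produced this way do not individually have the right sign.

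The condition $\zeta \in \Gamma_f$, that is $f(\zeta) \in (0,\infty)$, supplies exactly one real constraint, namely $\im f(\zeta) = 0$. This is what makes the extra factor $\xi + \overline\zeta$ work: it is the "mirror" zero, and it can absorb the imaginary parts. Concretely, I expect an identity of the form
\formula{
 \frac{\xi (f(\xi) - f(\zeta))}{(\xi - \zeta)(\xi + \overline\zeta)} & = A + \frac{1}{\pi} \int \frac{\xi \, \re(\ldots)}{(\xi + i r)\lv \zeta + i r \rv^2} \, \mu(dr) + \dots ,
}
where $\im f(\zeta) = 0$ is used to show that the leftover coefficient is real and nonnegative. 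This is analogous to Proposition~\ref{prop:cbf:s}, which handles the complete Bernstein case by the partial fraction decomposition at $\zeta, \overline\zeta$.

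The main obstacle is this algebraic bookkeeping for a general, non-symmetric $\mu$. If the direct computation becomes unwieldy, I would instead argue through arguments of the values. First I would use the spine structure from Proposition~\ref{prop:rogers:spine}\ref{it:rogers:spine:b}: $\im(f(\xi) - \lambda)$ changes sign exactly across $\Gamma_f$. Combining this with the boundary behaviour on $i\R$ (Proposition~\ref{prop:rogers:exp}) and a Phragmén--Lindelöf-type argument, I would show that $\arg(\xi^{-1} h(\xi)) \in [-\pi/2, \pi/2]$. Removability at $\xi = \zeta$ is immediate, since $f'(\zeta) \ne 0$ by Proposition~\ref{prop:rogers:lambda}\ref{it:rogers:lambda:b}.

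Alternatively, I would simply cite the original source. This result is Proposition~3.16 of~\cite{k25}, and the paper states it with that reference rather than reproving it.
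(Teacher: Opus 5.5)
The paper does not prove this statement; it quotes it from \cite{k25}, so your last option (citing) is what the paper does. As a proof, however, your proposal has a gap at the decisive step. The identity you ``expect'' is never derived, and the role you predict for $\im f(\zeta)=0$ is not what actually happens. Your first route does close, without the spine or Phragm\'en--Lindel\"of.

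Divide $f(\xi)-f(\zeta)$ termwise by $\xi-\zeta$ using \eqref{eq:rogers:int} and $\frac{\xi}{\xi+ir}-\frac{\zeta}{\zeta+ir}=\frac{ir(\xi-\zeta)}{(\xi+ir)(\zeta+ir)}$. Then multiply by $\frac{\xi}{\xi+\overline{\zeta}}$, using
\[
 \frac{\xi}{(\xi+\overline{\zeta})(\xi+ir)} = \frac{1}{\overline{\zeta}-ir}\biggl(\frac{\overline{\zeta}}{\xi+\overline{\zeta}}-\frac{ir}{\xi+ir}\biggr), \qquad (\zeta+ir)(\overline{\zeta}-ir)=\lv \zeta+ir\rv^2 ,
\]
together with $\frac{\xi(\xi+\zeta)}{\xi+\overline{\zeta}}=\xi+2i\im\zeta-\frac{2i\overline{\zeta}\im\zeta}{\xi+\overline{\zeta}}$. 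Collecting terms, one checks from \eqref{eq:rogers:int} that the coefficient of $(\xi+\overline{\zeta})^{-1}$ equals $-i\overline{\zeta}\im f(\zeta)/\re\zeta$. Hence, for $\xi\in\hp$,
\[
 \frac{\xi\,(f(\xi)-f(\zeta))}{(\xi-\zeta)(\xi+\overline{\zeta})} = c_2\xi + i(2c_2\im\zeta-c_1) + \frac{1}{\pi}\int_{\R\setminus\{0\}}\biggl(\frac{r^2}{\lv\zeta+ir\rv^2}\,\frac{1}{\xi+ir}+\frac{i\sign r}{1+\lv r\rv}\biggr)\frac{\mu(dr)}{\lv r\rv} - \frac{i\overline{\zeta}\,\im f(\zeta)}{(\re\zeta)(\xi+\overline{\zeta})} \, .
\]
Keep $\frac{i\sign r}{1+\lv r\rv}$ grouped with $\frac{1}{\xi+ir}$: $\mu$ only integrates $\lv r\rv^{-3}$ at infinity, and only the grouped integrand is $O(\lv r\rv^{-3})$.

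So $\im f(\zeta)=0$ does not make a leftover coefficient ``real and nonnegative''. It removes the only term whose real part can have the wrong sign, and then
\[
 \re\frac{\xi}{h(\xi)} = c_2\re\xi+\frac{1}{\pi}\int_{\R\setminus\{0\}}\frac{r^2}{\lv\zeta+ir\rv^2}\,\frac{\re\xi}{\lv\xi+ir\rv^2}\,\frac{\mu(dr)}{\lv r\rv} \ge 0 .
\]

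You also skip a necessary point. For $h$ to be a Rogers function it must be holomorphic on all of $\hp$, so you need $f(\xi)\ne f(\zeta)$ for every $\xi\in\hp\setminus\{\zeta\}$, not just removability at $\zeta$. Likewise, your equivalence $\re w\ge 0\iff\re(1/w)\ge 0$ requires $w\ne 0$.

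The display gives both facts at once. Its right-hand side is holomorphic on $\hp$ with strictly positive real part unless $c_2=0$ and $\mu=0$. In that case $f(\xi)=c_0-ic_1\xi$, so $\Gamma_f=\varnothing$ and there is no $\zeta$. Hence $\xi/h(\xi)$ is zero-free on $\hp$. In particular its value at $\zeta$, namely $\zeta f'(\zeta)/(2\re\zeta)$, is nonzero, so you do not need Proposition~\ref{prop:rogers:lambda} for removability.

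Compared with the citation, this route is elementary and self-contained, and it gives more. It exhibits $\xi^2/h(\xi)$ directly as a Rogers function with $c_0=0$ and measure $\frac{r^2}{\lv\zeta+ir\rv^2}\,\mu(dr)$. That is exactly the form \eqref{eq:rogers:h:dual} in which the paper later uses the result.

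Your Phragm\'en--Lindel\"of fallback is unnecessary and much harder to make rigorous. It would need a bounded continuous branch of $\arg\frac{f(\xi)-f(\zeta)}{\xi-\zeta}$, control of only a.e.\ boundary values, and control at $0$ and $\infty$, none of which you supply.
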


Since $\xi^2 / h(\xi)$ is a Rogers function whenever $h$ is, we find that
\formula[eq:rogers:h:dual]{
 \tilde h(\xi) & = \frac{\xi^2}{(\xi - \zeta) (\xi + \overline{\zeta})} \, (f(\xi) - f(\zeta))
}
is a Rogers function.

The function $h$ with $\zeta = \zeta_f(r)$ is denoted in~\cite{k25} by $f(r; \xi)$. Thus,
\formula[eq:rogers:fr]{
 f(r; \xi) & = \frac{(\xi - \zeta_r(r)) (\xi + \overline{\zeta_f(r)})}{f(\xi) - \lambda_f(r)} \, .
}
The corresponding Wiener--Hopf factors are denoted by $f^+(r; \xi)$ and $f^-(r; \xi)$. Observe that the Wiener--Hopf factors of $\tilde h(\xi) = \xi^2 / f(r; \xi)$ are given by $\xi / f^+(r; \xi)$ and $\xi / f^-(r; \xi)$.


\subsection{Difference quotients of symbols of \texorpdfstring{$\amcm$}{AM/CM} sequences}

Suppose that $F$ is a symbol of a nonzero $\amcm$ sequence, and let $f(\xi) = F(1 + i \xi)$ be the corresponding Rogers function. We have already seen many analogies between $\amcm$ sequences and their symbols on the one hand, and $\amdcm$ functions and the associated Rogers functions on the other hand. When it comes to difference quotients, the link is less straightforward: a direct analogue of the function $h$ given by~\eqref{eq:rogers:h}, that is,
\formula{
 & \frac{(z - \gamma(\lambda)) (z - \overline{\gamma}(\lambda))}{F(z) - \lambda} \, ,
}
does not define the symbol of an $\amcm$ sequence. It turns out, however, that the Rogers function $\xi^2 / h(\xi)$ corresponds to the symbol of an $\amcm$ sequence.

\begin{proposition}
\label{prop:h}
If $F$ is the symbol of a nonzero summable $\amcm$ sequence, and $\lambda \in \Lambda$, then
\formula{
 H(z) & = \frac{(z - 1)^2}{(\gamma(\lambda) - z) (\overline{\gamma}(\lambda) - z)} \, (F(z) - \lambda) ,
}
defined for $z \in D_F \setminus \{\gamma(\lambda), \overline{\gamma}(\lambda)\}$ and extended continuously at $z = \gamma(\lambda)$ and $z = \overline{\gamma}(\lambda)$, is the symbol of a summable $\amcm$ sequence.
\end{proposition}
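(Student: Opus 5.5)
The plan is to check the hypotheses of Proposition~\ref{prop:characterisation} for $H$. Put $h(\xi) = H(1 + i \xi)$ and $\zeta = -i(\gamma(\lambda) - 1)$, so that $\gamma(\lambda) = 1 + i \zeta$; since $\gamma(\lambda) \in \Gamma$, we have $\zeta \in \Gamma_f$ for the Rogers function $f(\xi) = F(1 + i \xi)$ of Lemma~\ref{lem:symbol:rogers}. With $z = 1 + i \xi$ we get $(z - 1)^2 = -\xi^2$, $\gamma(\lambda) - z = i(\zeta - \xi)$ and $\overline{\gamma}(\lambda) - z = -i(\overline{\zeta}\,' )$, where $\overline{\gamma}(\lambda) = 1 - i \overline{\zeta}$, so $\overline{\gamma}(\lambda) - z = -i(\xi + \overline{\zeta})$. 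The product is then $(\zeta - \xi)(\xi + \overline{\zeta})$, and therefore
\[
 h(\xi) = \frac{\xi^2}{(\xi - \zeta)(\xi + \overline{\zeta})} \, (f(\xi) - \lambda),
\]
which is exactly $\tilde h$ in~\eqref{eq:rogers:h:dual} with $f(\zeta) = \lambda$. By Proposition~\ref{prop:rogers:h} and the remark after it, $h$ is a Rogers function. This settles the first condition.

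Next comes $h(0^+) = 0$. As $\xi \to 0^+$, the factor $\xi^2$ goes to $0$, $f(\xi) - \lambda \to -\lambda$ by Lemma~\ref{lem:one}, and $(\xi - \zeta)(\xi + \overline{\zeta}) \to -\lv \zeta \rv^2 \ne 0$, because $\zeta \ne 0$ (we have $\gamma(\lambda) \ne 1$). So $h(0^+) = 0$.

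For the holomorphic extension to $\C \setminus (-i\infty, i]$, equivalently of $H$ to $\C \setminus [0, \infty)$, we use~\eqref{eq:symbol}: $F$ is holomorphic off $[0, \infty)$, and the only singularities of the prefactor are at $\gamma(\lambda)$ and $\overline{\gamma}(\lambda)$. There the factor $F(z) - \lambda$ vanishes, and these zeros are simple because $F'(\gamma(\lambda)) \ne 0$. This follows from $\lambda_f' > 0$ on $Z_f$ (Proposition~\ref{prop:rogers:lambda}) together with real-analyticity of the spine, or equivalently from the continuity of $\tilde h$ at $\zeta$ given by Proposition~\ref{prop:rogers:h}. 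So the singularities are removable. I expect the main technical point to be this removability, including the case where $\gamma(\lambda)$ sits inside $D_F$ but near the cut.

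The last condition is positivity on the negative axis, $H(x) > 0$ for $x < 0$. Here $F(x) \ge \limit \ge \sup \Lambda > \lambda$ by Proposition~\ref{prop:spine}\ref{it:spine:e}. Also $(x - 1)^2 > 0$, and $(\gamma(\lambda) - x)(\overline{\gamma}(\lambda) - x) = \lv \gamma(\lambda) - x \rv^2 > 0$. Hence $H(x) > 0$.

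Proposition~\ref{prop:characterisation} now shows that $H$ is the symbol of a summable $\amcm$ sequence.
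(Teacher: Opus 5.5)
Your proof is correct and follows the paper's own argument. Like the paper, you identify $H(1+i\xi)$ with the dual difference quotient $\tilde h$ of~\eqref{eq:rogers:h:dual} and check the conditions of Proposition~\ref{prop:characterisation}, using Proposition~\ref{prop:spine}\ref{it:spine:e} for positivity on $(-\infty,0)$.

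One small remark: removability at $\gamma(\lambda)$ and $\overline{\gamma}(\lambda)$ needs only $F(\gamma(\lambda)) = F(\overline{\gamma}(\lambda)) = \lambda$, so you do not need the zeros to be simple. Your alternative justification of simplicity via continuity of $\tilde h$ does not work; it would have to be continuity of the difference quotient $h$ itself.
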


\begin{proof}
Denote $f(\xi) = F(1 + i \xi)$ and $\zeta = -i (\gamma(\lambda) - 1)$. Then $f$ is a Rogers function, and $\zeta \in \Gamma_f$. Let $\tilde h$ be the Rogers function given by~\eqref{eq:rogers:h:dual}. We have $\gamma(\lambda) = 1 + i \zeta$, $\lambda = f(\zeta)$, and
\formula{
 H(1 + i \xi) & = \frac{(-\xi^2)}{(i \xi - i \zeta) (i \xi + i \overline{\zeta})} \, (f(\xi) - f(\zeta)) = \tilde h(\xi) .
}
In order to prove that $H$ is the symbol of a summable $\amcm$ sequence, we will apply Proposition~\ref{prop:characterisation}. Thus, we need to verify three properties of $\tilde h$.

Clearly, $\tilde h(0^+) = 0$ by definition~\eqref{eq:rogers:h:dual}. By Proposition~\ref{prop:characterisation} applied to $F$, the Rogers function $f$ is holomorphic in $\C \setminus (-i \infty, i]$, and so $\tilde h$ is holomorphic in $\C \setminus (-i \infty, i]$. Finally, for $r > 1$ we have
\formula{
 \tilde h(i r) & = \frac{-r^2}{(i r - \zeta) (i r + \overline{\zeta})} \, (f(i r) - f(\zeta)) = \frac{r^2}{\lv r + i \zeta \rv^2} \, (F(1 - r) - \lambda) .
}
By Proposition~\ref{prop:spine}\ref{it:spine:e}, $F(1 - r) \ge \limit \ge \sup \Lambda > \lambda$, and so $\tilde h(i r) > 0$. We conclude that, by Proposition~\ref{prop:characterisation}, $H$ is indeed a Rogers function.
\end{proof}

Recall that if $\zeta = \zeta_f(r)$, then the function $h$ introduced in Proposition~\ref{prop:rogers:h} is denoted by $f(r; \xi)$; see~\eqref{eq:rogers:fr}. In a similar way, we denote by $F(\lambda; z)$ the function $H$ defined in Proposition~\ref{prop:h}, multiplied by $\lv \gamma(\lambda) \rv$.

\begin{definition}
\label{def:fr}
Suppose that $F$ is the symbol of a summable $\amcm$ sequence. For $\lambda \in \Lambda$ and $z \in D_F \setminus \{\gamma(\lambda), \overline{\gamma}(\lambda)\}$ we denote
\formula[eq:fr]{
 F(\lambda; z) & = \frac{\lv \gamma(\lambda) \rv (z - 1)^2}{(\gamma(\lambda) - z) (\overline{\gamma}(\lambda) - z)} \, (F(z) - \lambda) .
}
This definition is extended continuously at $z = \gamma(\lambda)$ and $z = \overline{\gamma}(\lambda)$.
\end{definition}

Clearly,
\formula{
 F(\lambda; 1 + i \xi) & = \lv \gamma(\lambda) \rv \tilde h(\xi) = \frac{\lv \gamma(\lambda) \rv \xi^2}{h(\xi)} = \frac{\lv \gamma(\lambda) \rv \xi^2}{f(r; \xi)} \, ,
}
where $r \in Z_f$ is such that $\gamma(\lambda) = 1 + i \zeta_f(r)$.

The factor $\lv \gamma(\lambda) \rv$ in~\eqref{eq:fr} makes the definition of $F(\lambda; z)$ symmetric: the corresponding function for the reversed sequence $(\check a_k) = (a_{-k})$ and its symbol $\check F(z) = F(z^{-1})$ is given by
\formula{
 \check F(\lambda; z) & = \frac{\lv \check\gamma(\lambda) \rv (z - 1)^2}{(z - \check\gamma(\lambda)) (z - \overline{\check\gamma(\lambda)})} \, (\check F(z) - \lambda) \\
 & = \frac{\lv \gamma(\lambda) \rv^{-1} (z - 1)^2}{(z - (\overline{\gamma}(\lambda))^{-1}) (z - (\gamma(\lambda))^{-1})} \, (F(z^{-1}) - \lambda) \\
 & = \frac{\lv \gamma(\lambda) \rv^{-1} z^2 (1 - z^{-1})^2}{\lv \gamma(\lambda) \rv^{-2} z^2 (\overline{\gamma}(\lambda) - z^{-1}) (\gamma(\lambda) - z^{-1})} \, (F(z^{-1}) - \lambda) \\
 & = F(\lambda; z^{-1}) .
}


\subsection{Wiener--Hopf factorisation of difference quotients}

The Wiener--Hopf factors of $F(\lambda; z)$ play a crucial role in our development.

\begin{definition}
\label{def:wh:fr}
Suppose that $F$ is the symbol of a nontrivial summable $\amcm$ sequence and $F(\lambda; x)$ is the corresponding difference quotient. We denote by $F^+(\lambda; x)$ and $F^-(\lambda; y)$ the Wiener--Hopf factors of $F(\lambda; z)$.
\end{definition}

In particular, by~\eqref{eq:wh}, we have
\formula[eq:wh:fr]{
 F(\lambda; z) & = F^+(\lambda; z) F^-(\lambda; z^{-1})
}
for $\lambda \in \Lambda$ and $z \in D_F \setminus \{0\}$.

By Proposition~\ref{prop:exp}, $F(\lambda; z)$ has the exponential representation
\formula{
 F(\lambda; z) & = c_\lambda \exp\biggl(\frac{1}{\pi} \int_0^1 \frac{1}{s - z^{-1}} \, \ph^+_\lambda(s) ds + \frac{1}{\pi} \int_0^1 \frac{1}{s - z} \, \ph^-_\lambda(s) ds \biggr) ,
}
where $\ph^+_\lambda(s), \ph^-_\lambda(s) \in [0, \pi]$ are given by
\formula[]{
\label{eq:fr:wh:ph:p}
 \ph^+_\lambda(s) & = -\lim_{t \to 0^+} \arg(F(\lambda; s^{-1} + i t)) , \\
\label{eq:fr:wh:ph:m}
 \ph^-_\lambda(s) & = \lim_{t \to 0^+} \arg(F(\lambda; s + i t))
}
for almost every $s \in (0, 1)$. Thus,
\formula[]{
\label{eq:fr:wh:exp:ph:p}
 F^+(\lambda; z) & = c^+_\lambda \exp\biggl(\frac{1}{\pi} \int_0^1 \frac{1}{s - z^{-1}} \, \ph^+_\lambda(s) ds \biggr) , \\
\label{eq:fr:wh:exp:ph:m}
 F^-(\lambda; z) & = c^-_\lambda \exp\biggl(\frac{1}{\pi} \int_0^1 \frac{1}{s - z^{-1}} \, \ph^-_\lambda(s) ds \biggr) .
}
Below we provide a more explicit description of $\ph^+_\lambda$ and $\ph^-_\lambda$.

\begin{proposition}
\label{prop:fr:wh:exp}
Suppose that $F$ is the symbol of a nontrivial summable $\amcm$ sequence. Then for almost every $s \in (0, \infty)$ the boundary limit
\formula[eq:fr:wh:exp:limit]{
 F(s + 0 i) & = \lim_{t \to 0^+} F(s + i t)
}
exists (also nontangentially), and we have $\im F(s + 0 i) \ge 0$ for almost every $s \in (0, 1)$, and $\im F(s + 0 i) \le 0$ for almost every $s \in (1, \infty)$. Furthermore,
\formula[]{
\label{eq:fr:wh:exp:p}
 F^+(\lambda; z) & = c^+_\lambda \exp\biggl(\frac{1}{\pi} \int_0^1 \frac{1}{s - z^{-1}} \, \lv \arg(F(s^{-1} + 0 i) - \lambda) \rv ds \biggr) , \\
\label{eq:fr:wh:exp:m}
 F^-(\lambda; z) & = c^-_\lambda \exp\biggl(\frac{1}{\pi} \int_0^1 \frac{1}{s - z^{-1}} \, \lv \arg(F(s + 0 i) - \lambda) \rv ds \biggr) ,
}
where $\arg$ takes values in $(-\pi, \pi]$.
\end{proposition}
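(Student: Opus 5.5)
The plan has three parts: show the boundary limits exist with the right signs, compute the angles $\ph^\pm_\lambda$ from~\eqref{eq:fr:wh:ph:p} and~\eqref{eq:fr:wh:ph:m} explicitly, and substitute into~\eqref{eq:fr:wh:exp:ph:p} and~\eqref{eq:fr:wh:exp:ph:m}.

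\emph{Existence and sign of the boundary limit.} By~\eqref{eq:symbol}, $F$ is, up to the change of variables $z \mapsto z^{-1}$ on the $\mu_+$ part, a sum of Stieltjes-type transforms of finite measures. Hence $F$ has nontangential boundary limits at almost every point of $(0,1)$ and of $(1,\infty)$, by the standard facts on Stieltjes transforms recalled in Section~\ref{sec:pre}. For the sign, I will use~\eqref{eq:poisson}. For $s \in (0,1)$, the $\mu_+$ term $\int \im z\, \lv sz-1\rv^{-2}\,\mu_+(ds)$ stays bounded and tends to $0$ as $z \to s$, because the kernel is nonsingular there. The $\mu_-$ term is nonnegative. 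Therefore $\im F(s+0i) \ge 0$. The case $s > 1$ is symmetric after inversion: the singular part is the nonpositive $\mu_+$ term, and the $\mu_-$ term vanishes in the limit.

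\emph{Identifying the angles.} For almost every $s \in (0,1)$ the limit $F(\lambda; s+0i)$ exists. The prefactor $\lv\gamma\rv (z-1)^2/((\gamma-z)(\overline\gamma-z))$ is continuous on the real axis away from $1$ and takes the value $\lv\gamma\rv (s-1)^2/\lv\gamma-s\rv^2 > 0$ there. It therefore does not change the argument, and
\[
\ph^-_\lambda(s) = \lim_{t\to0^+}\arg F(\lambda; s+it) = \arg\bigl(F(s+0i)-\lambda\bigr).
\]
Since $\ph^-_\lambda \in [0,\pi]$, and since also $\im(F(s+0i)-\lambda) \ge 0$, this value equals $\lv\arg(F(s+0i)-\lambda)\rv$. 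The only care needed is where $F(s+0i)-\lambda$ is a negative real number; there $\arg = \pi$ by our convention, so the identity still holds. The same computation at $s^{-1} > 1$ gives
\[
\ph^+_\lambda(s) = -\arg\bigl(F(s^{-1}+0i)-\lambda\bigr) = \lv\arg(F(s^{-1}+0i)-\lambda)\rv,
\]
because now $\im F(s^{-1}+0i) \le 0$, so the argument lies in $[-\pi,0]$, again with the convention $\arg = \pi$ in the degenerate negative-real case. Substituting these expressions into~\eqref{eq:fr:wh:exp:ph:p} and~\eqref{eq:fr:wh:exp:ph:m} yields~\eqref{eq:fr:wh:exp:p} and~\eqref{eq:fr:wh:exp:m}.

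The main obstacle is justifying that the nontangential limit of $\arg F(\lambda;\cdot)$ in~\eqref{eq:fr:wh:ph:m} coincides with $\arg$ of the limit $F(s+0i)-\lambda$. Continuity of $\arg$ fails at $0$ and on the negative axis. At points where the limit is $0$, which is a null set or else harmless since $\arg 0 = 0$ is immaterial up to a null set, I will argue via the almost-everywhere statement. On the negative axis, the sign information $\im(F(s+it)-\lambda) \ge 0$ for small $t > 0$ (for $s<1$) forces the argument to approach $\pi$ rather than $-\pi$. If this sign fails for small $t$, I will instead invoke Proposition~\ref{prop:exp} directly for $F(\lambda;\cdot)$, which already guarantees values in $[0,\pi]$.
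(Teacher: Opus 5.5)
Your proposal is correct. For the existence of $F(s + 0 i)$ and the identification of $\ph^\pm_\lambda$ it follows the paper's proof. That proof uses Fatou's theorem for the Cauchy--Stieltjes integrals in~\eqref{eq:symbol}. Section~\ref{sec:pre} only recalls how a measure is recovered from $\im f$, so it is not the right citation for a.e.\ existence of the full nontangential limit. It then combines positivity of the factor $\lv \gamma \rv (z - 1)^2 / ((\gamma - z)(\overline{\gamma} - z))$ on $\R \setminus \{1\}$ with $\ph^\pm_\lambda \in [0, \pi]$.

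The genuinely different step is the sign of $\im F(s + 0 i)$. You read it off~\eqref{eq:poisson}. For $s \in (0, 1)$ the $\mu_+$-kernel is bounded near $s$, so that term is $O(\im z)$, while the $\mu_-$-term is nonnegative; the case $s > 1$ is symmetric. The paper instead lets $t \to 0^+$ in $\re(F(1 + i \xi)/\xi) \ge 0$ with $\xi = t - (s - 1) i$, i.e.\ it uses that $f(\xi) = F(1 + i \xi)$ is a Rogers function (Lemma~\ref{lem:symbol:rogers}). Your route is more elementary and self-contained; the paper's is a one-liner once the Rogers machinery is available.

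Two points in the angle step need tidying.

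\emph{The sign option.} Drop the first option in your last paragraph: the sign of $\im(F(s + i t) - \lambda)$ off the axis is not controlled. Take the tridiagonal example with $q < p$ and $s \in ((q/p)^{1/2}, 1)$. There $\im F(s + i t) = t \bigl(q/(s^2 + t^2) - p\bigr) < 0$ and $F(s) < \lambda_1 < \lambda$. So $\arg(F(s + i t) - \lambda) \to -\pi$, whereas $\ph^-_\lambda(s) = \pi$. The prefactor, although it tends to a positive number, moves the product across the negative half-line.

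Your fallback is the correct argument, and it is exactly the paper's. Put $w = F(s + 0 i) - \lambda \ne 0$; then $e^{i \arg F(\lambda; s + i t)} \to w/\lv w \rv$. The limit $\ph^-_\lambda(s)$ exists and lies in $[0, \pi]$ by Proposition~\ref{prop:exp} applied to $F(\lambda; \cdot)$. It is therefore congruent to $\arg w$ modulo $2\pi$, hence to $\lv \arg w \rv$ since $\im w \ge 0$. Two numbers in $[0, \pi]$ that are congruent modulo $2\pi$ coincide.

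The same reasoning at $s^{-1}$, with $-\arg$ and $\im F(s^{-1} + 0 i) \le 0$, handles $\ph^+_\lambda$. It also repairs your chain $\ph^+_\lambda(s) = -\arg(\ldots) = \lv \arg(\ldots) \rv$, whose middle term equals $-\pi$ when $F(s^{-1} + 0 i) - \lambda < 0$.

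\emph{The zero set.} The points where $F(s + 0 i) = \lambda$ are not ``harmless'' by convention; you need them to form a null set. This holds, for instance, by Proposition~\ref{prop:exp}. Near $(0, 1)$, $\log F(\lambda; z)$ is a Cauchy integral of the bounded function $\ph^-_\lambda$ plus a function holomorphic across $(0, 1)$, and symmetrically near $(1, \infty)$. Hence it has finite boundary values almost everywhere. The paper passes over this point silently as well.
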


\begin{proof}
By~\eqref{eq:symbol}, $F$ is a Cauchy--Stieltjes integral in the upper complex half-plane, and so the almost everywhere existence of the limit~\eqref{eq:fr:wh:exp:limit} is a consequence of Fatou's theorem (see Theorems~2.2 and~3.5 in~\cite{d70}). Furthermore, since $f(\xi) = F(1 + i \xi)$ is a Rogers function, we have $\re (F(1 + i \xi) / \xi) \ge 0$ when $\re \xi \ge 0$. By setting $\xi = t - (s - 1) i$ and passing to the limit as $t \to 0^+$, we find that $\re (i F(s + 0 i) / (s - 1)) \ge 0$ whenever the limit exists. It follows that $\im F(s + 0 i) \ge 0$ for almost all $s \in (0, 1)$, and $\im F(s + 0 i) \le 0$ for almost all $s \in (1, \infty)$.

Let us write $\alpha \equiv \beta \pmod{2 \pi}$ if $\alpha - \beta$ is a multiple of $2 \pi$. By~\eqref{eq:fr:wh:ph:p} and Definition~\ref{def:fr}, for almost all $s \in (0, 1)$ we have
\formula{
 \ph^+_\lambda(s) & = -\lim_{t \to 0^+} \arg\biggl(\frac{\lv \gamma(\lambda) \rv (s^{-1} + i t - 1)^2}{(\gamma(\lambda) - s^{-1} + i t) (\overline{\gamma}(\lambda) - s^{-1} + i t)} \, (F(s^{-1} + i t) - \lambda)\biggr) \\
 & \equiv -\arg(F(s^{-1} + 0 i) - \lambda) = \lv \arg(F(s^{-1} + 0 i) - \lambda) \rv \pmod{2 \pi}.
}
However, both $\ph^+_\lambda$ and $\lv \arg(F(s^{-1} + 0 i) - \lambda) \rv$ belong to $[0, \pi]$, and so we necessarily have $\ph^+_\lambda(s) = \lv \arg(F(s^{-1} + 0 i) - \lambda)$. Formula~\eqref{eq:fr:wh:exp:p} follows thus from~\eqref{eq:fr:wh:exp:ph:p}. Similarly, for almost all $s \in (0, 1)$, by~\eqref{eq:fr:wh:ph:m},
\formula{
 \ph^-_\lambda(s) & = \lim_{t \to 0^+} \arg\biggl(\frac{\lv \gamma(\lambda) \rv (s + i t - 1)^2}{(\gamma(\lambda) - s + i t) (\overline{\gamma}(\lambda) - s + i t)} \, (F(s + i t) - \lambda)\biggr) \\
 & \equiv \arg(F(s + 0 i) - \lambda) \pmod{2 \pi} ,
}
and since both sides are in $[0, \pi]$, they must be equal. Hence, \eqref{eq:fr:wh:exp:ph:m} implies~\eqref{eq:fr:wh:exp:m}.
\end{proof}

If $\lambda \in \Lambda$ and $r \in Z_f$ are such that $\gamma(\lambda) = 1 + i \zeta_f(r)$, then
\formula{
 F(\lambda; 1 + i \xi) & = \frac{\lv \gamma(\lambda) \rv \xi^2}{f(r; \xi)} = \lv \gamma(\lambda) \rv \, \frac{-i \xi}{f^+(r; -i \xi)} \, \frac{i \xi}{f^-(r; i \xi)} \, .
}
By the uniqueness of the Wiener--Hopf factors, for some $c_\lambda > 0$ we have
\formula{
 F^+(\lambda; 1 + i \xi) & = c_\lambda \lv \gamma(\lambda) \rv \, \frac{-i \xi}{f^+(r; -i \xi)} \, , \\
 F^-(\lambda; (1 + i \xi)^{-1}) & = c_\lambda^{-1} \, \frac{i \xi}{f^-(r; i \xi)} \, .
}
In other words,
\formula[]{
\label{eq:fr:wh:p}
 F^+(\lambda; x) & = c_\lambda \lv \gamma(\lambda) \rv \, \frac{1 - x}{f^+(r; 1 - x)} \, , \\
\label{eq:fr:wh:m}
 F^-(\lambda; y) & = c_\lambda^{-1} \, \frac{y^{-1} - 1}{f^-(r; y^{-1} - 1)} \, .
}
Since $f^+(r; \xi)$ and $\xi / f^+(r; \xi)$ are complete Bernstein functions of $\xi$, also
\formula[eq:fr:cbf:p]{
 & F^+(\lambda; 1 - \xi) \quad \text{and} \quad \frac{\xi}{F^+(\lambda; 1 - \xi)} \quad \text{are complete Bernstein functions of $\xi$.}
}
In a similar way, using the Rogers function $g$ instead of $f$ (or simply applying the above property to the reversed sequence $(\check a_k) = (a_{-k})$ and the dual symbol $\check F(z) = F(z^{-1})$), we find that
\formula[eq:fr:cbf:m]{
 & F^-(\lambda; 1 - \xi) \quad \text{and} \quad \frac{\xi}{F^-(\lambda; 1 - \xi)} \quad \text{are complete Bernstein functions of $\xi$.}
}
In particular, $(1 - z) / F^+(\lambda; z)$ and $(1 - z) / F^-(\lambda; z)$ have finite limits as $z \to 1$ nontangentially in $z \in \C \setminus [1, \infty)$. However,
\formula{
 \frac{1 - z}{F^+(\lambda; z)} \, \frac{1 - z^{-1}}{F^-(\lambda; z^{-1})} & = -\frac{(z - 1)^2}{z F(\lambda; z)}
}
converges to a nonzero limit $\lambda^{-1} \lv \gamma(\lambda) \rv^{-1} \lv 1 - \gamma(\lambda) \rv^2$ as $z \to 1$ nontangentially in $z \in \C \setminus \R$; see Definition~\ref{def:fr}. Therefore, both limits
\formula[eq:fr:one]{
 \lim_{x \to 1^-} \frac{1 - x}{F^+(\lambda; x)} \, , &&
 \lim_{y \to 1^-} \frac{1 - y}{F^-(\lambda; y)}
}
exist (even nontangentially in $\C \setminus [1, \infty)$) and are finite positive numbers.

We will need the following auxiliary result, which is a simple reformulation of Proposition~3.20 from~\cite{k25}.

\begin{proposition}
\label{prop:integral}
Let $F$ be the symbol of a nontrivial summable $\amcm$ sequence and $\lambda \in \Lambda$. Let $G(z)$ denote the branch of the square root of $(\gamma(\lambda) - z) (\overline{\gamma}(\lambda) - z)$ which is a holomorphic function on $D_F^+ \cup D_F^-$, equal to $\lv \gamma(\lambda) - z \rv$ on $D_F^+ \cap (0, 1)$ and on $D_F^- \cap (1, \infty)$. If $x \in D_F^+$ and $y^{-1} \in D_F^-$, then
\formula*[eq:integral]{
 & \exp\biggl(\frac{1}{2 \pi i} \int_{\Gamma^\star} \biggl(\frac{1}{z - x} - \frac{1}{z - y^{-1}}\biggr) \log \lv F(z) - \lambda \rv dz \biggr) \\
 & \qquad = \frac{G(x) G(y^{-1}) F^+(\lambda; x) F^-(\lambda; y)}{\lv \gamma(\lambda) \rv (1 - x) (y^{-1} - 1)} \, .
}
\end{proposition}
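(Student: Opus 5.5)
The plan is to transfer Proposition~3.20 of~\cite{k25} through the change of variables $z = 1 + i\xi$ that links $F$ to the Rogers function $f(\xi) = F(1 + i\xi)$. As in the proof of Proposition~\ref{prop:h}, let $\zeta = -i(\gamma(\lambda) - 1) = \zeta_f(r)$ with $r \in Z_f$. By Proposition~\ref{prop:spine}, $\Gamma^\star$ is the image under $\xi \mapsto 1 + i\xi$ of the symmetrised spine $\Gamma_f^\star$ of $f$, reflected appropriately. The regions $D_F^+$ and $D_F^-$ correspond, by \eqref{eq:dom:m} and \eqref{eq:dom:p}, to the two sides of $\Gamma_f^\star$, that is, to $-i\xi$ lying on the ``$+$'' and ``$-$'' sides used in~\cite{k25}.

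Proposition~3.20 in~\cite{k25} expresses
\[
 \exp\Bigl(\tfrac{1}{2\pi i}\int_{\Gamma_f^\star}\bigl(\tfrac{1}{w-\xi}-\tfrac{1}{w+\eta}\bigr)\log\lv f(w)-f(\zeta)\rv\,dw\Bigr)
\]
in terms of the Wiener--Hopf factors $f^\pm(r;\cdot)$ of the difference quotient $f(r;\xi)$ and the square-root factor $\sqrt{(\xi-\zeta)(\xi+\overline\zeta)}$. I would substitute $w = -i(z-1)$, $\xi = -i(x-1)$, $\eta = i(y^{-1}-1)$, so that $1/(w-\xi) = i/(z-x)$ and $dw = -i\,dz$. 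This makes the kernel exactly $\frac{1}{z-x}-\frac{1}{z-y^{-1}}$, and $\log\lv f(w)-\lambda\rv = \log\lv F(z)-\lambda\rv$. I would also check orientation: $\Gamma^\star$ is traversed via $\gamma(\lambda)$ with $\lambda$ increasing from $-\limit$ to $\limit$, matching $\zeta_f$ traversed with $r$ increasing through its conjugate-symmetric extension.

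Next I would translate the right-hand side. By \eqref{eq:fr:wh:p} and \eqref{eq:fr:wh:m},
\[
 \frac{1-x}{f^+(r;1-x)} = \frac{F^+(\lambda;x)}{c_\lambda\lv\gamma(\lambda)\rv},
 \qquad
 \frac{y^{-1}-1}{f^-(r;y^{-1}-1)} = c_\lambda F^-(\lambda;y).
\]
The constants $c_\lambda$ cancel, leaving the factor $\lv\gamma(\lambda)\rv^{-1}$. The square-root factor becomes $G$. We have $(\xi-\zeta)(\xi+\overline\zeta) = -(x-\gamma)(x-\overline\gamma)$, so $\sqrt{\cdot}$ equals $G(x)$ up to a unimodular constant. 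That constant is fixed by evaluating at real points: $x \in D_F^+ \cap (0,1)$, where $G = \lv\gamma - x\rv$ and all other quantities are positive. The same is done for $y^{-1}$ on $D_F^- \cap (1,\infty)$. The remaining denominators $(1-x)$ and $(y^{-1}-1)$ come from the factors $\xi$ and $\eta$ in $\xi/f^+(r;\xi)$, since $F(\lambda;\cdot)$ corresponds to $\xi^2/h$ rather than to $h$.

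I expect the main obstacle to be bookkeeping of branches and domains. The branch of $\log$ and of the square root, and the admissible region ($x \in D_F^+$, $y^{-1} \in D_F^-$ versus the half-plane conditions in~\cite{k25}), must match exactly. Convergence of the contour integral near the endpoints and at $\infty$ is also needed; this follows from the length bound in Proposition~\ref{prop:spine}\ref{it:spine:d} together with boundedness of $F$ on $\Gamma^\star$.

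To handle the branch issue robustly, I would first prove the identity for real $x \in (0,1) \cap D_F^+$ and $y \in (0,1)$ with $y^{-1} \in D_F^-$, where both sides are positive. I would then extend to the full domain by holomorphy in $x$ and in $y^{-1}$. Both sides are holomorphic and nonvanishing on the connected regions $D_F^+$ and $D_F^-$, which avoids tracking arguments pointwise.
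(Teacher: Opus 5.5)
Your proposal is correct and is essentially the paper's proof. The paper applies Proposition~3.20 of~\cite{k25} to $f(\xi) = F(1 + i \xi)$ at $\xi_1 = -i(x - 1)$ and $\xi_2 = -i(y^{-1} - 1)$ (your $\eta = -\xi_2$), substitutes $z = 1 + i \xi$, and converts $f^+(r; 1 - x)$ and $f^-(r; y^{-1} - 1)$ into $F^+(\lambda; x)$ and $F^-(\lambda; y)$ via \eqref{eq:fr:wh:p} and \eqref{eq:fr:wh:m}, with the constants $c_\lambda$ cancelling.

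Your branch-fixing detour is unnecessary. The square-root factor in~\cite{k25} is the branch of $\sqrt{-(\xi - \zeta)(\xi + \overline{\zeta})}$, which is exactly $G(1 + i \xi)$. If you keep the detour, note that $D_F^+$ and $D_F^-$ need not be connected when $\Lambda$ is not an interval. However, every component of $D_F^+$ meets $(0, 1)$ and every component of $D_F^-$ meets $(1, \infty)$, so the extension by holomorphy still works component by component.
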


\begin{proof}
We apply Propositon~3.20 from~\cite{k25} to the (nondegenerate and nonconstant) Rogers function $f(\xi) = F(1 + i \xi)$, with $\xi_1 = -i (x - 1)$, $\xi_2 = -i (y^{-1} - 1)$ and $r \in Z_f$ such that $\zeta_f(r) = \gamma(\lambda)$ and $\lambda_f(r) = \lambda$. If $\Gamma_f^\star = 1 + i \Gamma^\star$ denotes the symmetrised spine of $f$, then we have
\formula{
 \exp\biggl(\frac{1}{2 \pi i} \int_{\Gamma_f^\star} \biggl(\frac{1}{\xi - \xi_1} - \frac{1}{\xi - \xi_2}\biggr) \log \lv f(\xi) - \lambda \rv d\xi \biggr) & = \frac{g(\xi_1) g(\xi_2)}{f^+(r; -i \xi_1) f^-(r; i \xi_2)} \, ,
}
where $g(\xi) = G(1 + i \xi)$ is the branch of the square root of $-(\xi - \zeta_f(r)) (\xi + \overline{\zeta_f(r)})$ which is holomorphic in $D_f^+ \cup D_f^-$ and equal to $\lv \xi - \zeta_f(r) \rv$ on $D_f^+ \cap (0, i \infty)$ and on $D_f^- \cap (-i \infty, 0)$.

A substitution $z = 1 + i \xi$ transforms the left-hand side to
\formula{
 & \exp\biggl(\frac{1}{2 \pi i} \int_{\Gamma^\star} \biggl(\frac{1}{-i (z - x)} - \frac{1}{-i (z - y^{-1})}\biggr) \log \lv F(z) - \lambda \rv (-i) dz \biggr) ,
}
which is equal to the left-hand side of~\eqref{eq:integral}. Using~\eqref{eq:fr:wh:p}, \eqref{eq:fr:wh:m} and $g(\xi) = G(1 + i \xi)$, we find that the right-hand side can be written as
\formula{
 \frac{G(x) G(y^{-1})}{f^+(r; 1 - x) f^-(r; y^{-1} - 1)} & = \frac{G(x) G(y^{-1})}{\lv \gamma(\lambda) \rv} \, \frac{F^+(\lambda; x)}{1 - x} \, \frac{F^-(\lambda; y)}{y^{-1} - 1} \, ,
}
which reduces to the right-hand side of~\eqref{eq:integral}.
\end{proof}

Note that the definition of $F^+(\lambda; x)$ and $F^-(\lambda; y)$ is symmetric, in the following sense: the Wiener--Hopf factors of the reversed sequence $(\check a_k) = (a_{-k})$ are given by $\check F^+(\lambda; x) = F^-(\lambda; x)$ and $\check F^-(\lambda; y) = F^+(\lambda; x)$.

%
%

\section{Toeplitz matrices, random walks, and Wiener--Hopf factorisation}
\label{sec:wh}

With the extensive toolkit developed in the previous section, we are ready to link the Wiener--Hopf factorisation of symbols $F$ of $\amcm$ sequences $(a_k)$ with Toeplitz matrices $T = (a_{k - l} : k, l \ge 0)$. More precisely, we give an expression for the generating function of the entries of $T^n$.

We take two paths. First, we use a purely analytic approach, exploiting Krein's factorisation theorem for inverse Toeplitz matrices. Next, we expand the discussion of the probabilistic context of our work given in the introduction, and we provide an alternative proof using the fluctuation theory of random walks.


\subsection{Toeplitz matrices}

We consider the infinite Toeplitz matrix $T = (a_{k - l} : k, l \ge 0)$ associated to a summable $\amcm$ sequence $(a_k)$. We denote by $F$ the symbol of $(a_k)$, and we set $F_\sigma(z) = \sigma + F(z)$ for $\sigma \ge 0$. Our goal in this section is to establish the link between the powers of $T$ and the Wiener--Hopf factors $F_\sigma^+$ and $F_\sigma^-$ of $F_\sigma$, described in Definition~\ref{def:wh}.

The following formula is likely well-known, but difficult to locate in this exact form in the literature.

\begin{proposition}
\label{prop:wh}
Let $T = (a_{k - l})$ be the infinite Toeplitz matrix corresponding to a summable $\amcm$ sequence $(a_k)$ with $a_0 = 0$. Denote by $T^n_{k, l}$ the entries of $T^n$, the $n$th matrix power of $T$. Then
\formula{
 \sum_{n = 0}^\infty \sum_{k = 0}^\infty \sum_{l = 0}^\infty \frac{x^k y^l}{(\sigma + \mass)^n} \, T^n_{k, l} & = \frac{\sigma + \mass}{1 - x y} \, \frac{1}{F_\sigma^+(x)} \, \frac{1}{F_\sigma^-(y)}
}
for $x, y \in \disk$ and $\sigma > 0$, where $F_\sigma^+$ and $F_\sigma^-$ are the Wiener--Hopf factors described in Definition~\ref{def:wh}.
\end{proposition}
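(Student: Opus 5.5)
The plan is to sum the series in $n$ first and reduce the statement to an inversion formula for a Toeplitz operator with a Wiener--Hopf factorable symbol. Write $q = (\sigma + \mass)^{-1}$. Since $a_0 = 0$, the entries of $T$ are nonnegative and every row and column sum is at most $\mass$. Hence the operator norm of $T$ on $\ell^1$ and on $\ell^\infty$ is at most $\mass$, and on $\ell^2$ it is at most $\mass$ by the Schur test. Therefore $\sum_n q^n T^n = (I - qT)^{-1}$ converges absolutely entrywise, and $q \sum_n q^n T^n = (\sigma + \mass - T)^{-1}$. The left-hand side of the claim thus equals $(\sigma + \mass) \sum_{k,l} x^k y^l \bigl((\sigma + \mass - T)^{-1}\bigr)_{k,l}$. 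Because the entries of $(\sigma+\mass-T)^{-1}$ are bounded by $\sum_n q^{n+1}\mass^n < \infty$, the triple series converges absolutely for $x, y \in \disk$ and may be rearranged.

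Next, observe that $\sigma + \mass - T$ is the Toeplitz matrix with symbol $\sigma + \mass - \hat a(z) = \sigma + \hat a(1) - \hat a(z) = F_\sigma(z)$, since $\hat a(1) = \mass$ when $a_0 = 0$. Factor $F_\sigma(z) = F_\sigma^+(z) F_\sigma^-(z^{-1})$ on $\torus$ as in~\eqref{eq:wh}. Here $F_\sigma^+$ is holomorphic on $\disk$, being the generating function (up to the constant $\sigma^+$, Remark~\ref{rem:wh:symbol}) of a one-sided summable $\amcm$ sequence supported on $k\ge 0$, and similarly for $F_\sigma^-$.

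The key structural step is the Toeplitz identity $T(F_\sigma) = T(F_\sigma^+)\, T(F_\sigma^-(\cdot^{-1}))$. Here $T(F_\sigma^+)$ is lower triangular with Fourier coefficients of $F_\sigma^+$, and $T(F_\sigma^-(z^{-1}))$ is upper triangular; the identity holds because $T(ab) = T(a)T(b)$ whenever $a$ is analytic-conjugate (minus type) or $b$ is analytic. With the convention $T = (a_{k-l})$, the lower triangular factor must sit on the correct side, so the order of the factors needs care. Both factors are invertible: $F_\sigma^\pm$ are bounded away from zero on $\overline{\disk}$ because $\sigma>0$ gives $\sigma^\pm>0$ and the exponential representation has no zeros. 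Moreover $1/F_\sigma^\pm$ lie in the Wiener algebra, since they are generating functions of summable sequences (I expect $1/F_\sigma^\pm$ to correspond to a complete-Bernstein/Stieltjes pair via Proposition~\ref{prop:cm:gf}, giving summable coefficients). Hence $(\sigma + \mass - T)^{-1} = T(1/F_\sigma^-(\cdot^{-1}))\, T(1/F_\sigma^+)$, which is Krein's factorisation.

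Finally, compute the generating function of the product. If $1/F_\sigma^+(x) = \sum_j p_j x^j$ and $1/F_\sigma^-(y) = \sum_j m_j y^j$, then $T(1/F_\sigma^+)_{j,l} = p_{j-l}$ and $T(1/F_\sigma^-(\cdot^{-1}))_{k,j} = m_{j-k}$. So the $(k,l)$ entry of the inverse is $\sum_{j \ge \max(k,l)} m_{j-k} p_{j-l}$. Summing against $x^k y^l$ and substituting $j-k = u$, $j-l = v$ gives
\begin{align*}
\sum_{j\ge0}\sum_{u,v\le j} m_u p_v x^{j-u} y^{j-v}.
\end{align*}
In the correct orientation this factorises as $\frac{1}{1-xy}\cdot\frac{1}{F_\sigma^+(x)F_\sigma^-(y)}$. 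The bookkeeping must be redone with the variables matched to the correct factors: the intended pairing attaches $x$ to the row index $k$ through the lower-triangular factor, giving $\sum_j (xy)^j$ times $\sum_u p_u x^u \sum_v m_v y^v$.

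The main obstacle I anticipate is exactly this orientation issue, namely verifying which of $F_\sigma^\pm$ produces the lower versus upper triangular factor under the paper's indexing $a_{k-l}$ and $F(z) = \hat a(1) - \hat a(z)$. The remaining delicate point is justifying summability of $1/F_\sigma^\pm$ on $\torus$, or instead working with $x, y$ in a smaller disk and using analytic continuation.
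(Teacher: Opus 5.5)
Your route is the paper's: sum the Neumann series for $(\sigma+\mass-T)^{-1}$, identify $\sigma+\mass-T = T(F_\sigma)$, invert via Krein's theorem using the factors $F_\sigma^\pm$, and then sum the generating function. However, the central factorisation is written in the wrong order, and this is an actual error, not bookkeeping.

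The identity $T(F_\sigma) = T(F_\sigma^+)\,T\bigl(F_\sigma^-(z^{-1})\bigr)$ is false. The rule you quote says $T(ab)=T(a)T(b)$ when $a$ is of minus type or $b$ is analytic. It applies with $a(z) = F_\sigma^-(z^{-1})$ and $b = F_\sigma^+$, and gives
\[
 T(F_\sigma) = T\bigl(F_\sigma^-(z^{-1})\bigr)\,T(F_\sigma^+) ,
\]
upper triangular times lower triangular. As a check, take $F_\sigma^+(z) = 1-\alpha z$ and $F_\sigma^-(z^{-1}) = 1-\beta z^{-1}$: the $(0,0)$ entry of $T(F_\sigma)$ is $1+\alpha\beta$, while the lower-times-upper product has $(0,0)$ entry $1$. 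Hence
\[
 (\sigma+\mass-T)^{-1} = T(1/F_\sigma^+)\,T\bigl(1/F_\sigma^-(z^{-1})\bigr) ,
\]
lower times upper, which is what the paper takes from Krein's theorem. Its $(k,l)$ entry is $\sum_{j=0}^{\min\{k,l\}} p_{k-j}\, m_{l-j}$, not $\sum_{j\ge\max\{k,l\}} m_{j-k}\, p_{j-l}$. The double series you wrote down is the generating function of the wrong operator, which is why it does not factor. With the correct entries, summing first over $k\ge j$ and $l\ge j$ gives
\[
 \sum_{j\ge0}(xy)^j\Bigl(\sum_u p_u x^u\Bigr)\Bigl(\sum_v m_v y^v\Bigr) = \frac{1}{1-xy}\,\frac{1}{F_\sigma^+(x)\,F_\sigma^-(y)} ,
\]
which is exactly Step~2 of the paper. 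So your ``orientation obstacle'' is not about which of $F_\sigma^\pm$ yields the lower triangular factor; you had that right. It is the order of multiplication, and the product rule you already cite settles it.

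The other point you left open is easy and needs no smaller disk. By Remark~\ref{rem:wh:symbol}, $F_\sigma^+(z) = \sigma^+ + A - \sum_{k\ge1} a_k^+ z^k$ with $a_k^+\ge0$ and $A=\sum_{k\ge1} a_k^+<\infty$. Thus $\lv F_\sigma^+\rv \ge \sigma^+>0$ on $\overline{\disk}$. Expanding $1/F_\sigma^+$ as a geometric series in $(\sigma^++A)^{-1}\sum_{k\ge1} a_k^+ z^k$ shows that $1/F_\sigma^+$ has nonnegative coefficients summing to $1/\sigma^+$, and the same holds for $F_\sigma^-$. With $F_\sigma^\pm$ and $1/F_\sigma^\pm$ in the Wiener algebra, the product rule also gives the invertibility of both triangular factors directly. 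Krein's formula then follows without further citation.
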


\begin{proof}
We divide the argument into three steps.

\emph{Step 1.}
Recall that $a_0 = 0$, $\mass = \sum_{k = -\infty}^\infty a_k$ and $F(z) = \mass - \hat a(z)$, where $\hat a$ is the generating function of $(a_k)$. Since $\lv \hat a(z) \rv \le \mass$ for $z \in \torus$, we have $\re F(z) \ge 0$, and so $\re F_\sigma(z) \ge \sigma$ for $z \in \torus$. In particular, the winding number of $0$ with respect to the curve $F_\sigma(e^{i t})$, $t \in [-\pi, \pi]$, is zero.

By Remark~\ref{rem:wh:symbol}, $F_\sigma^+ - \sigma_+$ and $F_\sigma^- - \sigma_-$ are symbols of one-sided summable $\amcm$ sequences for appropriate $\sigma^+, \sigma^- > 0$. Hence, $F_\sigma^+$ and $F_\sigma^-$ are generating functions of one-sided summable sequences. By definition, $F_\sigma^+$ and $F_\sigma^-$ are holomorphic in $\disk$ and have no zeroes in~$\disk$.

Recall the Wiener--Hopf factorisation~\eqref{eq:wh}:
\formula{
 F_\sigma(z) & = F_\sigma^+(z) F_\sigma^-(z^{-1}) ,
}
where $z \in D_F \setminus \{0\}$ (where $D_F$ is defined in~\eqref{eq:dom}). If $z \in \torus \setminus \{1\}$, then it follows that
\formula{
 F_\sigma(z) & = F_\sigma^+(z) F_\sigma^-(\overline{z}) = F_\sigma^+(z) \overline{F_\sigma^-(z)} ,
}
and by continuity this extends to $z = 1$. The above equation is an example of the Wiener--Hopf factorisation discussed in Theorem~1.14 in~\cite{bs99}, and so we may apply Krein's theorem given in Theorem~1.15 in~\cite{bs99}. This result asserts that if we denote by $T(\hat b) = (b_{k - l} : k, l \ge 0)$ the infinite Toeplitz matrix associated to the sequence $(b_k)$ with generating function $\hat b$, then
\formula{
 (T(F_\sigma))^{-1} & = T(1 / F_\sigma^+) T(\overline{1 / F_\sigma^-}) .
}
Here the inverse matrix is understood in the sense of operators on $\ell^2$.

As a part of this statement, $1 / F_\sigma^+(z)$ and $1 / \overline{F_\sigma^-(z)} = 1 / F_\sigma^-(z^{-1})$ (where $z \in \torus$) are generating functions of one-sided sequences $(b_k^+)$ and $(b_k^-)$, with $b_k^+ = 0$ for $k < 0$ and $b_k^- = 0$ for $k > 0$. Thus, if we denote by $b_{k, l}$ the entries of $(T(F_\sigma))^{-1}$, then
\formula{
 b_{k, l} & = \sum_{j = \min\{k, l\}}^\infty b_{j - k}^+ b_{l - j}^- .
}

\emph{Step 2.}
By Fubini's theorem for series, for $x, y \in \disk$ we have
\formula{
 \sum_{k = 0}^\infty \sum_{l = 0}^\infty x^k y^l b_{k, l} & = \sum_{k = 0}^\infty \sum_{l = 0}^\infty \sum_{j = 0}^{\max\{k, l\}} x^k y^l b_{k - j}^+ b_{j - l}^- \\
 & = \sum_{j = 0}^\infty \sum_{k = j}^\infty \sum_{l = j}^\infty x^k y^l b_{k - j}^+ b_{j - l}^- \\
 & = \sum_{j = 0}^\infty \biggl(\sum_{k = j}^\infty x^k b_{k - j}^+\biggr) \biggl(\sum_{l = j}^\infty y^l b_{j - l}^-\biggr) \\
 & = \sum_{j = 0}^\infty x^j y^j \biggl(\sum_{k = 0}^\infty x^k b_k^+\biggr) \biggl(\sum_{l = -\infty}^0 y^{-l} b_l^-\biggr) .
}
Summing up the geometric series and using the definition of the generating function, we obtain
\formula{
 \sum_{k = 0}^\infty \sum_{l = 0}^\infty x^k y^l b_{k, l} & = \frac{1}{1 - x y} \, \frac{1}{F_\sigma^+(x)} \, \frac{1}{F_\sigma^-(y)} \, .
}

\emph{Step 3.}
On the other hand, $T(F_\sigma) = \sigma + T(F) = \sigma + \mass - T$. Hence,
\formula{
 (T(F_\sigma))^{-1} & = (\sigma + \mass - T)^{-1} = \frac{1}{\sigma + \mass} \sum_{n = 0}^\infty \frac{1}{(\sigma + \mass)^n} \, T^n
}
is the resolvent for the operator $T$. Note that the series converges both entry-wise and in the operator norm on every $\ell^p$, $p \in [1, \infty]$, because, by induction, $\sum_{l = 0}^\infty T^n_{k, l} \le \mass^n$ and $\sum_{k = 0}^\infty T^n_{k, l} \le \mass^n$ for every $n, k, l \ge 0$.

Recall that $b_{k, l}$ are the entries of $(T(F_\sigma))^{-1}$. It follows that
\formula{
 b_{k, l} & = \frac{1}{\sigma + \mass} \sum_{n = 0}^\infty \frac{1}{(\sigma + \mass)^n} \, T^n_{k, l} .
}
Thus, for $x, y \in \disk$,
\formula{
 \sum_{n = 0}^\infty \sum_{k = 0}^\infty \sum_{l = 0}^\infty \frac{x^k y^l}{(\sigma + \mass)^n} \, T^n_{k, l} & = (\sigma + \mass) \sum_{k = 0}^\infty \sum_{l = 0}^\infty x^k y^l b_{k, l} .
}
The desired result follows by combining this with the result of the previous step.
\end{proof}


\subsection{Random walks and fluctuation theory}
\label{sec:rw}

Consider a random walk $(X_n)$ on $\Z$ with increments equal to $k$ with probability $a_k$. More precisely, we assume that the increments $X_{n + 1} - X_n$ are i.i.d.\@ random variables with probability mass function $(a_k)$. Typically, we consider $X_0 = 0$ with probability one, but if the initial value $X_0$ is equal to $l$ with probability one, then we write $\pr[\cdot \mid X_0 = l]$ for the corresponding probability.

We assume that $(a_k)$ is an $\amcm$ sequence, possibly with $a_0 > 0$, and in this case we say that $(X_n)$ is an $\amcm$ random walk. If $a_k = 0$ for $k \ne 0$, we say that $(X_n)$ is \emph{trivial}. A nontrivial random walk $(X_n)$ is \emph{one-sided} if $a_k = 0$ for all $k \le -1$ or $a_k = 0$ for all $k \ge 1$. Otherwise, we say that $(X_n)$ is \emph{two-sided}. We are primarily interested in two-sided $\amcm$ random walks.

Observe that the double-infinite matrix $(a_{k - l} : k, l \in \Z)$ is the transpose of the transition matrix of $(X_n)$, and the Toeplitz matrix $T = (a_{k - l} : k, l \ge 0)$ is the transpose of the Markov chain obtained by \emph{killing} $(X_n)$ when it becomes negative. More precisely, if $X_n^+ = X_n$ when $X_0, X_1, \ldots, X_n \ge 0$ and $X_n^+ = \infty$ otherwise, then the transpose of $T$ is the transition matrix of $(X_n^+)$ (with the absorbing \emph{cemetery state} $\infty$ omitted).

The symbol $F$ of the $\amcm$ sequence $(a_k)$ is closely related with the generator matrix of $(X_n)$, and we say that $F$ is the \emph{symbol} of $(X_n)$.

We recall some fundamentals of the fluctuation theory of random walks. We continue to assume that the sequence $(a_k)$ is $\amcm$, although in fact the results discussed below hold for general random walks on $\Z$.

Denote the running minima and the running maxima of $(X_n)$ by
\formula{
 \ol{X}_n & = \max\{X_0, X_1, \ldots, X_n\} , \\
 \ul{X}_n & = \min\{X_0, X_1, \ldots, X_n\} .
}
Consider a random variable $N$ drawn from the geometric distribution with parameter $q \in (0, 1)$: $\pr[N = n] = (1 - q) q^n$ for $n = 0, 1, \ldots$\,, and assume that $N$ and the process $(X_n)$ are independent. It is a fundamental result in the fluctuation theory of random walks that
\formula**{
\notag
 \text{$\ul{X}_N$ and $X_N - \ul{X}_N$ are independent;} \\
\label{eq:wh:max:min}
 \text{$X_N - \ul{X}_N$ is equal in distribution to $\ol{X}_N$;} \\
\notag
 \text{$X_N - \ol{X}_N$ is equal in distribution to $\ul{X}_N$.}
}
Since $N$ and the process $(X_n)$ are independent, the generating function $G_q$ of the probability mass function of $X_N$ is given by
\formula{
 G_q(z) & = \ex[z^{X_N}] = \sum_{n = 0}^\infty \pr[N = n] \ex[z^{X_n}]
}
when $z \in \torus$. The increments of the process $(X_n)$ are i.i.d., and therefore
\formula{
 G_q(z) & = \sum_{n = 0}^\infty \pr[N = n] (\ex[z^{X_1}])^n .
}
The generating function of $(a_k)$ is $\hat a(z) = 1 - F(z)$. Hence,
\formula{
 G_q(z) & = \sum_{n = 0}^\infty (1 - q) q^n (\hat a(z))^n = \frac{1 - q}{1 - q \hat a(z)} = \frac{1 - q}{1 - q + q F(z)} \, .
}
Recall that we denoted $F_\sigma(z) = \sigma + F(z)$. Thus, if $\sigma = q^{-1} - 1$, we obtain
\formula{
 G_q(z) & = \frac{\sigma}{F_\sigma(z)} \, .
}

Let $G_q^+(\cdot)$ and $G_q^-(\cdot)$ denote the generating functions of $\ol{X}_N$ and $-\ul{X}_N$, respectively. By~\eqref{eq:wh:max:min},
\formula{
 G_q(z) & = \ex[z^{X_N}] \\
 & = \ex[z^{X_N - \ul{X}_N}] \ex[z^{\ul{X}_N}] \\
 & = \ex[z^{\ol{X}_N}] \ex[z^{\ul{X}_N}] \\
 & = G_q^+(z) G_q^-(z^{-1}) .
}
Here $z \in \torus$. Note, however, that $G_q^+$ and $G_q^-$ are holomorphic functions on the unit disk $\disk$, continuous up to the boundary. Furthermore, as these are generating functions of infinitely divisible distributions, they are zero-free in $\disk$. It follows that $G_q^+(z)$ and $G_q^-(z^{-1})$ are the \emph{Wiener--Hopf factors} of $G_q(z)$.

The Wiener--Hopf factors are unique, up to multiplication by a constant, and in Definition~\ref{def:wh} we described the Wiener--Hopf factors $F_\sigma^+$ and $F_\sigma^-$ of $F_\sigma$. It follows that $G_q^+$ and $G_q^-$ are, up to multiplication by a constant, equal to $1 / F_\sigma^+$ and $1 / F_\sigma^-$, respectively. Since $G_q^+(1) = G_q^-(1) = 1$, we conclude that for $x, y \in \disk$, $q \in (0, 1)$ and $\sigma = q^{-1} - 1$,
\formula[]{
\label{eq:wh:pr:p}
 \ex[x^{\ol{X}_N}] & = G_q^+(x) = \frac{F_\sigma^+(1)}{F_\sigma^+(x)} \, , \\
\label{eq:wh:pr:m}
 \ex[y^{-\ul{X}_N}] & = G_q^-(y) = \frac{F_\sigma^-(1)}{F_\sigma^-(y)} \, .
}
These identities allow for a detailed analysis of the distributions of $\ol{X}_n$ and $\ul{X}_n$, in a similar way as the running suprema and infima of Lévy processes were studied in~\cite{k25}. This is, however, beyond the scope of the present paper.


\subsection{Transition probabilities}
\label{sec:rw:wh}

We now provide an alternative proof of Proposition~\ref{prop:wh}, using probabilistic methods. Consider the transition probabilities of the Markov chain $(X_n^+)$, obtained by killing $(X_n)$ when it becomes negative:
\formula{
 p_n^+(l, k) & = \pr\bigl[X_n^+ = k \mid X_0^+ = l \bigr] = \pr\bigl[X_n = k, \, \ul{X}_n \ge 0 \mid X_0 = l \bigr] .
}
Note that $p_1^+(l, k) = a_{k - l}$ when $k, l \ge 0$, and hence $T = (p_1^+(l, k) : k, l \ge 0)$. It follows that $T^n = (p_n^+(l, k) : k, l \ge 0)$, that is, $p_n^+(l, k) = T^n_{k, l}$. In other words, the transpose of $T^n$ is the $n$ step transition matrix of $(X_n^+)$.

Let $P_q^+(x, y)$ denote the trivariate generating function of $p_n^+(l, x)$: when $x, y \in \disk$ and $q \in (0, 1)$, we define
\formula{
 P_q^+(x, y) & = \sum_{n = 0}^\infty \sum_{k = 0}^\infty \sum_{l = 0}^\infty q^n x^k y^l p_n^+(l, k) \\
 & = \sum_{n = 0}^\infty \sum_{k = 0}^\infty \sum_{l = 0}^\infty q^n x^k y^l T^n_{k, l} .
}
Suppose that $N$ is a random variable drawn from the geometric distribution with parameter $q$, independent from the process $(X_n)$. Then,
\formula{
 P_q^+(x, y) & = \frac{1}{1 - q} \sum_{n = 0}^\infty \sum_{k = 0}^\infty \sum_{l = 0}^\infty \pr[N = n] x^k y^l \pr\bigl[X_n = k, \, \ul{X}_n \ge 0 \mid X_0 = l\bigr] \\
 & = \frac{1}{1 - q} \sum_{n = 0}^\infty \sum_{k = 0}^\infty \sum_{l = 0}^\infty \pr[N = n] x^k y^l \pr\bigl[l + X_n = x, \, l + \ul{X}_n \ge 0\bigr] .
}
By independence and Fubini's theorem,
\formula{
 P_q(x, y) & = \frac{1}{1 - q} \sum_{n = 0}^\infty \sum_{k = 0}^\infty \sum_{l = 0}^\infty x^k y^l \pr\bigl[N = n, \, l + X_n = k, l + \ul{X}_n \ge 0\bigr] \\
 & = \frac{1}{1 - q} \, \ex \biggl[\sum_{l = 0}^\infty x^{l + X_N} y^l \ind_{\{l + \ul{X}_N \ge 0\}}\biggr] \\
 & = \frac{1}{1 - q} \, \ex \biggl[\sum_{l = -\ul{X}_N}^\infty x^{l + X_N} y^l\biggr] \\
 & = \frac{1}{1 - q} \, \ex \biggl[\frac{x^{X_N - \ul{X}_N} y^{-\ul{X}_N}}{1 - x y}\biggr] .
}
By~\eqref{eq:wh:max:min}, we find that
\formula{
 P_q^+(x, y) & = \frac{1}{1 - q} \, \frac{1}{1 - x y} \, \ex[x^{X_N - \ul{X}_N}] \ex[y^{-\ul{X}_N}] \\
 & = \frac{1}{1 - q} \, \frac{1}{1 - x y} \, \ex[x^{\ol{X}_N}] \ex[y^{-\ul{X}_N}] \\
 & = \frac{1}{1 - q} \, \frac{1}{1 - x y} \, G_q^+(x) G_q^-(y) .
}
Finally, by~\eqref{eq:wh:pr:p} and~\eqref{eq:wh:pr:m},
\formula{
 P_q^+(x, y) & = \frac{1}{1 - q} \, \frac{1}{1 - x y} \, \frac{F_\sigma^+(1)}{F_\sigma^+(x)} \, \frac{F_\sigma^-(1)}{F_\sigma^-(y)} \, .
}
where $\sigma = q^{-1} - 1$. Since $F_\sigma^+(1) F_\sigma^-(1) = F_\sigma(1) = \sigma$ and $q = 1 / (\sigma + 1)$ and $1 / (1 - q) = (\sigma + 1) / \sigma$, we conclude that
\formula{
 P_{1 / (\sigma + 1)}^+(x, y) & = \frac{\sigma + 1}{1 - x y} \, \frac{1}{F_\sigma^+(x)} \, \frac{1}{F_\sigma^-(y)} \, .
}
This proves Proposition~\ref{prop:wh} if $(a_k)$ is a \emph{probabilistic} $\amcm$ sequence (in this case $\mass = 1$).

We remark that by considering the sequence $(\mass^{-1} a_k)$, one can easily reduce Proposition~\ref{prop:wh} to the case of probabilistic $\amcm$ sequences; we omit the details.

%
%

\section{Inversion}
\label{sec:inv}

Recall that our main goal is to study the the powers $T^n$ of the infinite Toeplitz matrix $T = (a_{k - l} : k, l \ge 0)$ corresponding to a summable $\amcm$ sequence $(a_k)$ with $a_0 = 0$. In the previous section, in Proposition~\ref{prop:wh}, we expressed the trivariate generating function of the entries $T^n_{k, l}$ of $T^n$ in terms of the Wiener--Hopf factors of the symbol $F$:
\formula[eq:tri]{
 \sum_{n = 0}^\infty \sum_{k = 0}^\infty \sum_{l = 0}^\infty \frac{x^k y^l}{(\sigma + \mass)^n} \, T^n_{k, l} & = \frac{\sigma + \mass}{1 - x y} \, \frac{1}{F_\sigma^+(x)} \, \frac{1}{F_\sigma^-(y)} \, .
}
Inversion of this trivariate transform is done in two steps: first, we deal with the temporal variable $n$, and only then we handle spatial variables $x, y$.


\subsection{Inversion of temporal transform}
\label{sec:temp}

The following analogue of Proposition~4.4 in~\cite{k25} inverts the transform in the temporal variable $n$. Recall that the regions $D_F^+$ and $D_F^-$ are defined in~\eqref{eq:dom:m} and~\eqref{eq:dom:p}.

\begin{proposition}
\label{prop:bi}
Let $F$ be the symbol of a two-sided summable $\amcm$ sequence $(a_k)$ with $a_0 = 0$, let $T = (a_{k - l} : k, l \ge 0)$ be the corresponding infinite Toeplitz matrix, and let $T^n_{k, l}$ denote the entries of the $n$th power of $T$. For $x \in (0, 1) \cap D_F^+$ and $y^{-1} \in (1, \infty) \cap D_F^-$, we have
\formula{
 \sum_{k = 0}^\infty \sum_{l = 0}^\infty x^k y^l T^n_{k, l} & = \frac{1}{\pi} \int_{\Lambda} (\mass - \lambda)^n \frac{1 - x}{\lv \gamma(\lambda) - x \rv^2 F^+(\lambda; x)} \, \frac{1 - y}{\lv (\gamma(\lambda))^{-1} - y \rv^2 F^-(\lambda; y)} \, \frac{\im \gamma(\lambda)}{\lv \gamma(\lambda) \rv} \, d\lambda .
}
\end{proposition}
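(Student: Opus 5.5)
The plan is to invert the transform \eqref{eq:tri} in the variable $\sigma$ by writing $1/(F_\sigma^+(x) F_\sigma^-(y))$ as a Stieltjes transform in $\sigma$, and then reading off its spectral measure from the boundary values at $\sigma = -\lambda \pm 0 i$. Fix $x \in (0, 1) \cap D_F^+$ and $y^{-1} \in (1, \infty) \cap D_F^-$, and write $A_n(x, y) = \sum_{k, l} x^k y^l T^n_{k, l}$.

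\emph{Stieltjes representation in $\sigma$.} By Proposition~\ref{prop:wh:int}, $\sigma \mapsto F_\sigma^+(x) F_\sigma^-(y)$ is a complete Bernstein function of $\sigma$, so its reciprocal $S(\sigma)$ is a Stieltjes function. Formula~\eqref{eq:wh:int} gives
\formula{
 S(\sigma) & = \frac{1}{\sigma} \exp\biggl(-\frac{1}{\pi} \int_0^\limit \Theta(\lambda) \, \frac{d\lambda}{\sigma + \lambda}\biggr) , & \Theta(\lambda) & = \arg(\gamma(\lambda) - y^{-1}) - \arg(\gamma(\lambda) - x) .
}
We have $\Theta = 0$ off $\Cl \Lambda$, since there $\gamma$ is real and lies outside the interval between $x$ and $y^{-1}$; this uses $x \in D_F^+$ and $y^{-1} \in D_F^-$. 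As $\sigma \to 0$, $S(\sigma)$ behaves like $\sigma^{-1}\exp(-\tfrac1\pi\int \Theta/\lambda)$, and this exponent diverges. I therefore expect $c_{-1} = 0$ and $c_0 = 0$, so that
\formula{
 S(\sigma) = \frac{1}{\pi} \int_{(0, \limit]} \frac{\nu(d\lambda)}{\sigma + \lambda} ,
}
with $\nu$ determined by~\eqref{eq:stieltjes:measure}. Since $\limit \le 2\mass$, we have $\lv \mass - \lambda \rv \le \mass < \sigma + \mass$. Hence $\frac{1}{\sigma + \lambda} = \sum_n (\mass - \lambda)^n (\sigma + \mass)^{-n-1}$ converges uniformly. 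Inserting this into \eqref{eq:tri} and comparing coefficients of $(\sigma + \mass)^{-n}$, which is legitimate by uniqueness of power series in $(\sigma + \mass)^{-1}$, yields
\formula{
 A_n(x, y) = \frac{1}{\pi (1 - x y)} \int (\mass - \lambda)^n \nu(d\lambda) .
}

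\emph{Identification of the density.} Let $\sigma = -\lambda + i t$ with $t \to 0^+$. The integral $\int \Theta(\lambda') \, d\lambda'/(\sigma + \lambda')$ tends to a principal value minus $i \pi \Theta(\lambda)$, so the exponential acquires the phase $e^{i \Theta(\lambda)}$. Its modulus is an exponential of a principal-value integral. I would identify this modulus by recognising it as the Wiener--Hopf factorisation of $F - \lambda$, which differs from $F(\lambda; z)$ by the elementary factor $\lv\gamma\rv (z-1)^2/((\gamma - z)(\overline\gamma - z))$. Proposition~\ref{prop:integral} is the precise tool for converting the principal-value integral into $G(x) G(y^{-1}) F^+(\lambda; x) F^-(\lambda; y) / (\lv \gamma \rv (1 - x)(y^{-1} - 1))$. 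I expect the outcome to be
\formula{
 \lim_{t \to 0^+} S(-\lambda + i t) = C \, \frac{(1 - x)(1 - y)}{(\gamma - x)(\gamma^{-1} - y) \, F^+(\lambda; x) F^-(\lambda; y)}
}
for a real constant $C$. One should first check that it equals $1$ by comparing the constants $c^\pm_\lambda$ via $F^+(\lambda;\cdot) F^-(\lambda;\cdot^{-1}) = F(\lambda;\cdot)$. Taking $-\im$ then reduces to the elementary identity
\formula{
 \im \frac{1}{(\gamma - x)(\gamma^{-1} - y)} = \pm\frac{(1 - x y) \im \gamma}{\lv \gamma \rv \, \lv \gamma - x \rv^2 \lv \gamma^{-1} - y \rv^2} \, ,
}
which follows by multiplying numerator and denominator by conjugates. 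The factor $1 - x y$ cancels the prefactor from \eqref{eq:tri}, producing exactly the claimed integrand on $\Lambda$ and zero density off $\Cl\Lambda$.

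\emph{Main obstacle.} The hardest step is making the boundary passage rigorous. Three things must be shown. First, $\nu$ has no singular part, in particular no mass on $\partial \Lambda$ or at $\limit$, which can be handled using Hölder continuity of $\gamma$ from Proposition~\ref{prop:spine}\ref{it:spine:f} so that the principal values converge locally uniformly. Second, the vague limit in~\eqref{eq:stieltjes:measure} is the pointwise density. Third, the principal-value expression is matched with the contour integral in Proposition~\ref{prop:integral}, which requires substituting $z = \gamma(\lambda')$ along $\Gamma^\star$ and using $\log \lv F(z) - \lambda \rv = \log\lv \lambda' - \lambda \rv$ there. I would follow the corresponding argument for Proposition~4.4 in~\cite{k25} closely, transferring it through the identifications \eqref{eq:wh:rogers} and~\eqref{eq:fr:wh:p}--\eqref{eq:fr:wh:m}.
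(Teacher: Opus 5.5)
Your route is the paper's: a Stieltjes representation of $\sigma\mapsto 1/(F_\sigma^+(x)F_\sigma^-(y))$ via Proposition~\ref{prop:wh:int}, then $c_0=c_{-1}=0$, then an absolutely continuous $\nu$ read off from boundary values at $-\lambda+i0$, the modulus identified through Proposition~\ref{prop:integral}, and finally the geometric expansion in $(\sigma+\mass)^{-1}$. However, the identification step fails as written.

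\textbf{The boundary value and your identity.} The boundary value is not a real multiple of $(1-x)(1-y)/((\gamma-x)(\gamma^{-1}-y)F^+(\lambda;x)F^-(\lambda;y))$, and your ``elementary identity'' is false. For $\gamma=i$, $x=y=\tfrac12$ one gets $(\gamma-x)(\gamma^{-1}-y)=\tfrac54$, so the left side is $0$ while the right side is not. In general $\im\frac{1}{(\gamma-x)(\gamma^{-1}-y)}=\frac{(y\lv\gamma\rv^2-x)\im\gamma}{\lv\gamma-x\rv^2\lv 1-y\gamma\rv^2}$, so following your plan would give a wrong density.

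You already have the right information. You showed $S(-\lambda+i0)=-\lambda^{-1}E(\lambda)e^{i\Theta(\lambda)}$ with $E(\lambda)=\exp\bigl(-\frac1\pi\pvint_0^{\limit}\frac{\Theta(s)}{s-\lambda}\,ds\bigr)$. Simply take $-\im$ to get $\lambda^{-1}E(\lambda)\sin\Theta(\lambda)$. Then use $\sin\Theta=\frac{(y^{-1}-x)\im\gamma}{\lv\gamma-x\rv\lv\gamma-y^{-1}\rv}$ together with $E(\lambda)=\lambda\frac{\lv\gamma\rv(1-x)(y^{-1}-1)}{\lv\gamma-x\rv\lv\gamma-y^{-1}\rv F^+(\lambda;x)F^-(\lambda;y)}$ from Proposition~\ref{prop:integral}, where $G(x)=\lv\gamma-x\rv$ and $G(y^{-1})=\lv\gamma-y^{-1}\rv$. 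The product is exactly $(1-xy)$ times the stated integrand (without the factor $(\mass-\lambda)^n$). Equivalently, $S(-\lambda+i0)=\frac{\lv\gamma\rv(1-x)(1-y)}{(\gamma-x)(1-y\overline{\gamma})F^+(\lambda;x)F^-(\lambda;y)}$, with $\overline{\gamma}$ rather than $\gamma$ in the second factor.

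\textbf{The claim that $\Theta=0$ off $\Cl\Lambda$ is also wrong.} In the tridiagonal example with $q>p$, $\gamma(\lambda)$ runs through $(1,\sqrt{q/p})\subseteq(x,y^{-1})$ for $\lambda\in(0,(\sqrt q-\sqrt p)^2)$, and there $\Theta=\pi$. The hypotheses $x\in D_F^+$ and $y^{-1}\in D_F^-$ only guarantee $\gamma(\lambda)\ne x,y^{-1}$, hence local Hölder continuity of $\Theta$. Off $\Lambda$ one only has $\Theta\in\{0,\pi\}$; this still kills the density there, because $\sin\Theta=0$.

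What actually drives the argument are the endpoint values $\Theta(0^+)=\pi$ and $\Theta(\limit^-)=0$. These follow from $\gamma(0^+)=1\in(x,y^{-1})$ and $\gamma(\limit^-)\in(-\infty,0]\cup\{\cinfty\}$.
\begin{itemize}
\item $\Theta(0^+)=\pi$ gives the divergence you assert for $c_{-1}=0$; under your stated claim it could fail.
\item $\Theta(0^+)=\pi$ also matters in the integration by parts of the principal value, which you need in order to produce $\log\lv s-\lambda\rv$ and hence the contour integral over $\Gamma^\star$. That step uses absolute continuity of $\Theta$ on compact subintervals (Proposition~\ref{prop:spine}\ref{it:spine:d}) and $\Theta'=0$ a.e.\ off $\Lambda$. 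It yields the boundary term $\pi\log\lambda$, which is what cancels the factor $\lambda^{-1}$ coming from $1/\sigma$.
\item $\Theta(\limit^-)=0$ shows $\nu(\{\limit\})=0$, via a monotone convergence argument as $\sigma\to\limit^+$. Local Hölder continuity on $(0,\limit)$ does not give this.
\end{itemize}
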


\begin{proof}
We argue as in the proof of Proposition~4.4 in~\cite{k25}. We divide the argument into five steps.

\emph{Step 1.}
Our starting point is~\eqref{eq:tri}. We fix $x, y \in \disk$, and we denote
\formula{
 G(\sigma) & = \frac{1}{1 - x y} \, \frac{1}{F_\sigma^+(x)} \, \frac{1}{F_\sigma^-(y)} \, ,
}
so that the right-hand side of~\eqref{eq:tri} is $(\sigma + \mass) G(\sigma)$. Applying Proposition~\ref{prop:wh:int}, we find that
\formula{
 G(\sigma) & = \frac{1}{1 - x y} \, \frac{1}{\sigma} \, \exp\biggl(-\frac{1}{\pi} \int_0^{\limit} \psi(\lambda) \, \frac{d\lambda}{\sigma + \lambda}\biggr) ,
}
where
\formula{
 \psi(\lambda) & = \arg(\gamma(\lambda) - y^{-1}) - \arg(\gamma(\lambda) - x) .
}
Furthermore, $G$ is the reciprocal of a complete Bernstein function, and hence a Stieltjes function of $\sigma$. We study the integral representation~\eqref{eq:stieltjes:int} of $G$:
\formula{
 G(\sigma) & = c_0 + \frac{c_{-1}}{\sigma} + \frac{1}{\pi} \int_{(0, \infty)} \frac{1}{\sigma + \lambda} \, \nu(d\lambda) ,
}
where $c_0, c_{-1} \ge 0$ and $\nu$ is a nonnegative measure on $(0, \infty)$. Before we proceed, let us record three properties of $\psi$:
\begin{itemize}
\item $\psi$ is locally Hölder continuous on $(0, \limit)$;
\item $0 \le \psi(\lambda) \le \pi$ for $\lambda \in (0, \limit)$;
\item $\psi(0^+) = \pi$ and $\psi(\limit^-) = 0$.
\end{itemize}
In order to prove these properties, it is convenient to note that $\psi(\lambda)$ is the interior angle at $\gamma(\lambda)$ of the (possibly degenerate) triangle with vertices $x$, $y^{-1}$ and $\gamma(\lambda)$. Since $x \in D_F^+$ and $y^{-1} \in D_F^-$, we have $\gamma(\lambda) \ne x$ and $\gamma(\lambda) \ne y^{-1}$. Thus, $\gamma(\lambda) - x$ and $\gamma(\lambda) - y^{-1}$ are non-zero and have nonnegative imaginary part. Since the complex argument is differentiable in this region, and $\gamma$ is locally Hölder continuous by Proposition~\ref{prop:spine}\ref{it:spine:f}, the first property follows. The second one is a consequence of $0 < x < y^{-1}$: the numbers $\gamma(\lambda) - x$ and $\gamma(\lambda) - y^{-1}$ have equal nonnegative imaginary part, but the former has a greater real part, and hence a smaller or equal argument. To prove the third one, simply note that $\gamma(\limit^-) \le 0 < x < \gamma(0^+) < y^{-1}$.

\emph{Step 2.}
Recall that $c_0$, $c_{-1}$ and $\nu$ are given by~\eqref{eq:stieltjes:constants} and~\eqref{eq:stieltjes:measure}. Since $\psi \ge 0$, we find that $0 \le G(\lambda) \le (1 - x y)^{-1} \sigma^{-1}$, and so
\formula{
 c_0 & = \lim_{\sigma \to \infty} G(\sigma) = 0 .
}
Furthermore, $\psi(0^+) = \pi$, and so, by the monotone convergence theorem,
\formula{
 c_{-1} & = \lim_{\sigma \to 0^+} (\sigma G(\sigma)) = \frac{1}{1 - x y} \, \exp\biggl( -\frac{1}{\pi} \int_0^{\limit} \psi(\lambda) \, \frac{d\lambda}{\lambda} \biggr) = \frac{1}{1 - x y} \, e^{-\infty} = 0 .
}
Finally, $\nu$ is the measure on $(0, \infty)$ given by
\formula{
 \nu(ds) & = \lim_{t \to 0^+} (-\im G(-s + i t)) ds .
}
By definition, $G$ is holomorphic in $\C \setminus [-\limit, 0]$, and hence $\nu$ is concentrated on $(0, \limit]$. We claim that $\nu(\{\limit\}) = 0$.

By the dominated convergence theorem,
\formula{
 \frac{\nu(\{\limit\})}{\pi} & = \lim_{\sigma \to \limit^+} \int_{(0, \limit]} \frac{\sigma - \limit}{\sigma - \lambda} \, \nu(d\lambda) = \lim_{\sigma \to \limit^+} (-(\sigma - \limit) G(-\sigma)) .
}
Using the definition of $G$, the identity $\int_0^{\limit} (\sigma - \lambda)^{-1} d\lambda = \log \limit - \log(\sigma - \limit)$, and the monotone convergence theorem, we obtain
\formula{
 \frac{\nu(\{\limit\})}{\pi} & = \lim_{\sigma \to \limit^+} \frac{1}{1 - x y} \, \frac{\sigma - \limit}{\sigma} \, \exp\biggl(\frac{1}{\pi} \int_0^{\limit} \psi(\lambda) \, \frac{d\lambda}{\sigma - \lambda}\biggr) \\
 & = \lim_{\sigma \to \limit^+} \frac{1}{1 - x y} \, \exp\biggl(-\frac{1}{\pi} \int_0^{\limit} (\pi - \psi(\lambda)) \, \frac{d\lambda}{\sigma - \lambda}\biggr) \\
 & = \frac{1}{1 - x y} \, \exp\biggl(-\frac{1}{\pi} \int_0^{\limit} (\pi - \psi(\lambda)) \, \frac{d\lambda}{\limit - \lambda}\biggr) .
}
Finally, $\psi(\limit^-) = 0$, and hence
\formula{
 \frac{\nu(\{\limit\})}{\pi} & = \frac{1}{1 - x y} \, e^{-\infty} = 0 ,
}
as claimed.

\emph{Step 3.}
Below we prove that $G$ has a continuous boundary limit on $(-\limit, 0)$, and hence $\nu$ is in fact absolutely continuous. Note that
\formula[]{
\notag
 G(-s + i t) & = \frac{1}{1 - x y} \, \frac{e^{-i \arg(-s + i t)}}{\sqrt{s^2 + t^2}} \, \exp\biggl(-\frac{1}{\pi} \int_0^{\limit} \psi(\lambda) \, \frac{\lambda - s - i t}{(\lambda - s)^2 + t^2} \, d\lambda\biggr) \\
\label{eq:bi:g}
 & = \frac{1}{1 - x y} \, \frac{1}{\sqrt{s^2 + t^2}} \, \exp\biggl(-\frac{1}{\pi} \int_0^{\limit} \psi(\lambda) \, \frac{\lambda - s}{(\lambda - s)^2 + t^2} \, d\lambda\biggr) \\
\notag
 & \qquad \times \exp\biggl(-i \arg(-s + i t) + \frac{i}{\pi} \int_0^{\limit} \psi(\lambda) \, \frac{t}{(\lambda - s)^2 + t^2} \, d\lambda\biggr) .
}
We study the two integrals on the right-hand side.

Since $\psi(s)$ is bounded and locally Hölder continuous on $(0, \limit)$, by the dominated convergence theorem, for every $s \in (0, \limit)$ we have
\formula{
 & \lim_{t \to 0^+} \int_0^{\limit} \psi(\lambda) \, \frac{\lambda - s}{(\lambda - s)^2 + t^2} \, d\lambda \\
 & \qquad = \lim_{t \to 0^+} \biggl(\int_0^{\limit} \psi(\lambda) \, \frac{\lambda - s}{(\lambda - s)^2 + t^2} \, d\lambda - \psi(s) \int_{s - 1}^{s + 1} \frac{\lambda - s}{(\lambda - s)^2 + t^2} \, d\lambda\biggr) \\
 & \qquad = \lim_{t \to 0^+} \int_{-\infty}^\infty \bigl(\psi(\lambda) \ind_{(0, \limit)}(\lambda) - \psi(s) \ind_{(-1, 1)}(\lambda - s)\bigr) \, \frac{\lambda - s}{(\lambda - s)^2 + t^2} \, d\lambda \\
 & \qquad = \int_{-\infty}^\infty \bigl(\psi(\lambda) \ind_{(0, \limit)}(\lambda) - \psi(s) \ind_{(-\delta, \delta)}(\lambda - s)\bigr) \, \frac{1}{\lambda - s} \, d\lambda \\
 & \qquad = \pvint_0^{\limit} \frac{\psi(\lambda)}{\lambda - s} \, d\lambda .
}
Furthermore, the integral under the limit is bounded uniformly with respect to $t \in (0, 1)$ and $s$ in an arbitrary compact subinterval of $(0, \limit)$.

The other integral in~\eqref{eq:bi:g} is a Poisson integral, and so continuity of $\psi$ implies that
\formula{
 \lim_{t \to 0^+} \frac{1}{\pi} \int_0^{\limit} \psi(\lambda) \, \frac{t}{(\lambda - s)^2 + t^2} \, d\lambda & = \psi(s) .
}
It follows that for every $s \in (0, \limit)$, the pointwise limit
\formula{
 \lim_{t \to 0^+} G(-s + i t) & = \frac{1}{1 - x y} \, \frac{1}{s} \, \exp\biggl(-\frac{1}{\pi} \pvint_0^{\limit} \frac{\psi(\lambda)}{\lambda - s} \, d\lambda - i \pi + i \psi(s)\biggr)
}
exists, and furthermore $G(-s + i t)$ is bounded uniformly in $t \in (0, 1)$ and $s$ in an arbitrary compact subinterval of $(0, \limit)$. Since $\nu$ is the vague limit of measures $-\pi^{-1} \im G(-s + i t) ds$ on $(0, \limit)$, it follows that $\nu$ is indeed absolutely continuous, with density function that we denote by the same symbol $\nu$:
\formula{
 \nu(s) & = \lim_{t \to 0^+} (-\im G(-s + i t)) = \frac{1}{1 - x y} \, \frac{\sin \psi(s)}{s} \, \exp\biggl(-\frac{1}{\pi} \pvint_0^{\limit} \frac{\psi(\lambda)}{\lambda - s} \, d\lambda\biggr) .
}
It will be convenient later to have the above formula with the roles of $s$ and $\lambda$ exchanged:
\formula[eq:bi:nu:1]{
 \nu(\lambda) & = \frac{1}{1 - x y} \, \frac{\sin \psi(\lambda)}{\lambda} \, \exp\biggl(-\frac{1}{\pi} \pvint_0^{\limit} \frac{\psi(s)}{s - \lambda} \, ds\biggr) .
}
Observe that if $\lambda \in (0, \limit) \setminus \Lambda$, then $\im \gamma(\lambda) = 0$, so that $\psi(\lambda)$ is either $0$ or $\pi$. It follows that $\sin \psi(\lambda) = 0$, and hence $\nu(\lambda) = 0$. We conclude that
\formula[eq:bi:g:nu]{
 G(\sigma) & = \frac{1}{\pi} \int_{\Lambda} \frac{1}{\sigma + \lambda} \, \nu(\lambda) d\lambda.
}

\emph{Step 4.}
We simplify the expression~\eqref{eq:bi:nu:1} for the density function $\nu$. Using the formula for the sine of a difference, we find that
\formula{
 \sin \psi(\lambda) & = \sin \bigl(\arg(\gamma(\lambda) - y^{-1}) - \arg(\gamma(\lambda) - x)\bigr) \\
 & = \frac{\re \gamma(\lambda) - x}{\lv \gamma(\lambda) - x \rv} \, \frac{\im \gamma(\lambda)}{\lv \gamma(\lambda) - y^{-1} \rv}  - \frac{\im \gamma(\lambda)}{\lv \gamma(\lambda) - x \rv} \, \frac{\re \gamma(\lambda) - y^{-1}}{\lv \gamma(\lambda) - y^{-1} \rv} \\
 & = \frac{(y^{-1} - x) \im \gamma(\lambda)}{\lv \gamma(\lambda) - x \rv \lv \gamma(\lambda) - y^{-1} \rv} \, .
}
Hence,
\formula[eq:bi:nu:2]{
 \nu(\lambda) & = \frac{\im \gamma(\lambda)}{\lambda y \lv \gamma(\lambda) - x \rv \lv \gamma(\lambda) - y^{-1} \rv} \, \exp\biggl(-\frac{1}{\pi} \pvint_0^{\limit} \frac{\psi(s)}{s - \lambda} \, ds\biggr) .
}
We claim that
\formula{
 -\frac{1}{\pi} \pvint_0^{\limit} \frac{\psi(s)}{s - \lambda} \, ds & = \log \lambda + \frac{1}{\pi} \int_{\Lambda} \psi'(s) \log \lv s - \lambda \rv ds .
}
Indeed, fix a sufficiently small $\delta > 0$. By Proposition~\ref{prop:spine}\ref{it:spine:d}, $\lv \gamma' \rv$ is integrable over $[\delta, \limit - \delta]$, and so $\psi$ is absolutely continuous on $[\delta, \limit - \delta]$. It follows that
\formula{
 -\biggl(\int_\delta^{\lambda - \delta} + \int_{\lambda + \delta}^{\limit - \delta}\biggr) \frac{\psi(s)}{s - \lambda} \, ds & = \psi(\delta) \log \lv \lambda - \delta \rv - \psi(\limit - \delta) \log \lv \lambda - \limit + \delta \rv \\
 & \qquad + (\psi(\lambda + \delta) - \psi(\lambda - \delta)) \log \delta \\
 & \qquad\qquad + \biggl(\int_\delta^{\lambda - \delta} + \int_{\lambda + \delta}^{\limit - \delta}\biggr) \psi'(s) \log \lv s - \lambda \rv ds .
}
For almost every $\lambda \in (0, \limit)$ the derivative $\psi'(\lambda)$ exists, and so we may pass to the limit as $\delta \to 0^+$. It follows that
\formula{
 -\pvint_0^{\limit} \frac{\psi(s)}{s - \lambda} \, ds & = \psi(0^+) \log \lambda - \psi(\limit^-) \log (\limit - \lambda) + 0 + \int_0^{\limit} \psi'(s) \log \lv s - \lambda \rv ds \\
 & = \pi \log \lambda + \int_0^{\limit} \psi'(s) \log \lv s - \lambda \rv ds .
}
Recall that if $s \in (0, \limit) \setminus \Lambda$, then $\im \gamma(s) = 0$, and hence $\psi(s) = 0$ or $\psi(s) = \pi$. If $s$ is not an isolated point of $(0, \limit) \setminus \Lambda$ and $\psi'(s)$ exists, then it follows that necessarily $\psi'(s) = 0$. Since there are countably many isolated points of $(0, \limit) \setminus \Lambda$, we have $\psi'(s) = 0$ for almost every $s \in (0, \limit) \setminus \Lambda$. Thus,
\formula{
 -\pvint_0^{\limit} \frac{\psi(s)}{s - \lambda} \, ds & = \pi \log \lambda + \int_{\Lambda} \psi'(s) \log \lv s - \lambda \rv ds ,
}
proving our claim.

Recall that we defined $\gamma(-s) = \overline{\gamma(s)}$, so that $\{\gamma(s) : s \in (-\Lambda) \cup \Lambda\}$ is the parameterisation of the symmetrised spine $\Gamma^\star$ (except for countably many endpoints; see~\eqref{eq:spine:star}). Since the argument is the imaginary part of the complex logarithm, for $s \in \Lambda$ we have
\formula{
 \psi'(s) & = \im \biggl(\frac{\gamma'(s)}{\gamma(s) - y^{-1}} - \frac{\gamma'(s)}{\gamma(s) - x} \biggr) \\
 & = \frac{1}{2 i} \biggl(\frac{\gamma'(s)}{\gamma(s) - y^{-1}} - \frac{\gamma'(s)}{\gamma(s)-x} \biggr) - \frac{1}{2 i} \overline{\biggl(\frac{\gamma'(s)}{\gamma(s) - y^{-1}} - \frac{\gamma'(s)}{\gamma(s) - x}\biggr)} \\
 & = \frac{1}{2 i} \biggl(\frac{\gamma'(s)}{\gamma(s) - y^{-1}} - \frac{\gamma'(s)}{\gamma(s)-x} \biggr) + \frac{1}{2 i} \biggl(\frac{\gamma'(-s)}{\gamma(-s) - y^{-1}} - \frac{\gamma'(-s)}{\gamma(-s) - x}\biggr) .
}
Therefore,
\formula{
 -\frac{1}{\pi} \pvint_0^{\limit} \frac{\psi(s)}{s - \lambda} \, ds & = \log \lambda + \frac{1}{2 \pi i} \int_{(-\Lambda) \cup \Lambda} \biggl(\frac{1}{\gamma(s) - y^{-1}} - \frac{1}{\gamma(s) - x}\biggr) \log \lv s - \lambda \rv \gamma'(s) ds .
}
Recall that if $z = \gamma(s)$, then $s = F(z)$. It follows that
\formula{
 -\frac{1}{\pi} \pvint_0^{\limit} \frac{\psi(s)}{s - \lambda} \, ds & = \log \lambda + \frac{1}{2 \pi i} \int_{\Gamma^\star} \biggl(\frac{1}{z - y^{-1}} - \frac{1}{z - x}\biggr) \log \lv F(z) - \lambda \rv dz .
}
The final step of the simplification is provided by Proposition~\ref{prop:integral}: we have
\formula{
 \exp\biggl(-\frac{1}{\pi} \pvint_0^{\limit} \frac{\psi(s)}{s - \lambda} \, ds\biggr) & = \lambda \, \frac{\lv \gamma(\lambda) \rv (1 - x) (y^{-1} - 1)}{\lv \gamma(\lambda) - x \rv \lv \gamma(\lambda) - y^{-1} \rv F^+(\lambda; x) F^-(\lambda; y)} \, .
}
Inserting this into \eqref{eq:bi:nu:2} yields
\formula[eq:bi:nu:3]{
 \nu(\lambda) & = \frac{\lv \gamma(\lambda) \rv (1 - x) (1 - y) \im \gamma(\lambda)}{y^2 \lv \gamma(\lambda) - x \rv^2 \lv \gamma(\lambda) - y^{-1} \rv^2 F^+(\lambda; x) F^-(\lambda; y)}
}
for $\lambda \in \Lambda$.

\emph{Step 5.}
We are now ready to invert the temporal transform. By~\eqref{eq:tri} and~\eqref{eq:bi:g:nu}, we have
\formula{
 \sum_{n = 0}^\infty \sum_{k = 0}^\infty \sum_{l = 0}^\infty \frac{x^k y^l}{(\sigma + \mass)^n} \, T^n_{k, l} & = (\sigma + \mass) G(\sigma) = \frac{1}{\pi} \int_{\Lambda} \frac{\sigma + \mass}{\sigma + \lambda} \, \nu(\lambda) d\lambda .
}
Recall that, by Proposition~\ref{prop:spine}\ref{it:spine:e}, $\sup \Lambda \le \limit \le 2 \mass$, and so $\lv \mass - \lambda \rv < \mass$ for $\lambda \in \Lambda$. It follows that $\lv \mass - \lambda \rv / \lv \sigma + \mass \rv < 1$ when $\lambda \in \Lambda$ and $\sigma > 0$, and so, by Fubini's theorem,
\formula{
 \sum_{n = 0}^\infty \sum_{k = 0}^\infty \sum_{l = 0}^\infty \frac{x^k y^l}{(\sigma + \mass)^n} \, T^n_{k, l} & = \frac{1}{\pi} \int_{\Lambda} \frac{1}{1 - (\mass - \lambda) / (\sigma + \mass)} \, \nu(\lambda) d\lambda \\
 & = \sum_{n = 0}^\infty \int_{\Lambda} \frac{(\mass - \lambda)^n}{(\sigma + \mass)^n} \, \nu(\lambda) d\lambda .
}
By the uniqueness of the generating function,
\formula{
 \sum_{k = 0}^\infty \sum_{l = 0}^\infty x^k y^l T^n_{k, l} & = \frac{1}{\pi} \int_{\Lambda} (\mass - \lambda)^n \nu(\lambda) d\lambda ,
}
and the desired result follows from~\eqref{eq:bi:nu:3}.
\end{proof}


\subsection{Inversion of spatial transforms}
\label{sec:spat}

After inverting the temporal transform in the trivariate generating function of $T^n_{k, l}$, we now deal with the remaining bivariate spatial transform. The following analogue of Proposition~4.6 in~\cite{k25} essentially proves Theorem~\ref{thm:main}.

\begin{proposition}
\label{prop:main}
Let $F$ be the symbol of a two-sided summable $\amcm$ sequence $(a_k)$ with $a_0 = 0$, let $T = (a_{k - l} : k, l \ge 0)$ be the corresponding infinite Toeplitz matrix, and let $T^n_{k, l}$ denote the entries of the $n$th power of $T$. Suppose that the spine $\Gamma$ of $F$ ends at a point in $(-\infty, 0)$, that is,
\formula{
 \gamma(\limit) & \in (-\infty, 0) .
}
Then
\formula[eq:main]{
 T^n_{k, l} & = \frac{2}{\pi} \int_{\Lambda} (\mass - \lambda)^n \ev^+_{\lambda, k} \ev^-_{\lambda, l} \, \frac{\lv \gamma'(\lambda) \rv}{\lv \gamma(\lambda) \rv} \, d\lambda ,
}
where
\formula{
 \ev^+_{\lambda, k} & = \lv \gamma(\lambda) \rv^{-k - 1} \sin\biggl((k + 1) \arg \gamma(\lambda) - \arg \frac{1 - \gamma(\lambda)}{F^+(\lambda;\gamma(\lambda))}\biggr) - \evr^+_{\lambda, k} , \\
 \ev^-_{\lambda, l} & = \lv \gamma(\lambda) \rv^{l + 1} \sin\biggl((l + 1) \arg \gamma(\lambda) + \arg \frac{1 - (\gamma(\lambda))^{-1}}{F^-(\lambda;(\gamma(\lambda))^{-1})}\biggr) - \evr^-_{\lambda, l} ,
}
and $\evr^+_{\lambda, k}$, $\evr^-_{\lambda, l}$ are completely monotone sequences determined by the generating functions
\formula{
 \sum_{k = 0}^\infty \ev^+_{\lambda, k} x^k & = \frac{\lv F^+(\lambda; \gamma(\lambda)) \rv \im \gamma(\lambda)}{\lv 1 - \gamma(\lambda) \rv} \, \frac{1 - x}{(\gamma(\lambda) - x) (\overline{\gamma}(\lambda) - x) F^+(\lambda; x)} \, , \\
 \sum_{l = 0}^\infty \ev^-_{\lambda, l} y^l & = \frac{\lv F^-(\lambda; (\gamma(\lambda))^{-1}) \rv \im (\overline{\gamma}(\lambda))^{-1}}{\lv 1 - (\gamma(\lambda))^{-1} \rv} \, \frac{1 - y}{((\gamma(\lambda))^{-1} - y) ((\overline{\gamma}(\lambda))^{-1} - y) F^-(\lambda; y)} \, ,
}
where $\lambda \in \Lambda$, $x, y \in \C$, and $\lv x \rv < \min \{\lv \gamma(\lambda) \rv, 1\}$, $\lv y \rv < \min \{\lv \gamma(\lambda) \rv^{-1}, 1\}$.
\end{proposition}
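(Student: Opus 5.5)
The plan is to start from Proposition~\ref{prop:bi} and invert the two spatial generating functions by comparing Taylor coefficients in $x$ and $y$. Four things have to be done.

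\emph{Step 1: small $x$ and $y$ are admissible, uniformly in $\lambda$.}
Under the hypothesis $\gamma(\limit) \in (-\infty, 0)$, the closure of $\Gamma^\star$ is a bounded closed curve. It separates $0$ from $\cinfty$, with $0 \in D_F^+$ and $\cinfty$ in the closure of $D_F^-$.

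\begin{itemize}
\item Hence there is $\eps > 0$ such that every real $x \in (0, \eps)$ and every real $y \in (0, \eps)$ satisfy $x \in (0, 1) \cap D_F^+$ and $y^{-1} \in (1, \infty) \cap D_F^-$. So Proposition~\ref{prop:bi} applies to them.
\item Since $\gamma$ is continuous on $[0, \limit]$ and never vanishes there ($\gamma(0^+) = 1$ and $\gamma(\limit^-) < 0$), there are $0 < m \le M < \infty$ with $m \le \lv \gamma(\lambda) \rv \le M$ for all $\lambda \in \Lambda$.
\end{itemize}

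Without the winding hypothesis, $\gamma(\limit^-) = 0$ or $\cinfty$ would destroy either the admissibility of small $x, y$ or these uniform bounds. This is where the hypothesis is used.

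\emph{Step 2: rewrite the integrand as a product of generating functions.}
Denote by $\hat\ev^\pm_\lambda$ the right-hand sides of the two generating-function formulas in the statement. Then the integrand in Proposition~\ref{prop:bi} equals
\formula{
 C(\lambda) \, \hat\ev^+_\lambda(x) \, \hat\ev^-_\lambda(y) \, \frac{\im \gamma(\lambda)}{\lv \gamma(\lambda) \rv} \, ,
}
where $C(\lambda)$ is built from $\lv F^+(\lambda; \gamma) F^-(\lambda; \gamma^{-1}) \rv$, $\lv 1 - \gamma \rv$, $\lv 1 - \gamma^{-1} \rv$ and $\im \gamma$.

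To identify $C(\lambda)$, I would use the Wiener--Hopf factorisation~\eqref{eq:wh:fr} at $z = \gamma(\lambda)$, which gives $\lv F^+(\lambda; \gamma) F^-(\lambda; \gamma^{-1}) \rv = \lv F(\lambda; \gamma) \rv$. Then I would evaluate $F(\lambda; \gamma)$ from Definition~\ref{def:fr} by l'H\^opital. Since $F(\gamma(\lambda)) = \lambda$, we have $F'(\gamma(\lambda)) = 1/\gamma'(\lambda)$ on $\Lambda$, and $\lv \gamma - \overline{\gamma} \rv = 2 \im \gamma$. This yields
\formula{
 \lv F(\lambda; \gamma) \rv = \frac{\lv \gamma \rv \lv 1 - \gamma \rv^2}{2 \im \gamma \, \lv \gamma' \rv} \, .
}
A direct computation should then turn the weight $\pi^{-1} \im \gamma / \lv \gamma \rv$ into $2 \pi^{-1} \lv \gamma' \rv / \lv \gamma \rv$, up to the constants $\lv F^\pm \rv$ and $\lv 1 - \gamma \rv$ already absorbed into $\hat\ev^\pm_\lambda$. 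This is routine bookkeeping.

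\emph{Step 3: extract coefficients.}
The left-hand side of Proposition~\ref{prop:bi} is holomorphic in $(x, y)$ on the polydisc of radius $\eps$. I would show that the right-hand side is as well, by proving
\formula{
 \sup_{\lambda \in \Lambda} \, \sup_{\lv x \rv, \lv y \rv \le \eps'} \lv \hat\ev^+_\lambda(x) \, \hat\ev^-_\lambda(y) \rv \, \frac{\lv \gamma'(\lambda) \rv}{\lv \gamma(\lambda) \rv} \in L^1(\Lambda) .
}
The ingredients are:
\begin{itemize}
\item $\lv \gamma - x \rv \ge m/2$ for $\lv x \rv \le \eps' \le m/2$, and similarly for $\gamma^{-1} - y$ using $M$;
\item a uniform lower bound for $\lv F^\pm(\lambda; x) \rv$ near $0$, read off from the exponential representations~\eqref{eq:fr:wh:exp:p}--\eqref{eq:fr:wh:exp:m}, whose integrands are bounded when $\lv x \rv$ is small;
\item integrability of $\lv \gamma' \rv$ over $\Lambda$, which holds because $\Gamma$ has finite length by Proposition~\ref{prop:spine}\ref{it:spine:d}.
\end{itemize}
With this bound, Fubini's theorem and the identity theorem let me compare Taylor coefficients of both sides. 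This gives~\eqref{eq:main}, with $\ev^\pm_{\lambda, k}$ defined as the coefficients of $\hat\ev^\pm_\lambda$.

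\emph{Step 4: explicit form of the coefficients.}
By~\eqref{eq:fr:cbf:p}, $h(\xi) = \xi / F^+(\lambda; 1 - \xi)$ is a complete Bernstein function. I would apply Proposition~\ref{prop:cbf:s} to $h(1 - x)$ with $\zeta = 1 - \overline{\gamma}$, which has positive imaginary part. This splits $\hat\ev^+_\lambda(x)$ into two simple fractions with poles at $\gamma$ and $\overline{\gamma}$, minus a Stieltjes function of $1 - x$. The Stieltjes remainder has $c_0 = c_{-1} = 0$, so by Proposition~\ref{prop:cm:gf} it is the generating function of a completely monotone sequence; this sequence is $\evr^+_\lambda$.

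Expanding the two simple fractions geometrically gives
\formula{
 \lv \gamma \rv^{-k - 1} \sin\bigl((k + 1) \arg \gamma - \arg \bigl((1 - \gamma) / F^+(\lambda; \gamma)\bigr)\bigr) ,
}
where the prefactor $\lv F^+(\lambda; \gamma) \rv \im \gamma / \lv 1 - \gamma \rv$ is exactly what normalises the amplitude to $\lv \gamma \rv^{-k - 1}$. The co-eigenvector $\ev^-_\lambda$ follows by the same argument, or by applying the $\ev^+$ result to the reversed sequence.

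The main obstacle is the uniform domination in Step 3 near the endpoints of the components of $\Lambda$. There $\im \gamma \to 0$, $\lv \gamma' \rv$ may blow up, and the bounds for $F^\pm(\lambda; \cdot)$ must not degenerate. The first thing I would try is to bound everything by constants times $\lv \gamma' \rv$ and rely on the finite length of the spine.
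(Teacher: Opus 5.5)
Your overall plan is the paper's. You start from Proposition~\ref{prop:bi} and split $H_+(\lambda;x)$ by Proposition~\ref{prop:cbf:s} at $\zeta = 1-\overline{\gamma}$. You compute the constant from $F^+(\lambda;\gamma)F^-(\lambda;\gamma^{-1}) = F(\lambda;\gamma)$ and $F'(\gamma(\lambda))\gamma'(\lambda) = 1$, and finish with Fubini and uniqueness of generating functions. Your Steps~1, 2 and~4 are fine.

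The gap is Step~3, which you leave open; the ingredient you propose there does not close it. A uniform lower bound on $\lv F^\pm(\lambda;x)\rv$ for small $x$ only makes sense after fixing the arbitrary constant $c^+_\lambda$, and that constant also sits in the numerator. Indeed, $\hat\ev^+_\lambda(x)$ carries the $\lambda$-dependent prefactor $\lv F^+(\lambda;\gamma(\lambda))\rv\,\im\gamma(\lambda)/\lv 1-\gamma(\lambda)\rv$. The exponential representation of $F^+(\lambda;\cdot)$ degenerates on $(1,\infty)$, and $\gamma(\lambda)$ can approach $(1,\infty)$ as $\lambda$ tends to an endpoint of a component of $\Lambda$. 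So $\lv F^+(\lambda;\gamma(\lambda))\rv$ need not stay bounded, and near $\lambda = 0^+$ there is also the factor $1/\lv 1-\gamma\rv$. A uniform-in-$\lambda$ upper bound for this prefactor is the real content of the domination step, and nothing in your list supplies it.

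The paper's missing idea is to bound coefficients rather than generating functions, using your own Step~4:
\begin{itemize}
\item Since $F^+(\lambda;0) > 0$, we have $\hat\ev^+_\lambda(0) > 0$, that is, $\evr^+_{\lambda,0} < \lv\gamma(\lambda)\rv^{-1}\sin(\thet_\lambda+\thet^+_\lambda)$.
\item Complete monotonicity gives $0\le\evr^+_{\lambda,k}\le\evr^+_{\lambda,0}$.
\item Hence $\lv\ev^+_{\lambda,k}\rv\le\lv\gamma(\lambda)\rv^{-k-1}+\lv\gamma(\lambda)\rv^{-1}$, and symmetrically for $\ev^-_{\lambda,l}$.
\end{itemize}
Together with $m\le\lv\gamma\rv\le M$ and $\int_\Lambda\lv\gamma'\rv\,d\lambda<\infty$, this justifies Fubini directly for real $x,y\in(0,\eps)$. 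It also yields your polydisc bound $\lv\hat\ev^+_\lambda(x)\rv\le(\lv\gamma\rv-\lv x\rv)^{-1}+\lv\gamma\rv^{-1}(1-\lv x\rv)^{-1}$.

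If you prefer to stay with the exponential representation, you must estimate the numerator as well. Normalise $c^+_\lambda=1$ and use only $0\le\ph^+_\lambda\le\pi$. Then $\lv F^+(\lambda;x)\rv\ge e^{-1}$ for $\lv x\rv\le\frac12$, and $\lv F^+(\lambda;\gamma)\rv\,\im\gamma/\lv 1-\gamma\rv\le 1+\lv\gamma\rv$. For the latter, split according to whether $\re\gamma^{-1}$ lies in $(-\infty,0)$, $[0,1)$ or $[1,\infty)$, and use $\im\gamma\le\lv 1-\gamma\rv$ and $1-\re\gamma^{-1}\le\lv 1-\gamma\rv/\lv\gamma\rv$. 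Either way, some such estimate has to be proved; it does not follow from a lower bound on the denominator alone.
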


\begin{proof}
In Proposition~\ref{prop:bi}, we proved that for $x \in (0, 1) \cap D_F^+$ and $y^{-1} \in (1, \infty) \cap D_F^-$,
\formula[eq:main:bi]{
 \sum_{k = 0}^\infty \sum_{l = 0}^\infty x^k y^l T^n_{k, l} & = \frac{1}{\pi} \int_{\Lambda} (\mass - \lambda)^n H_+(\lambda; x) H_-(\lambda; y) \, \frac{\im \gamma(\lambda)}{\lv \gamma(\lambda) \rv} \, d\lambda ,
}
where
\formula[]{
\label{eq:main:hpm:p}
 H_+(\lambda; x) & = \frac{1 - x}{\lv \gamma(\lambda) - x \rv^2 F^+(\lambda; x)} \, , \\
\label{eq:main:hpm:m}
 H_-(\lambda; y) & = \frac{1 - y}{\lv (\gamma(\lambda))^{-1} - y \rv^2 F^-(\lambda; y)} \, .
}
Below we fix $\lambda \in \Lambda$ and we identify $H_+(\lambda; x)$ and $H_-(\lambda; x)$, up to multiplication by a constant, with the generating functions of $(\ev^+_{\lambda, k})$ and $(\ev^-_{\lambda, l})$, respectively. The desired result is then obtained using Fubini's theorem.

The argument is divided into five steps.

\emph{Step 1.}
We fix $\lambda \in \Lambda$, and, to simplify the notation, in this step we write $\gamma$ instead of $\gamma(\lambda)$. Recall that by~\eqref{eq:fr:cbf:p},
\formula{
 h(\xi) & = \frac{\xi}{F^+(\lambda; 1 - \xi)}
}
is a complete Bernstein function of $\xi$. Let $\zeta = 1 - \overline{\gamma}$. By Proposition~\ref{prop:cbf:s}, there is a Stieltjes function $g$ such that
\formula{
 \frac{h(\xi)}{(\xi - \zeta) (\xi - \overline{\zeta})} & = \frac{1}{2 i \im \zeta} \biggl(\frac{h(\zeta)}{\xi - \zeta} - \frac{h(\overline{\zeta})}{\xi - \overline{\zeta}}\biggr) - g(\xi) .
}
Setting $\xi = 1 - x$, where $x \in (0, 1)$, we recover the function $H_+(\lambda; x)$:
\formula{
 H_+(\lambda; x) & = \frac{h(1 - x)}{(\overline{\gamma} - x)(\gamma - x)} = \frac{1}{2 i \im \gamma} \biggl(\frac{h(\zeta)}{\overline{\gamma} - x} - \frac{h(\overline{\zeta})}{\gamma - x}\biggr) - g(1 - x) .
}
Since $h(\overline{\zeta}) = \overline{h(\zeta)}$ and $\zeta = 1 - \overline{\gamma}$, we obtain
\formula{
 H_+(\lambda; x) & = \frac{1}{\im \gamma} \, \im \biggl(-\frac{h(\overline{\zeta})}{\gamma - x}\biggr) - g(1 - x) \\
 & = \frac{1}{\im \gamma} \, \im \biggl(-\frac{1 - \gamma}{F^+(\lambda; \gamma)} \, \frac{1}{\gamma - x}\biggr) - g(1 - x) .
}
If $x \in (0, \lv \gamma \rv) \cap (0, 1)$, then $1 / (\gamma - x)$ is the generating function of a geometric series:
\formula{
 H_+(\lambda; x) & = \frac{1}{\im \gamma} \sum_{k = 0}^\infty \im \biggl(-\frac{1 - \gamma}{F^+(\lambda; \gamma)} \, \frac{x^k}{\gamma^{k + 1}}\biggr) - g(1 - x) .
}
Finally, by Proposition~\ref{prop:cm:gf}, $(\lv 1 - \gamma \rv^{-1} \lv F^+(\lambda; \gamma) \rv \im \gamma) g(1 - x)$ is the generating function of a completely monotone sequence $\evr^+_{\lambda, k}$. Hence,
\formula{
 H_+(\lambda; x) & = \frac{1}{\im \gamma} \sum_{k = 0}^\infty \im \biggl(-\frac{1 - \gamma}{F^+(\lambda; \gamma)} \, \frac{x^k}{\gamma^{k + 1}}\biggr) - \frac{\lv 1 - \gamma \rv}{\lv F^+(\lambda; \gamma) \rv \im \gamma} \sum_{k = 0}^\infty \evr^+_{\lambda, k} x^k .
}
Denoting $\thet = \arg \gamma$ and
\formula{
 \thet^+ & = -\arg \frac{1 - \gamma}{F^+(\lambda; \gamma)}
}
(both $\thet$ and $\thet^+$ depend on $\lambda$), we obtain
\formula*[eq:main:hp]{
 H_+(\lambda; x) & = \frac{\lv 1 - \gamma \rv}{\lv F^+(\lambda; \gamma) \rv \im \gamma} \sum_{k = 0}^\infty \Bigl(\lv \gamma \rv^{-k - 1} \sin\bigl((k + 1) \thet + \thet^+\bigr) - \evr^+_{\lambda, k}\Bigr) x^k \\
 & = \frac{\lv 1 - \gamma \rv}{\lv F^+(\lambda; \gamma) \rv \im \gamma} \sum_{k = 0}^\infty \ev^+_{\lambda, k} x^k .
}
This identity is valid for $x \in (0, \lv \gamma \rv) \cap (0, 1)$.

\emph{Step 2.}
Let $\lambda \in \Lambda$ still be fixed, and $\gamma = \gamma(\lambda)$. An argument completely analogous to the one used in Step~1, but applied to $H_-(\lambda; y)$ instead of $H_+(\lambda; x)$, and with $\gamma$ replaced by $\overline{\gamma}{}^{-1}$, leads to the formula
\formula{
 H_-(\lambda; y) & = \frac{\lv 1 - \overline{\gamma}{}^{-1} \rv}{\lv F^-(\lambda; \overline{\gamma}{}^{-1}) \rv \im \overline{\gamma}{}^{-1}} \sum_{l = 0}^\infty \Bigl(\lv\overline{\gamma}\rv^{l + 1} \sin\bigl((l + 1) \thet + \thet^-\bigr) - \evr^-_{\lambda, l}\Bigr) y^l .
}
Here $\evr^-_{\lambda, l}$ is a completely monotone sequence, $\thet = \arg \overline{\gamma}{}^{-1} = \arg \gamma$ is the same number as in Step~1, and
\formula{
 \thet^- & = -\arg \frac{1 - \overline{\gamma}{}^{-1}}{F^-(\lambda; \overline{\gamma}{}^{-1})} = \arg \frac{1 - \gamma^{-1}}{F^-(\lambda; \gamma^{-1})} \, .
}
The above identity holds for $y \in (0, \lv \overline{\gamma}{}^{-1} \rv) \cap (0, 1) = (0, \lv \gamma \rv^{-1}) \cap (0, 1)$.

After elementary simplification, we obtain
\formula*[eq:main:hm]{
 H_-(\lambda; y) & = \frac{\lv \gamma \rv \lv 1 - \gamma \rv}{\lv F^-(\lambda; \gamma^{-1}) \rv \im \gamma} \sum_{l = 0}^\infty \Bigl(\lv \gamma \rv^{l + 1} \sin\bigl((l + 1) \thet + \thet^-\bigr) - \evr^-_{\lambda, l}\Bigr) y^l \\
 & = \frac{\lv \gamma \rv \lv 1 - \gamma \rv}{\lv F^-(\lambda; \gamma^{-1}) \rv \im \gamma} \sum_{l = 0}^\infty \ev^-_{\lambda, l} y^l ,
}
where $y \in (0, \lv \gamma \rv^{-1}) \cap (0, 1)$.

\emph{Step 3.}
Let $\delta \in [0, 1]$ be the largest number such that $\delta \le \lv \gamma(\lambda) \rv \le \delta^{-1}$ for every $\lambda \in \Lambda$. By assumption, $\delta > 0$. Note that if $x \in (0, \delta)$, then $x \in (0, 1)$, $x \in (0, \lv \gamma(\lambda) \rv)$ for every $\lambda \in \Lambda$, and $x \in D_F^+$. Similarly, if $y \in (0, \delta)$, then $y \in (0, 1)$, $y \in (0, \lv \gamma(\lambda) \rv^{-1})$ for every $\lambda \in \Lambda$, and $y^{-1} \in D_F^-$. This allows us to insert the expressions given by~\eqref{eq:main:hp} and~\eqref{eq:main:hm} into~\eqref{eq:main:bi}.

It follows that if $x, y \in (0, \delta)$, then
\formula{
 \sum_{k = 0}^\infty \sum_{l = 0}^\infty x^k y^l T^n_{k, l} & = \frac{1}{\pi} \int_{\Lambda} (\mass - \lambda)^n I(\lambda) \biggl(\sum_{k = 0}^\infty \ev^+_{\lambda, k} x^k\biggr) \biggl(\sum_{l = 0}^\infty \ev^-_{\lambda, l} y^l\biggr) d\lambda ,
}
where
\formula{
 I(\lambda) & = \biggl(\frac{\lv 1 - \gamma(\lambda) \rv}{\lv F^+(\lambda; \gamma(\lambda)) \rv \im \gamma(\lambda)}\biggr) \biggl(\frac{\lv \gamma(\lambda) \rv \lv 1 - \gamma(\lambda) \rv}{\lv F^-(\lambda; (\gamma(\lambda))^{-1}) \rv \im \gamma(\lambda)}\biggr) \frac{\im \gamma(\lambda)}{\lv \gamma(\lambda) \rv} \\
 & = \frac{\lv 1 - \gamma(\lambda) \rv^2}{\lv F^+(\lambda; \gamma(\lambda)) F^-(\lambda; (\gamma(\lambda))^{-1}) \rv \im \gamma(\lambda)} \, .
}
Recall that $F^+(\lambda; x)$ and $F^-(\lambda; y)$ are the Wiener--Hopf factors of the function $F(\lambda; z)$, introduced in Definition~\ref{def:fr}. By~\eqref{eq:wh:fr}, we have
\formula{
 F^+(\lambda; \gamma(\lambda)) F^-(\lambda; (\gamma(\lambda))^{-1}) & = F(\lambda; \gamma(\lambda)) ,
}
and by Definition~\ref{def:fr} and continuity,
\formula{
 F(\lambda; \gamma(\lambda)) & = \frac{\lv \gamma(\lambda) \rv (1 - \gamma(\lambda))^2 F'(\gamma(\lambda))}{2 i \im \gamma(\lambda)} \, .
}
Therefore,
\formula{
 I(\lambda) & = \frac{2}{\lv \gamma(\lambda) \rv \lv F'(\gamma(\lambda)) \rv} = \frac{2 \lv \gamma'(\lambda) \rv}{\lv \gamma(\lambda) \rv} \, ,
}
the last equality being a consequence of
\formula{
 F'(\gamma(\lambda)) \gamma'(\lambda) & = (F(\gamma(\lambda)))' = (\lambda)' = 1 .
}
We conclude that, with the present notation, the assertion of Proposition~\ref{prop:bi} reads
\formula[eq:main:fubini]{
 \sum_{k = 0}^\infty \sum_{l = 0}^\infty x^k y^l T^n_{k, l} & = \frac{2}{\pi} \int_{\Lambda} (\mass - \lambda)^n \biggl(\sum_{k = 0}^\infty \ev^+_{\lambda, k} x^k\biggr) \biggl(\sum_{l = 0}^\infty \ev^-_{\lambda, l} y^l\biggr) \, \frac{\lv \gamma'(\lambda) \rv}{\lv \gamma(\lambda) \rv} \, d\lambda .
}

\emph{Step 4.}
In order to apply Fubini's theorem, we need upper bounds on $\ev^+_{\lambda, k}$ and $\ev^-_{\lambda, l}$. We claim that
\formula{
 \lv \ev^+_{\lambda, k} \rv & \le 1 + \lv \gamma(\lambda) \rv^{-k - 1} , & \lv \ev^-_{\lambda, l} \rv & \le 1 + \lv \gamma(\lambda) \rv^{l + 1} .
}
In order to prove this claim, we pass to the limit $x \to 0^+$ in~\eqref{eq:main:hp} to obtain
\formula{
 H_+(\lambda; 0^+) & = \frac{\lv 1 - \gamma(\lambda) \rv}{\lv F^+(\lambda; \gamma(\lambda)) \rv \im \gamma(\lambda)} \, \bigl(\sin(\thet_\lambda + \thet^+_\lambda) - \evr^+_{\lambda, 0} \bigr) .
}
By definition~\eqref{eq:main:hpm:p}, we have $H_+(\lambda; 0^+) > 0$, and so $\evr^+_{\lambda, 0} < \sin(\thet_\lambda + \thet^+_\lambda)$. Since the sequence $(\evr^+_{\lambda, k})$ is completely monotone, it is nonnegative and nonincreasing, and it follows that
\formula[eq:psi:est:p]{
 0 & \le \evr^+_{\lambda, k} < \sin(\thet_\lambda + \thet^+_\lambda) .
}
Therefore,
\formula{
 \lv \ev^+_{\lambda, k} \rv & \le \lv \gamma(\lambda) \rv^{-k - 1} \bigl| \sin\bigl((k + 1) \thet_\lambda + \thet^+_\lambda\bigr)\bigr| + \evr^+_{\lambda, k} \le \lv \gamma(\lambda) \rv^{-k - 1} + 1 ,
}
as desired. The proof of the other part of our claim is completely analogous, and we additionally have
\formula[eq:psi:est:m]{
 0 & \le \evr^-_{\lambda, l} < \sin(\thet_\lambda + \thet^-_\lambda) .
}

\emph{Step 5.}
By the previous step, if $x \in (0, \delta)$ and $\lambda \in \Lambda$, then
\formula{
 \lv \ev^+_{\lambda, k} x^k \rv & \le x^k + x^k \lv \gamma(\lambda) \rv^{-k - 1} \le x^k + \delta^{-1} (\delta^{-1} x)^k .
}
Similarly, for $y \in (0, \delta)$ and $\lambda \in \Lambda$, we have
\formula{
 \lv \ev^-_{\lambda, l} y^l \rv & \le y^l + y^l \lv \gamma(\lambda) \rv^{l + 1} \le y^k + \delta^{-1} (\delta^{-1} y)^l .
}
Finally, by Proposition~\ref{prop:spine}\ref{it:spine:e}, we have $\lv \mass - \lambda \rv \le \mass$ for $\lambda \in \Lambda$, and by Proposition~\ref{prop:spine}\ref{it:spine:d}, the total length of $\Gamma$ is finite. Thus,
\formula{
 \int_{\Lambda} \lv \mass - \lambda \rv^n \, \frac{\lv \gamma'(\lambda) \rv}{\lv \gamma(\lambda) \rv} \, d\lambda & \le \delta^{-1} \mass^n \int_{\Lambda} \lv \gamma'(\lambda) \rv d\lambda < \infty .
}
It follows that the integral of the double series on the right-hand side of~\eqref{eq:main:fubini} is absolutely integrable if $x, y \in (0, \delta)$:
\formula{
 & \int_{\Lambda} \lv\mass - \lambda\rv^n \biggl(\sum_{k = 0}^\infty \lv \ev^+_{\lambda, k} \rv x^k\biggr) \biggl(\sum_{l = 0}^\infty \lv \ev^-_{\lambda, l} \rv y^l\biggr) \, \frac{\lv \gamma'(\lambda) \rv}{\lv \gamma(\lambda) \rv} \, d\lambda \\
 & \qquad \le \biggl(\sum_{k = 0}^\infty (x^k + \delta^{-1} (\delta^{-1} x)^k)\biggr) \biggl(\sum_{l = 0}^\infty (y^l + \delta^{-1} (\delta^{-1} y)^l)\biggr) \int_{\Lambda} \lv\mass - \lambda\rv^n \, \frac{\lv \gamma'(\lambda) \rv}{\lv \gamma(\lambda) \rv} \, d\lambda < \infty .
}
By Fubini's theorem, \eqref{eq:main:fubini} can be written as
\formula{
 \sum_{k = 0}^\infty \sum_{l = 0}^\infty x^k y^l T^n_{k, l} & = \sum_{k = 0}^\infty \sum_{l = 0}^\infty \biggl(\frac{2}{\pi} \int_{\Lambda} (\mass - \lambda)^n \ev^+_{\lambda, k} \ev^-_{\lambda, l} \, \frac{\lv \gamma'(\lambda) \rv}{\lv \gamma(\lambda) \rv} \, d\lambda\biggr) x^k y^l ,
}
where $x, y \in (0, \delta)$ are arbitrary. The desired result~\eqref{eq:main} follows by the uniqueness of the generating function.

The expressions for the generating functions of the sequences $(\ev^+_{\lambda, k})$ and $(\ev^-_{\lambda, l})$ follow directly from~\eqref{eq:main:hp} and~\eqref{eq:main:hm} combined with~\eqref{eq:main:hpm:p} and~\eqref{eq:main:hpm:m} when $x \in (0, 1) \cap (0, \lv \gamma(\lambda) \rv)$ and $y \in (0, 1) \cap (0, \lv \gamma(\lambda) \rv^{-1})$. By uniqueness of the holomorphic extension, the formula is valid also for complex $x, y$.
\end{proof}


\subsection{Generalised eigenvector expansion}
\label{sec:main}

The proof of Theorem~\ref{thm:main} is now almost immediate.

\begin{proof}[Proof of Theorem~\ref{thm:main}]
Recall that the definition of the symbol $F$ of an $\amcm$ sequence $(a_k)$ does not depend on the value of $a_0$. Therefore, we may rephrase Proposition~\ref{prop:main} as follows: for an arbitrary $\amcm$ sequence $(a_k)$ such that its spine $\Gamma$ winds around $0$, and the corresponding infinite Toeplitz matrix $T = (a_{k - l} : k, l \ge 0)$, we have
\formula{
 (T - a_0 I)^n_{k, l} & = \frac{2}{\pi} \int_{\Lambda} (\mass - \lambda)^n \ev^+_{\lambda, k} \ev^-_{\lambda, l} \, \frac{\lv \gamma'(\lambda) \rv}{\lv \gamma(\lambda) \rv} \, d\lambda ,
}
where $\ev^+_{\lambda, k}$ and $\ev^-_{\lambda, k}$ are defined by the same formulae as in Proposition~\ref{prop:main}. By linearity, for any polynomial $P$ we have
\formula[eq:general]{
 (P(T))_{k, l} & = \frac{2}{\pi} \int_{\Lambda} P(a_0 + \mass - \lambda) \ev^+_{\lambda, k} \ev^-_{\lambda, l} \, \frac{\lv \gamma'(\lambda) \rv}{\lv \gamma(\lambda) \rv} \, d\lambda .
}
In Theorem~\ref{thm:main}, we choose $a_0 = -\sum_{k \in \Z \setminus \{0\}} a_k = -\mass$, in which case~\eqref{eq:general} reduces to~\eqref{eq:main}.

It remains to observe that with the notation introduced in the introduction:
\formula{
 \thet_\lambda & = \arg \gamma(\lambda) , \\
 \thet^+_\lambda & = -\arg \frac{1 - \gamma(\lambda)}{F^+(\lambda; \gamma(\lambda))} , \\
 \thet^-_\lambda & = \arg \frac{1 - (\gamma(\lambda))^{-1}}{F^-(\lambda; (\gamma(\lambda))^{-1})} , 
}
the definitions of $(\ev^+_{\lambda, k})$ and $(\ev^-_{\lambda, l})$ given in~\eqref{eq:ev:p} and~\eqref{eq:ev:m} agree with those in Proposition~\ref{prop:main}.
\end{proof}

Below we verify various properties of $\thet^+_\lambda$, $\thet^-_\lambda$, $(\evr^+_{\lambda, k})$ and $(\evr^-_{\lambda, l})$ listed in the introduction. We fix $\lambda \in \Lambda$, and, for brevity, we write $\gamma$ instead of $\gamma(\lambda)$. The argument is divided into six steps.

\emph{Step 1.}
Recall that by~\eqref{eq:fr:cbf:p}, $\xi (F^+(\lambda; 1 - \xi))^{-1}$ is a complete Bernstein function of $\xi$. If $\lambda \in \Lambda$ and $\xi = 1 - \gamma$, then $\im \xi < 0$, and so, by~\eqref{eq:cbf:arg},
\formula{
 0 & \ge \arg \frac{1 - \gamma}{F^+(\lambda; \gamma)} \ge \arg(1 - \gamma) = \arg(\gamma - 1) - \pi .
}
Therefore,
\formula{
 \thet^+_\lambda & \in [0, \pi - \arg(\gamma - 1)] .
}
Similarly, by~\eqref{eq:fr:cbf:m}, $\xi (F^-(\lambda; 1 - \xi))^{-1}$ is a complete Bernstein function of $\xi$, $\im(1 - \gamma^{-1}) > 0$, and so
\formula{
 0 & \le \arg \frac{1 - \gamma^{-1}}{F^-(\lambda; \gamma^{-1})} \le \arg(1 - \gamma^{-1}) = \arg(\gamma - 1) - \arg \gamma .
}
Thus,
\formula{
 \thet^-_\lambda & \in [0, \arg(\gamma - 1) - \thet_\lambda] .
}
In particular, $\thet_\lambda + \thet^+_\lambda + \thet^-_\lambda \le \pi$.

\emph{Step 2.}
The generating function of the sequence
\formula{
 \ev^+_{\lambda, k} + \evr^+_{\lambda, k} & = \lv \gamma \rv^{-k - 1} \sin((k + 1) \thet_\lambda + \thet^+_\lambda) = \frac{1}{2 i} \biggl(\frac{e^{i \thet^+_\lambda}}{\overline{\gamma}{}^{k + 1}} - \frac{e^{-i \thet^+_\lambda}}{\gamma^{k + 1}}\biggr)
}
extends to a rational function of $x$:
\formula{
 \sum_{k = 0}^\infty (\ev^+_{\lambda, k} + \evr^+_{\lambda, k}) x^k & = \frac{1}{2 i} \sum_{k = 0}^\infty \biggl(\frac{e^{i \thet^+_\lambda}}{\overline{\gamma}} \biggl(\frac{x}{\overline{\gamma}}\biggr)^k - \frac{e^{-i \thet^+_\lambda}}{\gamma} \biggl(\frac{x}{\gamma}\biggr)^k\biggr) \\
 & = \frac{1}{2 i} \biggl(\frac{e^{i \thet^+_\lambda}}{\overline{\gamma} - x} - \frac{e^{-i \thet^+_\lambda}}{\gamma - x}\biggr) .
}
On the other hand, by definition (see Proposition~\ref{prop:main}), the generating function of the sequence $(\ev^+_{\lambda, k})$ extends to a meromorphic function in the unit disk:
\formula{
 \sum_{k = 0}^\infty \ev^+_{\lambda, k} x^k & = \frac{\lv F^+(\lambda; \gamma) \rv \im \gamma}{\lv 1 - \gamma \rv} \, \frac{1 - x}{(\gamma - x) (\overline{\gamma} - x) F^+(\lambda; x)} \, .
}
The series on the left-hand sides of the above formulae converge when $\lv x \rv < \min\{\lv \gamma \rv, 1\}$. However, the generating function of $(\evr^+_{\lambda, k})$ converges in the unit disk, and it follows that for $x \in \disk$ we have
\formula{
 \sum_{k = 0}^\infty \evr^+_{\lambda, k} x^k & = \frac{1}{2 i} \biggl(\frac{e^{i \thet^+_\lambda}}{\overline{\gamma} - x} - \frac{e^{-i \thet^+_\lambda}}{\gamma - x}\biggr) - \frac{\lv F^+(\lambda; \gamma) \rv \im \gamma}{\lv 1 - \gamma \rv} \, \frac{1 - x}{(\gamma - x) (\overline{\gamma} - x) F^+(\lambda; x)} \, .
}

\emph{Step 3.}
We pass to the limit as $x \to 1^-$:
\formula{
 \sum_{k = 0}^\infty \evr^+_{\lambda, k} & = \frac{1}{2 i} \biggl(\frac{e^{i \thet^+_\lambda}}{\overline{\gamma} - 1} - \frac{e^{-i \thet^+_\lambda}}{\gamma - 1}\biggr) - \frac{\lv F^+(\lambda; \gamma) \rv \im \gamma}{\lv 1 - \gamma \rv^3} \, \lim_{x \to 1^-} \frac{1 - x}{F^+(\lambda; x)} \, .
}
By~\eqref{eq:fr:cbf:p}, $\xi / F^+(\lambda; 1 - \xi)$ is a complete Bernstein function of $\xi$, and so the limit on the right-hand side is nonnegative. Thus,
\formula{
 \sum_{k = 0}^\infty \evr^+_{\lambda, k} & \le \frac{1}{2 i} \biggl(\frac{e^{i \thet^+_\lambda}}{\overline{\gamma} - 1} - \frac{e^{-i \thet^+_\lambda}}{\gamma - 1}\biggr) = \im \frac{e^{i \thet^+_\lambda}}{\overline{\gamma} - 1} \, .
}
An analogous argument shows that
\formula{
 \sum_{l = 0}^\infty \evr^-_{\lambda, l} & \le \im \frac{e^{i \thet^-_\lambda}}{\gamma^{-1} - 1} \, .
}
We have already seen in~\eqref{eq:psi:est:p} and~\eqref{eq:psi:est:m} that
\formula{
 0 & \le \evr^+_{\lambda, k} < \sin(\thet_\lambda + \thet^+_\lambda) , \\
 0 & \le \evr^-_{\lambda, l} < \sin(\thet_\lambda + \thet^-_\lambda)
}
for every $k, l \ge 0$.

\emph{Step 4.}
Denote the holomorphic extension of the generating function of $(\evr^+_{\lambda, k})$ by $\hat \evr^+_\lambda$. Since $(\evr^+_{\lambda, k})$ is summable and completely monotone, for some measure $\mu^+_\lambda$ on $[0, 1)$ we have
\formula[eq:psi:cm]{
 \evr^+_{\lambda, k} & = \int_{[0, 1)} s^k \mu^+_\lambda(ds) ,
}
and, by Fubini's theorem,
\formula{
 \hat \evr^+_\lambda(x) & = \sum_{k = 0}^\infty \evr^+_{\lambda, k} x^k = \int_{[0, 1)} \frac{1}{1 - x s} \, \mu^+_\lambda(ds) .
}
It follows that
\formula{
 x^{-1} \hat \evr^+_\lambda(x^{-1}) & = \int_{[0, 1)} \frac{1}{x - s} \, \mu^+_\lambda(ds)
}
is the Stieltjes transform of the measure $\mu^+_\lambda$. Thus, in the sense of vague convergence of measures on $(0, 1)$,
\formula[eq:psi:stieltjes]{
 \mu^+_\lambda(ds) & = \lim_{t \to 0^+} \frac{\im ((s - i t)^{-1} \hat \evr^+_\lambda((s - i t)^{-1}))}{\pi} \, \ind_{(0, 1)}(s) ds .
}
Additionally, by the dominated convergence theorem,
\formula{
 \mu^+_\lambda(\{0\}) & = \lim_{x \to -\infty} \hat \evr^+_\lambda(x) .
}

\emph{Step 5.}
From the formula for $\hat \evr^+_\lambda(x)$ found in Step~2 it follows that for $x \in (-\infty, 1)$,
\formula{
 \lv \hat \evr^+_\lambda(x) \rv & = \biggl| \frac{1}{2 i} \biggl(\frac{e^{i \thet^+_\lambda}}{\overline{\gamma} - x} - \frac{e^{-i \thet^+_\lambda}}{\gamma - x}\biggr) - \frac{\lv F^+(\lambda; \gamma) \rv \im \gamma}{\lv 1 - \gamma \rv} \, \frac{1 - x}{(\gamma - x) (\overline{\gamma} - x) F^+(\lambda; x)} \biggr| \\
 & \le \frac{1}{\lv \gamma - x \rv} + \frac{\lv F^+(\lambda; \gamma) \rv \im \gamma}{\lv 1 - \gamma \rv} \, \frac{\lv 1 - x \rv}{\lv \gamma - x \rv^2 F^+(\lambda; x)} \, .
}
By~\eqref{eq:fr:cbf:p}, $F^+(\lambda; 1 - \xi)$ is a complete Bernstein function of $\xi$, and so $F^+(\lambda; x)$ is positive and decreasing on $(-\infty, 1)$. In particular,
\formula{
 \mu^+_\lambda(\{0\}) & = \lim_{x \to -\infty} \hat \evr^+_\lambda(x) = 0 .
}
We rewrite now rewrite the formula for $\hat \evr^+_\lambda(x)$ using~\eqref{eq:wh:fr}:
\formula{
 \hat \evr^+_\lambda(x) & = \frac{1}{2 i} \biggl(\frac{e^{i \thet^+_\lambda}}{\overline{\gamma} - x} - \frac{e^{-i \thet^+_\lambda}}{\gamma - x}\biggr) - \frac{\lv F^+(\lambda; \gamma) \rv \im \gamma}{\lv 1 - \gamma \rv} \, \frac{(1 - x) F^-(\lambda; x^{-1})}{(\gamma - x) (\overline{\gamma} - x) F(\lambda; x)} \, .
}
If the boundary limit
\formula{
 F(\lambda; s^{-1} + 0 i) & = \lim_{t \to 0^+} F(\lambda; s^{-1} + i t)
}
exists uniformly with respect to $s$ in any compact subinterval of $(0, 1)$, and $F(\lambda; s^{-1} + 0 i) \ne 0$ for all $s \in (0, 1)$, then also the boundary limit $\hat \evr^+_\lambda(s^{-1} + 0 i)$ exists uniformly with respect to $s$ in any compact subinterval of $(0, 1)$, and
\formula{
 \hat \evr^+_\lambda(s^{-1} + i 0) & = \frac{1}{2 i} \biggl(\frac{e^{i \thet^+_\lambda}}{\overline{\gamma} - s^{-1}} - \frac{e^{-i \thet^+_\lambda}}{\gamma - s^{-1}}\biggr) - \frac{\lv F^+(\lambda; \gamma) \rv \im \gamma}{\lv 1 - \gamma \rv} \, \frac{(1 - s^{-1}) F^-(\lambda; s)}{(\gamma - s^{-1}) (\overline{\gamma} - s^{-1}) F(\lambda; s^{-1} + 0 i)} \\
 & = \im \frac{e^{i \thet^+_\lambda}}{\overline{\gamma} - s^{-1}} + \frac{\lv F^+(\lambda; \gamma) \rv \im \gamma}{\lv 1 - \gamma \rv} \, \frac{(s^{-1} - 1) F^-(\lambda; s)}{\lv \gamma - s^{-1} \rv^2 F(\lambda; s^{-1} + 0 i)} \, .
}
In particular, the vague limit in~\eqref{eq:psi:stieltjes} coincides with the pointwise limit of density functions, that is,
\formula{
 \mu^+_\lambda(ds) & = \frac{\im (s^{-1} \hat \evr^+_\lambda(s^{-1} + 0 i))}{\pi} \, \ind_{(0, 1)}(s) ds .
}
Combining this with~\eqref{eq:psi:cm}, we conclude that
\formula{
 \evr^+_{\lambda, k} & = \frac{1}{\pi} \int_0^1 s^k \, \frac{\lv F^+(\lambda; \gamma) \rv \im \gamma}{\lv 1 - \gamma \rv} \, \frac{(1 - s) F^-(\lambda; s)}{\lv \gamma s - 1 \rv^2} \, \im \frac{1}{F(\lambda; s^{-1} + 0 i)} \, ds \\
 & = \frac{1}{\pi} \int_1^\infty \frac{1}{t^k} \, \frac{\lv F^+(\lambda; \gamma) \rv \im \gamma}{\lv 1 - \gamma \rv} \, \frac{(1 - t^{-1}) F^-(\lambda; t^{-1})}{\lv \gamma - t \rv^2} \, \im \frac{1}{F(\lambda; t + 0 i)} \, dt .
}
When the continuous boundary limit $F(\lambda; s^{-1} + 0 i)$ does not exist, the limiting measure $\mu^+_\lambda(ds)$ need not have a density function. Nevertheless, the above argument still works, and the final expression remains valid, as long as we understand the boundary limit in the vague sense, that is,
\formula{
 \im \frac{1}{F(\lambda; s^{-1} + 0 i)} \, ds & = \lim_{t \to 0^+} \im \frac{1}{F(\lambda; s^{-1} + i t)} \, ds .
}

\emph{Step 6.}
A completely analogous argument (or an application of the result of Step~5 to the reflected sequence $(\check a_k) = (a_{-k})$ and the dual symbol $\check F(z) = F(z^{-1})$) leads to the expression
\formula{
 \evr^-_{\lambda, l} & = \frac{1}{\pi}  \int_0^1 s^l \, \frac{\lv F^-(\lambda; \gamma^{-1}) \rv \im \overline{\gamma}{}^{-1}}{\lv 1 - \gamma^{-1} \rv} \, \frac{(1 - s) F^+(\lambda; s)}{\lv \gamma^{-1} s - 1 \rv^2} \, \im \frac{-1}{F(\lambda; s + 0 i)} \, ds \\
 & = \frac{1}{\pi}  \int_1^\infty \frac{1}{t^l} \frac{\lv F^-(\lambda; \gamma^{-1}) \rv \im \overline{\gamma}{}^{-1}}{\lv 1 - \gamma^{-1} \rv} \, \frac{(t - 1) F^+(\lambda; t^{-1})}{t \lv \gamma^{-1} - t \rv^2} \, \im \frac{-1}{F(\lambda; t^{-1} + 0 i)} \, dt ,
}
provided that the boundary limit $F(\lambda; s + 0 i)$ exists uniformly with respect to $s$ in any compact subinterval of $(0, 1)$, and $F(\lambda; s + 0 i) \ne 0$ for all $s \in (0, 1)$. Otherwise, the formula remains valid if the boundary limit is understood in the vague sense, as in the previous step.


\subsection{Eigenvectors}
\label{sec:eig}

Below we prove that under appropriate conditions, $(\ev^+_{\lambda, k})$ and $(\ev^-_{\lambda, l})$ are true eigenvectors and co-eigenvectors of the Toeplitz matrix $T = (a_{k - l} : k, l \ge 0)$. Recall that the generating function of $(a_k)$ is
\formula{
 \sum_{k = -\infty}^\infty a_k z^k & = \hat a(z) = \mass - F(z)
}
for $z \in \torus$, where $\mass = \hat a(1)$.

\begin{proof}[Proof of Theorem~\ref{thm:eig}]
We fix $\lambda \in \Lambda$, and to simplify the notation, in this proof we write $\gamma$ instead of $\gamma(\lambda)$. We also denote by $\hat \ev^+_\lambda$ the holomorphic extension of the generating function of $(\ev^+_{\lambda, k})$. By Proposition~\ref{prop:main},
\formula[eq:eig:phi]{
 \hat \ev^+_\lambda(z) & = \frac{\lv F^+(\lambda; \gamma) \rv \im \gamma}{\lv 1 - \gamma \rv} \, \frac{1 - z}{(\gamma - z) (\overline{\gamma} - z)} \, \frac{1}{F^+(\lambda; z)}
}
for $z \in \C \setminus ([1, \infty) \cup \{\gamma, \overline{\gamma}\}$. We divide the proof into four steps.

\emph{Step 1.}
Suppose first that $\lv \gamma \rv > 1$. In this case $(\ev^+_{\lambda, k})$ is a summable sequence. We extend it to a doubly infinite sequence by setting $\ev^+_{\lambda, k} = 0$ when $k < 0$. The generating function of the convolution of $(\ev^+_{\lambda, k})$ and $(a_k)$ is the product of the corresponding generating functions:
\formula{
 \sum_{k = -\infty}^\infty (T \ev^+_\lambda)_k z^k & = \sum_{k = -\infty}^\infty \biggl(\sum_{l = 0}^\infty a_{k - l} \ev^+_{\lambda, l}\biggr) z^k = (\mass - F(z)) \hat \ev^+_\lambda(z)
}
for $z \in \torus$, and therefore
\formula{
 \sum_{k = -\infty}^\infty \bigl((\mass - \lambda) \ev^+_{\lambda, k} - (T \ev^+_\lambda)_k\bigr) z^k & = (F(z) - \lambda) \hat \ev^+_\lambda(z)
}
By Cauchy's integral formula, for every $k \ge 0$,
\formula[eq:eig:eig]{
 (\mass - \lambda) \ev^+_{\lambda, k} - (T \ev^+_\lambda)_k & = \frac{1}{2 \pi i} \int_\torus (F(z) - \lambda) \hat \ev^+_\lambda(z) \, \frac{dz}{z^{k + 1}} \, ,
}
and we claim that the right-hand side is zero.

By Definition~\ref{def:fr},
\formula{
 F(z) - \lambda & = \frac{(\gamma - z) (\overline{\gamma} - z)}{\lv \gamma \rv (1 - z)^2} \, F(\lambda; z) .
}
Applying this and~\eqref{eq:eig:phi} to~\eqref{eq:eig:eig}, we obtain
\formula{
 (\mass - \lambda) \ev^+_{\lambda, k} - (T \ev^+_\lambda)_k & = \frac{1}{2 \pi i} \int_\torus \frac{\lv F^+(\lambda; \gamma) \rv \im \gamma}{\lv \gamma \rv \lv 1 - \gamma \rv} \, \frac{F(\lambda; z)}{(1 - z) F^+(\lambda; z)} \, \frac{dz}{z^{k + 1}} \, .
}
The Wiener--Hopf factorisation~\eqref{eq:wh:fr} yields
\formula{
 (\mass - \lambda) \ev^+_{\lambda, k} - (T \ev^+_\lambda)_k & = \frac{1}{2 \pi i} \int_\torus \frac{\lv F^+(\lambda; \gamma) \rv \im \gamma}{\lv \gamma \rv \lv 1 - \gamma \rv} \, \frac{F^-(\lambda; z^{-1})}{1 - z} \, \frac{dz}{z^{k + 1}} \, ,
}
which, after substitution $z = \tilde z^{-1}$, $dz = -\tilde z^{-2} d\tilde z$, translates to
\formula{
 (\mass - \lambda) \ev^+_{\lambda, k} - (T \ev^+_\lambda)_k & = -\frac{1}{2 \pi i} \int_\torus \frac{\lv F^+(\lambda; \gamma) \rv \im \gamma}{\lv \gamma \rv \lv 1 - \gamma \rv} \, \frac{F^-(\lambda; \tilde z)}{1 - \tilde z^{-1}} \, \tilde z^{k - 1} d\tilde z \\
 & = \frac{1}{2 \pi i} \int_\torus \frac{\lv F^+(\lambda; \gamma) \rv \im \gamma}{\lv \gamma \rv \lv 1 - \gamma \rv} \, \frac{F^-(\lambda; \tilde z)}{1 - \tilde z} \, \tilde z^k d\tilde z.
}
The integrand is a holomorphic function in $\disk$, and it extends continuously to $\overline \disk$ (see~\eqref{eq:fr:one}). Hence, by Cauchy's theorem, the integral is zero. Our claim is proved, and the desired result follows when $\lv \gamma \rv > 1$.

\emph{Step 2.}
If $\lv \gamma \rv = 1$, then $(\ev^+_{\lambda, k})$ is merely bounded, and the first part of the above argument requires appropriate modifications.

Observe that, by the dominated convergence theorem,
\formula{
 (T \ev^+_\lambda)_k & = \sum_{l = 0}^\infty a_{k - l} \ev^+_{\lambda, l} = \lim_{x \to 1^-} \sum_{l = 0}^\infty a_{k - l} \ev^+_{\lambda, l} x^l .
}
The sum one on the right-hand side is the convolution of two summable sequences, and so we can proceed as in the previous step. Since the generating function of the sequence $(\ev^+_{\lambda, k} x^k)$ is $\hat \ev^+_\lambda(x z)$, we have
\formula{
 (\mass - \lambda) \ev^+_{\lambda, k} x^k - \sum_{l = 0}^\infty a_{k - l} \ev^+_{\lambda, l} x^l & = \frac{1}{2 \pi i} \int_\torus (F(z) - \lambda) \, \hat \ev^+_\lambda(x z) \, \frac{dz}{z^{k + 1}} \, .
}
It follows that
\formula{
 (\mass - \lambda) \ev^+_{\lambda, k} - (T \ev^+_\lambda)_k & = \lim_{x \to 1^-} \frac{1}{2 \pi i} \int_\torus (F(z) - \lambda) \, \hat \ev^+_\lambda(x z) \, \frac{dz}{z^{k + 1}} .
}
In the proof of Proposition~\ref{prop:main} we observed that $\hat \ev^+_\lambda$ is analytic near $\torus \setminus \{1, \gamma, \overline{\gamma}\}$ and it has simple poles at $\gamma$ and $\overline{\gamma}$, while in the proof of Theorem~\ref{thm:main} we also noted that $\hat \ev^+_\lambda$ has a finite nontangential limit at $1$ in $\C \setminus [1, \infty)$. Furthermore, if $z \in \torus$ and $x \in (0, 1)$, then, by~\eqref{eq:disk}, $\lv \gamma - x z \rv = \lv \gamma \overline z - x \rv \ge \tfrac{1}{2} \lv \gamma \overline z - 1 \rv = \tfrac{1}{2} \lv \gamma - z \rv$, and, similarly, $\lv \overline{\gamma} - x z \rv \ge \tfrac{1}{2} \lv \overline{\gamma} - z \rv$. Therefore, there are constants $c_1, c_2 > 0$ such that
\formula{
 \lv (F(z) - \lambda) \, \hat \ev^+_\lambda(x z) \rv & \le c_1 \, \frac{\lv F(z) - \lambda \rv}{\lv \gamma - x z \rv \lv \overline{\gamma} - x z \rv} \le 4 c_1 \, \frac{\lv F(z) - \lambda \rv}{\lv \gamma - z \rv \lv \overline{\gamma} - z \rv} \le c_2
}
for $z \in \torus$ and $x \in (0, 1)$ (the last inequality is a consequence of the fact that $F(z) - \lambda$ has zeroes at $z = \gamma$ and $z = \overline{\gamma}$). By the dominated convergence theorem,
\formula{
 (\mass - \lambda) \ev^+_{\lambda, k} - (T \ev^+_\lambda)_k & = \frac{1}{2 \pi i} \int_\torus (F(z) - \lambda) \, \hat \ev^+_\lambda(z) \, \frac{dz}{z^{k + 1}} .
}
We have thus arrived at formula~\eqref{eq:eig:eig}, and the remaining part of the proof is exactly the same as in Step~1.

\emph{Step 3.}
If $\lv \gamma \rv < 1$ and $\sum_{l = 0}^\infty a_{-l} \lv \gamma \rv^{-l}$ is finite, then $\hat a$ and $F$ extend to holomorphic functions in the region $\lv \gamma \rv < \lv z \rv < 1$, continuous up to the boundary. We have
\formula{
 (T \ev^+_\lambda)_k & = \sum_{l = 0}^\infty a_{k - l} \ev^+_{\lambda, l} = \lim_{x \to 1^-} \lv \gamma \rv^{-k} \sum_{l = 0}^\infty (a_{k - l} \lv \gamma \rv^{k - l}) (\ev^+_{\lambda, l} \lv \gamma \rv^l x^l) ,
}
and again the sum on the right-hand side is the convolution of two summable sequences. Arguing as in the previous step, we find that
\formula{
 (\mass - \lambda) \ev^+_{\lambda, k} - (T \ev^+_\lambda)_k & = \lim_{x \to 1^-} \frac{1}{2 \pi i \lv \gamma \rv^k} \int_\torus (F(\lv \gamma \rv z) - \lambda) \, \hat \ev^+_\lambda(x z / \lv \gamma \rv) \, \frac{dz}{z^{k + 1}} ,
}
and the remaining part of the proof is very similar to Steps~1 and~2; we omit the details.

\emph{Step 4.}
Analogous arguments (or the results proved above applied to the reversed sequence $(\check a_k) = (a_{-k})$ and the dual symbol $\check F(z) = F(z^{-1})$) lead to the desired properties of $(\ev^-_{\lambda, l})$; once again we omit the details.
\end{proof}

%
%

\section{Examples}
\label{sec:ex}

Below we give explicit expressions for the generalised eigenvectors and co-eigenvectors for a variety of examples. The first few of them are $\amcm$ sequences with simple rational generating functions, where the evaluation of the generalised eigenvectors and co-eigenvectors could be done in a more straightforward fashion. These examples are meant to illustrate the validity of Theorems~\ref{thm:main} and~\ref{thm:eig}. Next, we discuss more general $\amcm$ sequences with rational generating functions. Finally, we give a detailed discussion of a class of $\amcm$ sequences which correspond to generating functions with a singularity of Fisher--Hartwig type.


\subsection{Tridiagonal matrices}

Consider a tridiagonal infinite Toeplitz matrix $T$ corresponding to the $\amcm$ sequence
\formula{
 (a_k) & = (\ldots, 0, 0, q, -p - q, p, 0, 0, \ldots) ,
}
where $p, q > 0$. The symbol of $(a_k)$ is
\formula{
 F(z) & = -\hat a(z) = p (1 - z) + q (1 - z^{-1}) .
}
Note that
\formula{
 \im F(z) & = \frac{(p \lv z \rv^2 - q) \im z}{\lv z \rv^2} \, .
}
Hence, the spine $\Gamma$ is the semicircle $\{z \in \C : \lv z \rv = r , \, \im z > 0\}$, where $r = (q / p)^{1 / 2}$. Clearly, $\Gamma$ winds around $0$. By a direct calculation,
\formula{
 F(r e^{i \thet}) & = p + q - 2 \sqrt{p q} \cos \thet ,
}
It follows that $\Lambda = (\lambda_1, \lambda_2)$, where $\lambda_1 = (\sqrt{p} - \sqrt{q})^2$ and $\lambda_2 = (\sqrt{p} + \sqrt{q})^2$, and
\formula{
 \gamma(\lambda) & = r e^{i \thet_\lambda}, && \text{where} & \thet_\lambda & = \arccos \biggl(\frac{p + q - \lambda}{2 \sqrt{p q}}\biggr) .
}
We fix $\lambda \in \Lambda$, and let us write $\gamma$ instead of $\gamma(\lambda)$ for simplicity. We evaluate the difference quotient $F(\lambda; z)$. Observe that
\formula{
 F(z) - \lambda & = F(z) - F(\gamma) \\
 & = p (\gamma - z) + q (\gamma^{-1} - z^{-1}) \\
 & = \frac{p (\gamma - z) (z \gamma - q / p)}{z \gamma} \\
 & = \frac{p (\gamma - z) (z \gamma - \lv \gamma \rv^2)}{z \gamma} \\
 & = \frac{p (\gamma - z) (z - \overline{\gamma})}{z} \, .
}
By~\eqref{eq:i:fr} (or Definition~\ref{def:fr}),
\formula{
 F(\lambda; z) & = \frac{\lv \gamma \rv (z - 1)^2}{(\gamma - z) (\overline{\gamma} - z)} \, (F(z) - \lambda) = \frac{p r (z - 1)^2}{z} = \sqrt{p q} \, (1 - z) (1 - z^{-1}) .
}
Noteworthy, $F(\lambda; z)$ does not depend on $\lambda$. Since $F(\lambda; z)$ is a rational function, its Wiener--Hopf factors are easily identified without the integral formulae~\eqref{eq:i:fr:wh:p} and~\eqref{eq:i:fr:wh:m} (or Proposition~\ref{prop:fr:wh:exp}): up to multiplication by a constant, we have
\formula{
 F^+(\lambda; z) & = \sqrt{p} \, (1 - z), & F^-(\lambda; z^{-1}) & = \sqrt{q} \, (1 - z^{-1}) .
}
In particular, by~\eqref{eq:i:theta},
\formula{
 \thet^+_\lambda & = -\arg \frac{1 - \gamma}{F^+(\lambda; \gamma)} = 0 , \\
 \thet^-_\lambda & = \arg \frac{1 - \gamma^{-1}}{F^-(\lambda; \gamma^{-1})} = 0 ,
}
and we have already given above an expression for $\thet_\lambda$. Since a continuous, real-valued and non-zero boundary limit $F(\lambda; s + 0 i) = \sqrt{p q} \, (1 - s) (1 - s^{-1})$ exists for $s \in (0, 1) \cup (1, \infty)$, by~\eqref{eq:i:psi:p} and~\eqref{eq:i:psi:m} we have
\formula{
 \evr^+_{\lambda, k} & = 0 , &
 \evr^-_{\lambda, l} & = 0
}
for every $k, l \ge 0$. Therefore,
\formula{
 \ev^+_{\lambda, k} & = r^{-k - 1} \sin((k + 1) \thet_\lambda) , \\
 \ev^-_{\lambda, l} & = r^{l + 1} \sin((l + 1) \thet_\lambda) .
}
Finally, $\lv \gamma'(\lambda) / \gamma(\lambda) \rv = \tfrac{d}{d\lambda} \thet_\lambda$. Theorem~\ref{thm:main} implies that for every polynomial $P$,
\formula{
 (P(T))_{k, l} & = \frac{2}{\pi} \int_{\lambda_1}^{\lambda_2} P(-\lambda) \ev^+_{\lambda, k} \ev^-_{\lambda, l} \, \frac{d \thet_\lambda}{d\lambda} \, d\lambda .
}
It is clearly convenient to substitute $\thet = \thet_\lambda$:
\formula[eq:tridiagonal]{
 (P(T))_{k, l} & = \frac{2 r^{l - k}}{\pi} \int_0^\pi P(2 \sqrt{p q} \cos \thet - p - q) \sin((k + 1) \thet) \sin((l + 1) \thet) d\thet .
}
Furthermore, by Theorem~\ref{thm:eig}, we have $T \ev^+_\lambda = -\lambda \ev^+_\lambda$ and $T^* \ev^-_\lambda = -\lambda \ev^-_\lambda$.

We mention the probabilistic interpretation of the above model. If $p + q = 1$, then the transpose of $T$ is the generator of an asymmetric simple random walk on nonnegative integers: $\pr[X_{n + 1} - X_n = 1] = p$ and $\pr[X_{n + 1} - X_n] = q$.

The above spectral properties of $T$ can be obtained in an elementary way, and are well-known. Indeed: identity~\eqref{eq:tridiagonal} follows directly from the application of the discrete sine transform to the symmetric Toeplitz matrix $(r^{k - l} a_{k - l} : k, l \ge 0)$.


\subsection{Geometric sequences}

Consider the infinite Toeplitz matrix $T$ corresponding to the $\amcm$ sequence $(a_k)$ given by two geometric sequences:
\formula{
 a_k & = \begin{cases}
  p (1 - \alpha) \alpha^{k - 1} & \text{if } k \ge 1, \\
  q (1 - \beta) \beta^{-k + 1} & \text{if } k \le -1, \\
  -p - q & \text{if } k = 0,
 \end{cases}
}
where $p, q \ge 0$ and $\alpha, \beta \in [0, 1)$. Note that the tri-diagonal matrix considered in the previous section is included as special case $\alpha = \beta = 0$, and other special cases are discussed later in later sections.

The symbol of $(a_k)$ is given by
\formula{
 F(z) = -\hat a(z) & = p + q - p z \, \frac{1 - \alpha}{1 - \alpha z} - q z^{-1} \, \frac{1 - \beta}{1 - \beta z^{-1}} \\
 & = p \, \frac{1 - z}{1 - \alpha z} + q \, \frac{1 - z}{\beta - z} \, .
}
By a direct calculation,
\formula{
 \im \frac{1 - z}{1 - \alpha z} & = \frac{-(1 - \alpha) \im z}{\lv 1 - \alpha z \rv^2} , &
 \im \frac{1 - z}{\beta - z} & = \frac{(1 - \beta) \im z}{\lv \beta - z \rv^2} ,
}
and so the spine $\Gamma$ is described by the equation
\formula{
 (1 - \beta) q \lv 1 - \alpha z \rv^2 & = (1 - \alpha) p \lv \beta - z \rv^2 .
}
It follows that if $(1 - \alpha) p = \alpha^2 (1 - \beta) q$, then $\Gamma$ is the half-line $\re z = \tfrac{1}{2} (\alpha^{-1} + \beta)$, $\im z > 0$. Otherwise, $\Gamma$ is a semicircle, and another direct calculation shows that its centre and radius are
\formula{
 z_0 & = -\frac{\alpha (1 - \beta) q - \beta (1 - \alpha) p}{(1 - \alpha) p - \alpha^2 (1 - \beta) q} , &
 r_0 & = \frac{\sqrt{(1 - \alpha) (1 - \beta) p q} \, (1 - \alpha \beta)}{\lv (1 - \alpha) p - \alpha^2 (1 - \beta) q \rv} \, .
}
By yet another direct calculation, one can verify that $\Gamma$ winds around zero if and only if $(1 - \alpha) p > \alpha^2 (1 - \beta) q$ and $(1 - \beta) q > \beta^2 (1 - \alpha) p$. Below we assume that these conditions are satisfied.

By a direct calculation, 
\formula[eq:geo:gamma]{
 F(z) - \lambda & = \frac{U(\lambda) - W(\lambda) z + V(\lambda) z^2}{(1 - \alpha z) (\beta - z)} \, ,
}
where
\formula{
 U(\lambda) & = q + \beta p - \beta \lambda , \\
 V(\lambda) & = p + \alpha q - \alpha \lambda , \\
 W(\lambda) & = (1 + \beta) p + (1 + \alpha) q - (1 + \alpha \beta) \lambda .
}
Equation~\eqref{eq:geo:gamma} has a solution $z = \gamma(\lambda)$ with positive imaginary part if and only if $(W(\lambda))^2 < 4 U(\lambda) W(\lambda)$. This condition simplifies to $\lambda \in \Lambda = (\lambda_1, \lambda_2)$, where
\formula[eq:geo:lambda]{
 \lambda_1 & = \frac{\bigl(\sqrt{(1 - \beta) p} - \sqrt{(1 - \alpha) q}\bigr)^2}{1 - \alpha \beta} \, , &
 \lambda_2 & = \frac{\bigl(\sqrt{(1 - \beta) p} + \sqrt{(1 - \alpha) q}\bigr)^2}{1 - \alpha \beta} \, , &
}
The solution is given by
\formula{
 \lv \gamma(\lambda) \rv & = \sqrt{\frac{U(\lambda)}{V(\lambda)}} \, , &
 \thet_\lambda & = \arg \gamma(\lambda) = \arccos \frac{W(\lambda)}{2 \sqrt{U(\lambda) V(\lambda)}} \, , &
 \re \gamma(\lambda) & = \frac{W(\lambda)}{2 V(\lambda)} \, .
}
We also find that
\formula{
 F(z) - \lambda & = \frac{V(\lambda) (\gamma(\lambda) - z) (\overline{\gamma}(\lambda) - z)}{(1 - \alpha z) (\beta - z)} \, .
}
Therefore,
\formula{
 F(\lambda; z) & = \frac{\lv \gamma(\lambda) \rv (z - 1)^2}{(\gamma(\lambda) - z) (\overline{\gamma}(\lambda) - z)} \, (F(z) - \lambda) \\
 & = \frac{\sqrt{U(\lambda) V(\lambda)} \, (z - 1)^2}{(1 - \alpha z) (\beta - z)} \\
 & = \frac{\sqrt{U(\lambda) V(\lambda)} \, (1 - z) (1 - z^{-1})}{(1 - \alpha z) (1 - \beta z^{-1})} \, .
}
Up to multiplication by a constant, the corresponding Wiener--Hopf factors are given by
\formula{
 F^+(\lambda; z) & = \frac{\sqrt{V(\lambda)} \, (1 - z)}{1 - \alpha z} \, , &
 F^-(\lambda; z^{-1}) & = \frac{\sqrt{U(\lambda)} \, (1 - z^{-1})}{1 - \beta z^{-1}} \, .
}
In particular,
\formula{
 \thet^+_\lambda & = -\arg \frac{1 - \gamma(\lambda)}{F^+(\lambda; \gamma(\lambda))} = -\arg (1 - \alpha \gamma(\lambda)) , \\
 \thet^-_\lambda & = \arg \frac{1 - (\gamma(\lambda))^{-1}}{F^+(\lambda; (\gamma(\lambda))^{-1})} = \arg (1 - \beta (\gamma(\lambda))^{-1}) .
}
We have
\formula{
 \lv 1 - \alpha \gamma(\lambda) \rv^2 & = 1 + \alpha^2 \lv \gamma(\lambda) \rv^2 - 2 \alpha \re \gamma(\lambda) = \frac{V(\lambda) + \alpha^2 U(\lambda) - \alpha W(\lambda)}{V(\lambda)} = \frac{(1 - \alpha \beta) p}{V(\lambda)} \, ,
}
and
\formula{
 \lv 1 - \beta (\gamma(\lambda))^{-1} \rv^2 & = \frac{\beta^2 + \lv \gamma(\lambda) \rv^2 - 2 \beta \re \gamma(\lambda)}{\lv \gamma(\lambda) \rv^2} = \frac{\beta^2 V(\lambda) + U(\lambda) - \beta W(\lambda)}{U(\lambda)} = \frac{(1 - \alpha \beta) q}{U(\lambda)} \, ,
}
and therefore
\formula{
 \cos \thet^+_\lambda & = \frac{1 - \alpha \re \gamma(\lambda)}{\lv 1 - \alpha \gamma(\lambda) \rv} = \frac{2 V(\lambda) - \alpha W(\lambda)}{2 \sqrt{(1 - \alpha) (1 - \alpha \beta) p V(\lambda)}} \, , \\
 \cos \thet^-_\lambda & = \frac{\lv \gamma(\lambda) \rv^2 - \beta \re \gamma(\lambda)}{\lv \gamma(\lambda) \rv^2 \lv 1 - \beta (\gamma(\lambda))^{-1} \rv} = \frac{2 U(\lambda) - \beta W(\lambda)}{2 \sqrt{(1 - \beta) (1 - \alpha \beta) q U(\lambda)}} \, .
}
Observe that for $s \in (0, 1) \cup (1, \infty)$, a continuous, real, nonzero boundary limit $1 / F(\lambda; s + 0 i)$ exists, so that $\evr^+_{\lambda, k} = \evr^-_{\lambda, l} = 0$. We conclude that
\formula{
 \ev^+_{\lambda, k} & = \biggl(\frac{V(\lambda)}{U(\lambda)}\biggr)^{(k + 1) / 2} \sin\biggl((k + 1) \arccos \frac{W(\lambda)}{2 \sqrt{U(\lambda) V(\lambda)}} + \arccos \frac{2 V(\lambda) - \alpha W(\lambda)}{2 \sqrt{(1 - \alpha) (1 - \alpha \beta) p V(\lambda)}}\biggr) , \\
 \ev^-_{\lambda, l} & = \biggl(\frac{U(\lambda)}{V(\lambda)}\biggr)^{(l + 1) / 2} \sin\biggl((l + 1) \arccos \frac{W(\lambda)}{2 \sqrt{U(\lambda) V(\lambda)}} + \arccos \frac{2 U(\lambda) - \beta W(\lambda)}{2 \sqrt{(1 - \beta) (1 - \alpha \beta) q U(\lambda)}}\biggr) .
}
Finally,
\formula{
 \biggl|\frac{\gamma'(\lambda)}{\gamma(\lambda)}\biggr|^2 & = \biggl(\frac{U'(\lambda)}{2 U(\lambda)} - \frac{V'(\lambda)}{2 V(\lambda)}\biggr)^2 + \biggl(\frac{2 W'(\lambda) - W(\lambda) (U'(\lambda) / U(\lambda) + V'(\lambda) / V(\lambda))}{2 \sqrt{4 U(\lambda) V(\lambda) - (W(\lambda))^2}}\biggr)^2 \\
 & = \frac{(1 - \alpha) (1 - \beta) (1 - \alpha \beta)^2 p q}{U(\lambda) V(\lambda) (4 U(\lambda) V(\lambda) - (W(\lambda))^2)} \, .
}
Theorem~\ref{thm:main} asserts that if $(1 - \alpha) p > \alpha^2 (1 - \beta) q$ and $(1 - \beta) q > \beta^2 (1 - \alpha) p$, then for every polynomial $P$,
\formula{
 (P(T))_{k, l} & = \frac{2}{\pi} \int_{\lambda_1}^{\lambda_2} P(-\lambda) \ev^+_{\lambda, k} \ev^-_{\lambda, l} \, \biggl|\frac{\gamma'(\lambda)}{\gamma(\lambda)}\biggr| \, d\lambda ,
}
where $\lambda_1, \lambda_2$ are given by~\eqref{eq:geo:lambda}.

By Theorem~\ref{thm:eig}, if $\beta^2 V(\lambda) \le U(\lambda)$, that is,
\formula{
 \lambda & \le \frac{\beta (1 - \beta) p + (1 - \alpha \beta^2) q}{\beta (1 - \alpha \beta)} \, ,
}
then $\ev^+_\lambda$ is indeed and eigenvector of $T$: we have $T \ev^+_\lambda = -\lambda \ev^+_\lambda$. Similarly, if $\alpha^2 U(\lambda) \le V(\lambda)$, which amounts to
\formula{
 \lambda & \le \frac{(1 - \alpha^2 \beta) p + \alpha (1 - \alpha) q}{\alpha (1 - \alpha \beta)} \, ,
}
then $T^* \ev^-_\lambda = -\lambda \ev^-_\lambda$.

In probabilistic terms, if $p + q = 1$, then the transpose of $T$ is the generator of a random walk with increments having two-sided geometric distribution.


\subsection{One-sided geometric sequences}

The complicated expressions appearing above simplify slightly when $\beta = 0$. The corresponding $\amcm$ sequence $(a_k)$ satisfies $a_k = 0$ for $k \le -2$, and so the infinite Toeplitz matrix $T$ is nearly triangular. We have
\formula{
 F(z) & = -\hat a(z) = p \, \frac{1 - z}{1 - \alpha z} + q (1 - z^{-1}) .
}
We assume that $p > \alpha^2 q$. In this case the spine $\Gamma$ is the semicircle with centre $z_0 = -\alpha q / (p - \alpha^2 q)$ and radius $r_0 = \sqrt{p q} / (p - \alpha^2 q)$. We have $U(\lambda) = q$ and
\formula{
 V(\lambda) & = p + \alpha q - \alpha \lambda , &
 W(\lambda) & = p + (1 + \alpha) q - \lambda .
}
Furthermore,
\formula{
 \lambda_1 & = p + (1 - \alpha) q - 2 \sqrt{(1 - \alpha) p q} , &
 \lambda_2 & = p + (1 - \alpha) q + 2 \sqrt{(1 - \alpha) p q} ,
}
The expressions for $\thet_\lambda$ and $\thet^+_\lambda$ do not simplify significantly, but we have $\thet^-_\lambda = 0$. Hence,
\formula{
 \ev^+_{\lambda, k} & = \biggl(\frac{p + \alpha q - \alpha \lambda}{q}\biggr)^{\frac{k + 1}{2}} \sin\biggl((k + 1) \arccos \frac{p + (1 + \alpha) q - \lambda}{2 \sqrt{q (p + \alpha q - \alpha \lambda)}} \\
 & \hspace*{12em} + \arccos \frac{(2 - \alpha) p + \alpha (1 - \alpha) q - \alpha \lambda}{2 \sqrt{(1 - \alpha) p (p + \alpha q - \alpha \lambda)}}\biggr) , \\
 \ev^-_{\lambda, l} & = \biggl(\frac{q}{p + \alpha q - \alpha \lambda}\biggr)^{\frac{l + 1}{2}} \sin\biggl((l + 1) \arccos \frac{p + (1 + \alpha) q - \lambda}{2 \sqrt{q (p + \alpha q - \alpha \lambda)}}\biggr) .
}
Finally,
\formula{
 \biggl|\frac{\gamma'(\lambda)}{\gamma(\lambda)}\biggr|^2 & = \frac{(1 - \alpha) p q}{q (p + \alpha q - \alpha \lambda) (4 q (p + \alpha q - \alpha \lambda) - (p + (1 + \alpha) q - \lambda)^2)} \, .
}
The assertions of Theorems~\ref{thm:main} and~\ref{thm:eig} are the same as in the previous example.

In probabilistic terms, when $p + q = 1$, the transpose of $T$ is the generator of the discrete-time risk process, with geometrically distributed claims.


\subsection{Symmetric geometric sequences}

The formulae simplify greatly in the symmetric case, when $\alpha = \beta$ and $p = q$. We have
\formula{
 F(z) & = -\hat a(z) = p \, \frac{1 - z}{1 - \alpha z} + p \, \frac{1 - z^{-1}}{1 - \alpha z^{-1}} \, ,
}
and $\Gamma$ is the unit semicircle. Furthermore,
\formula{
 U(\lambda) = V(\lambda) & = (1 + \alpha) p - \alpha \lambda , &
 W(\lambda) & = 2 (1 + \alpha) p - (1 + \alpha^2) \lambda .
}
The spectrum is the interval
\formula{
 \Lambda = (\lambda_1, \lambda_2) & = \biggl(0, \frac{4 p}{1 + \alpha}\biggr) ,
}
and the generalised eigenvectors and co-eigenvectors are
\formula{
 \ev^\pm_{\lambda, k} & = \sin\biggl((k + 1) \arccos \frac{2 (1 + \alpha) p - (1 + \alpha^2) \lambda}{2 ((1 + \alpha) p - \alpha \lambda)} + \arccos \frac{(2 p - \alpha \lambda) \sqrt{1 + \alpha}}{2 \sqrt{p ((1 + \alpha) p - \alpha \lambda)}}\biggr) .
}
In Theorem~\ref{thm:main}, it is perhaps convenient to substitute $\thet = \thet_\lambda$. This leads to the formula
\formula{
 (P(T))_{k, l} & = \frac{2}{\pi} \int_0^\pi P(-\lambda(\thet)) \ev^\pm_{\lambda(\thet), k} \ev^\pm_{\lambda(\thet), l} \, d\thet
}
for every polynomial $P$, where
\formula{
 \lambda(\thet) & = \frac{2 (1 + \alpha) p (1 + \cos \thet)}{1 + \alpha^2 + 2 \alpha \cos \thet} \, ,
}
and
\formula{
 \ev^\pm_{\lambda(\thet), k} & = \sin\biggl((k + 1) \thet + \arccos \frac{(1 + \alpha \cos \thet)}{\sqrt{1 + \alpha^2 + 2 \alpha \cos \thet}}\biggr) .
}


\subsection{Fisher--Hartwig symbols}

Toeplitz matrices corresponding to $\amcm$ sequences whose symbols have a finite number of Fisher--Hartwig type singularities attracted much attention. A prime example is the symbol:
\formula{
 F(z) & = (1 - z)^\alpha (1 - z^{-1})^\beta ,
}
which turns out to be the symbol of an $\amcm$ sequence when $\alpha, \beta \in (0, 1)$. This follows easily from Proposition~\ref{prop:characterisation}, but we can also describe the corresponding $\amcm$ sequence $(a_k)$ in a more direct way. For $z \in \torus$, we have
\formula{
 F^+(z) = (1 - z)^\alpha & = \sum_{k = 0}^\infty (-1)^k \binom{\alpha}{k} z^k = \sum_{k = 1}^\infty \frac{\alpha (1 - \alpha) (2 - \alpha) \ldots (k - 1 - \alpha)}{k!} \, (1 - z^k) ,
}
and using $(-1)^{k + 1} \tbinom{\alpha}{k + 1} - (-1)^k \tbinom{\alpha}{k} = (-1)^{k + 1} \tbinom{\alpha + 1}{k + 1}$ one can easily verify that the sequence $((-1)^k \tbinom{\alpha}{k} : k \ge 1)$ is completely monotone. The same argument shows that if $z \in \torus$, then
\formula{
 F^-(z) = (1 - z^{-1})^\alpha & = \sum_{l = -\infty}^0 (-1)^l \binom{\beta}{-l} z^l .
}
Therefore, $-F^+$ and $-F^-$ are generating functions of summable one-sided $\amcm$ sequences $(a^+_k)$ and $(a^-_k)$ (where $a^+_k = 0$ for $k < 0$ and $a^-_k = 0$ for $k > 0$). The product $-F = -F^+ F^-$ is therefore the generating function of the sequence $(a_k)$ given by the convolution:
\formula{
 a_k & = -\sum_{l = -\infty}^\infty a^+_{k + l} a^-_{-l} .
}
We claim that $(a_k)$ is an $\amcm$ sequence. Observe that $a^-_{-l} = (-1)^l \tbinom{\beta - 1}{l} - (-1)^{l - 1} \tbinom{\beta - 1}{l - 1}$ (where $\tbinom{\beta - 1}{l} = 0$ if $l < 0$), and use partial summation:
\formula{
 a_k & = -\sum_{l = -\infty}^\infty a^+_{k + l} \biggl((-1)^l \binom{\beta - 1}{l} - (-1)^{l - 1} \binom{\beta - 1}{l - 1}\biggr) \\
 & = \sum_{l = 0}^\infty (a^+_{k + l + 1} - a^+_{k + l}) (-1)^l \binom{\beta - 1}{l} .
}
Since $(-1)^l \binom{\beta - 1}{l} \ge 0$ and $(a^+_{k + 1} - a^+_k : k \ge 1)$ is completely monotone, also $(a_k : k \ge 1)$ is completely monotone. A similar argument shows that $(a_{-k} : k \ge 1)$ is completely monotone, and our claim follows.

The spine $\Gamma$ is characterised by the condition $\im z > 0$ and
\formula{
 0 & = \arg F(z) = \alpha \arg (1 - z) + \beta \arg (1 - z^{-1}) .
}
If $z = r e^{i \thet}$, the above condition reads
\formula{
 \alpha \arccos \frac{1 - r \cos \thet}{1 + r^2 - 2 r \cos \thet} & = \beta \arccos \frac{r (r - \cos \thet)}{1 + r^2 - 2 r \cos \thet} \, .
}
The two sides of the above condition have different monotonicity, and this easily implies that for every $\thet \in (0, \pi)$ there is a unique solution $r$, and so $\Gamma$ winds around $0$. Furthermore, $r > 1$ if $\alpha < \beta$, and $r < 1$ if $\alpha > \beta$. No simple description of the solution $r$ seems to be available, except some special cases. For example, if $\alpha = \beta$, then clearly $r = 1$; this case is discussed in more detail in the next section. On the other hand, numerical approximation of the solution $r$ using, for example, Newton's method, presents no difficulties.

The parameterisation $\gamma(\lambda)$ of the spine is therefore inexplicit, unless, for example, $\alpha = \beta$. However, it is easy to see that the endpoints of $\Gamma$ are $1$ and the solution of $F'(z) = 0$, that is, $z = -\beta / \alpha$. It follows that
\formula{
 \Lambda & = \biggl(0, \frac{(\alpha + \beta)^{\alpha + \beta}}{\alpha^\alpha \beta^\beta}\biggr)
}
Fix $\lambda \in \Lambda$, and, for brevity, write $\gamma = \lv \gamma \rv e^{i \thet}$ instead of $\gamma(\lambda) = \lv \gamma(\lambda) \rv e^{i \thet_\lambda}$. Clearly, for $s \in (0, 1)$,
\formula{
 F(s^{-1} + 0 i) & = e^{-i \alpha \pi} (s^{-1} - 1)^\alpha (1 - s)^\beta = e^{-i \alpha \pi} s^{-\alpha} (1 - s)^{\alpha + \beta} ,
}
and hence
\formula{
 \lv \arg(F(s^{-1} + 0 i) - \lambda) \rv & = \arccot \frac{\cos(\alpha \pi) - \lambda s^\alpha (1 - s)^{-\alpha - \beta}}{\sin(\alpha \pi)} \, .
}
Similarly,
\formula{
 \lv \arg(F(s + 0 i) - \lambda) \rv & = \arccot \frac{\cos(\beta \pi) - \lambda s^\beta (1 - s)^{-\alpha - \beta}}{\sin(\beta \pi)} \, .
}
In particular,
\formula{
 \thet^+_\lambda & = \frac{1}{\pi} \int_0^1 \frac{\lv \gamma \rv \sin \thet}{\lv s \gamma - 1 \rv^2} \, \arccot \frac{\lambda s^\alpha (1 - s)^{-\alpha - \beta} - \cos(\alpha \pi)}{\sin(\alpha \pi)} \, ds , \\
 \thet^-_\lambda & = \frac{1}{\pi} \int_0^1 \frac{\lv \gamma \rv \sin \thet}{\lv s - \gamma \rv^2} \, \arccot \frac{\lambda s^\beta (1 - s)^{-\alpha - \beta} - \cos(\beta \pi)}{\sin(\beta \pi)} \, ds ,
}
and
\formula{
 F^+(\lambda; z) & = c^+_\lambda \exp\biggl(\frac{1}{\pi} \int_0^1 \frac{1}{s - z^{-1}} \, \arccot \frac{\cos(\alpha \pi) - \lambda s^\alpha (1 - s)^{-\alpha - \beta}}{\sin(\alpha \pi)} \, ds \biggr) , \\
 F^-(\lambda; z) & = c^-_\lambda \exp\biggl(\frac{1}{\pi} \int_0^1 \frac{1}{s - z^{-1}} \, \arccot \frac{\cos(\beta \pi) - \lambda s^\beta (1 - s)^{-\alpha - \beta}}{\sin(\beta \pi)} \, ds \biggr) .
}
Here $c^+_\lambda$ and $c^-_\lambda$ are constants such that $F^+(\lambda; -1) F^-(\lambda; -1) = F(\lambda; -1)$.

We have
\formula{
 F(\lambda; z) & = \frac{\lv \gamma \rv (z - 1)^2}{\lv \gamma \rv^2 + z^2 - 2 \lv \gamma \rv z \cos \thet} \, ((1 - z)^\alpha (1 - z^{-1})^\beta - \lambda) .
}
It follows that for $s \in (0, 1)$,
\formula{
 \biggl|\im \frac{1}{F(\lambda; s^{-1} + 0 i)}\biggr| & = \frac{\lv \gamma \rv^2 s^2 + 1 - 2 \lv \gamma \rv s \cos \thet}{\lv \gamma \rv (1 - s)^2} \, \frac{s^\alpha (1 - s)^{-\alpha - \beta} \sin(\pi \alpha)}{\lv e^{i \pi \alpha} - s^\alpha (1 - s)^{-\alpha - \beta} \lambda \rv^2} \, , \\
 \biggl|\im \frac{1}{F(\lambda; s + 0 i)}\biggr| & = \frac{\lv \gamma \rv^2 + s^2 - 2 \lv \gamma \rv s \cos \thet}{\lv \gamma \rv (1 - s)^2} \, \frac{s^\beta (1 - s)^{-\alpha - \beta} \sin(\pi \beta)}{\lv e^{i \pi \beta} - s^\beta (1 - s)^{-\alpha - \beta} \lambda \rv^2} \, .
}
With the above definitions, $\ev^+_{\lambda, k}$ and $\ev^-_{\lambda, l}$ are defined as in the introduction; formulae~\eqref{eq:i:psi:p} and~\eqref{eq:i:psi:m} for $\evr^+_{\lambda, k}$ and $\evr^-_{\lambda, l}$ do not seem to simplify in any way, but they are numerically tractable.

By Theorem~\ref{thm:eig}, $\ev^+_\lambda$ is an eigenvector of $T$ if $\alpha \le \beta$, and $\ev^-_\lambda$ is a co-eigenvector if $\alpha \ge \beta$. Theorem~\ref{thm:main} provides a generalised eigenvector expansion of $P(T)$ for an arbitrary polynomial~$P$.


\subsection{Symmetric Fisher--Hartwig symbols}

The expressions in the previous example simplify significantly when $\alpha = \beta$. We have already noted that in this case $\lv \gamma(\lambda) \rv = 1$, that is, $\gamma(\lambda) = e^{i \thet_\lambda}$. We simply have
\formula{
 \Lambda & = (0, 2^{2 \alpha}) ,
}
and since $F(e^{i \thet}) = (2 - 2 \cos \thet)^\alpha = (2 \sin(\tfrac{1}{2} \thet))^{2 \alpha}$, we find that $\gamma(\lambda) = e^{i \thet_\lambda}$ with
\formula{
 \thet_\lambda & = \arccos(1 - \tfrac{1}{2} \lambda^{1 / \alpha}) .
}
Furthermore,
\formula{
 \thet^\pm_\lambda & = \frac{1}{2 \pi} \int_0^1 \frac{\lambda^{1 / (2 \alpha)} \sqrt{4 - \lambda^{1 / \alpha}}}{(1 - s)^2 + \lambda^{1 / \alpha} s} \, \arccot \frac{\lambda s^\alpha (1 - s)^{-2 \alpha} - \cos(\alpha \pi)}{\sin(\alpha \pi)} \, ds .
}
We also have
\formula{
 F^\pm(\lambda; z) & = c^\pm_\lambda \exp\biggl(\frac{1}{\pi} \int_0^1 \frac{1}{s - z^{-1}} \, \arccot \frac{\cos(\alpha \pi) - \lambda s^\alpha (1 - s)^{-2 \alpha}}{\sin(\alpha \pi)} \, ds \biggr) ,
}
where $c^\pm_\lambda$ is such that $(F^\pm(\lambda; -1))^2 = F(\lambda; -1) = 4 (4^\alpha - \lambda) / (4 - \lambda^{1 / \alpha})$. For $s \in (0, 1)$,
\formula{
 \biggl|\im \frac{1}{F(\lambda; s^{-1} + 0 i)}\biggr| = \biggl|\im \frac{1}{F(\lambda; s + 0 i)}\biggr| & = \frac{(1 - s)^2 + \lambda^{1 / \alpha} s}{(1 - s)^2} \, \frac{s^\alpha (1 - s)^{-2 \alpha} \sin(\pi \alpha)}{\lv e^{i \pi \alpha} - s^\alpha (1 - s)^{-2 \alpha} \lambda \rv^2} \, .
}
Therefore,
\formula{
 \evr^\pm_{\lambda, k} & = \frac{1}{\pi} \, \frac{\lv F^+(\lambda; \gamma) \rv \im \gamma}{\lv 1 - \gamma \rv} \int_0^1 s^k \, \frac{(1 - s) F^-(\lambda; s)}{\lv \gamma s - 1 \rv^2} \, \frac{(1 - s)^2 + \lambda^{1 / \alpha} s}{(1 - s)^2} \, \frac{s^\alpha (1 - s)^{-2 \alpha} \sin(\pi \alpha)}{\lv e^{i \pi \alpha} - s^\alpha (1 - s)^{-2 \alpha} \lambda \rv^2} \, ds \\
 & = \frac{1}{\pi} \, \frac{\lv F^+(\lambda; \gamma) \rv \im \gamma}{\lv 1 - \gamma \rv} \int_0^1 s^k \, \frac{F^-(\lambda; s)}{1 - s} \, \frac{s^\alpha (1 - s)^{-2 \alpha} \sin(\pi \alpha)}{\lv e^{i \pi \alpha} - s^\alpha (1 - s)^{-2 \alpha} \lambda \rv^2} \, ds ,
}
and
\formula{
 \ev^\pm_{\lambda, k} & = \sin\bigl((k + 1) \thet_\lambda + \thet^+_\lambda\bigr) - \evr^+_{\lambda, k} .
}
Finally,
\formula{
 \lv \gamma'(\lambda) \rv & = \frac{1}{\alpha \lambda^{1 - 1 / 2 \alpha} \sqrt{4 - \lambda^{1 / \alpha}}} \, .
}
Theorem~\ref{thm:main} states that for every polynomial $P$,
\formula{
 (P(T))_{k, l} & = \frac{2}{\pi} \int_0^{2^{2 \alpha}} P(-\lambda) \ev^\pm_{\lambda, k} \ev^\pm_{\lambda, l} \, \frac{1}{\alpha \lambda^{1 - 1 / 2 \alpha} \sqrt{4 - \lambda^{1 / \alpha}}} \, d\lambda .
}
It may be convenient to substitute $\thet = \thet_\lambda$ in the above integral:
\formula{
 (P(T))_{k, l} & = \frac{2}{\pi} \int_0^\pi P(-(2 - 2 \cos \thet)^\alpha) \ev^\pm_{(2 - 2 \cos \thet)^\alpha, k} \ev^\pm_{(2 - 2 \cos \thet)^\alpha, l} d\thet .
}
Furthermore, by Theorem~\ref{thm:eig}, $\ev^\pm_\lambda$ is an eigenvector of $T$: we have $T \ev^\pm_\lambda = -\lambda \ev^\pm_\lambda$.

%
%

\newcommand{\nist}[2]{\href{https://dlmf.nist.gov/#1\#E#2}{Eq.~#1.#2}}
\newcommand{\doi}[1]{\href{https://doi.org/#1}{\textsf{\scriptsize DOI:#1}}}
\newcommand{\arxiv}[1]{\href{https://arxiv.org/abs/#1}{\textsf{\scriptsize arXiv:#1}}}
\newcommand{\isbn}[1]{\textsf{ISBN:#1}}

%
%

\end{document}